\DeclareMathOperator{\Hom}{Hom}
\DeclareMathOperator{\Fun}{Fun}
\DeclareMathOperator{\Ob}{Ob}
\DeclareMathOperator{\coker}{coker}
\DeclareMathOperator{\Res}{Res}
\DeclareMathOperator{\Ind}{Ind}
\DeclareMathOperator{\Mod}{Mod}
\DeclareMathOperator{\hproj}{h-proj}
\DeclareMathOperator{\pretr}{pretr}
\DeclareMathOperator{\cone}{C}
\DeclareMathOperator{\dual}{D}
\DeclareMathOperator{\hfp}{hfp}
\DeclareMathOperator{\hfpbd}{hfp^b}
\DeclareMathOperator{\fp}{fp}
\DeclareMathOperator{\compdg}{dgm}
\DeclareMathOperator{\dercomp}{\mathsf{D}}
\DeclareMathOperator{\dercompdg}{\mathsf{D}_{\mathrm{dg}}}
\newcommand{\dercompdgmin}{\mathsf{D}^{-}_{\mathrm{dg}}}
\newcommand{\dercatabplus}{\mathfrak D^{+}}
\newcommand{\dercatabdgplus}{\mathfrak D^{+}_{\mathrm{dg}}}
\newcommand{\dercatabdgbd}{\mathfrak D^{b}_{\mathrm{dg}}}
\DeclareMathOperator{\RHom}{\mathbb R\!\Hom}
\DeclareMathOperator{\lotimes}{\overset{\mathbb L}{\otimes}}
\newcommand{\cat}{\mathscr}
\newcommand{\opp}[1]{{#1}^{\mathrm{op}}}
\newcommand{\kat}{\mathsf}
\newcommand{\smallcat}{\mathfrak}
\newcommand{\injcat}{\mathbf}
\newcommand{\HH}{\operatorname{HH}}
\newcommand{\Inj}{\operatorname{Inj}}
\newcommand{\DGInj}{\operatorname{DGInj}}
\newcommand{\Hqe}{\kat{Hqe}}
\newcommand{\basering}[1]{\mathbf{#1}}
\newtheorem{introtheorem}{Theorem}
\newtheorem*{introdef}{Definition}
\newtheorem{theorem}{Theorem}[subsection]
\newtheorem{proposition}[theorem]{Proposition}
\newtheorem{corollary}[theorem]{Corollary}
\newtheorem{lemma}[theorem]{Lemma}
\newtheorem{sublemma}[theorem]{Sublemma}
\theoremstyle{remark}
\newtheorem{remark}[theorem]{Remark}
\newtheorem{example}[theorem]{Example}
\newtheorem*{introremark}{Remark}
\theoremstyle{definition}
\newtheorem{definition}[theorem]{Definition}
\newlist{primenumerate}{enumerate}{1}
\setlist[primenumerate,1]{label={(\arabic*$'$)}}
\let\oldmarginpar\marginpar
\def\marginpar#1{\oldmarginpar{\tiny \raggedright #1}}
\numberwithin{equation}{section}
\title{T-structures on dg-categories and derived deformations}
\author{Francesco Genovese}
\address[Francesco Genovese]{Univerzita Karlova, Matematicko-fyzik\'aln\'i fakulta \\ Sokolovsk\'a 49/83, 186 75 Praha 8 \\ Czech Republic}
\email{genovese@karlin.mff.cuni.cz}
\author{Wendy Lowen} 
\address[Wendy Lowen]{Universiteit Antwerpen, Departement Wiskunde, Middelheimcampus, Middelheimlaan 1, 2020 Antwerp, Belgium}
\email{wendy.lowen@uantwerpen.be}
\author{Michel Van den Bergh}
\address[Michel Van den Bergh]{Universiteit Hasselt\\ Campus Diepenbeek\\ Agoralaan Gebouw D \\ 3590 Diepenbeek \\Belgium}
\email{michel.vandenbergh@uhasselt.be}
\subjclass{Primary 14A22 18G35 18G80; Secondary 18E10}
\thanks{The authors acknowledge the support of the Research Foundation Flanders (FWO) under Grant No G.0D86.16N. This project has received funding from the European Research Council (ERC) under the European Union’s Horizon 2020 research and innovation programme (grant agreement No. 817762). The first named author also acknowledges the support of the Czech Science Foundation grant [GA \v{C}R 20-13778S]}
\begin{document}
\maketitle

\begin{abstract}
	This paper is a sequel to ``t-structures and twisted complexes on derived injectives'' by the same authors. We develop the foundations of the infinitesimal derived deformation theory of pretriangulated dg-categories endowed with t-structures. This generalizes the deformation theory of abelian categories developed by the last two authors. We show how deformations of dg-categories of derived injectives yield derived deformations of the associated t-structures.
\end{abstract}

\tableofcontents

\section{Introduction} 
\subsection{Context}
As a sequel to \cite{genovese-lowen-vdb-dginj}, this paper is the second one in an ongoing project to develop the deformation theory of triangulated categories with t-structure, inspired by the deformation theory of abelian categories from \cite{lowen-vdb-hochschild,lowen-vandenbergh-deformations-abelian}.

Consider the dual numbers $D_0 = \basering k[\epsilon]/(\epsilon^2)$ over a field
$\basering k$. Then every $\basering k$-linear category gives rise to a $D_0$-linear category via restriction for the canonical map $D_0 \rightarrow k$. Algebra deformations in the sense of Gerstenhaber are (flat) lifts along the left adjoint $\basering k \otimes_{D_0} -$ of the restriction functor, and this notion of deformation generalizes to linear or dg-categories by viewing those as (dg-)algebras with several objects - where in the dg-case we lift along the derived tensor product. In this introduction we will refer to such deformations as \emph{left deformations}. In the linear case, left deformations are controlled by ``Hochschild cohomology'' (denoted by ``$\HH$'' below), but in the dg-case in general Hochschild cohomology describes deformations with curvature, leading to rather serious technical difficulties \cite{keller-lowen-nicolas}.

In \cite{lowen-vandenbergh-deformations-abelian} it was observed that for abelian categories, it is actually more appropriate to lift along the right adjoint $\Hom_{D_0}(\basering k,-)$ of restriction, leading to a type of deformation we will refer to as \emph{right deformations} in this introduction.  Thanks to the 1-1 correspondence $\mathfrak A \leftrightarrow \Inj(\mathfrak A)$ (*) between abelian categories with enough injectives on the one hand and categories of injectives - which can be characterised intrinsically - on the other hand, a deformation equivalence between right deformations of $\mathfrak A$ and left deformations of $\Inj(\mathfrak A)$ was established in \cite{lowen-vandenbergh-deformations-abelian}. In particular, right deformations of $\cat C$ are controlled by
$\HH(\Inj(\mathfrak A))$.

A main aim of the project is to obtain an analogous deformation equivalence for appropriate pretriangulated dg-categories $\cat{A}$ with t-structure. Here, injectives have to be replaced by \emph{derived injectives} $\DGInj(\cat{A})$ \cite{rizzardo-nonFM} and right deformations are defined using $\RHom$ rather than $\Hom$ and take t-structures into account. Since the dg-category $\DGInj(\cat{A})$ has trivial cohomology in strictly positive degrees, no curvature is picked up under deformation so left deformations are controlled by Hochschild cohomology. Apart from the added level of generality, an additional advantage of such a  setup lies in the fact that now one can deform in the direction of commutative dgas like $D_n = \basering k[\epsilon]/(\epsilon^2)$ for $\deg(\epsilon) = n\le 0$.

In \cite{genovese-lowen-vdb-dginj}, the relevant generalization of (*) was proven for pretriangulated categories with t-structure with enough derived injectives.  In the present paper we construct one direction of the resulting deformation equivalence, proving that a left deformation of $\DGInj(\cat{A})$ gives rise to a right deformation of $\cat{A}$ (see Theorem \ref{introthm:grand_summary} below).  As a corollary we find that if  $\mathfrak{A}$ is a Grothendieck abelian category then a right deformation $\mathfrak{A}'$ of $\mathfrak{A}$ gives rise to a right deformation $\mathfrak{D}^+(\mathfrak{A}')$ of $\mathfrak{D}^+(\mathfrak{A})$ between the bounded below dg-derived categories (see Theorem \ref{introtheorem:abdeform_derdeform} below). We remark that the proof actually works under the more general assumption that $\mathfrak A$ is an abelian category with enough injectives.

In a subsequent paper, the other direction of the deformation equivalence will be treated and the proof of the equivalence will be completed.

\subsection{A note on set-theoretic universes} \label{subsec:sizes}
We shall fix a Grothendieck universe $\mathbb U$, and a bigger universe $\mathbb V$ such that $\mathbb U \in \mathbb V$. Unless otherwise specified, individual objects such as (dg\nobreakdash-)rings, modules etc. are implicitly taken to be $\mathbb U$-small. As for (dg\nobreakdash-)categories: unless otherwise specified, lowercase fraktur letters ($\smallcat a, \smallcat b$,\,\ldots) will denote \emph{$\mathbb U$-small} (dg\nobreakdash-)categories; uppercase calligraphic letters ($\cat A, \cat B$,\,\ldots) and sometimes uppercase bold letters ($\injcat I, \injcat J,\,\ldots)$ will denote \emph{locally $\mathbb U$-small} (dg\nobreakdash-)categories. From now on, the expression ``small (dg\nobreakdash-) category'' will mean ``$\mathbb U$-small (dg\nobreakdash-)category'', and ``(dg\nobreakdash-)category'' will mean ``locally $\mathbb U$-small (dg\nobreakdash-)category''.

If a dg-category $\smallcat a$ is $\mathbb U$-small, then its dg-category of right ($\mathbb U$-small) dg-modules $\compdg(\smallcat a)$ will be locally $\mathbb U$-small, hence $\mathbb V$-small. This setup will allow us to work with the locally $\mathbb V$-small dg-categories of dg-modules over locally $\mathbb U$-small dg-categories, or dg/quasi-functors between locally $\mathbb U$-small dg-categories.

\subsection{Structure of the paper}
Throughout the paper, we work with dg-categories over dg-rings. 

In \S \ref{sec:preliminaries} we present some preliminaries on dg-categories and quasi-functors. We first check (\S \ref{ssec:qf}, Proposition \ref{prop: ind_rqr_firstargument}, Corollary \ref{coroll: ind_rqr_firstargument}) that the dg-category of quasi-functors $\RHom(\cat A, \cat B)$ can be described as the dg-category $\hproj^\mathrm{rqr}(\opp{\cat A} \otimes \cat B)$ of right quasi-representable h-projective bimodules, where \emph{either $\cat A$ or $\cat B$ can be taken to be h-flat}. This will be essential when we deal with the change of base dg-ring. The following \S \ref{sec:duality} deals with opposite and adjoint quasi-functors, whereas in \S \ref{sec:trunc} we deal with truncations of dg-categories and compatibility results with dg-categories of quasi-functors (cf. Proposition \ref{prop:H0_equiv_iftarget_H0} and Lemma \ref{lemma:truncations_image_qfun}).

In \S \ref{sec:tstruct_dgcat} we explore t-structures on pretriangulated dg-categories, which are understood as t-structures on their underlying homotopy categories. We address in \S \ref{subsec:truncationfunctors} the quasi-functoriality of truncations, which does not hold ``on the nose''. The main result of this section is the construction in \S \ref{subsec:tstruct_quasifunctors} of a natural t-structure on the dg-category of quasi-functors:
\begin{introtheorem}[cf. Theorem \ref{thm:tstruct_quasifunctors}, Proposition \ref{prop:tstruct_quasifunctors_heart}] \label{introtheorem:tstruct_qfun}
Let $R$ be a dg-ring strictly concentrated in nonpositive degrees. All dg-categories here will be over $R$.

Let $\cat B$ be a pretriangulated dg-category endowed with a t-structure $(\cat B_{\leq 0}, \cat B_{\geq 0})$, and let $\smallcat a$ be a small dg-category with cohomology concentrated in nonpositive degrees. Then, the dg-category of quasi-functors $\RHom(\smallcat a, \cat B)$ has a t-structure such that
\begin{align*}
    \RHom(\smallcat a,\cat B)_{\leq 0} &= \RHom(\smallcat a,\cat B_{\leq 0}), \\
    \RHom(\smallcat a,\cat B)_{\geq 0} &= \RHom(\smallcat a,\cat B_{\geq 0}).
\end{align*}
The heart of this t-structure can be identified with the abelian category $\Fun(H^0(\smallcat a),H^0(\cat B)^\heartsuit)$.
\end{introtheorem}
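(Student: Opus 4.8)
The plan is to verify, on the underlying homotopy category $H^0(\RHom(\smallcat a,\cat B))$, the three axioms of a t-structure for the candidate pair $\bigl(\RHom(\smallcat a,\cat B_{\leq 0}),\RHom(\smallcat a,\cat B_{\geq 0})\bigr)$, using throughout the identification $\RHom(\smallcat a,\cat B)\simeq\hproj^{\mathrm{rqr}}(\opp{\smallcat a}\otimes\cat B)$ and the truncation-compatibility results of \S\ref{sec:trunc}. First I would record that both candidate aisles are genuine full sub-dg-categories cut out by a pointwise condition: by Lemma \ref{lemma:truncations_image_qfun} and Proposition \ref{prop:H0_equiv_iftarget_H0}, the inclusions $\cat B_{\leq 0},\cat B_{\geq 0}\hookrightarrow\cat B$ induce fully faithful functors whose essential images are exactly the quasi-functors $F$ with $F(a)\in\cat B_{\leq 0}$ (resp.\ $\cat B_{\geq 0}$) for all $a\in\Ob\smallcat a$. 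Since the shift on $\RHom(\smallcat a,\cat B)$ is computed pointwise, the required closure under the appropriate shifts is inherited directly from the t-structure on $\cat B$.

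For the orthogonality axiom I would compute the morphism complex between quasi-functors explicitly. By Proposition \ref{prop: ind_rqr_firstargument} we may take $\smallcat a$ to be h-flat (replacing it if necessary by a cohomologically nonpositive h-flat resolution), so that a pair of quasi-functors $F,G$ is modelled by honest dg-functors and
\[
\RHom(\smallcat a,\cat B)(F,G)\ \simeq\ \RHom_{\opp{\smallcat a}\otimes\smallcat a}\bigl(\smallcat a,\ \cat B(F-,G-)\bigr),
\]
the Hochschild/cobar complex whose $n$-th column is $\prod\RHom_R\bigl(\smallcat a(a_0,a_1)\otimes_R\cdots\otimes_R\smallcat a(a_{n-1},a_n),\ \cat B(F(a_0),G(a_n))\bigr)$. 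Now take $F\in\RHom(\smallcat a,\cat B_{\leq 0})$ and $G\in\RHom(\smallcat a,\cat B_{\geq 1})$. The t-structure orthogonality on $\cat B$ forces each $\cat B(F(a_0),G(a_n))$ to have cohomology concentrated in degrees $\geq 1$, while each tensor power $\smallcat a(a_0,a_1)\otimes_R\cdots$ is cohomologically nonpositive because $R$ and $\smallcat a$ are. Over the connective dg-ring $R$ the derived $\RHom_R$ of a cohomologically nonpositive module into a $1$-coconnective one is again $1$-coconnective, so every column of the cobar complex, hence its totalization, lives in degrees $\geq 1$; in particular $\Hom_{H^0}(F,G)=H^0=0$. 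This connectivity bookkeeping is routine and I do not expect it to be the obstacle.

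The delicate axiom is the existence, for each quasi-functor $F$, of a truncation triangle $F'\to F\to F''\to F'[1]$ with $F'\in\RHom(\smallcat a,\cat B_{\leq 0})$ and $F''\in\RHom(\smallcat a,\cat B_{\geq 1})$. The naive recipe of applying $\tau_{\leq 0}$ and $\tau_{\geq 1}$ pointwise fails to define quasi-functors because, as flagged in \S\ref{subsec:truncationfunctors}, truncation is not quasi-functorial on the nose. My plan is to promote the truncations to genuine quasi-functors $\cat B\to\cat B_{\leq 0}$ and $\cat B\to\cat B_{\geq 1}$ using the results of that subsection, realising $\tau_{\leq 0}$ as a right adjoint, in the sense of \S\ref{sec:duality}, to the inclusion $\cat B_{\leq 0}\hookrightarrow\cat B$. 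Whiskering with $F$ then yields $\tau_{\leq 0}F$ and $\tau_{\geq 1}F$ as quasi-functors with the correct pointwise values, and, because $\RHom(\smallcat a,-)$ is triangulated and computes cones pointwise, the pointwise truncation triangles assemble into the required triangle in $H^0(\RHom(\smallcat a,\cat B))$. Equivalently and more structurally, since $\RHom(\smallcat a,-)$ carries adjoint quasi-functors to adjoint quasi-functors, $\RHom(\smallcat a,\tau_{\leq 0})$ is right adjoint to the inclusion $\RHom(\smallcat a,\cat B_{\leq 0})\hookrightarrow\RHom(\smallcat a,\cat B)$, which together with the orthogonality above is exactly the data of a t-structure. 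I expect the genuine difficulty to be concentrated here: making the homotopy-coherent truncation quasi-functor and its counit precise, and checking that whiskering commutes with the formation of cones, is where the ``not on the nose'' failure has to be absorbed.

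Finally, for the heart I would identify $\RHom(\smallcat a,\cat B_{\leq 0})\cap\RHom(\smallcat a,\cat B_{\geq 0})$ with $\Fun(H^0(\smallcat a),H^0(\cat B)^\heartsuit)$ by a rigidity argument. A quasi-functor $F$ landing in the heart has all mapping complexes $\cat B(F(a),F(a'))$ cohomologically nonnegative, again by orthogonality, while the $\smallcat a(a,a')$ are cohomologically nonpositive; the degree mismatch forces the higher coherence data of $F$, and of morphisms between such $F$'s, to be homotopically rigid, so that passing to $H^0$ loses no information. Concretely, $F\mapsto H^0(F)$ lands in $\Fun(H^0(\smallcat a),H^0(\cat B)^\heartsuit)$, and using the cobar complex above one sees it is fully faithful (only the $n=0$ column survives in $H^0$, giving natural transformations) and essentially surjective (an ordinary functor lifts to a quasi-functor, the higher components being uniquely determined since their obstructions live in strictly negative cohomology of the coconnective complexes $\cat B(F(a),F(a'))$, which vanishes).
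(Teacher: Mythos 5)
Your aisle description and your orthogonality computation are workable, but the step you yourself single out as the crux --- the construction of truncation triangles --- rests on an object that does not exist. You propose to realise $\tau_{\leq 0}$ as a quasi-functor $\cat B \to \cat B_{\leq 0}$ right adjoint to the inclusion $i \colon \cat B_{\leq 0} \hookrightarrow \cat B$, and then to whisker with $F$. This is precisely what \S\ref{subsec:truncationfunctors} warns is impossible: an adjunction of quasi-functors forces a graded adjunction on $H^*$ (this is how adjoints are detected in \S\ref{sec:duality}, via the dual bimodule, cf.\ Lemma \ref{lemma:adjoints_quasifunctors_cohomology}), and $H^*(i)$ admits no graded right adjoint. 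Indeed, in degree $0$ any such adjoint $G$ would have to be $\tau_{\leq 0}$, and then in degree $1$ one would need $\Hom_{H^0(\cat B)}(X[-1],Y) \cong \Hom_{H^0(\cat B)}(X[-1],\tau_{\leq 0}Y)$ for all $X \in \cat B_{\leq 0}$; taking $\cat B = \dercompdg(\basering k)$, $X = \basering k$, $Y = \basering k[-1]$ gives $\basering k \neq 0$ on the left and $0$ on the right. What does exist (Proposition \ref{prop:truncations_adjoints}) is a truncation quasi-functor $\tau_{\leq 0} \colon \tau^{\leq 0}\cat B \to \tau^{\leq 0}(\cat B_{\leq 0})$, i.e.\ only after truncating the hom-complexes of the ambient dg-categories. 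The missing idea that makes your whiskering strategy work is therefore the reduction $\tau^{\leq 0}\RHom(\smallcat a, \cat B) \simeq \tau^{\leq 0}\RHom(\smallcat a, \tau^{\leq 0}\cat B)$ of Lemma \ref{lemma:truncations_image_qfun}; this is a genuinely nontrivial lemma, and it is exactly where the hypothesis that $\smallcat a$ has cohomology in nonpositive degrees enters (in your step it plays no role at all, which is a sign something is off). Even granting this, identifying the cone of the counit $i_{\leq 0}\tau_{\leq 0} \to \operatorname{id}$ with $i_{\geq 1}\tau_{\geq 1}$ is not a formality about ``cones computed pointwise'': $\tau^{\leq 0}\cat B$ is no longer pretriangulated and its Yoneda modules do not obviously carry triangles of $H^0(\cat B)$ to triangles of modules, so the paper devotes a separate proposition to producing the functorial triangle \eqref{eq:funct_disttria_tstruct}, using orthogonality (Lemma \ref{lemma:qfunct_orthogonal}) plus a Yoneda argument.

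Where your proposal is sound, it diverges from the paper in instructive ways. For orthogonality the paper works in $\tau^{\leq 0}\cat B$, where hom-complexes from the left aisle into $\cat B_{\geq 1}$ are outright acyclic, and applies Lemma \ref{lemma:qfunct_orthogonal} (a coend computation with the dual bimodule $\dual(F)$). Your route --- strictify the quasi-functors and estimate the Hochschild/cobar complex --- can be made to work, but needs two justifications you omit: $\smallcat a$ must be taken cofibrant (h-flatness alone does not let you replace quasi-functors by honest dg-functors), and the product totalization requires a convergence argument (Milnor sequence or the column filtration), since the columns are unbounded complexes that are merely coconnective in cohomology. For the heart, your obstruction-theoretic rigidity argument essentially re-derives Proposition \ref{prop:H0_equiv_iftarget_H0}; note, however, that you cannot apply that proposition directly to the heart dg-subcategory $\cat B_{\leq 0} \cap \cat B_{\geq 0}$, because that dg-category is only concentrated in \emph{nonnegative} cohomological degrees (its positive cohomologies are Ext-groups of the heart, cf.\ Remark \ref{remark:tstruct_dgcat_aisles_heart}); one must first truncate it and invoke Lemma \ref{lemma:truncations_image_qfun} --- the same manoeuvre whose absence breaks your truncation step.
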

The assumption that $R$ is a dg-ring concentrated in nonpositive degrees is essential when we work with t-structures: for instance, it ensures that the derived category $\dercomp(R)$ of $R$ has a well-behaved non-degenerate ``canonical'' t-structure. For technical ease, we assume \emph{strict} instead of cohomological nonpositivity.

\S \ref{section:changeofrings} deals with the main technical tool of this paper, namely, the \emph{change of base dg-ring} of dg-categories. This is clearly essential for deformations. \S \ref{subsec:coextension} and \S \ref{subsection:scalar_leftextension} yield the following:
\begin{introtheorem}[cf. Theorem \ref{thm:coext_univproperty}, Proposition \ref{prop:scalar_leftextension}]
 Let $R \to S$ be a morphism of commutative dg-rings. If $\cat A$ is an $S$-linear dg-category, we denote by $\cat A_R$ the $R$-linear dg-category obtained by restriction of scalars along $R \to S$.
 
 For any $R$-linear dg-category $\cat B$, there is an $S$-linear dg-category
 \[
 \cat B_{(S)} = \RHom_R(S,\cat B),
 \]
 together with a natural $R$-linear quasi-equivalence
 \[
 \RHom_R(\cat A_R, \cat B) \cong \RHom_S(\cat A, \cat B_{(S)})_R.
 \]
 
 Moreover, for any $R$-linear dg-category $\cat b$, there is an $S$-linear dg-category
 \[
 S \lotimes_R \cat B,
 \]
 together with a natural $R$-linear quasi-equivalence
 \[
 \RHom_S(S \lotimes_R \cat B, \cat A)_R \cong \RHom_R(\cat B, \cat A_R).
 \]
\end{introtheorem}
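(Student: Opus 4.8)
The plan is to recognize both statements as the several-objects, derived incarnations of the classical tensor--hom and coextension adjunctions attached to the ring map $R\to S$: the coextension $\cat B_{(S)}=\RHom_R(S,\cat B)$ is the (derived) right adjoint to restriction of scalars, while $S\lotimes_R\cat B$ is its left adjoint. The decisive tool is the identification of quasi-functors with right quasi-representable h-projective bimodules from Proposition~\ref{prop: ind_rqr_firstargument}, together with the flexibility recorded in Corollary~\ref{coroll: ind_rqr_firstargument} that \emph{either} argument of $\RHom$ may be taken h-flat. I would use this to reduce each adjunction to a purely bimodule-theoretic tensor--hom isomorphism, which is strict and manifestly natural, and then check that it is compatible with right quasi-representability and with the relevant linear structures.

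For the coextension statement I would first construct $\cat B_{(S)}$ together with its $S$-linear structure. Viewing $S$ as a one-object $R$-linear dg-category and modeling $\RHom_R(S,\cat B)$ by h-projective right quasi-representable bimodules over $\opp S\otimes_R\cat B=S\otimes_R\cat B$ (using that $S$ is commutative, so $\opp S=S$), the residual multiplication action of $S$ on the tensor factor $S$ endows this bimodule dg-category, and hence $\cat B_{(S)}$, with a natural $S$-linear structure. The adjunction
\[
\RHom_R(\cat A_R,\cat B)\;\cong\;\RHom_S(\cat A,\cat B_{(S)})_R
\]
then follows by ``currying'': at the bimodule level the internal-hom adjunction gives a natural isomorphism $\RHom_S(\cat A,\RHom_R(S,\cat B))\cong\RHom_R(S\lotimes_S\cat A,\cat B)$, and $S\lotimes_S\cat A\simeq\cat A_R$ recovers $\cat A$ with its restricted $R$-structure. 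To make the derived functors compute correctly I would resolve $S$ by an h-flat $R$-dg-algebra, invoking Corollary~\ref{coroll: ind_rqr_firstargument} to place the flatness on the $S$-factor so that the resolution does not interfere with the arbitrary dg-categories $\cat A$ and $\cat B$.

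The left-extension statement is dual. I would define $S\lotimes_R\cat B$ by taking an h-flat resolution of $\cat B$ over $R$ (equivalently, of $S$ over $R$) and forming the ordinary tensor product, which carries a canonical $S$-linear structure from the $S$-factor and is well defined up to quasi-equivalence by invariance of the tensor product under h-flat resolutions. The adjunction
\[
\RHom_S(S\lotimes_R\cat B,\cat A)_R\;\cong\;\RHom_R(\cat B,\cat A_R)
\]
is again obtained from the strict tensor--hom adjunction for dg-bimodules, now in the form $\RHom_S(S\lotimes_R\cat B,\cat A)\cong\RHom_R(\cat B,\RHom_S(S,\cat A))$ with $\RHom_S(S,\cat A)\simeq\cat A$, whose $R$-linear structure is $\cat A_R$, after choosing resolutions on the h-flat side permitted by Corollary~\ref{coroll: ind_rqr_firstargument}.

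The main obstacle I anticipate is the bookkeeping of resolutions across the restriction of scalars. Restriction along $R\to S$ preserves neither h-flatness nor h-projectivity, so the naive derived functors on the two sides are computed with respect to different module structures; the role of Corollary~\ref{coroll: ind_rqr_firstargument} is precisely to let me shift the flatness hypothesis onto the tensor factor $S$, which I control, rather than onto $\cat A$ or $\cat B$, which are arbitrary. I would then have to verify two compatibility points that the underlying module-level adjunction does not see automatically: first, that the strict bimodule isomorphism preserves right quasi-representability, so that it descends to an equivalence of the quasi-functor dg-categories rather than merely of the ambient bimodule categories; and second, that the $S$-linear structures on $\cat B_{(S)}$ and on $\RHom_S(\cat A,\cat B_{(S)})$ (respectively on $S\lotimes_R\cat B$) are exactly the ones rendering the displayed quasi-equivalences $R$-linear and natural in $\cat A$ and $\cat B$. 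Both reduce to tracing the $S$-action through the tensor--hom isomorphism, which is routine but is where the genuine content beyond the classical one-object adjunction resides.
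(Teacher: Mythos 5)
Your overall shape (strict currying, plus the $S$-linear structure coming from commutativity of $S$, plus checks of right quasi-representability) matches the paper's strategy, and your sketch of the extension-of-scalars half is essentially Proposition \ref{prop:scalar_leftextension}: there the strict adjunction $\compdg_R(\cat A, \cat B_R) \leftrightarrows \compdg_S(S \otimes_R \cat A, \cat B)_R$ really does land in bimodule categories, so one only needs to check preservation of acyclics, h-projectives and right quasi-representables. But the coextension half of your proposal has two genuine problems. First, your plan to ``resolve $S$ by an h-flat $R$-dg-algebra'' is not only unnecessary but would break the statement: the paper stresses (see the Remark after the introductory theorem, and Remark \ref{remark:notation_coext_rings}) that \emph{not} resolving $S$ is essential, because the strict $S$-linear structure on $\cat B_{(S)} = \hproj_R^{\mathrm{rqr}}(S,Q(\cat B))$ comes from the honest multiplication action of the commutative ring $S$ on itself (Lemma \ref{lemma:bimodules_tensor_S_Slinear}); an h-flat resolution $\widetilde{S} \to S$ of associative $R$-dg-algebras need not be commutative and in any case carries no strict $S$-action, so your ``residual multiplication'' structure would no longer exist. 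The actual role of Corollary \ref{coroll: ind_rqr_firstargument} is exactly the opposite of how you use it: it lets you keep $S$ strict and push the h-flat resolution onto $\cat B$ (and the h-projectivity onto $\cat A$ over $S$), which is why $\cat B_{(S)}$ is defined with $Q(\cat B)$ rather than with a resolution of $S$.

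Second, and more seriously, the strict currying does not reduce the theorem to ``checking right quasi-representability of a strict isomorphism.'' What the currying of Proposition \ref{prop:coextension_scalars_dg_strict} produces from an $R$-linear bimodule in $\compdg_R(\cat A_R,\cat B)$ is a \emph{strict} $S$-linear dg-functor $\cat A \to \compdg_R(S,\cat B)$, i.e.\ an object of $\Fun_S(\cat A, \compdg_R(S,\cat B))$, whereas an object of $\RHom_S(\cat A, \cat B_{(S)})$ is a \emph{bimodule} over $\opp{\cat B_{(S)}} \otimes_S \cat A$. These are different kinds of objects, and bridging them is the technical heart of the paper's proof: one needs the h-projectivity theory for the strict functor category (Definition \ref{def:Funhp_acyclics_qis}), the coend ``realization'' dg-functor $L(F)(A) = \int^X F_A^X \otimes_S X$ of Lemma \ref{lemma:rqr-to-functors}, and the proofs that $L$ is quasi-fully faithful, lands in h-projectives (Lemma \ref{lemma:changeofrings_hproj_preservation}), and is quasi-essentially surjective onto $\Fun_S^{\mathrm{hp}}(\cat A, \hproj_R^{\mathrm{rqr}}(S,\cat B))$ (Lemma \ref{lemma:rqr-to-functors_quasiequivalences}). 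Your proposal treats this passage as bookkeeping, but without it the ``tensor--hom isomorphism'' you invoke, $\RHom_S(\cat A, \RHom_R(S,\cat B)) \cong \RHom_R(\cat A_R, \cat B)$, mixes internal Homs over two different base rings and is precisely the assertion to be proved, not an available adjunction.
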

\begin{introremark}
The dg-category $\cat B_{(S)}$ is more concretely defined as 
\[
\cat B_{(S)} = \hproj_R^\mathrm{rqr}(S,Q(\cat B)),
\]
where $Q(\cat B) \to \cat B$ is an h-flat resolution of $\cat B$. Not having to take a resolution of $S$ is essential to get a (strict!) $S$-linear structure on $\cat B_{(S)}$.
\end{introremark}
We can apply Theorem \ref{introtheorem:tstruct_qfun} to endow the coextension of scalars $\cat B_{(S)}$ with a natural t-structure; this is discussed in \S \ref{subsection:changeofrings_tstructures}, together with a number of compatibility results.

\S \ref{section:derived_deformations} is where we finally deal with derived deformations. In \S \ref{subsection:recollections_before_deformations} we recollect the technical results of the previous sections. We can then define derived deformations as follows:
\begin{introdef}[cf. Definition \ref{def:derived_deformations} and Definition \ref{def:left_derived_deformation}]
We fix a morphism $R \to S$ of commutative dg-rings strictly concentrated in nonpositive degrees.

Let $\cat B$ be an $S$-linear dg-category with a t-structure. A \emph{derived deformation} of $\cat B$ along $R \to S$ is an $R$-linear dg-category $\cat A$ with a t-structure together with a t-exact quasi equivalence $\cat B \cong \cat A_{(S)}$.

Let $\injcat B$ be an $S$-linear dg-category. A \emph{left derived deformation} of $\injcat B$ along $R \to S$ is an $R$-linear dg-category $\injcat A$ together with an $S$-linear quasi-equivalence $S \lotimes_R \injcat A \cong \injcat B$.
\end{introdef}

In \S \ref{subsec:fromabelian_toderived} we prove that flat abelian deformations of abelian categories (in the sense of \cite{lowen-vandenbergh-deformations-abelian}) induce derived deformation of the corresponding derived categories. More precisely, in the Grothendieck abelian case we get derived deformations of the left bounded derived categories, whereas for ``smaller'' abelian category we get derived deformations of the bounded derived categories:
\begin{introtheorem}[cf. Theorem \ref{thm:abdeform_derdeform}, Theorem \ref{thm:smallabdeform_derdeform}] \label{introtheorem:abdeform_derdeform}
Let $\mathfrak B$ be an $S$-linear Grothendieck abelian category (resp. an $S$-linear small abelian category), and let $\mathfrak A$ be a $R$-linear flat deformation of $\mathfrak B$ (which is again Grothendieck if $\mathfrak A$ is such, cf. \cite[Theorem 6.29]{lowen-vandenbergh-deformations-abelian} ). Then, $\dercatabdgplus(\mathfrak A)$ (resp. $\dercatabdgbd(\mathfrak A)$) is a $R$-linear derived deformation of the $S$-linear dg-category $\dercatabdgplus(\mathfrak B)$ (resp. $\dercatabdgbd(\mathfrak B)$).
\end{introtheorem}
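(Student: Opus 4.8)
The goal is, by the very definition of a derived deformation, to equip $\dercatabdgplus(\mathfrak A)$ with a t-structure and to exhibit a t-exact quasi-equivalence $\dercatabdgplus(\mathfrak B) \cong \dercatabdgplus(\mathfrak A)_{(S)}$, where $\dercatabdgplus(\mathfrak A)_{(S)} = \RHom_R(S, \dercatabdgplus(\mathfrak A))$ is the coextension of scalars. So the plan is to set $\cat A = \dercatabdgplus(\mathfrak A)$, carrying its canonical t-structure with heart $\mathfrak A$, and to analyse $\cat A_{(S)}$ in two stages: first pin down its t-structure and heart, then recognize it as the bounded-below derived dg-category of that heart. Since $S$ is strictly concentrated in nonpositive degrees, Theorem \ref{thm:tstruct_quasifunctors} (applied with $\smallcat a = S$, the one-object dg-category) together with the change-of-rings compatibility of \S\ref{subsection:changeofrings_tstructures} furnishes $\cat A_{(S)}$ with a t-structure whose truncations are computed by the coextension.

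The first key step is the heart computation. By Proposition \ref{prop:tstruct_quasifunctors_heart} the heart of $\cat A_{(S)}$ is $\Fun(H^0(S), \mathfrak A)$, the category of $H^0(S)$-module objects in $\mathfrak A$ compatible with the given $R$-linear structure, i.e.\ the strict $S$-objects $\mathfrak A_S \subseteq \mathfrak A$. Here the flatness hypothesis enters decisively: for an $R$-flat object $A \in \mathfrak A$ one checks that $\RHom_R(S, A)$ is concentrated in degree $0$ and coincides with the reduction $A \otimes_R S$, the higher $\Ext_R^i(S, A)$ vanishing precisely because $A$ is flat, even though $S$ may have infinite projective dimension over $R$. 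Combined with the defining property of a flat abelian deformation, namely $\mathfrak A_S \cong \mathfrak A \otimes_R S \cong \mathfrak B$ (cf.\ \cite{lowen-vandenbergh-deformations-abelian}), this identifies the heart of $\cat A_{(S)}$ with $\mathfrak B$ and shows the coextension t-structure is again bounded below and non-degenerate.

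It then remains to upgrade this identification of hearts to a t-exact quasi-equivalence $\cat A_{(S)} \cong \dercatabdgplus(\mathfrak B)$. I would do this through derived injectives, following the prequel \cite{genovese-lowen-vdb-dginj}: in the Grothendieck case $\mathfrak B$ has enough injectives, $\DGInj(\dercatabdgplus(\mathfrak B))$ is the dg-category associated to $\Inj(\mathfrak B)$, and $\dercatabdgplus(\mathfrak B)$ is reconstructed from it. Dually $\DGInj(\cat A) \simeq \Inj(\mathfrak A)$, and the same flatness-driven vanishing shows that its coextension computes $\Inj(\mathfrak B)$; hence $\DGInj(\cat A_{(S)}) \simeq \Inj(\mathfrak B) \simeq \DGInj(\dercatabdgplus(\mathfrak B))$ compatibly with the t-structures. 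Equivalently, one may phrase this as an instance of the grand summary (Theorem \ref{introthm:grand_summary}): the flat abelian deformation $\mathfrak A$ produces a left deformation $\Inj(\mathfrak A)$ of $\Inj(\mathfrak B) = \DGInj(\dercatabdgplus(\mathfrak B))$, whose associated derived deformation is then identified with $\dercatabdgplus(\mathfrak A)$. For a small abelian $\mathfrak B$ without enough injectives, I would pass to the Grothendieck Ind-completion, run the argument there, and cut down to bounded complexes, using that flat deformation and coextension both restrict to the bounded subcategory to land in $\dercatabdgbd$.

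The main obstacle is precisely this last, realization step: knowing that $\cat A_{(S)}$ and $\dercatabdgplus(\mathfrak B)$ share a heart and a bounded-below t-structure does not by itself force them to be quasi-equivalent, so one must genuinely match their Hom-complexes, which is what the reconstruction from derived injectives accomplishes. Ensuring that the abelian-deformation datum aligns on the nose with the left-deformation datum on $\DGInj$ required by the grand summary, with all the attendant $S$-linearity, t-exactness and flatness-vanishing bookkeeping, is where the real work lies; controlling the amplitude shift of the coextension (so that bounded-belowness is preserved) and the reduction of the small case to the Grothendieck one are comparatively routine.
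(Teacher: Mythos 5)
Your ``equivalently'' formulation---deduce the statement from the grand summary applied to $\injcat I=\Inj(\mathfrak A)$, $\injcat J=\Inj(\mathfrak B)$---is, in substance, the paper's own proof of Theorem \ref{thm:abdeform_derdeform}. The paper invokes \cite{lowen-vandenbergh-deformations-abelian} to get the $S$-linear equivalence $\Inj(\mathfrak B)\cong S\otimes_R\Inj(\mathfrak A)$, notes that flat hom-sets make $\Inj(\mathfrak A)$ h-flat so that this is an equivalence $\Inj(\mathfrak B)\cong S\lotimes_R\Inj(\mathfrak A)$, and then runs the chain
\[
\dercatabdgplus(\mathfrak B)\cong \opp{\hfp({\dercompdgmin}_{,S}(\opp{\Inj(\mathfrak B)}))}\cong\cdots\cong \opp{\hfp({\dercompdgmin}_{,R}(\opp{\Inj(\mathfrak A)}))}_{(S)}\cong \dercatabdgplus(\mathfrak A)_{(S)},
\]
using the reconstruction \eqref{eq:dginj_reconstruction}, Lemma \ref{lemma:dercat_dginj}, and the change-of-rings compatibilities \eqref{eq:quasieq_changeofrings_derdgcat}, \eqref{eq:quasieq_hfp_changeofrings}, \eqref{eq:quasieq_opposite_changeofrings}---which is exactly the chain that proves Theorem \ref{thm:dginj_derived_deformation}; citing that theorem as a black box is legitimate (no circularity), provided you also check that the ring-theoretic setup of \S\ref{subsubsection:dginj_deform_setup} holds (coherence of $R$ makes the powers $I^k$ finitely presented) and that $\Inj(\mathfrak B)$ is hlc with Karoubian $H^0$. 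Your reduction of the small abelian case via $\Ind(-)$ likewise matches Theorem \ref{thm:smallabdeform_derdeform}, with one caveat: one cuts down not to all bounded complexes but to $\hfpbd(-)$ (bounded \emph{and} finitely presented cohomology), via Lemma \ref{lemma:hfpbd_derind} and \eqref{eq:quasieq_hfpbd_changeofrings}; bounded complexes over $\Ind(\mathfrak B)$ alone would give the wrong category.

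Your primary route, however, is not what the paper does and has a genuine gap. You propose to analyze $\cat A_{(S)}$ intrinsically: compute its heart, identify $\DGInj(\cat A_{(S)})\cong\Inj(\mathfrak B)$, and apply the reconstruction theorem to $\cat A_{(S)}$ itself. To do the last step you must verify that the t-structure on $\cat A_{(S)}$ is left bounded, non-degenerate, compatible with countable products, and has \emph{enough derived injectives}, and you must actually compute $\DGInj(\cat A_{(S)})$. Neither the paper nor your sketch provides tools for derived injectives of a coextension of scalars: the vanishing of $\Ext_R^{>0}(S,A)$ for flat $A$ is an abelian-level fact and does not by itself give the dg-level statements you need (that $I\mapsto\RHom_R(S,I)$ is quasi-fully faithful on $\DGInj(\cat A)$, that it hits all derived injectives of $\cat A_{(S)}$, and that there are enough of them); supplying these would essentially amount to reproving the grand summary in this special case. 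This is precisely why the paper argues by transport along quasi-equivalences and never confronts $\cat A_{(S)}$ directly. Two smaller corrections: the heart identification $H^0(\cat A_{(S)})^\heartsuit\cong\Fun_R(S,\mathfrak A)\cong\mathfrak B$ needs no flatness at all---it is Proposition \ref{prop:tstruct_quasifunctors_heart} plus the \emph{definition} of an abelian deformation; flatness enters only through \cite{lowen-vandenbergh-deformations-abelian}, to guarantee that $\mathfrak A$ has enough injectives and that $\Inj(\mathfrak A)$ is an h-flat left deformation of $\Inj(\mathfrak B)$. And note the direction of the injective comparison actually used: it is the \emph{extension} of scalars $S\lotimes_R\Inj(\mathfrak A)\cong\Inj(\mathfrak B)$, not a computation of the coextension $\RHom_R(S,-)$ on injectives.
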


The proof of Theorem \ref{introtheorem:abdeform_derdeform} for the ``Grothendieck case'' uses the fact that deformations of Grothendieeck abelian categories correspond to left deformations of their injective objects; then, one can reconstruct the derived categories from such injective objects. We again remark that the conclusions of Theorem \ref{introtheorem:abdeform_derdeform} also hold if we relax the ``Grothendieck'' hypothesis to the weaker ``having enough injectives'', see Remark \ref{remark:abdeform_derdeform_enoughinj}. The part regarding small abelian categories and their bounded derived categories is proved by passing to the (Grothendieck) ind-categories and then using \emph{homotopically finitely presented objects} (cf. \S \ref{subsubsec:hfp_obj}).

Recalling the more general (re)construction results in \cite{genovese-lowen-vdb-dginj}, we see that understanding left derived deformations of \emph{dg-categories of derived injectives} is essential for the study of derived deformations of t-structures. In this respect, we prove in \S \ref{subsec:deformations_dginj} the last main result of our paper:
\begin{introtheorem}[cf. Proposition \ref{prop:properties_lifting_leftdeformation}, \label{introthm:grand_summary} Theorem \ref{thm:dginj_derived_deformation}]
Let $R \to S$ be a morphism of dg-rings which satisfies suitable assumptions (see \S \ref{subsubsection:dginj_deform_setup}).

Let $\injcat J$ be an $S$-linear \emph{dg-category of derived injectives}, namely, $\injcat J$ is a (left) homotopically locally coherent dg-category (cf. \S \ref{subsubsection:dginj_deform_results}) such that $H^0(\injcat J)$ is Karoubian. Let $\injcat I$ be a $R$-linear left derived deformation of $\injcat J$. Then, $\injcat I$ is also a dg-category of derived injectives.

Let $\hfp(-)$ stand for objects with finitely presented cohomology (see Definition \ref{def:hfp} for more details). We set
\begin{align*}
    \cat B &= \opp{\hfp({\dercompdgmin}_{,S}(\opp{\injcat J}))}, \\
    \cat A &= \opp{\hfp({\dercompdgmin}_{,R}(\opp{\injcat I}))},
\end{align*}
endowed with the t-structures of \cite[Theorem 1.2]{genovese-lowen-vdb-dginj}. Then, $\cat A$ is a derived deformation of $\cat B$. In short, left derived deformations of dg-categories of derived injectives induce derived deformations of the t-structures associated to them.
\end{introtheorem}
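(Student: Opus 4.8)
The plan is to handle the two component statements in order: first promote $\injcat I$ to a dg-category of derived injectives (which is what makes $\cat A$ and its t-structure available at all via \cite[Theorem 1.2]{genovese-lowen-vdb-dginj}), and then produce a t-exact quasi-equivalence $\cat B \cong \cat A_{(S)}$. For the first assertion (Proposition \ref{prop:properties_lifting_leftdeformation}) I would show that both defining properties descend along the left derived deformation $S \lotimes_R \injcat I \cong \injcat J$. The Karoubian property of $H^0(\injcat I)$ follows from that of $H^0(\injcat J)$ by lifting idempotents along the reduction $H^0(\injcat I) \to H^0(S \lotimes_R \injcat I) = H^0(\injcat J)$, which is possible because $\ker(R \to S)$ is nilpotent under the standing hypotheses of \S\ref{subsubsection:dginj_deform_setup}. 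Homotopical local coherence is a finiteness condition on the cohomologies of the hom-complexes together with the existence of suitable homotopy limits; here I would combine the flatness/deformation hypotheses on $R \to S$ with exactness of the reduction on the pertinent cohomology to force coherence of $\injcat I$ from that of $\injcat J$. These are the recollections assembled in \S\ref{subsection:recollections_before_deformations}.

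For the second assertion (Theorem \ref{thm:dginj_derived_deformation}) the crux is a natural $S$-linear quasi-equivalence
\[
\cat A_{(S)} = \RHom_R(S,\opp{\hfp({\dercompdgmin}_{,R}(\opp{\injcat I}))}) \;\cong\; \opp{\hfp({\dercompdgmin}_{,S}(\opp{(S \lotimes_R \injcat I)}))} = \cat B .
\]
I would obtain it by commuting the coextension $\RHom_R(S,-)$ past the reconstruction functor $\injcat J \mapsto \opp{\hfp(\dercompdgmin(\opp{\injcat J}))}$. Reading the module categories as quasi-functors to the base via the right quasi-representable h-projective bimodule description (Proposition \ref{prop: ind_rqr_firstargument}) and passing through opposites (the duality of \S\ref{sec:duality}), the coextension on the reconstructed side is exchanged with the left extension $S \lotimes_R -$ on the side of the derived injectives. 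This exchange is exactly the composite of the two change-of-base-ring adjunctions of Theorem \ref{thm:coext_univproperty} and Proposition \ref{prop:scalar_leftextension}, and it yields the identification first at the level of the ambient unbounded derived categories.

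It then remains to match the finiteness and boundedness decorations and to verify t-exactness. For $\hfp$ I would prove that an object of ${\dercompdgmin}_{,R}(\opp{\injcat I})$ has finitely presented cohomology precisely when its reduction does; the forward direction uses exactness/flatness of the reduction, while the nontrivial converse lifts finiteness from the reduction by a Nakayama-type argument resting on the nilpotence (completeness) of $\ker(R \to S)$. Boundedness above is preserved because $R \to S$ is nonpositively graded, so coextension does not raise cohomological amplitude. Finally, by Theorem \ref{introtheorem:tstruct_qfun} applied with target $\cat A$ (carrying the t-structure of \cite[Theorem 1.2]{genovese-lowen-vdb-dginj}) and $\smallcat a = S$ regarded as a one-object $R$-linear dg-category with nonpositive cohomology, $\cat A_{(S)}$ acquires a t-structure with aisles $\RHom_R(S,\cat A_{\leq 0})$ and $\RHom_R(S,\cat A_{\geq 0})$; the compatibility results of \S\ref{subsection:changeofrings_tstructures} identify these with the t-structure of \cite[Theorem 1.2]{genovese-lowen-vdb-dginj} on $\cat B$ under the equivalence above, giving t-exactness and exhibiting $\cat A$ as a derived deformation of $\cat B$.

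The step I expect to be the main obstacle is the commutation in the previous paragraph: interchanging coextension of scalars, passage to the (bounded-above) derived module category, and the $\hfp$-truncation all at once. Each interchange demands care about h-flatness and h-projectivity of the chosen resolutions — so that $S \lotimes_R -$ and $\RHom_R(S,-)$ are genuinely derived — about the behaviour of opposites under change of rings, and about the compatibility of the finiteness condition with reduction. The genuinely delicate point is to keep the $S$-linear structure \emph{strict} throughout rather than merely up to quasi-equivalence, which is precisely why the coextension is modelled on $\hproj_R^\mathrm{rqr}(S,Q(\cat B))$ without resolving $S$.
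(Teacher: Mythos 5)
Your overall architecture (first lift the derived-injective properties to $\injcat I$, then run a chain of t-exact quasi-equivalences from $\cat B$ to $\cat A_{(S)}$) matches the paper's, but both halves have genuine gaps. For Proposition \ref{prop:properties_lifting_leftdeformation} your sketch omits the mechanism that makes nilpotence usable at all. The hypotheses give only \emph{cohomological} nilpotence $H^*(I^n)=0$ of the dg-ideal $I=\ker(R\to S)$; the paper first factors $\theta$ through the quotients $R/I^k$ and reduces to the square-zero case $I^2=0$ (Lemma \ref{lemma:dgring_morphism_factorization_properties}, Lemma \ref{lemma:nilpotency_order2_reduction}), because only then is $I$ naturally an $S$-dg-module, giving $I\lotimes_R \injcat I \cong I\lotimes_S \injcat J$ (Lemma \ref{lemma:tensor_lift_Slinear}). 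That one identification powers every lifting statement you need: nonpositivity of $H^*(\injcat I)$ (Lemma \ref{lemma:deformation_lift_negativedegrees}) --- a defining condition of hlc that you never address --- the identification $H^0(S)\otimes_{H^0(R)}H^0(\injcat I)\cong H^0(\injcat J)$ (Corollary \ref{coroll:induced_deformation_H0}), which is what turns your idempotent-lifting heuristic into an actual application of \cite[Propositions A.5 and A.7]{lowen-vandenbergh-deformations-abelian}, and the coherence and finite-presentation lifting (Lemma \ref{lemma:hlc_lift_coherentH0}, Lemma \ref{lemma:hlc_lift_deform}). You also mischaracterize hlc (it asks for weak cokernels in $H^0$ and finitely presented cohomology of representables, not ``existence of suitable homotopy limits''), and you lean on ``flatness/deformation hypotheses'': no flatness of $R\to S$ is assumed, and the paper stresses that none is needed, so an argument resting on flatness or exactness of the reduction has nothing to rest on.

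For Theorem \ref{thm:dginj_derived_deformation} your chain of equivalences is the paper's, but your hfp-matching lemma is the wrong comparison and introduces spurious difficulty. Under the identification ${\dercompdgmin}_{,R}(\opp{\injcat I})_{(S)}\cong{\dercompdgmin}_{,S}(\opp{\injcat J})$, an object of the coextension is a quasi-functor $S\to{\dercompdgmin}_{,R}(\opp{\injcat I})$ whose value at $\diamondsuit$ is the \emph{restriction} of the corresponding $\opp{\injcat J}$-module along $\opp{\injcat I}\to\opp{\injcat J}$; so what must be proved is that hfp is detected by this restriction, i.e. precisely the compatibility $\hfp(\cat C_{(S)})\cong\hfp(\cat C)_{(S)}$ of Proposition \ref{prop:hfp_changeofrings}. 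That statement is essentially formal: t-cohomology in the coextension is computed objectwise (Remark \ref{remark:tstruct_quasifunctors_cohomologies}), its heart is $\Fun_{H^0(R)}(H^0(S),-)$ applied to the heart, and finitely presented objects of that functor category are exactly the functors with finitely presented values \cite[Proposition 4.6]{lowen-vandenbergh-deformations-abelian}. No Nakayama-type argument and no nilpotence enter here --- nilpotence is used only in the first half. Your proposed comparison of $N$ with its reduction $S\lotimes_R N$ is a different statement (cohomology does not commute with $S\lotimes_R-$ in the absence of flatness, so even its ``easy'' direction is not easy) and would not splice into the chain. Finally, note that the ``exchange'' step you call the crux requires the concrete facts that coextension of a module category is the module category of the extension (Lemma \ref{lemma:tstruct_hproj_coextension}, Proposition \ref{prop:comparison_dgmodules_Slinear_Rlinear}); it does not follow from merely composing the two change-of-ring adjunctions.
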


\section{Preliminaries on dg-categories and quasi-functors} \label{sec:preliminaries}
\subsection{Resolutions of dg-categories}
We fix some base commutative dg-ring $R$. Unless otherwise specified our constructions are in the $R$-linear context, although we do not always say it.

We denote by $\kat{dgCat}(R)$ the category of small $R$-linear dg-categories. This is endowed with a model structure whose weak equivalences are the quasi-equivalences: this is discussed in \cite[\S 2]{lunts-schnurer-smoothness-equivariant} specifically for dg-categories \emph{over commutative dg-rings}, directly generalizing \cite{tabuada-dgcat}. The homotopy category of $\kat{dgCat}(R)$ is denoted by $\Hqe(R)$. Every small dg-category (with respect to a given universe, e.g. $\mathbb U$ or
$\mathbb V$) $\cat A$ has a \emph{cofibrant replacement}
\begin{equation}
    Q(\cat A) \xrightarrow{\sim} \cat A,
\end{equation}
and the dg-category $Q(\cat A)$ is, in particular, \emph{h-projective}: namely, the hom-complexes $Q(\cat A)(A,B)$ are h-projective $R$-dg-modules. This also implies that $Q(\cat A)$ is \emph{h-flat}, namely the hom-complexes $Q(\cat A)(A,B)$ are h-flat $R$-dg-modules. H-flat resolutions may be used to compute the derived tensor product in $\Hqe(R)$. E.g.\
\begin{equation}
    \cat A \lotimes \cat B = Q(\cat A) \otimes \cat B \cong \cat A \otimes Q(\cat B).
\end{equation}
We mention a useful fact:
\begin{lemma} \label{lemma:cofibrantresolutions_nonpositive}
Let $\cat A$ be a dg-category which is strictly concentrated in nonpositive degrees. Then, there is an h-projective (hence h-flat) resolution $Q(\cat A) \xrightarrow{\sim} \cat A$ such that $Q(\cat A)$ is also strictly concentrated in nonpositive degrees. 
\end{lemma}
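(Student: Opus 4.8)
The plan is to produce $Q(\cat A)$ as an explicit \emph{semifree} dg-category, that is, a dg-category obtained from a graded $R$-quiver by freely adjoining composites and equipping the result with a differential compatible with an exhaustive filtration by the adjoined generators. Semifree dg-categories are cofibrant in $\kat{dgCat}(R)$, and cofibrant dg-categories have h-projective (hence h-flat) hom-complexes; so it suffices to exhibit such a semifree $Q(\cat A)$ together with a quasi-equivalence $Q(\cat A) \xrightarrow{\sim} \cat A$, all the while keeping everything strictly concentrated in nonpositive degrees. Here I use that the base ring $R$ is itself strictly concentrated in nonpositive degrees, as it is throughout the part of the paper where this lemma is applied: this is precisely what prevents the generators, and their free composites over $R$, from spreading into positive degrees.

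First I would record the module-level input. Over the nonpositive $R$, every $R$-dg-module strictly concentrated in nonpositive degrees admits a semifree resolution built from \emph{free} $R$-modules placed in nonpositive degrees only: this is the usual Koszul--Tate procedure (choose free generators surjecting onto the cocycles, then inductively adjoin generators one degree lower to kill the remaining cohomology), and since both $R$ and the target sit in degrees $\leq 0$, so do all the generators. I would then build $Q(\cat A)$ with the same objects as $\cat A$, by a degree-decreasing inductive attachment of hom-generators: at the first stage adjoin, for each pair $(A,B)$, a free nonpositive $R$-module of generators surjecting onto the cohomology of $\cat A(A,B)$, and form the free dg-category on this graded $R$-quiver (composition is concatenation of paths, the differential is defined on generators and extended by the graded Leibniz rule); then iterate, adjoining at each later stage further generators, each in one lower degree, whose differential hits the cocycles representing the remaining kernel on cohomology. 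Passing to the colimit gives a semifree $Q(\cat A)$ with a dg-functor $Q(\cat A) \to \cat A$ which is the identity on objects and a quasi-isomorphism on each hom-complex, hence a quasi-equivalence. The hom-complexes of a free dg-category are direct sums of $R$-tensor products of the generator modules; since these are free and nonpositive and $R$ is nonpositive, the homs are free (so h-projective) over $R$ and again strictly nonpositive.

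The step I expect to be the main obstacle is reconciling h-projectivity over $R$ with the multiplicative structure while retaining degree control. One cannot simply replace each $\cat A(A,B)$ by an independent h-projective resolution, since that destroys composition; nor can one feed non-h-projective hom-complexes into a bar--cobar resolution and expect the outcome to have h-projective homs over a general $R$. The construction above sidesteps both difficulties by generating \emph{freely}: $R$-h-projectivity of the homs is built in (they are free $R$-modules), while the dg-category structure is tautological. The only remaining thing to watch is degrees, and this is routine bookkeeping once $R$ and $\cat A$ are nonpositive: every cocycle one ever needs to hit lives in degree $\leq 0$, so every adjoined generator lives in degree $\leq 0$, and free composition over the nonpositive $R$ cannot manufacture positive degrees. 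As a cross-check, the standard cobar--bar resolution yields the same degree estimate: its length-$n$ generators are $n$-fold tensor powers of the reduced hom-complexes shifted by $[1]$ and then by $[-1]$, landing in degrees $\leq 1-n \leq 0$ for $n \geq 1$; taking the coefficients to be nonpositive and h-projective over $R$ then reproduces a cofibrant, nonpositive $Q(\cat A)$. In either presentation, semifreeness gives cofibrancy, whence the hom-complexes are h-projective and therefore h-flat, completing the proof.
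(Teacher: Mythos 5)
Your construction is correct, but it is a genuinely different proof from the paper's. The paper disposes of this lemma in one line, by citing \cite[Proposition 3.25]{tabuada-dgvssimplicial} --- a result stated for dg-categories over a commutative \emph{ring} --- together with a footnote conceding that a fully rigorous treatment would require generalizing that citation to commutative dg-rings, a generalization the paper does not carry out. Your degree-controlled semifree (cell-attachment) construction is precisely that generalization, done by hand: attach cocycle generators hitting the cohomology of each hom-complex, inductively attach generators one degree lower killing kernel classes, pass to the colimit, and observe that since $R$ and $\cat A$ sit in degrees $\leq 0$, every generator one is ever forced to attach sits in degrees $\leq 0$, hence so do the freely generated hom-complexes; h-projectivity then follows either from cofibrancy of semifree dg-categories in $\kat{dgCat}(R)$ (the fact the paper quotes from \cite{lunts-schnurer-smoothness-equivariant}) or directly from the semifree filtration. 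What your route buys is a self-contained argument valid over any commutative dg-ring strictly concentrated in nonpositive degrees, which is exactly the generality in which the paper applies the lemma; what the paper's route buys is brevity.

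Two further remarks. First, you are right to make the nonpositivity of $R$ explicit: it does not appear in the statement of the lemma (which is formulated in a subsection where $R$ is an arbitrary commutative dg-ring), but it is genuinely necessary, so your proof identifies a hypothesis the paper leaves implicit. Indeed, take $R = \basering k[u]$ with $u$ of degree $2$ and zero differential, and let $\cat A$ be the one-object $R$-linear dg-category with endomorphisms $\basering k$, where $u$ acts by $0$; this $\cat A$ is strictly concentrated in degree $0$. If $Q(\cat A) \to \cat A$ were a quasi-equivalence with $Q(\cat A)$ nonpositive and h-projective, quasi-fully-faithfulness at an object lifting the unique object of $\cat A$ would yield an h-projective $R$-dg-module $P'$, concentrated in degrees $\leq 0$, quasi-isomorphic to $\basering k$. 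But any such $P'$ is homotopy equivalent to the semifree resolution $P = Rx \oplus Ry$ with $|x|=0$, $|y|=1$, $dx=0$, $dy=ux$, and then $P' \otimes_R \basering k \simeq P \otimes_R \basering k$ in the homotopy category; this is impossible, since $H^1(P \otimes_R \basering k) = \basering k \neq 0$ while $P' \otimes_R \basering k$ is concentrated in degrees $\leq 0$. Second, your closing ``cross-check'' via the bar--cobar resolution should simply be dropped: as you yourself note earlier, that construction is only meaningful once the hom-complexes are already h-flat or h-projective, so the remark is circular --- harmless, since nothing in your argument rests on it, but it corroborates nothing.
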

\begin{proof}
This follows from \cite[Proposition 3.25]{tabuada-dgvssimplicial}.\footnote{To be completely rigorous, we should first generalize the cited result to dg-categories over commutative dg-rings.}
\end{proof}

\subsection{Modules and bimodules}
Let $\cat A$ and $\cat B$ be $R$-linear dg-categories.  The category of right $\cat A$-modules is denoted by $\compdg(\cat A)$. Recall that a right $\cat A$-module is a $R$-linear dg-functor
\[
\opp{\cat A}\to \compdg(R).
\]
We denote by $\dercomp(\cat A)$ the derived category of right $\cat A$-modules, which is enriched in $H^0(R)$-modules. Sometimes we will also use its graded version $\dercomp_{\operatorname{gr}}(\cat A)$ (the latter category is 
enriched in graded $H^*(R)$-modules). The full subcategory of $\compdg(\cat A)$ consisting of h-projective right $\cat A$-modules will denoted by $\hproj(\cat A)$. The dg-category $\hproj(\cat A)$ provides a dg-model for $\dercomp(\cat A)$. I.e.\ we have
\begin{align*}
\dercomp_{\operatorname{gr}}(\cat A)\cong H^\ast(\hproj(\cat A)), \\
\dercomp(\cat A) \cong H^0(\hproj(\cat A)).
\end{align*}
We also recall  that if
$f \colon \cat A \to \cat A'$ is a quasi-equivalence between
dg-categories, then the induction
$\Ind_f \colon \hproj(\cat A) \to \hproj(\cat A')$ is again a
quasi-equivalence. The inverse of $H^*(\Ind_f)$ is given by the
restriction functor
\begin{equation*}
    \Res_f \colon \dercomp_{\mathrm{gr}}(\cat A') \to \dercomp_{\mathrm{gr}}(\cat A),
\end{equation*}
upon identifying $H^*(\hproj(-))$ with $\dercomp_{\mathrm{gr}}(-)$.

\medskip

By considering suitable tensor products of dg-categories we may define analogous notions for bimodules. We spell this out explicitly. An \emph{$\cat A$-$\cat B$-dg-bimodule} is a $R$-linear dg-functor 
\begin{equation}
\label{eq:F}
    F \colon \opp {\cat B} \otimes {\cat A} \to \compdg(R).
\end{equation}
The dg-category $\compdg({\cat B}\otimes \opp {\cat A})$ of $\cat A$-$\cat B$-bimodules will also be denoted by $\compdg(\cat A,\cat B)$, thinking of bimodules as generalized morphisms from $\cat A$ to $\cat B$.

The tensor product $\opp {\cat B} \otimes {\cat A}$  is only compatible with quasi-equivalence if either $\cat A$ or $\cat B$ is h-flat  (this is true if they are cofibrant or  in general if $R$ is a field).
In general we should replace ${\cat B}\otimes \opp {\cat A}$ by ${\cat B}\lotimes \opp {\cat A}$.

If $\cat A$ or $\cat B$ is h-flat then we shall denote by $\hproj(\cat A,\cat B)$ the full dg-subcategory of $\compdg(\cat A, \cat B)$ of h-projective $\cat A$-$\cat B$-dg-bimodules.  
In this case we also denote by $\dercomp(\cat A,\cat B)$ the derived category of $\cat A$-$\cat B$-bimodules and by  $\dercomp_{\operatorname{gr}}(\cat A,\cat B)$  its graded counterpart.

\subsubsection{The covariant-contravariant convention}
Throughout we shall adopt the notation ``co\-variant-below, contravariant-above'', namely for $F$ as in \eqref{eq:F} we write:
\begin{equation}
    F_A^B = F(B,A).
\end{equation}
More generally, we shall adopt this notation for any dg-functor defined on any tensor product of dg-categories with values in $R$. For example, if $G \colon \opp{\cat B} \otimes \cat A \otimes \cat C \to \compdg(R)$, then we shall write
\begin{equation*}
    F_{A,C}^B = F(B,A,C).
\end{equation*}
Sometimes, we shall also denote by $\cat A$ or $h_{\cat A}$ or even $h$ the diagonal bimodule of a dg-category $\cat A$, namely:
\begin{equation}
    \cat A_A^B = h_A^B = \cat A(B,A).
\end{equation}

\subsubsection{(Co)end notation}

Let $\cat A$ be a dg-category, and let
\begin{equation*}
    T \colon \opp{\cat A} \otimes \cat A \to \compdg(R)
\end{equation*}
be an $\cat A$-$\cat A$-dg-bimodule. We shall use the \emph{end} notation
\begin{equation} \label{eq:end}
    \int_A T_A^A = \{ (\varphi_A)_A \in \prod_A T_A^A : f \varphi_A = (-1)^{|f||\varphi|}  \varphi_{A'} f \quad \forall\, f \in \cat A(A,A') \}. 
\end{equation}
and the \emph{coend} notation
\begin{equation} \label{eq:coend}
\begin{split}
    \int^A T_A^A = \coker \left( \bigoplus_{A_1,A_2 \in \cat A} \cat A_{A_1}^{A_2} \right. & \otimes T_{A_2}^{A_1} \left. \to \bigoplus_{A \in \cat A} T^A_A \right) \\
    f \otimes x & \mapsto fx - (-1)^{|f||x|} xf.
\end{split}
\end{equation}
(see Remark \ref{remark:coend_size} below for size issues). The complex of morphisms between two bimodules $F,G \in \compdg(\cat A,\cat B)$ is the following end:
\begin{equation}
    \compdg(\cat A, \cat B)(F,G) = \int_{A,B} \compdg(R) (F_A^B, G_A^B).
\end{equation}
Let $F \in \compdg(\cat A,\cat B)$ and $G \in \compdg(\cat B,\cat C)$ be dg-bimodules. Then, the tensor product $F \otimes_{\cat B} G \in \compdg(\cat A,\cat C)$ is given precisely by the following coend:
\begin{equation}
    (F \otimes_{\cat B} G)_A^C = \int^B F_A^B \otimes G^C_B.
\end{equation}
We remark that, if $F_A \cong \cat B(-,\Phi_F(A))$ in $\compdg(\cat B)$ (respectively in $H^0(\compdg(\cat B))$) for a suitable $\Phi_F(A) \in \cat B$, then we have:
\begin{equation} \label{eq:coYoneda}
\begin{split} 
    \int^B F_A^B \otimes G_B & \cong \int^B \cat B(B,\Phi_F(A)) \otimes G_B \\
    & \cong G_{\Phi_F(A)},
\end{split}
\end{equation}
with isomorphisms in $\compdg(\cat C)$ (respectively in $H^0(\compdg(\cat C))$.

A ``Fubini theorem'' holds for ends and coends, namely if $T \in \compdg(\cat A \otimes \cat B, \cat A \otimes \cat B)$, then we have natural isomorphisms
\begin{equation}
    \int_{A,B} T_{A,B}^{A,B} = \int_A \int_B T_{A,B}^{A,B} = \int_B \int_A T_{A,B}^{A,B},
\end{equation}
and analogously for coends. Moreover, ends and coends are compatible with hom-complexes:
\begin{align}
    \compdg(R)(V, \int_ A F_A^A) & \cong \int_A  \compdg(R)(V, F_A^A), \\
    \compdg(R)(\int^A F_A^A, V) & \cong \int_A \compdg(R)(F_A^A, V).
\end{align}
The reader can find a general and detailed account on (co)ends in \cite{loregian-coend}.
\begin{remark} \label{remark:coend_size} If $\cat A$ is not
  $\mathbb U$-small, then the (co)end of
  $T \colon \opp{\cat A} \otimes \cat A \to \compdg(R)$ will not be,
  in general, a $\mathbb U$-small $R$-dg-module, but still a
  $\mathbb V$-small $R$-dg-module (cf. \S
  \ref{subsec:sizes}). Whenever necessary, we will implicitly view the
  dg-category of $\mathbb U$-small dg-modules (over $R$ or even over
  suitable dg-categories) as a full dg-subcategory of the dg-category
  of $\mathbb V$-small dg-modules. We will not keep track of the
  universes in our notation.
\end{remark}
\subsection{Quasi-functors}
\label{ssec:qf}
Assume $\cat A$ and $\cat B$ are $R$-linear dg-categories with \emph{either $\cat A$ or $\cat B$} having h-flat complexes of morphisms.
A bimodule $F \in \compdg(\cat A, \cat B)$ is by definition right quasi-representable if $F_A$ is quasi-isomorphic to $\cat B(-,\Phi_F(A))$ for some $\Phi_F(A) \in \cat B$, for all $A \in \cat A$.  We shall denote by $\compdg^{\mathrm{rqr}}(\cat A,\cat B)$ the category of right quasi-representable $\cat A$-$\cat B$-dg-bimodules and  $\hproj^{\mathrm{rqr}}(\cat A,\cat B)=\hproj(\cat A, \cat B)\cap \compdg^{\operatorname{rqr}}(\cat A,\cat B)$.

We shall denote by $\dercomp^{\mathrm{rqr}}_{\mathrm{gr}}(\cat A, \cat B)$ the full (graded) subcategory of $\dercomp_{\mathrm{gr}}(\cat A,\cat B)$ spanned by right quasi-representable bimodules, which clearly satisfies
\begin{equation}
    \dercomp^{\mathrm{rqr}}_{\mathrm{gr}}(\cat A, \cat B) \cong H^*(\hproj^{\mathrm{rqr}}(\cat A, \cat B)).
\end{equation}
Right quasi-representable bimodules are also called \emph{quasi-functors}. In the next proposition we carefully check that $\hproj^\mathrm{rqr}(\cat A, \cat B)$ is compatible with quasi-equivalences in the first argument.
\begin{proposition} \label{prop: ind_rqr_firstargument} Let $\cat B$
  be a dg-category with h-flat complexes of morphisms. Let
  $f \colon \cat A' \to \cat A$ be a dg-functor. Consider the
  restriction (graded) functor along
  $f'=f \otimes 1 \colon \cat A' \otimes \opp{\cat B} \to \cat A
  \otimes \opp{\cat B}$:
\begin{align*}
\Res_{f'} \colon \dercomp_{\mathrm{gr}}(\cat A, \cat B) & \to \dercomp_{\mathrm{gr}}(\cat A', \cat B), \\
F \mapsto F_{f}. 
\end{align*}
Then, $\Res_{f'}$ restricts to a functor
\begin{equation}
    \Res_{f'} \colon \dercomp^{\mathrm{rqr}}_{\mathrm{gr}}(\cat A, \cat B)  \to \dercomp^{\mathrm{rqr}}_{\mathrm{gr}}(\cat A', \cat B).
\end{equation}
If $f$ is such that $H^0(f)$ is essentially surjective, then for $F \in \dercomp_{\mathrm{gr}}(\cat A, \cat B)$ we have:
\begin{equation*}
    \Res_{f'}(F) \in \dercomp^{\mathrm{rqr}}_{\mathrm{gr}}(\cat A', \cat B) \iff F \in \dercomp^{\mathrm{rqr}}_{\mathrm{gr}}(\cat A, \cat B).
\end{equation*}
In particular, if $f$ is a quasi-equivalence, then the equivalence $\Res_{f'} \colon \dercomp_{\mathrm{gr}}(\cat A, \cat B)  \to \dercomp_{\mathrm{gr}}(\cat A', \cat B)$ restricts to an equivalence
\[
\Res_{f'} \colon \dercomp^{\mathrm{rqr}}_{\mathrm{gr}}(\cat A, \cat B)  \to \dercomp^{\mathrm{rqr}}_{\mathrm{gr}}(\cat A', \cat B).
\]
Hence, the quasi-equivalence
\begin{equation*}
    \Ind_{f'} \colon \hproj(\cat A', \cat B) \to \hproj(\cat A, \cat B)
\end{equation*}
restricts to a quasi-equivalence
\begin{equation}
    \Ind_{f'} \colon \hproj^{\mathrm{rqr}}(\cat A', \cat B) \to \hproj^{\mathrm{rqr}}(\cat A,\cat B).
\end{equation}
\begin{proof}
Assume that $F \in \dercomp_{\mathrm{gr}}^\mathrm{rqr}(\cat A', \cat B)$, and let $A' \in \cat A'$. By assumption, there is a quasi-isomorphism of right $\cat B$-dg-modules
\begin{equation*}
    \cat B(-. \Phi_F(f(A'))) \xrightarrow{\mathrm{qis}} F_{f(A')},
\end{equation*}
which proves that $F_f = \Res_{f'}(F)$ is right quasi-representable.

Next, assume that $H^0(f)$ is essentially surjective, let $F \in \dercomp_{\mathrm{gr}}(\cat A, \cat B)$ and assume that $\Res_{f'}(F) \in \dercomp^{\mathrm{rqr}}_{\mathrm{gr}}(\cat A', \cat B)$. Let $A \in \cat A$. By hypothesis, there is $A' \in \cat A'$ and a closed degree $0$ morphism $f(A') \to A$ in $\cat A$ which induces an isomorphism in $H^0(\cat A)$. Hence, we get a morphism
\begin{equation}
    F_{f(A')} \to F_{A}
\end{equation}
which induces an isomorphism in $H^0(\compdg(\cat B))$. In particular, it is a quasi-isomorphism. By hypothesis, we then have a quasi-isomorphism
\begin{equation*}
    \cat B(-, \Phi_F(f(A'))) \xrightarrow{\mathrm{qis}} F_{f(A')} \xrightarrow{\mathrm{qis}} F_{A}
\end{equation*}
of right $\cat B$-dg-modules. This means that $F_f$ is right quasi-representable, as claimed.

Finally, assume that $f$ is a quasi-equivalence, and let $G \in \hproj^\mathrm{rqr}(\cat A',\cat B)$. By the above part of the proof, in order to prove that $\Ind_{f'}(G) \in \hproj^\mathrm{rqr}(\cat A, \cat B)$ it is enough to check that $\Res_{f'} \Ind_{f'}(G)$ is right quasi-representable. But $\Ind_{f'}$ is a quasi-equivalence and its inverse in cohomology is $\Res_{f'}$, so we have an isomorphism
\[
G \cong \Res_{f'} \Ind_{f'}(G)
\]
in $\dercomp_{\mathrm{gr}}(\cat A',\cat B)$. So, since $G$ is right quasi-representable, the same is true for $\Res_{f'} \Ind_{f'}(G)$. It remains to check that if $F \in H^0(\hproj^\mathrm{rqr}(\cat A, \cat B)) \cong \dercomp^\mathrm{rqr}(\cat A,\cat B)$, then there exists $G \in H^0(\hproj^\mathrm{rqr}(\cat A', \cat B)) \cong \dercomp^\mathrm{rqr}(\cat A',\cat B)$ such that $\Ind_{f'}(G) \cong F$ in $H^0(\hproj^\mathrm{rqr}(\cat A, \cat B))$. To this purpose, it is enough to take (an h-projective resolution of) $G=\Res_{f'}(F)$, thanks to the above part of the proof and the isomorphism
\[
F \cong \Ind_{f'}(G)
\]
in $\dercomp_{\mathrm{gr}}(\cat A,\cat B)$, recalling that $\Ind_{f'}$ is a quasi-equivalence.
\end{proof}
\end{proposition}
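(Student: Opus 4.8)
The plan is to read the right quasi-representability condition off objectwise in the first variable and to treat the three assertions in increasing strength, each feeding into the next. Recall that $\Res_{f'}$ is computed by $(F_f)_{A'} = F_{f(A')}$, so the whole statement is really about how the right $\cat B$-modules $F_A$ behave as $A$ ranges over all of $\cat A$ versus over the image of $f$. The first step is the easy direction, valid with no hypothesis on $f$: if $F$ is right quasi-representable, then for every $A' \in \cat A'$ the module $(F_f)_{A'} = F_{f(A')}$ is quasi-isomorphic to $\cat B(-, \Phi_F(f(A')))$, so that $\Phi_F \circ f$ witnesses $\Res_{f'}(F) \in \dercomp^{\mathrm{rqr}}_{\mathrm{gr}}(\cat A', \cat B)$. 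This disposes of the first assertion.

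The substance lies in the reflection statement, where I assume $H^0(f)$ essentially surjective and must recover right quasi-representability of $F$ from that of $\Res_{f'}(F)$. The point is that essential surjectivity lets me reach every $A \in \cat A$ up to a weak equivalence: I choose $A' \in \cat A'$ together with a closed degree-zero morphism $g \colon f(A') \to A$ that is invertible in $H^0(\cat A)$. Since $F$ is a dg-functor, covariant in the first variable, it carries $g$ to a morphism $F_g \colon F_{f(A')} \to F_A$ of right $\cat B$-modules which is an isomorphism in $H^0(\compdg(\cat B))$ — here one uses that dg-functors preserve homotopy equivalences — and is therefore a quasi-isomorphism. Composing the quasi-isomorphism $\cat B(-, \Phi_F(f(A'))) \xrightarrow{\mathrm{qis}} F_{f(A')}$ supplied by right quasi-representability of $\Res_{f'}(F)$ with $F_g$ then exhibits $F_A$ as quasi-representable, and since $A$ was arbitrary this gives the second assertion. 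I expect this transfer of quasi-representability along the $H^0$-isomorphism to be the main obstacle: it is the only place where one must argue with care that the relevant comparison is a genuine quasi-isomorphism and not merely an isomorphism in $H^0(\cat A)$, and the decisive input is precisely that $g$, being invertible in cohomology, is a homotopy equivalence that $F$ must preserve.

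With these two assertions the quasi-equivalence case is formal. If $f$ is a quasi-equivalence then $H^0(f)$ is essentially surjective, so the rqr property is both preserved and reflected; moreover, as $\opp{\cat B}$ is h-flat the functor $f' = f \otimes 1$ is again a quasi-equivalence, whence $\Res_{f'} \colon \dercomp_{\mathrm{gr}}(\cat A, \cat B) \to \dercomp_{\mathrm{gr}}(\cat A', \cat B)$ is an equivalence that necessarily restricts to an equivalence between the full subcategories $\dercomp^{\mathrm{rqr}}_{\mathrm{gr}}$. To lift this to a quasi-equivalence $\Ind_{f'} \colon \hproj^{\mathrm{rqr}}(\cat A', \cat B) \to \hproj^{\mathrm{rqr}}(\cat A, \cat B)$ I would invoke the identification $\dercomp^{\mathrm{rqr}}_{\mathrm{gr}}(-,-) \cong H^*(\hproj^{\mathrm{rqr}}(-,-))$: full faithfulness is inherited from $\Ind_{f'}$ on all of $\hproj$, while for essential surjectivity, given $F \in \hproj^{\mathrm{rqr}}(\cat A, \cat B)$ one takes an h-projective resolution of $G = \Res_{f'}(F)$, which is rqr by the first step, and uses $\Ind_{f'}(G) \cong F$ coming from the fact that $\Ind_{f'}$ and $\Res_{f'}$ are mutually inverse in cohomology. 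The only bookkeeping subtlety here is that $\Res_{f'}$ need not preserve h-projectivity, which is exactly why one passes to a resolution of $G$ before applying $\Ind_{f'}$.
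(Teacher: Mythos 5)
Your proposal is correct and follows essentially the same route as the paper's own proof: the easy objectwise preservation via $\Phi_F \circ f$, reflection by transporting quasi-representability along the homotopy equivalence $F_{f(A')} \to F_A$ induced by a closed degree-zero morphism that is invertible in $H^0(\cat A)$, and then the formal passage to the quasi-equivalence case, including the same device of taking an h-projective resolution of $\Res_{f'}(F)$ for essential surjectivity of $\Ind_{f'}$. The only cosmetic difference is that you make explicit the role of h-flatness of $\cat B$ (ensuring $f'=f\otimes 1$ is a quasi-equivalence), which the paper leaves implicit.
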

\begin{corollary} \label{coroll: ind_rqr_firstargument}
Let $\smallcat a$ and $\smallcat b$ be small dg-categories with either $\smallcat a$ or $\smallcat b$ having h-flat complexes of morphisms. Then, $\hproj^\mathrm{rqr}(\smallcat a, \smallcat b)$ is the internal hom in $\Hqe(R)$ between $\smallcat a$ and $\smallcat b$:
\begin{equation*}
    \hproj^\mathrm{rqr}(\smallcat a, \smallcat b) \cong \RHom(\smallcat a, \smallcat b).
\end{equation*}
\end{corollary}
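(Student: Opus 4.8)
The plan is to reduce the assertion to the well-understood case in which the source dg-category is cofibrant, and then to transport the identification along a cofibrant replacement by means of the preceding proposition. Recall that for a cofibrant (in particular h-flat) dg-category $\smallcat a$ the internal hom of $\Hqe(R)$ is computed by the dg-category of h-projective right quasi-representable bimodules, that is $\RHom(\smallcat a, \smallcat b) \cong \hproj^{\mathrm{rqr}}(\smallcat a, \smallcat b)$; this is the standard description of the internal hom (Toën's theorem, in its version over a commutative dg-ring). The content of the corollary is that the same formula persists under the weaker hypothesis that merely one of $\smallcat a$, $\smallcat b$ is h-flat, which is precisely what is needed for $\hproj^{\mathrm{rqr}}(\smallcat a, \smallcat b)$ to be defined and homotopy invariant in the first place.

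First I would choose a cofibrant replacement $q \colon Q(\smallcat a) \xrightarrow{\sim} \smallcat a$, so that $Q(\smallcat a)$ is h-projective and hence h-flat. Since $\RHom(-,-)$ is the internal hom of the closed symmetric monoidal category $\Hqe(R)$, it is invariant under quasi-equivalences in each variable, whence
\[
\RHom(\smallcat a, \smallcat b) \cong \RHom(Q(\smallcat a), \smallcat b) \cong \hproj^{\mathrm{rqr}}(Q(\smallcat a), \smallcat b),
\]
the second isomorphism being the cofibrant case just recalled. It therefore suffices to produce a quasi-equivalence
\[
\Ind_{q'} \colon \hproj^{\mathrm{rqr}}(Q(\smallcat a), \smallcat b) \xrightarrow{\sim} \hproj^{\mathrm{rqr}}(\smallcat a, \smallcat b)
\]
induced by $q' = q \otimes 1$, and this is exactly the content of Proposition \ref{prop: ind_rqr_firstargument} applied to the quasi-equivalence $q$.

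If $\smallcat b$ is h-flat, Proposition \ref{prop: ind_rqr_firstargument} applies verbatim (with $\cat B = \smallcat b$) and the argument is complete. The case that requires care — and which I expect to be the main obstacle — is when only $\smallcat a$ is assumed h-flat, since then the h-flatness hypothesis on the second argument in Proposition \ref{prop: ind_rqr_firstargument} is not literally available. Here the point to verify is that the essential input of that proposition, namely that $q'$ induces a quasi-equivalence $\Ind_{q'}$ on the full bimodule categories, continues to hold: the functor $q \otimes 1 \colon Q(\smallcat a) \otimes \opp{\smallcat b} \to \smallcat a \otimes \opp{\smallcat b}$ is a quasi-equivalence because $q$ is one and \emph{both} $Q(\smallcat a)$ and $\smallcat a$ are h-flat, so tensoring with $\opp{\smallcat b}$ preserves the quasi-equivalence irrespective of the flatness of $\smallcat b$. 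The remaining, purely formal part of the argument — that induction and restriction along $q'$ exchange the right quasi-representable subcategories, using that $H^0(q)$ is essentially surjective — is insensitive to which factor is h-flat and goes through exactly as in the proof of Proposition \ref{prop: ind_rqr_firstargument}. Thus the real work is the flatness bookkeeping ensuring that the tensor products $\smallcat a \otimes \opp{\smallcat b}$ and $Q(\smallcat a) \otimes \opp{\smallcat b}$ genuinely model the derived tensor product, so that both sides represent the same object of $\Hqe(R)$.
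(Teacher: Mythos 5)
Your proof is correct, and its skeleton is the same as the paper's: replace the first argument by a cofibrant (hence h-flat) resolution $q \colon Q(\smallcat a) \to \smallcat a$ and use Proposition \ref{prop: ind_rqr_firstargument} to conclude that $\Ind_{q'} \colon \hproj^{\mathrm{rqr}}(Q(\smallcat a), \smallcat b) \to \hproj^{\mathrm{rqr}}(\smallcat a, \smallcat b)$ is a quasi-equivalence. The difference lies in how the two cases are apportioned. The paper handles the case where $\smallcat a$ is h-flat by citation: the identification $\RHom(\smallcat a,\smallcat b) \cong \hproj^{\mathrm{rqr}}(\smallcat a,\smallcat b)$ for h-flat (not merely cofibrant) source is exactly the known result of To\"en and Canonaco--Stellari, and the replacement-plus-Proposition argument is reserved for the case where $\smallcat b$ is h-flat, which is precisely when the Proposition's hypothesis is literally satisfied. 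You instead take only the cofibrant-source case as known input, and re-derive the h-flat-source case by observing that the proof of Proposition \ref{prop: ind_rqr_firstargument} uses the h-flatness of $\cat B$ at a single point, namely to guarantee that $q' = q \otimes 1$ is a quasi-equivalence, and that this conclusion also follows when both $Q(\smallcat a)$ and $\smallcat a$ are h-flat: the cone of $q$ on hom-complexes is then acyclic and h-flat, so it remains acyclic after tensoring with the (arbitrary) hom-complexes of $\opp{\smallcat b}$. Your claim that the remaining, rqr-exchange part of that proof uses no flatness is accurate; it only needs that $H^0(q)$ is essentially surjective and that $H^*(\Ind_{q'})$ is an equivalence with inverse $H^*(\Res_{q'})$. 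What your route buys is self-containedness, proving rather than citing the h-flat-source case, at the cost of rerunning the Proposition's argument under a hypothesis it does not literally cover; the paper's route is shorter because the cited results are already stated for h-flat sources.
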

\begin{proof}
This result is known in the case that $\smallcat a$ is h-flat \cite{canonaco-stellari-internalhoms,toen-morita}.

If $\smallcat b$ has h-flat $\Hom$-complexes then we let $q \colon Q(\smallcat a) \to \smallcat a$ be a quasi-equivalence with $Q(\smallcat a)$ having h-flat complexes of morphisms. Then we use the fact that, by Proposition \ref{prop: ind_rqr_firstargument}, the induction dg-functor
\begin{equation*}
    \Ind_{q'} \colon \hproj^{\mathrm{rqr}}(Q(\smallcat a),\smallcat b) \to \hproj^\mathrm{rqr}(\smallcat a, \smallcat b)
\end{equation*}
is a quasi-equivalence.
\end{proof}
\begin{remark}
Even for (locally small) dg-categories $\cat A$ and $\cat B$, with either $\cat A$ or $\cat B$ being h-flat, we may and will identify
\[
\hproj^\mathrm{rqr}(\cat A, \cat B) \cong \RHom(\cat A, \cat B).
\]
\end{remark}
\begin{remark}
In virtue of the above discussion, we shall sometimes abuse notation and write $\hproj^{\mathrm{rqr}}(\cat A,\cat B)$ for $\hproj^{\mathrm{rqr}}(Q(\cat A),\cat B)$ or $\hproj^{\mathrm{rqr}}(\cat A,Q(\cat B))$, if neither $\cat A$ nor $\cat B$ are h-flat. We will sometimes abuse notation similarly with regards to $\dercomp^\mathrm{rqr}(\cat A,\cat B)$.
\end{remark}
In the following remark, we collect a few useful facts and notations about quasi-functors.
\begin{remark} \label{remark:qfun_notations}
We say that two quasi-functors $F,G \colon \cat A \to \cat B$ are \emph{isomorphic} if they are isomorphic in $\dercomp^\mathrm{rqr}(\cat A,\cat B)$. Quasi-functors can be composed using coends, see for example \cite[\S 7]{genovese-adjunctions}. In particular, if $F \colon \cat A \to \cat B$ and $G \colon \cat B \to \cat C$ are quasi-functors, with either $F$ or $G$ being h-projective as a dg-bimodule, we have:
\begin{equation} \label{eq:qfun_composition}
    (G \circ F)_A^C = \int^B F_A^B \otimes G^C_B.
\end{equation}

If $F \colon \cat A \to \cat B$ is a quasi-functor, we will usually denote by $\Phi_F(A)$ the object in $\cat B$ which weakly represents $F_A$, with the quasi-isomorphism
\[
\cat B(-,\Phi_F(A)) \xrightarrow{\approx} F_A
\]
of right $\cat B$-dg-modules. Then, the quasi-functor $F$ yields a graded functor
\begin{align*}
H^*(F) \colon H^*(\cat A) & \to H^*(\cat B), \\
A &\mapsto \Phi_F(A).
\end{align*}

Let $F \colon \cat A \to \cat B$ be a quasi-functor. We say that $F$ is \emph{quasi-fully faithful} (resp. \emph{quasi-essentially surjective}, resp. a \emph{quasi-equivalence}) if the induced graded functor
\[
H^*(F) \colon H^*(\cat A) \to H^*(\cat B)
\]
is fully faithful (resp. essentially surjective, resp. an equivalence). It can be proved (for example using the dual discussed in the following \S \ref{sec:duality}) that the $F \colon \cat A \to \cat B$ is a quasi-equivalence if and only there is a quasi-functor $G \colon \cat B \to \cat A$ such that $GF \cong 1_{\cat A}$ in $\dercomp^\mathrm{rqr}(\cat A,\cat A)$ and $FG \cong 1_{\cat B}$ in $\dercomp^\mathrm{rqr}(\cat B, \cat B)$.
\end{remark}

\subsection{Duality and adjoint quasi-functors}
\label{sec:duality}
If $F \colon \cat A \to \cat B$ is a (strict) dg-functor between dg-categories, the definition of its opposite
\[
\opp{F} \colon \opp{\cat A} \to \opp{\cat B}
\]
is straightforward, and clearly $\opp{(\opp{F})}=F$. On the other hand, if $F \colon \cat A \to \cat B$ is a \emph{quasi-functor}, we have to be more careful to define $\opp{F}$, which will be a quasi-functor $\opp{\cat A} \to \opp{\cat B}$. The right way to do so is using the (derived) duality of dg-bimodules.
\begin{proposition}
Let $\cat A$ and $\cat B$ be dg-categories. There is a natural dg-functor
\begin{equation}
    \begin{split}
       \dual = \dual_{\cat A, \cat B} \colon \compdg(\cat A, \cat B) & \to \opp{\compdg(\opp{\cat A}, \opp{\cat B})}, \\
        F &\mapsto \dual_{\cat A, \cat B}(F)_A^B = \compdg(\cat B)(F_A, h_B).
    \end{split}
\end{equation}
($\dual_{\cat A, \cat B}(F)$ is ``covariant in $A \in \opp{\cat A}$'' and ``contravariant in $B \in \opp{\cat B}$''). This dg-functor $\dual_{\cat A, \cat B}$ has a right adjoint given by 
\[
\opp{\dual}_{\opp{\cat A}, \opp{\cat B}} \colon \opp{\compdg(\opp{\cat A}, \opp{\cat B})} \to \compdg(\cat A, \cat B).
\]
\end{proposition}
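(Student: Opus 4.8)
The plan is to treat the two assertions in turn: first that $\dual_{\cat A,\cat B}$ is a well-defined dg-functor, and then that $\opp{\dual}_{\opp{\cat A},\opp{\cat B}}$ is its right adjoint. For the first, I would note that $\dual_{\cat A,\cat B}(F)_A^B = \compdg(\cat B)(F_A,h_B)$ is the composite of two manifestly dg-functorial operations: the evaluation $F \mapsto F_A$, producing a right $\cat B$-module, and the representable-valued internal hom $\compdg(\cat B)(-,h_B)$. The only thing to check is the variance. Since $F_A$ is covariant in $A$ and $h_B = \cat B(-,B)$ is covariant in $B$, the complex $\compdg(\cat B)(F_A,h_B)$ is contravariant in $A$ and covariant in $B$, hence defines a dg-functor $\cat B \otimes \opp{\cat A} \to \compdg(R)$, i.e.\ an object of $\compdg(\opp{\cat A},\opp{\cat B})$. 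Contravariance in the $F$-slot (through the first argument of the hom) is precisely why the target is taken to be $\opp{\compdg(\opp{\cat A},\opp{\cat B})}$, so that $\dual_{\cat A,\cat B}$ becomes a covariant dg-functor. This step is routine.

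Establishing the adjunction amounts, after unwinding the opposite categories, to exhibiting a natural isomorphism of hom-complexes
\[
\compdg(\opp{\cat A},\opp{\cat B})(G,\dual_{\cat A,\cat B}(F)) \cong \compdg(\cat A,\cat B)(F,\dual_{\opp{\cat A},\opp{\cat B}}(G))
\]
for $F \in \compdg(\cat A,\cat B)$ and $G \in \compdg(\opp{\cat A},\opp{\cat B})$; in the dg-enriched setting such a natural isomorphism of hom-complexes \emph{is} the adjunction. I would prove it by expanding both sides as iterated ends. Using the end description of the bimodule hom-complex, the compatibility of hom with ends, and the tensor-hom adjunction in $\compdg(R)$, the left-hand side becomes
\[
\int_A \int_B \int_{B'} \compdg(R)(G_A^B \otimes F_A^{B'},\, \cat B(B',B)),
\]
while the identical manipulations, together with the description $\dual_{\opp{\cat A},\opp{\cat B}}(G)_A^B = \compdg(\opp{\cat B})(G_A,\opp{\cat B}(-,B))$, turn the right-hand side into
\[
\int_A \int_B \int_{B'} \compdg(R)(F_A^B \otimes G_A^{B'},\, \cat B(B,B')).
\]
Applying Fubini to interchange $B$ and $B'$ in the second expression and then the symmetry isomorphism $F_A^{B'} \otimes G_A^{B} \cong G_A^{B} \otimes F_A^{B'}$ of the tensor product identifies the two, yielding the desired isomorphism.

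Finally I would verify naturality: each step used --- the end description of homs, the hom–end compatibility, tensor-hom, Fubini, and the symmetry of the tensor product --- is natural in $F$ and $G$ and is an isomorphism of complexes, so their composite is a dg-natural isomorphism, upgrading the pointwise bijection to a genuine dg-adjunction. The main obstacle I anticipate is bookkeeping rather than conceptual: keeping the ``covariant-below, contravariant-above'' convention straight through the two layers of opposite categories, and fixing the Koszul signs in the symmetry isomorphism so that the identification is strictly compatible with the differentials. Size issues attached to the representables $h_B$ and to the ends are handled as in Remark \ref{remark:coend_size}.
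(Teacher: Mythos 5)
Your proposal is correct, but it is worth noting that the paper does not actually prove this proposition at all: its entire proof is a citation of \cite[Proposition 5.4]{genovese-adjunctions}, where the duality adjunction is established in the first author's earlier work. Your argument is therefore a genuinely different contribution — a self-contained proof using precisely the (co)end toolkit that the present paper sets up in its preliminaries (the end description of bimodule hom-complexes, compatibility of $\compdg(R)(V,-)$ with ends, the tensor-hom adjunction in $\compdg(R)$, and Fubini). Your key computation is sound: unwinding the two opposites correctly reduces the adjunction $\dual_{\cat A,\cat B} \dashv \opp{\dual}_{\opp{\cat A},\opp{\cat B}}$ to the dg-natural isomorphism
\[
\compdg(\opp{\cat A},\opp{\cat B})(G,\dual_{\cat A,\cat B}(F)) \cong \compdg(\cat A,\cat B)(F,\dual_{\opp{\cat A},\opp{\cat B}}(G)),
\]
and both sides do expand to the triple ends you display, which are identified by relabelling the two $\cat B$-variables and the symmetry of $\otimes$ over the commutative base; one can check that the variances of the integrands match after the relabelling, so the ends are genuinely isomorphic. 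What your route buys is transparency and independence from the external reference; what the paper's route buys is brevity and consistency with the cited framework, where the same duality is embedded in a ``derived bicategory of bimodules'' formalism that the paper later reuses (e.g. in Remark \ref{remark:adjoints_dual_bicategory_bimodules} and Lemma \ref{lemma:qfunct_orthogonal}), so the citation also imports those stronger statements, not just this one. The two points you leave as bookkeeping — Koszul signs in the symmetry isomorphism and dg-naturality of each step — are indeed routine and do not hide any obstruction.
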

\begin{proof}
This is \cite[Proposition 5.4]{genovese-adjunctions}, rephrased.
\end{proof}
This duality of dg-bimodules is precisely what enables us to rigorously define opposite quasi-functors. More precisely, we can prove the following result.
\begin{proposition} \label{prop:duality_quasifunctors}
Let $\cat A$ and $\cat B$ be dg-categories, and assume that either $\cat A$ or $\cat B$ is h-flat. Then, the duality dg-functor $\dual_{\cat A, \cat B}$ induces a natural quasi-equivalence
\begin{equation} \label{eq:quasifunctors_duality}
    \dual = \dual_{\cat A, \cat B} \colon \hproj^\mathrm{rqr}(\cat A, \cat B)  \xrightarrow{\sim} \opp{\hproj^\mathrm{rqr}(\opp{\cat A}, \opp{\cat B})}.
\end{equation}
\end{proposition}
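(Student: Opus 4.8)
The plan is to reduce the statement to a fibrewise computation governed by the enriched Yoneda lemma, and then to recognise $\dual_{\cat A, \cat B}$ and its right adjoint $\opp{\dual}_{\opp{\cat A}, \opp{\cat B}}$ as mutually quasi-inverse via the reflexivity (biduality) of representable modules. First I would reduce to the case in which $\cat A$ is h-projective: since both $\hproj^{\mathrm{rqr}}(\cat A, \cat B) \cong \RHom(\cat A, \cat B)$ and $\opp{\hproj^{\mathrm{rqr}}(\opp{\cat A}, \opp{\cat B})}$ depend only on the quasi-equivalence class of $\cat A$ (Corollary \ref{coroll: ind_rqr_firstargument} and Proposition \ref{prop: ind_rqr_firstargument}), and $\dual$ is natural in its arguments, I may replace $\cat A$ by a cofibrant resolution $Q(\cat A)$. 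The gain is that, for $\cat A$ h-projective, the fibres $F_A = F(-, A)$ of an h-projective bimodule $F$ are h-projective right $\cat B$-modules: the generating representable bimodules $\cat A(A_0, -) \otimes \cat B(-, B_0)$ restrict at $A$ to $\cat A(A_0, A) \otimes_R \cat B(-, B_0)$, which is h-projective over $\cat B$ because $\cat A(A_0, A)$ is an h-projective $R$-dg-module; the general case follows since all h-projectives are built from these under operations preserved by restriction.

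Next I would compute the dual fibrewise. For $F \in \hproj^{\mathrm{rqr}}(\cat A, \cat B)$ the defining quasi-isomorphism $\cat B(-, \Phi_F(A)) \xrightarrow{\approx} F_A$ is a homotopy equivalence of h-projective modules, so applying $\compdg(\cat B)(-, h_B)$ and using the Yoneda identification $\compdg(\cat B)(\cat B(-, \Phi_F(A)), \cat B(-, B)) \cong \cat B(\Phi_F(A), B) = \opp{\cat B}(B, \Phi_F(A))$ produces a quasi-isomorphism $\dual(F)_A \approx \opp{\cat B}(-, \Phi_F(A))$ of right $\opp{\cat B}$-modules. Thus $\dual(F)$ is right quasi-representable with $\Phi_{\dual(F)}(A) = \Phi_F(A)$, and on cohomology $H^*(\dual)$ acts as $\opp{H^*(-)}$ on objects. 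The same computation, applied with $(\opp{\cat A}, \opp{\cat B})$ in place of $(\cat A, \cat B)$, shows that $\dual_{\opp{\cat A}, \opp{\cat B}}$ likewise preserves right quasi-representability.

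Finally I would invoke the adjunction $\dual_{\cat A, \cat B} \dashv \opp{\dual}_{\opp{\cat A}, \opp{\cat B}}$ of the preceding proposition, whose unit is the biduality transformation $\eta \colon \mathrm{id} \Rightarrow \opp{\dual}_{\opp{\cat A}, \opp{\cat B}} \circ \dual_{\cat A, \cat B}$. Testing $\eta_F$ fibrewise and using that the double dual of a representable module is canonically itself (since $\compdg(\cat B)(h_{B_0}, h_{\bullet}) \cong \opp{h}_{B_0}$ and $\compdg(\opp{\cat B})(\opp{h}_{B_0}, \opp{h}_{\bullet}) \cong h_{B_0}$ by Yoneda), together with the fibrewise h-projectivity secured above, shows that $\eta_F$ is a quasi-isomorphism for every such $F$; the symmetric argument shows the counit is a quasi-isomorphism on $\hproj^{\mathrm{rqr}}(\opp{\cat A}, \opp{\cat B})$. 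Hence $\opp{\dual}_{\opp{\cat A}, \opp{\cat B}}$ is a quasi-inverse to $\dual_{\cat A, \cat B}$ on the rqr parts and $H^*(\dual)$ is an equivalence.

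I expect the main obstacle to be that h-projectivity is \emph{not} preserved strictly by $\dual$: the $R$-linear dual of an h-projective module is h-injective rather than h-projective, so $\dual(F)$ has representable (indeed h-projective) fibres but need not be h-projective as a bimodule, and $\dual$ does not literally land in $\opp{\hproj^{\mathrm{rqr}}(\opp{\cat A}, \opp{\cat B})}$. The careful point is therefore to post-compose with a functorial h-projective replacement $p$, so that $p \circ \dual$ is a genuine dg-functor inducing the equivalence of homotopy categories $\dercomp^{\mathrm{rqr}}_{\mathrm{gr}}(\cat A, \cat B) \to \opp{\dercomp^{\mathrm{rqr}}_{\mathrm{gr}}(\opp{\cat A}, \opp{\cat B})}$ computed above, and to check that the reduction to h-projective $\cat A$ together with the biduality unit and counit are all natural with respect to this replacement.
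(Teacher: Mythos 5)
Your proof is correct and follows essentially the same route as the paper's: reduce by naturality to h-projective resolutions of the arguments, observe that $\dual$ only lands in $\opp{\hproj^{\mathrm{rqr}}(\opp{\cat A}, \opp{\cat B})}$ up to h-projective replacement (the paper records exactly this caveat in a footnote, treating $\dual$ as a quasi-functor), and then verify that $H^*(\dual)$ gives an equivalence $\dercomp^{\mathrm{rqr}}_{\mathrm{gr}}(\cat A, \cat B) \xrightarrow{\sim} \opp{\dercomp^{\mathrm{rqr}}_{\mathrm{gr}}(\opp{\cat A}, \opp{\cat B})}$. The fibrewise Yoneda/biduality computation you spell out (representable fibres, h-projectivity of $F_A$, unit and counit of $\dual_{\cat A,\cat B} \dashv \opp{\dual}_{\opp{\cat A},\opp{\cat B}}$ being quasi-isomorphisms) is precisely the content the paper outsources to the techniques of \cite[Proposition 5.9]{genovese-adjunctions}.
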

\begin{proof}
Clearly, $\dual = \dual_{\cat A, \cat B}$ restricts to a natural \emph{quasi-functor}\footnote{Note that as a \emph{dg-functor} this is not correct. $\dual_{\cat A, \cat B}$ does not take values in
 $\opp{\hproj(\opp{\cat A}, \opp{\cat B})}$.}.
 Note that 
\[
\dual_{\cat A, \cat B} \colon \hproj^\mathrm{rqr}(\cat A, \cat B)  \rightarrow \opp{\hproj(\opp{\cat A}, \opp{\cat B})},
\]
using suitable h-projective resolutions. $\dual_{\cat A, \cat B}$ is natural with respect to $\cat A$ and $\cat B$, so we can assume without loss of generality that both $\cat A$ and $\cat B$ are h-projective, if necessary. Then, we are left to check that the above quasi-functor induces an equivalence
\[
H^*(\dual) \colon \dercomp_{\mathrm{gr}}^\mathrm{rqr}(\cat A, \cat B) \xrightarrow{\sim} \opp{\dercomp_{\mathrm{gr}}^\mathrm{rqr}(\opp{\cat A}, \opp{\cat B})}.
\]
This is achieved using the techniques of \cite[Proposition 5.9]{genovese-adjunctions}.
\end{proof}
For a given quasi-functor $F \colon \cat A \to \cat B$, the dual $\dual(F) \colon \opp{\cat A} \to \opp{\cat B}$ is what we would want to call the \emph{opposite quasi-functor}. The above Proposition \ref{prop:duality_quasifunctors} says that taking opposites is a actually an involution, in a weak sense. Namely, we have a natural isomorphism
\begin{equation}
    \dual(\dual(F)) \cong F
\end{equation}
in the derived category $\dercomp^\mathrm{rqr}(\cat A, \cat B)$ of quasi-functors.
\begin{remark} \label{remark:left_qrep_bimod_dual}
We observe that $\dual(F) = \dual_{\cat A, \cat B}(F)$ can also be viewed as a $\cat B$-$\cat A$-dg-bimodule, namely an object of $\dercomp(\cat B, \cat A)$. If $F$ is a quasi-functor, then $\dual(F)$ has the property of being \emph{left quasi-representable} as a $\cat B$-$\cat A$-dg-bimodule, namely $\dual(F)^A$ is quasi-isomorphic to a representable $\cat B$-dg-bimodule, for all $A \in \cat A$.
\end{remark}
\begin{remark} \label{remark:dual_quasifun_cohomology}
From the definition, it is clear that for a given quasi-functor $F \colon \cat A \to \cat B$, then
\begin{equation}
    H^*(\dual(F))\cong \opp{H^*(F)} \colon \opp{H^*(\cat A)} \to \opp{H^*(\cat B)}
\end{equation}
\end{remark}

Next, we state some useful results on adjoint quasi-functors, which can be deduced from \cite{genovese-adjunctions}. First, we recall that given quasi-functors
\[
F \colon \cat A \to \cat B, \quad G \colon \cat B \to \cat A,
\]
we have that \emph{$F$ is left adjoint to $G$} (in symbols, $F \dashv G$) if there is an isomorphism
\begin{equation}
    \dual_{\cat A, \cat B}(F) \cong G
\end{equation}
in the derived category $\dercomp(\cat B, \cat A)$. We replace $F$ with a suitable h-projective resolution if necessary so that the dual $\dual(F)$ is correctly defined up to quasi-isomorphism. We recall \cite[Proposition 7.1]{genovese-adjunctions} that a quasi-functor $F$ has a left adjoint if and only if it is left quasi-representable as a bimodule (see the above Remark \ref{remark:left_qrep_bimod_dual}), and it has a right adjoint if and only if $\dual_{\cat A, \cat B}(F)$ is a quasi-functor $\cat B \to \cat A$ (namely, it is right quasi-representable as a $\cat B$-$\cat A$-dg-bimodule).

\begin{lemma} \label{lemma:adjoints_quasifunctors_cohomology}
Let $F \colon \cat A \to \cat B$ be a quasi-functor. Assume that $H^*(F) \colon H^*(\cat A) \to H^*(\cat B)$ has a right adjoint $R$ (resp. a left adjoint $L$) as a graded functor. Then, $F$ has a right adjoint $\widetilde{R}$ (resp. a left adjoint $\widetilde{L}$) as a quasi-functor. Moreover, $\widetilde{R}$ (resp. $\widetilde{L}$) satisfies $H^*(\widetilde{R}) \cong R$ (resp. $H^*(\widetilde{L}) \cong L$.
\end{lemma}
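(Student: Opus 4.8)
The plan is to reduce everything to the quasi-representability criterion recalled just above the statement, taken from \cite[Proposition 7.1]{genovese-adjunctions}: a quasi-functor $F$ has a right adjoint precisely when $\dual(F)$ is right quasi-representable as a $\cat B$-$\cat A$-dg-bimodule, and in that case the right adjoint \emph{is} $\dual(F)$, now regarded as a quasi-functor $\cat B \to \cat A$. Thus, working under the standing h-flatness convention and replacing $F$ by an h-projective resolution so that $\dual(F)$ is defined, the entire task is to promote the graded adjunction $H^*(F) \dashv R$ to the statement that $\dual(F)$ is right quasi-representable.

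First I would fix an object $B \in \cat B$ and aim to show that the right $\cat A$-dg-module $\dual(F)_B$ is quasi-isomorphic to the representable module $h_{R(B)} = \cat A(-, R(B))$. By Remark \ref{remark:dual_quasifun_cohomology} we have $H^*(\dual(F)) \cong \opp{H^*(F)}$, so that the graded right $H^*(\cat A)$-module $H^*(\dual(F)_B)$ is given on objects by $A \mapsto H^*(\cat B)(\Phi_F(A), B) = H^*(\cat B)(H^*(F)(A), B)$. The graded adjunction isomorphism $H^*(\cat B)(H^*(F)(A), B) \cong H^*(\cat A)(A, R(B))$ then identifies this graded module with $H^*(h_{R(B)})$, so $\dual(F)_B$ and $h_{R(B)}$ have isomorphic cohomology as graded $\cat A$-modules.

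The key remaining step, which I expect to be the main obstacle, is to upgrade this cohomological coincidence to an honest quasi-isomorphism: isomorphic cohomology does not by itself produce quasi-isomorphic dg-modules. I would construct an explicit comparison map from the counit of the graded adjunction. The counit $\epsilon_B \colon H^*(F)(R(B)) \to B$ corresponds, under the identification above, to a degree-$0$ class in $H^0(\dual(F)_B^{R(B)})$; since representables are h-projective, the dg-Yoneda lemma realizes this class as a genuine closed degree-$0$ morphism of right $\cat A$-dg-modules $\phi_B \colon h_{R(B)} \to \dual(F)_B$. Naturality of the adjunction bijection then shows that $H^*(\phi_B)$ is, objectwise, exactly the adjunction isomorphism $H^*(\cat A)(A, R(B)) \xrightarrow{\sim} H^*(\cat B)(H^*(F)(A), B)$, whence $\phi_B$ is a quasi-isomorphism. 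Therefore $\dual(F)_B$ is quasi-representable with weak representative $R(B)$ for every $B$, i.e. $\dual(F)$ is right quasi-representable, and the criterion delivers the right adjoint $\widetilde{R} = \dual(F)$. Since by construction $\Phi_{\widetilde{R}}(B) = R(B)$, and the comparison maps $\phi_B$ are natural in $B$ (again by naturality of $\epsilon$), this identification is promoted to an isomorphism of graded functors $H^*(\widetilde{R}) \cong R$.

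For the left-adjoint statement I would argue dually. The cleanest route is to apply the right-adjoint case to the opposite quasi-functor $\dual(F) \colon \opp{\cat A} \to \opp{\cat B}$ furnished by Proposition \ref{prop:duality_quasifunctors}: a graded left adjoint $L$ of $H^*(F)$ becomes a graded right adjoint $\opp{L}$ of $H^*(\dual(F)) \cong \opp{H^*(F)}$, so $\dual(F)$ acquires a right adjoint quasi-functor, and dualizing back — using that opposite quasi-functors exchange left and right adjoints together with $\dual(\dual(F)) \cong F$ — produces the left adjoint $\widetilde{L}$ of $F$ with $H^*(\widetilde{L}) \cong L$. Alternatively, one may run the argument directly on $F$ itself, which is already right quasi-representable as a quasi-functor, using the unit of $L \dashv H^*(F)$ to show that $F$ is moreover \emph{left} quasi-representable; by the criterion (Remark \ref{remark:left_qrep_bimod_dual}) this is equivalent to $F$ admitting a left adjoint, and the same naturality argument gives $H^*(\widetilde{L}) \cong L$.
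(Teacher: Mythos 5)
Your proposal is correct and follows essentially the same route as the paper's proof: both reduce to the quasi-representability criterion of \cite[Proposition 7.1]{genovese-adjunctions}, realize the graded adjunction datum (your counit class is exactly the element $e \in H^0(\dual(F)^{R(B)}_B)$ the paper extracts via the graded Yoneda lemma) as a genuine morphism $\cat A(-,R(B)) \to \dual(F)_B$ via the derived Yoneda lemma, and verify it is a quasi-isomorphism because it induces the adjunction isomorphism in cohomology. The left-adjoint case in the paper is likewise handled by your second, direct variant (showing $F$ is left quasi-representable), so no further comment is needed.
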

\begin{proof}
Upon taking a suitable resolution, we may assume that $F$ is h-projective as a bimodule. We may also assume that $\cat A$ is h-projective, so that $F_A$ is h-projective as $\cat B$-dg-bimodule for all $A \in \cat A$ \cite[Lemma 3.4]{canonaco-stellari-internalhoms}. $F$ being a quasi-functor means that for all $A \in \cat A$ there is a quasi-isomorphism
\[
\cat B(-,\Phi_F(A)) \xrightarrow{\sim} F_A,
\]
in $\compdg(\cat B)$, for a suitable $\Phi_F(A) \in \cat B$. This implies that there is a quasi-isomorphism
\[
\cat B(\Phi_F(A),-) \cong \dual(F)^A
\]
in $\compdg(\opp{\cat B})$, where we set $\dual(F)=\dual_{\cat A, \cat B}(F)$. This induces an isomorphism
\[
H^*(\cat B)(\Phi_F(A),B) \cong H^*(\dual(F)^A_B),
\]
natural in both $A \in H^*(\cat A)$ and $B \in H^*(\cat B)$. The functor $H^*(F)$ is actually given by $A \mapsto \Phi_F(A)$.

Now, assume that $H^*(F)$ has a right adjoint functor $R \colon H^*(\cat B) \to H^*(\cat A)$. This means that for all $B \in \cat B$ there is an object $R(B) \in \cat A$ and an isomorphism of graded $H^*(\cat A)$-modules: 
\begin{equation} \label{eq:graded_iso_adjoint}
H^*(\cat A)(-,R(B)) \xrightarrow{\sim} H^*(\cat B)(\Phi_F(-),B) \cong H^*(\dual(F))_B. \tag{$\ast$}
\end{equation}
By the graded Yoneda lemma (applied to $H^*(\cat A)$), this isomorphism corresponds to an element
\[
e \in H^0(\dual(F))^{R(B)}_B.
\]
Now, the derived Yoneda lemma tells that $e$ yields a morphism
\[
\cat A(-,R(B)) \to \dual(F)_B,
\]
in $\dercomp(\cat A)$, which actually induces the above isomorphism \eqref{eq:graded_iso_adjoint}. This means that $\dual(F)$ is right quasi-representable as a $\cat B$-$\cat A$-dg-bimodule, and it yields the right adjoint $\widetilde{R}$ of $F$. By definition, $\widetilde{R}$ satisfies $H^*(\widetilde{R}) \cong R$. The case where $H^*(F)$ has a left adjoint is dealt with analogously, with the goal of proving that $F$ is left quasi-representable.
\end{proof}
We add the following lemma for comfort.
\begin{lemma} \label{lemma:quasifunctor_adjoint_unit_cohomology}
  Let $F \colon \cat A \to \cat B$ and $G \colon \cat B \to \cat A$ be quasi-functors. Assume there exists $\eta \colon 1 \to GF$ in $\dercomp(\cat A, \cat A)$ such that $H^*(\eta) \colon 1 \to H^*(GF)$ is the unit morphism of an adjunction $H^*(F) \dashv H^*(G)$. Then, $\eta$ is the unit morphism of an adjunction of quasi-functors $F \dashv G$.
\end{lemma}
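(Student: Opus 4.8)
The plan is to realize $\eta$ as the datum producing the defining isomorphism of the adjunction $F \dashv G$, which by definition is an isomorphism $\dual_{\cat A, \cat B}(F) \cong G$ in $\dercomp(\cat B, \cat A)$. After replacing $F$ and $G$ (and, if necessary, $\cat A$ and $\cat B$) by h-projective/h-flat resolutions as in the preceding lemmas, both $\dual(F)$ and $G$ are honest $\cat B$-$\cat A$-dg-bimodules, with
\[
\dual(F)^B_A = \compdg(\cat B)(F_A, h_B) \simeq \cat B(\Phi_F(A), B), \qquad G^B_A \simeq \cat A(A, \Phi_G(B)),
\]
so that an isomorphism $\dual(F) \cong G$ is the same as a family of quasi-isomorphisms $\cat B(\Phi_F(A), B) \simeq \cat A(A, \Phi_G(B))$, natural in $A \in \cat A$ and $B \in \cat B$.

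First I would write down, from the given data, the candidate bimodule morphism $\alpha \colon \dual(F) \to G$. On components it is the derived transpose $g \mapsto G(g) \circ \eta_A$: one applies the map on derived mapping complexes induced by the quasi-functor $G$, namely $\cat B(\Phi_F(A), B) \to \cat A(\Phi_G \Phi_F(A), \Phi_G(B))$ (using $\Phi_{GF}(A) \cong \Phi_G\Phi_F(A)$), and then precomposes with the component $\eta_A \colon A \to \Phi_{GF}(A)$ of $\eta$. Since $\eta$ is a morphism of $\cat A$-$\cat A$-bimodules and $G$ acts functorially, these components are natural in $A$ and $B$, so they assemble into a morphism $\alpha$ in $\dercomp(\cat B, \cat A)$.

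Next I would pass to cohomology. Using $H^*(GF) \cong H^*(G)H^*(F)$ and the compatibility of $G$ with $H^*$ recalled in Remark \ref{remark:qfun_notations}, one identifies $H^*(\alpha^B_A)$ with the cohomological transpose $g \mapsto H^*(G)(g) \circ H^*(\eta)_A$ of the graded adjunction $H^*(F) \dashv H^*(G)$. By hypothesis $H^*(\eta)$ is the unit of this adjunction, so that transpose is an isomorphism of graded $H^*(R)$-modules for every $A$ and $B$. Hence each component of $\alpha$ is a quasi-isomorphism, $\alpha$ is an isomorphism $\dual(F) \cong G$ in $\dercomp(\cat B, \cat A)$, and $F \dashv G$. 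As $\alpha$ was built out of $\eta$, tracing the construction back shows that $\eta$ is precisely the unit of the resulting adjunction. (The mere existence of a right adjoint to $F$ also follows from Lemma \ref{lemma:adjoints_quasifunctors_cohomology}; the content here is to certify $\eta$ as its unit.)

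I expect the main obstacle to be the bookkeeping in the construction of $\alpha$: making the transpose precise at the level of complexes, so that ``apply $G$'' and ``precompose with $\eta_A$'' are represented by genuine chain maps compatible with the coends defining quasi-functor composition, and so that $H^*(\alpha)$ really computes the cohomological transpose. This is exactly where the h-projective resolutions of $F$, $G$ and of $\cat A$, $\cat B$ are used; granting strictness there, both the naturality of $\alpha$ and the cohomology identification become formal.
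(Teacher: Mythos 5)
Your proposal is correct and follows essentially the same route as the paper's proof: construct the bimodule morphism $\dual(F) \to G$ as ``apply $G$, then precompose with $\eta$'' (the paper writes it as $\compdg(\cat B)(F_A, h_B) \xrightarrow{G} \compdg(\cat A)((GF)_A, G_B) \xrightarrow{\eta^*} \compdg(\cat A)(h_A, G_B)$), then identify its cohomology with the transpose of the graded adjunction, which is an isomorphism since $H^*(\eta)$ is the unit, so the map is a quasi-isomorphism and $F \dashv G$ by definition. The ``bookkeeping'' obstacle you flag is exactly the point the paper spends its technical effort on — representing $\eta$ by a roof $\cat A \xleftarrow{\sim} Q(\cat A) \to GF$ and using h-projectivity of $\cat A$, $F$, $G$ to make $\eta^*$ a genuine morphism in $\dercomp(\cat B,\cat A)$ — and your resolution of it is the same.
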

\begin{proof}
We may assume that both $F$ and $G$ are h-projective as dg-bimodules. We will also assume, without loss of generality, that $\cat A$ is an h-projective dg-category. We consider the following morphism $\dual(F) \to G$ in $\dercomp(\cat B, \cat A)$, given by:
\begin{equation} \label{eq:quasifunct_adjoint_qis_unit}
\dual(F)^A_B \cong \compdg(\cat B)(F_A, h_B) \xrightarrow{G} \compdg(\cat A)((GF)_A, G_B) \xrightarrow{\eta^*} \compdg(\cat A)(h_A, G_B) \cong G^A_B.
\end{equation}
We remark that $G$ (or, more precisely, tensoring with $G$) and $\eta^*$ are well defined as morphisms in $\dercomp(\cat B, \cat A)$ thanks to the fact that $F$ and $G$ are h-projective, and $\cat A$ is h-projective. Indeed, we may identify $\eta$ as the following ``roof'' of morphisms of $\cat A$-$\cat A$-dg-bimodules:
\[
\cat A \xleftarrow{\sim} Q(\cat A) \to GF,
\]
where $Q(\cat A) \to \cat A$ is an h-projective resolution of the diagonal bimodule. Since $\cat A$ is h-projective, the $\cat A$-dg-module $Q(\cat A)_A$ is h-projective for all $A \in \cat A$ \cite[Lemma 3.4]{canonaco-stellari-internalhoms}, and the same holds for $h_A=\cat A(-,A)$ for all $A \in \cat A$, thanks to the Yoneda Lemma. Hence, the induced morphism
\[
\compdg(\cat A)(Q(\cat A)_A, G_B) \leftarrow \compdg(\cat A)(h_A, G_B) \cong G^A_B
\]
is a quasi-isomorphism, natural in $A \in \cat A$ and $B \in \cat B$. We conclude that
\[
\eta ^* \colon \compdg(\cat A)((GF)_A, G_B) \to \compdg(\cat A)(Q(\cat A)_A, G_B) \xleftarrow{\sim} \compdg(\cat A)(h_A, G_B) \cong G^A_B
\]
gives a genuine morphism in $\dercomp(\cat B, \cat A)$.

Now, we conclude if we prove that \eqref{eq:quasifunct_adjoint_qis_unit} is a quasi-isomorphism (compare with \cite[\S 6]{genovese-adjunctions}). Indeed, the morphism $H^*(\dual(F)) \to H^*(G)$ can be identified with
\[
H^*(\cat B)(\Phi_F(A), B) \xrightarrow{H^*(G)} H^*(\cat A)(\Phi_G \Phi_F(A),\Phi_F(B)) \xrightarrow{H^*(\eta)^*} H^*(\cat A)(A,\Phi_G(B)),
\]
where $\Phi_F(A)$ and $\Phi_G(B)$ are the objects quasi-representing $F_A$ and $G_B$. By hypothesis, the above composition $H^*(\eta)^* \circ H^*(G)$ is an isomorphism, so \eqref{eq:quasifunct_adjoint_qis_unit} is actually a quasi-isomorphism, and we conclude.
\end{proof}
\begin{remark} \label{remark:adjoints_dual_bicategory_bimodules}
If $F \colon \cat A \to \cat B$ is a quasi-functor, we know from \cite[Corollary 6.6]{genovese-adjunctions} that there is an adjunction $F \dashv \dual_{\cat A, \cat B}(F)$ in the ``derived bicategory of dg-bimodules''. In particular, we remark that for all $G \in \dercomp(\cat A, \cat B)$, there is a natural isomorphism
\begin{equation}
\dercomp(\cat A, \cat B)(F,G) \cong \dercomp(\cat A, \cat A)(h_{\cat A}, \dual(F) \circ G).
\end{equation}
\end{remark}

\subsection{Truncations of dg-categories}
\label{sec:trunc}
In this part, we assume our base dg-ring $R$ to be strictly concentrated in nonpositive degrees. 

Let $\cat A$ be a dg-category. We define a dg-category $\tau^{\leq 0} \cat A$ as follows: $\Ob \tau^{\leq 0} \cat A = \Ob \cat A$, and for any $A,B \in \Ob \cat A$ we set
\begin{equation*}
    (\tau^{\leq 0} \cat A)(A,B) = \tau_{\leq 0} (\cat A(A,B)),
\end{equation*}
where $\tau_{\leq 0}$ is the (smart) truncation of $R$-dg-modules. More explicitly:
\begin{equation*}
\begin{cases}
(\tau^{\leq 0} \cat A)(A,B)^n = \cat A(A,B)^n & \text{if $n < 0$}, \\
(\tau^{\leq 0} \cat A)(A,B)^n = 0 & \text{if $n > 0$}, \\
(\tau^{\leq 0} \cat A)(A,B)^0 = Z^0(\cat A(A,B)).
\end{cases}
\end{equation*}
There is a natural dg-functor
\begin{equation}
i^{\leq 0} \colon \tau^{\leq 0} \cat A \rightarrow \cat A,
\end{equation}
which is the identity on objects and the inclusion map $\tau_{\leq 0} \cat A(A,B) \hookrightarrow \cat A(A,B)$ on hom-complexes. Moreover, there is a natural dg-functor
\[
\tau^{\leq 0}\cat A \to H^0(\cat A),
\]
which is the identity on objects and the natural projection $\tau_{\leq 0} \cat A(A,B) \to H^0(\cat A(A,B))$ on morphisms. If $\cat A$ is cohomologically concentrated in nonpositive degrees, then $\tau^{\leq 0}\cat A \to \cat A$ is a quasi-equivalence; if $\cat A$ is cohomologically concentrated in degree $0$, then we have quasi-equivalences
\[
H^0(\cat A) \leftarrow \tau^{\leq 0} \cat A \to A.
\]

The construction $\cat A \mapsto \tau^{\leq 0} \cat A$ is clearly functorial, namely, given a dg-functor $F \colon \cat A \to \cat B$ we get an induced dg-functor $\tau^{\leq 0} F \colon \tau^{\leq 0} \cat A \to \tau^{\leq 0} \cat B$, which in particular is the same as $F$ on objects.

We now mention a result which describes the quasi-functors from a dg-category which has cohomology concentrated in degree $0$.
\begin{proposition} \label{prop:H0_equiv_iftarget_H0}
Let $\cat B$ be a ($R$-linear) dg-category which is cohomologically concentrated in degree $0$, and let $\cat A$ be a dg-category which is cohomologically concentrated in nonpositive degrees. Assume that either $\cat A$ or $\cat B$ is h-flat. Then, we have a natural equivalence of categories:
\begin{equation} \label{eq:H0underlyingfunctor}
\begin{split}
    H^0(\hproj^{\mathrm{rqr}}(\cat A, \cat B)) & \xrightarrow{\sim} \Fun_{H^0(R)}(H^0(\cat A),H^0(\cat B)), \\
    F & \mapsto H^0(F)
\end{split}
\end{equation}
\end{proposition}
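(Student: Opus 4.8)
The plan is to show that the well-defined comparison functor $F\mapsto H^0(F)$ of \eqref{eq:H0underlyingfunctor} (recall from Remark \ref{remark:qfun_notations} that a morphism of quasi-functors $\alpha\colon F\to G$ induces a natural transformation $H^0(\alpha)\colon H^0(F)\Rightarrow H^0(G)$ of $H^0(R)$-linear functors) is an equivalence, after two harmless reductions that do not affect the statement. Since $\cat B$ is cohomologically concentrated in degree $0$, the truncation quasi-equivalences of \S\ref{sec:trunc} give $H^0(\cat B)\xleftarrow{\sim}\tau^{\leq 0}\cat B\xrightarrow{\sim}\cat B$, so $\cat B\cong \kat b := H^0(\cat B)$ in $\Hqe(R)$; by functoriality of the internal hom $\RHom(\cat A,-)$ I may replace $\cat B$ by the genuine $H^0(R)$-linear category $\kat b$, concentrated in degree $0$. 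On the source side, $\tau^{\leq 0}\cat A\xrightarrow{\sim}\cat A$ is a quasi-equivalence, and composing with a resolution provided by Lemma \ref{lemma:cofibrantresolutions_nonpositive} I obtain an h-projective $\cat A$ that is \emph{strictly} concentrated in nonpositive degrees; by Proposition \ref{prop: ind_rqr_firstargument} this does not change $\hproj^{\mathrm{rqr}}(\cat A,\kat b)$ up to quasi-equivalence, and it only replaces $H^0(\cat A)$ by an equivalent category. So I may assume $\cat A$ is h-projective and strictly concentrated in nonpositive degrees, and $\cat B=\kat b$ is concentrated in degree $0$.

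The key step is a strictification statement. If $F\colon\cat A\to\kat b$ is a quasi-functor, then for each $A$ the module $F_A$ is quasi-isomorphic to the representable $\kat b(-,\Phi_F(A))$, which lives in degree $0$; hence the whole bimodule $F$ is cohomologically concentrated in degree $0$, and smart truncation yields an isomorphism $F\cong H^0(F)$ in $\dercomp^{\mathrm{rqr}}(\cat A,\kat b)$, where $H^0(F)$ also denotes the strict functor-bimodule $(b,A)\mapsto\kat b(b,H^0(F)(A))$ attached to the functor $H^0(F)\colon H^0(\cat A)\to\kat b$. This gives essential surjectivity at once: any $g\in\Fun_{H^0(R)}(H^0(\cat A),\kat b)$ produces a strict dg-functor $g\circ q\colon\cat A\to\kat b$, where $q\colon\cat A\to H^0(\cat A)$ is the projection (a dg-functor since $\cat A^0=Z^0\cat A$), and its associated quasi-functor $F$ satisfies $H^0(F)=g$. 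It also shows that, for full faithfulness, I may assume $F$ and $G$ are the strict functor-bimodules of $g_F=H^0(F)$ and $g_G=H^0(G)$.

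It then remains to compute $H^0$ of the hom-complex, that is $\Hom_{\dercomp(\cat A,\kat b)}(F,G)$, and to identify it with $\Fun_{H^0(R)}(H^0(\cat A),\kat b)(g_F,g_G)=\mathrm{Nat}(g_F,g_G)$. Here the nonpositivity of $\cat A$ is essential. Since $\kat b\otimes\opp{\cat A}$ is cohomologically concentrated in nonpositive degrees, the derived bimodule hom is concentrated in nonnegative cohomological degrees: resolving the diagonal of $\cat A$ by its bar complex and using that $F_A$ is representable (so $\RHom_{\kat b}(F_A,G_A)\cong\kat b(\Phi_F(A),\Phi_G(A))$ sits in degree $0$) realizes it as the Hochschild-type cochain complex with terms $\prod_{A_0,\dots,A_n}\Hom_R(\cat A(A_0,A_1)\otimes\cdots\otimes\cat A(A_{n-1},A_n),\kat b(\Phi_F(A_n),\Phi_G(A_0)))$, all lying in degrees $\geq 0$ because $\cat A$ is in degrees $\leq 0$ and $\kat b$ in degree $0$. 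Hence its $H^0$ is the kernel of the differential out of the degree-$0$ term $\prod_A\kat b(\Phi_F(A),\Phi_G(A))$, which is exactly naturality with respect to degree-$0$ (equivalently $H^0(\cat A)$-)morphisms, i.e. $\mathrm{Nat}(g_F,g_G)$. Conceptually, degree-$0$ bimodules form the heart of the canonical t-structure on $\dercomp(\cat A,\kat b)$ (which exists precisely because $\kat b\otimes\opp{\cat A}$ is nonpositive), and derived homs between heart objects are computed in the heart. A routine verification that these identifications are compatible with $F\mapsto H^0(F)$ finishes the argument. The hard part is exactly this last degree analysis: everything hinges on the nonpositive concentration of $\cat A$ forcing the derived hom-complex into nonnegative degrees, so that no higher cohomology contaminates the degree-$0$ part.
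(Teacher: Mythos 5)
Your proof is correct, and its skeleton coincides with the paper's: the same preliminary reductions (replace $\cat B$ by $H^0(\cat B)$, and $\cat A$ by an h-projective model strictly concentrated in nonpositive degrees, via Lemma \ref{lemma:cofibrantresolutions_nonpositive} and Proposition \ref{prop: ind_rqr_firstargument}), and the same essential surjectivity argument (compose $G \colon H^0(\cat A) \to H^0(\cat B)$ with the projection $\cat A \to H^0(\cat A)$). Where you genuinely diverge is full faithfulness. The paper observes that a quasi-functor $F$ into $H^0(\cat B)$ satisfies $H^k(F_A^B)=0$ for $k \neq 0$, hence lies in the heart of the canonical t-structure on $\dercomp(\opp{\cat A} \otimes H^0(\cat B))$ (Proposition \ref{prop:dercomp_naturaltstruct}); since homs in a derived category between heart objects are heart homs, it remains to identify that heart, $\Mod(H^0(\opp{\cat A} \otimes_R H^0(\cat B)))$, with $\Mod(\opp{H^0(\cat A)} \otimes_{H^0(R)} H^0(\cat B))$, which is exactly what Lemma \ref{lemma:H^0_comparison_complexes} provides, and full faithfulness follows because functors embed fully faithfully into the latter module category as bimodules. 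You instead compute the hom-complex by hand: after strictifying $F$ and $G$ by truncation, you resolve the diagonal of $\cat A$ by the bar complex and exhibit $\RHom(F,G)$ as a Hochschild-type cochain complex concentrated in degrees $\geq 0$, whose degree-zero cocycles are precisely the natural transformations $H^0(F) \Rightarrow H^0(G)$ (your indexing of the target as $\kat b(\Phi_F(A_n),\Phi_G(A_0))$ has the variance reversed relative to your ordering of the tensor factors, but this is an immaterial relabelling). Both arguments turn on the same phenomenon — nonpositivity of $\cat A$ forces the derived hom between degree-zero bimodules into degrees $\geq 0$ — and you even state the heart reformulation as an aside; what your route buys is independence from Lemma \ref{lemma:H^0_comparison_complexes} and from the comparison of module categories, with naturality over $H^0(\cat A)$ read off directly from the cocycle condition, at the price of justifying that the bar construction yields an h-projective bimodule resolution (which does hold, since you arranged $\cat A$ to be h-projective and each $F_A$ is representable, hence h-projective over $\kat b$) and of the final compatibility check with $F \mapsto H^0(F)$, which you correctly flag as routine.
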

\begin{proof}
Thanks to Lemma \ref{lemma:cofibrantresolutions_nonpositive} and Proposition \ref{prop: ind_rqr_firstargument} we may assume that $\cat A$ is an h-flat dg-category which is strictly concentrated in nonpositive degrees. Moreover, we can and will replace $\hproj^{\mathrm{rqr}}(\cat A, \cat B)$ with $\hproj^{\mathrm{rqr}}(\cat A, H^0(\cat B))$, since $\cat B$ is quasi-equivalent to $H^0(\cat B)$ by hypothesis.

We now continue the proof under these assumptions.  We first show
essential surjectivity. Let $G \colon H^0(\cat A) \to H^0(\cat B)$ be
an $H^0(R)$-linear functor. We view both $H^0(\cat A)$ and
$H^0(\cat B)$ as $R$-linear dg-categories via the restriction along
$R \to H^0(R)$. Then, taking the composition
\[
F \colon \cat A \to H^0(\cat A) \xrightarrow{G} H^0(\cat B),
\]
it is clear that we get a quasi-functor (indeed, a dg-functor) $F$ such that $H^0(G) \cong F$.

We now concentrate on fully faithfulness. First, we view $H^0(\hproj^{\mathrm{rqr}}(\cat A, H^0(\cat B))$ as a subcategory of the derived category $\dercomp(\opp{\cat A} \otimes H^0(\cat B))$. By assumption, $\opp{\cat A} \otimes H^0(\cat B)$ is (strictly) concentrated in nonpositive degrees, so the derived category $\dercomp(\opp{\cat A} \otimes H^0(\cat B))$ has the natural t-structure of Proposition \ref{prop:dercomp_naturaltstruct}. If $F \in H^0(\hproj^{\mathrm{rqr}}(\cat A, H^0(\cat B))$, then by hypothesis there is a quasi-isomorphism
\[
H^0(\cat B)(-,\Phi_F(A)) \xrightarrow{\sim} F_A,
\]
for all $A \in \cat A$. In particular, for all $A \in \cat A$ and $B \in H^0(\cat B)$, this means that
\[
H^k(F_A^B)=0,
\]
for $k \neq 0$. So, $F$ lives in the heart $\Mod(H^0(\opp{\cat A} \otimes H^0(\cat B))$. Hence, we have a commutative diagram of categories and functors:
\[
\begin{tikzcd}
{H^0(\hproj^\mathrm{rqr}(\cat A, H^0(\cat B)))} \arrow[r, "H^0"] \arrow[d, hook] & {\Fun(H^0(\cat A),H^0(\cat B))} \arrow[d, hook] \\
\Mod(H^0(\opp{\cat A} \otimes_R H^0(\cat B))) \arrow[r]                                & \Mod(\opp{H^0(\cat A)} \otimes_{H^0(R)} H^0(\cat B)),    
\end{tikzcd}
\]
where the left vertical fully faithful functor is obtained by viewing a functor $H^0(\cat A) \to H^0(\cat B)$ as a $H^0(\cat A)$-$H^0(\cat B)$-bimodule. In particular, if we are able to prove that the lower horizontal functor
\begin{equation}
    \Mod(H^0(\opp{\cat A} \otimes_R H^0(\cat B))) \to \Mod(\opp{H^0(\cat A)} \otimes_{H^0(R)} H^0(\cat B)),
\end{equation}
induced by restriction along the natural functor
\begin{equation} \label{eq:functor_comparison_H0}
\begin{split}
\opp{H^0(\cat A)} \otimes_{H^0(R)} H^0(\cat B) &\to H^0(\opp{\cat A} \otimes_R H^0(\cat B)), \\
[f] \otimes [g] & \mapsto [f \otimes g],
\end{split}
\end{equation}
is fully faithful, then we immediately conclude. This claim follows from the fact that \eqref{eq:functor_comparison_H0} is actually an isomorphism, thanks to Lemma \ref{lemma:H^0_comparison_complexes} below.
\end{proof}

Below, we prove Lemma \ref{lemma:H^0_comparison_complexes} together with a few other technical results about derived tensor products of dg-modules, which will be useful later on.

\begin{lemma} \label{lemma:tensorproduct_nonpositive}
Let $V$ and $W$ two $R$-dg-modules which have cohomology concentrated in nonpositive degrees. Then, the same is true for the derived tensor product $V \lotimes_R W$.
\end{lemma}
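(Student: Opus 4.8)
The plan is to reduce to strict representatives and then observe that a tensor product of complexes concentrated in nonpositive degrees is again concentrated in nonpositive degrees.

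First I would replace $V$ and $W$ by their smart truncations $\tau_{\le 0}V$ and $\tau_{\le 0}W$. Because $R$ is \emph{strictly} concentrated in nonpositive degrees we have $R^1 = 0$, hence $R^0 = Z^0(R)$, and this is exactly what guarantees that $\tau_{\le 0}V$ and $\tau_{\le 0}W$ are honest $R$-submodules of $V$ and $W$ (the same verification as for the hom-truncation $\tau^{\le 0}$ in \S\ref{sec:trunc}, where strictness of $R$ was also used). Since $V$ and $W$ already have vanishing cohomology in positive degrees, the inclusions $\tau_{\le 0}V \hookrightarrow V$ and $\tau_{\le 0}W \hookrightarrow W$ are quasi-isomorphisms, so $V \lotimes_R W \cong \tau_{\le 0}V \lotimes_R \tau_{\le 0}W$ in $\dercomp(R)$. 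Thus I may assume from the outset that both $V$ and $W$ are strictly concentrated in nonpositive degrees.

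Next I would choose an h-flat resolution in order to compute the derived tensor product. Here I use the key technical input: a module strictly concentrated in nonpositive degrees admits a semifree (hence h-flat) resolution $P \xrightarrow{\sim} W$ with $P$ itself strictly concentrated in nonpositive degrees. This is the module-level analogue of Lemma~\ref{lemma:cofibrantresolutions_nonpositive}, obtained by the standard cell-by-cell construction of a semifree resolution over a connective dg-ring in which one only ever attaches free generators in nonpositive degrees; since $R$ lives in nonpositive degrees, every stage of the resolution --- and hence the colimit $P$ --- stays in nonpositive degrees.

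Finally, with such a $P$ in hand, $V \lotimes_R W \cong V \otimes_R P$ because $P$ is h-flat. Since $V^i = 0$ for $i>0$ and $P^j = 0$ for $j>0$, the degree-$n$ component $(V \otimes_R P)^n$, a subquotient of $\bigoplus_{i+j=n} V^i \otimes P^j$, vanishes for $n>0$; that is, $V \otimes_R P$ is itself strictly concentrated in nonpositive degrees, so \emph{a fortiori} its cohomology is. The main obstacle is the existence of the nonpositive h-flat resolution in the middle step; the smart-truncation reduction and the final degree count are routine. If one prefers to avoid quoting a resolution lemma, the same conclusion can be phrased as the stability of the aisle $\dercomp(R)_{\le 0}$ of the t-structure of Proposition~\ref{prop:dercomp_naturaltstruct} under $\lotimes_R$, but establishing that stability reduces to the very same resolution argument.
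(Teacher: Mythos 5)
Your proof is correct and follows essentially the same route as the paper's: truncate both modules via $\tau_{\leq 0}$, replace one factor by an h-flat resolution strictly concentrated in nonpositive degrees (possible because $R$ is strictly connective), and conclude by a degree count on the underlying tensor product. The only differences are cosmetic --- you resolve $W$ where the paper resolves $V$, and you spell out the details (why strictness of $R$ makes the truncations honest dg-submodules, and how the connective semifree resolution is built) that the paper compresses into ``the result is immediate.''
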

\begin{proof}
By hypothesis, the natural morphisms $\tau_{\leq 0}V \to V$ and $\tau_{\leq 0}W \to W$ are quasi-isomorphisms. Next, we may replace $\tau_{\leq 0} V$ with an h-flat complex strictly concentrated in nonpositive degrees $Q(\tau_{\leq 0} V)$ (recall that $R$ is strictly concentrated in nonpositive degrees), and compute:
\[
V \lotimes_R W \cong Q(\tau_{\leq 0} V) \otimes_R \tau_{\leq 0} W.
\]
 Then, the result is immediate.
\end{proof}
\begin{lemma} \label{lemma:tensorproduct_nonpositive_surjectivemorphism} 
Let $W$ be an $R$-dg-module cohomologically concentrated in nonpositive degrees, and let $f \colon V \to V'$ be a morphism in $\dercomp(R)$ between dg-modules cohomologically concentrated in nonpositive degrees, such that the induced morphism
\[
H^0(f) \colon H^0(V) \to H^0(V')
\]
is surjective. Then, the morphism
\[
H^0(f \otimes 1) \colon H^0(V \lotimes_R W) \to H^0(V' \lotimes_R W)
\]
is also surjective.
\end{lemma}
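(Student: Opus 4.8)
The plan is to reduce the surjectivity assertion to a vanishing statement about a cone, and then to deduce that vanishing from the preceding Lemma~\ref{lemma:tensorproduct_nonpositive} by a single degree shift. Working in the triangulated category $\dercomp(R)$, I would first complete $f$ to a distinguished triangle
\[
V \xrightarrow{f} V' \to C \to V[1],
\]
where $C$ denotes the cone of $f$. Since the derived tensor product $-\lotimes_R W$ is a triangulated (exact) endofunctor of $\dercomp(R)$, applying it produces a distinguished triangle
\[
V \lotimes_R W \xrightarrow{f \otimes 1} V' \lotimes_R W \to C \lotimes_R W \to (V \lotimes_R W)[1],
\]
whose third term is thus (canonically isomorphic to) the cone of $f \otimes 1$. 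The associated long exact cohomology sequence contains the segment $H^0(V' \lotimes_R W) \to H^0(C \lotimes_R W)$ directly after $H^0(f \otimes 1)$, so by exactness it suffices to prove that $H^0(C \lotimes_R W) = 0$.

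Next I would pin down the cohomological concentration of $C$ itself, using the long exact sequence of the original triangle together with the hypotheses that $V$ and $V'$ are cohomologically concentrated in nonpositive degrees. For $n \geq 1$ the terms $H^n(V')$ and $H^{n+1}(V)$ both vanish, forcing $H^n(C) = 0$; in degree $0$ the segment $H^0(V) \xrightarrow{H^0(f)} H^0(V') \to H^0(C) \to H^1(V) = 0$ identifies $H^0(C)$ with $\coker H^0(f)$. The surjectivity hypothesis on $H^0(f)$ therefore forces $H^0(C) = 0$, so $C$ is cohomologically concentrated in degrees $\leq -1$.

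The shift then does the rest. Because $C$ is concentrated in degrees $\leq -1$, the shift $C[-1]$ is cohomologically concentrated in nonpositive degrees, and $W$ is so by assumption; hence Lemma~\ref{lemma:tensorproduct_nonpositive} applies and shows that $C[-1] \lotimes_R W$ is cohomologically concentrated in nonpositive degrees. Using the natural isomorphism $C[-1] \lotimes_R W \cong (C \lotimes_R W)[-1]$, this says precisely that $H^m(C \lotimes_R W) = 0$ for all $m \geq 0$, and in particular $H^0(C \lotimes_R W) = 0$. Feeding this back into the long exact sequence of the tensored triangle yields the surjectivity of $H^0(f \otimes 1)$.

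I do not anticipate a serious obstacle here: the only points demanding care are the bookkeeping that identifies $C \lotimes_R W$ with the cone of $f \otimes 1$ (i.e.\ the exactness of $-\lotimes_R W$) and the shift that converts the ``$\leq -1$'' concentration of $C$ into the nonpositive-degree hypothesis required to invoke Lemma~\ref{lemma:tensorproduct_nonpositive}. The genuine content has already been isolated in that earlier lemma, whose proof rests on the strict nonpositivity of $R$; the present statement is then a clean triangulated consequence.
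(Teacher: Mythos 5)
Your proposal is correct and follows essentially the same route as the paper: both arguments form the cone triangle, use the long exact sequence together with surjectivity of $H^0(f)$ to see that the (shifted) cone is cohomologically concentrated in nonpositive degrees, and then apply Lemma~\ref{lemma:tensorproduct_nonpositive} to the tensored triangle to conclude. The only cosmetic difference is that the paper rotates the triangle to work with $\cone(f)[-1]$ from the start, whereas you work with $\cone(f)$ and perform the shift when invoking the lemma.
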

\begin{proof}
We consider the following distinguished triangle in $\dercomp(R)$:
\[
\cone(f)[-1] \to V \xrightarrow{f} V'.
\]
Taking the long exact sequence in cohomology and using that $H^0(f)$ is surjective, we see that $\cone(f)[-1]$ is also cohomologically concentrated in nonpositive degrees. By tensoring $- \lotimes_R W$, we obtain a distinguished triangle
\[
\cone(f)[-1] \lotimes_R W \to V \lotimes_R W \xrightarrow{f \otimes 1} V' \lotimes_R W 
\]
Every object in the above distinguished triangle is cohomologically concentrated in nonpositive degrees, being a derived tensor product of dg-modules with such property (cf. Lemma \ref{lemma:tensorproduct_nonpositive}). Taking the long exact sequence in cohomology, we conclude that $H^0(f \otimes 1)$ is indeed surjective.
\end{proof}
\begin{lemma} \label{lemma:H^0_comparison_complexes}
Let $V$ and $W$ be $R$-dg-modules which are cohomologically concentrated in nonpositive degrees. Then, the natural ($H^0(R)$-linear) morphism
\begin{equation}
\begin{split} \label{eq:H^0_comparison_complexes}
H^0(V) \otimes_{H^0(R)} H^0(W) & \to H^0(V \lotimes_R W), \\
[f] \otimes [g] & \mapsto [f \otimes g],
\end{split}
\end{equation}
is an isomorphism.
\end{lemma}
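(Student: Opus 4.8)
The plan is to reduce everything to strict, h-flat, nonpositively concentrated models and then to compute all four cohomology groups by hand. First I would choose, as in the proof of Lemma \ref{lemma:tensorproduct_nonpositive}, an h-flat resolution $P \xrightarrow{\sim} \tau_{\leq 0} V$ with $P$ strictly concentrated in nonpositive degrees, and set $W' = \tau_{\leq 0} W$, which is likewise strict nonpositive and quasi-isomorphic to $W$. Since $P$ is h-flat and both $P \to V$ and $W' \to W$ are quasi-isomorphisms, one has $V \lotimes_R W \cong P \otimes_R W'$, so the statement becomes one about honest (strict) complexes concentrated in nonpositive degrees over the ring $R$, which is itself strict nonpositive.

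Next I would write out the degree-$0$ and degree-$(-1)$ parts explicitly. Because $P$, $W'$ and $R$ all vanish in positive degrees, the degree-$0$ part of the tensor collapses to $(P \otimes_R W')^0 = P^0 \otimes_{R^0} Z^0(W)$ (only the summand with both tensor factors in degree $0$ survives the $R$-relations), and since there are no positive degrees the top differential is zero, so $H^0(P \otimes_R W') = \coker\big(d^{-1} \colon (P \otimes_R W')^{-1} \to P^0 \otimes_{R^0} Z^0(W)\big)$. A general degree-$(-1)$ element is a sum of pure tensors of bidegree $(-1,0)$ and $(0,-1)$; applying the Leibniz rule and using $R^1 = P^1 = 0$ together with $Z^0(W) = \ker d$, I would compute that the image of $d^{-1}$ is precisely the subgroup generated by $dP^{-1} \otimes Z^0(W)$ and $P^0 \otimes dW^{-1}$. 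In parallel, $H^0(V) = P^0/dP^{-1}$, $H^0(R) = R^0/dR^{-1}$ and $H^0(W) = Z^0(W)/dW^{-1}$, and from $R^1 = 0$ and the Leibniz rule one checks that $dP^{-1}$ and $dW^{-1}$ are stable under the $R^0$-action and that $dR^{-1}$ acts as zero on the relevant quotients, so all of these are genuine $H^0(R)$-modules.

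It then remains a purely algebraic comparison. Writing $A = R^0$, $I = dR^{-1}$, $M = P^0$, $N = Z^0(W)$, $M_0 = dP^{-1}$, $N_0 = dW^{-1}$, the inclusions $IM \subseteq M_0$ and $IN \subseteq N_0$ (again from the Leibniz rule) make $M/M_0$ and $N/N_0$ into $A/I$-modules, and by right-exactness of the tensor product applied twice (i.e.\ the base-change identity $M/M_0 \otimes_{A/I} N/N_0 \cong (M \otimes_A N)/(\mathrm{im}(M_0 \otimes N) + \mathrm{im}(M \otimes N_0))$) this is exactly the cokernel computed in the previous step; unwinding, the identification is the stated map $[f] \otimes [g] \mapsto [f \otimes g]$. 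The main obstacle I anticipate is the bookkeeping in the second step: correctly identifying the image of $d^{-1}$ inside $P^0 \otimes_{R^0} Z^0(W)$, in particular ensuring the tensor is taken over $R^0$ (not over the ground field) so that the two families of boundaries assemble into exactly the submodule appearing in the algebraic formula, and verifying that the $R$-linearity of the tensor in negative degrees introduces no further relations.
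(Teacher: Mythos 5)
Your proof is correct and follows essentially the same route as the paper: the paper's proof also reduces to strict, nonpositively concentrated h-flat models so that $V \lotimes_R W$ becomes an honest tensor product, and then appeals to a ``diagram chase, which is left to the reader.'' Your computation of $(P \otimes_R W')^0 = P^0 \otimes_{R^0} Z^0(W)$, of the image of $d^{-1}$, and the final base-change identity is precisely that omitted chase, carried out correctly.
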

\begin{proof}
As in the proof of Lemma \ref{lemma:tensorproduct_nonpositive}, we may replace $V$ and $W$ with h-flat dg-modules strictly concentrated in nonpositive degrees, so that we may identify $V \lotimes W = V \otimes W$. Then, the result follows from a diagram chase, which is left to the reader.
\end{proof}

The following result is a precise formulation of the intuition that if $\cat A$ is concentrated in non-positive degrees then a quasi-functor
$\cat A\rightarrow \cat B^{\le 0}$ is the same as a quasi-functor $\cat A\rightarrow \cat B$, concentrated in non-positive degrees (recall that a quasi-functor is a bimodule). The proof turns out to be a bit involved. 

\begin{lemma} \label{lemma:truncations_image_qfun}
Let $\cat A$ and $\cat B$ be dg-categories, and assume that $\cat A$ has cohomology concentrated in nonpositive degrees and h-projective complexes of morphisms. We denote by $i^{\leq 0} \colon \tau^{\leq 0} \cat B \to \cat B$ the natural dg-functor. This induces a quasi-equivalence
\begin{equation*}
    \tau^{\leq 0} i^{\leq 0}_* \colon \tau^{\leq 0} \hproj^{\mathrm{rqr}}(\cat A, \tau^{\leq 0}\cat B) \to \tau^{\leq 0}\hproj^{\mathrm{rqr}}(\cat A, \cat B).
\end{equation*}
\end{lemma}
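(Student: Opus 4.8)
The plan is to verify that the dg-functor $\tau^{\leq 0}i^{\leq 0}_*$ is quasi-fully faithful and quasi-essentially surjective. Throughout I regard $i^{\leq 0}_*$ as induction $\Ind_{1\otimes i^{\leq 0}}$ along $1\otimes i^{\leq 0}\colon \opp{\cat A}\otimes\tau^{\leq 0}\cat B\to\opp{\cat A}\otimes\cat B$ (equivalently, postcomposition of a quasi-functor with the dg-functor $i^{\leq 0}$); this is a genuine dg-functor on the $\hproj^{\mathrm{rqr}}$'s because induction sends representables to representables, so that $(i^{\leq 0}_*G)_A\cong\cat B(-,\Phi_G(A))$. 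Since smart truncation does not alter $H^0$, we have $H^0(\tau^{\leq 0}\hproj^{\mathrm{rqr}}(\cat A,-))=\dercomp^{\mathrm{rqr}}(\cat A,-)$, so quasi-essential surjectivity of $\tau^{\leq 0}i^{\leq 0}_*$ is just essential surjectivity of $i^{\leq 0}_*$ on $\dercomp^{\mathrm{rqr}}$, while quasi-full faithfulness means the comparison maps of derived bimodule $\Hom$-complexes are isomorphisms on $H^{k}$ for every $k\leq 0$. As in the proof of Proposition \ref{prop:H0_equiv_iftarget_H0}, I would first reduce, using Lemma \ref{lemma:cofibrantresolutions_nonpositive}, Proposition \ref{prop: ind_rqr_firstargument}, and the fact that $\tau^{\leq 0}$ preserves quasi-equivalences, to the case where $\cat A$ is h-flat and \emph{strictly} concentrated in nonpositive degrees.

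For quasi-full faithfulness, fix quasi-functors $F,G\colon\cat A\to\tau^{\leq 0}\cat B$ with representing objects $\Phi_F(A),\Phi_G(A)$. Using the adjunction $\Ind_{1\otimes i^{\leq 0}}\dashv\Res_{1\otimes i^{\leq 0}}$, the comparison map is identified with postcomposition by the unit $\eta\colon G\to\Res i^{\leq 0}_*G$, and I would show that
\[
\dercomp(\cat A,\tau^{\leq 0}\cat B)(F,G[k])\xrightarrow{\ \eta_*\ }\dercomp(\cat A,\tau^{\leq 0}\cat B)(F,\Res i^{\leq 0}_*G[k])
\]
is an isomorphism for all $k\leq 0$. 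Concretely $(\Res i^{\leq 0}_*G)_A\cong\cat B(i^{\leq 0}(-),\Phi_G(A))$ and $\eta$ is pointwise the inclusion $\tau_{\leq 0}\cat B(-,\Phi_G(A))\hookrightarrow\cat B(-,\Phi_G(A))$, whose cone $\cone(\eta)$ is entrywise concentrated in strictly positive cohomological degrees. By the long exact sequence it then suffices to prove $\dercomp(\cat A,\tau^{\leq 0}\cat B)(F,\cone(\eta)[k])=0$ for all $k\leq 0$. Since $F$ is right quasi-representable, \eqref{eq:coYoneda} together with the derived Yoneda lemma reduces this derived bimodule $\Hom$ to the derived end $\int_A \cone(\eta)_A^{\Phi_F(A)}$, whose entries lie in positive degrees.

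The main point, and the step I expect to require the most care, is that this derived end stays concentrated in positive degrees. Modelling it by the cobar totalization whose $p$-th term is a product of $R$-complexes
\[
\RHom_R\bigl(\cat A(A_0,A_1)\otimes\cdots\otimes\cat A(A_{p-1},A_p),\,\cone(\eta)_{A_p}^{\Phi_F(A_0)}\bigr),
\]
I would use that each hom-complex of $\cat A$ is h-projective with cohomology in degrees $\leq 0$, while each $\cone(\eta)$-entry lies in degrees $\geq 1$. Since $R$ is strictly concentrated in nonpositive degrees, the canonical t-structure on $\dercomp(R)$ gives $\RHom_R(V,W)\in\dercomp(R)^{\geq 1}$ whenever $V\in\dercomp(R)^{\leq 0}$ and $W\in\dercomp(R)^{\geq 1}$; hence every cobar term is cohomologically positive, and the complete half-plane filtration forces the totalization to be so as well. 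This yields $\dercomp(\cat A,\tau^{\leq 0}\cat B)(F,\cone(\eta)[k])=0$ for $k\leq 0$ and hence quasi-full faithfulness; the delicate aspects here are the convergence/boundedness of the product totalization and the precise use of h-projectivity to guarantee that the cobar complex computes the homotopy end.

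For quasi-essential surjectivity, take any quasi-functor $F\colon\cat A\to\cat B$, choose an h-projective right quasi-representable model with $F_A\xleftarrow{\sim}\cat B(-,\Phi_F(A))$, and set $G:=\tau_{\leq 0}\Res_{1\otimes i^{\leq 0}}(F)$. Because $\cat A$ is now strictly concentrated in nonpositive degrees, its action raises no cohomological degree, so the entrywise smart truncation is stable under the $\cat A$- and $\tau^{\leq 0}\cat B$-actions and $G$ is a genuine sub-$\cat A$-$\tau^{\leq 0}\cat B$-bimodule of $\Res(F)$. Truncating the restricted representing quasi-isomorphism identifies $G_A\cong\tau^{\leq 0}\cat B(-,\Phi_F(A))$, so $G$ is right quasi-representable over $\tau^{\leq 0}\cat B$; applying $i^{\leq 0}_*$ to the inclusion $G\hookrightarrow\Res(F)$ and composing with the counit then produces a natural map $i^{\leq 0}_*G\to F$ which is pointwise the representing quasi-isomorphism $\cat B(-,\Phi_F(A))\xrightarrow{\sim}F_A$, hence an isomorphism in $\dercomp^{\mathrm{rqr}}(\cat A,\cat B)$ (replacing $G$ by an h-projective resolution to view it in $\hproj^{\mathrm{rqr}}$). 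Combined with the previous paragraphs, this establishes that $\tau^{\leq 0}i^{\leq 0}_*$ is a quasi-equivalence.
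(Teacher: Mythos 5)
Your proposal is correct, and its skeleton is close to the paper's own proof: both rest on the adjunction $\Ind_{1\otimes i^{\leq 0}}\dashv\Res_{1\otimes i^{\leq 0}}$ plus entrywise representability computations, and your quasi-essential-surjectivity argument is literally the paper's counit computation — your $G=\tau_{\leq 0}\Res(F)$ is the paper's right adjoint $G\mapsto\tau_{\leq 0}G^{i^{\leq 0}}$ evaluated at $F$, and the observation that strict nonpositivity of $\cat A$ and $\tau^{\leq 0}\cat B$ makes the entrywise smart truncation a genuine sub-bimodule is used there in the same way. The genuine divergence is in quasi-full faithfulness. The paper inserts the truncation into the $\Ind\dashv\Res$ adjunction by invoking the natural t-structure on $\dercomp(\cat A,\tau^{\leq 0}\cat B)$ (Proposition \ref{prop:dercomp_naturaltstruct}, applicable because $\opp{\cat A}\otimes\tau^{\leq 0}\cat B$ has cohomology in nonpositive degrees, cf.\ Lemma \ref{lemma:tensorproduct_nonpositive}), and then checks the unit is an entrywise isomorphism; you instead kill $\dercomp(\cat A,\tau^{\leq 0}\cat B)(F,\cone(\eta)[k])$ for $k\leq 0$ by a cobar-totalization computation. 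But that vanishing is exactly the aisle orthogonality of the t-structure of Proposition \ref{prop:dercomp_naturaltstruct}: $F$ lies in $\dercomp(\cat A,\tau^{\leq 0}\cat B)_{\leq 0}$ and $\cone(\eta)[k]$ lies entrywise in degrees $\geq 1$, so a citation replaces the entire cobar discussion — which is precisely where all your delicate points (convergence of the product totalization via the complete filtration and Milnor sequences, h-projectivity hypotheses making the cobar complex compute the homotopy end) are concentrated. Your hand-rolled argument does close (each cobar term lies in $\dercomp(R)^{\geq 1}$, products of complexes are exact, and the complete-filtration/$\varprojlim^{1}$ argument gives vanishing of the totalization in degrees $\leq 0$), so what it buys is self-containedness, at the cost of reproving a special case of an available result. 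Two small repairs if you keep your route: in the reduction step retain h-\emph{projectivity} (not merely h-flatness) of the hom-complexes of $\cat A$ — Lemma \ref{lemma:cofibrantresolutions_nonpositive} provides a resolution that is both h-projective and strictly nonpositive, and your cobar identification and the entrywise h-projectivity of $F_A$ need it; and in the surjectivity step the map to be checked entrywise should be the composite $i^{\leq 0}_*Q(G)\to i^{\leq 0}_*G\to i^{\leq 0}_*\Res(F)\to F$ for an h-projective resolution $Q(G)\to G$, since the strict induction of the non-h-projective $G$ itself need not have the correct homotopy type.
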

\begin{proof}
By assumption, the natural dg-functor $\tau^{\leq 0} \cat A \to \cat A$ is a quasi-equivalence. Using this and suitable resolutions (in particular, Lemma \ref{lemma:cofibrantresolutions_nonpositive}), we can assume that $\cat A$ is strictly concentrated in nonpositive degrees.

The dg-functor $i^{\leq 0}_*$ is given by induction along
\begin{equation*}
    \opp{\cat A} \otimes \tau^{\leq 0} \cat B \xrightarrow{1 \otimes i^{\leq 0}} \opp{\cat A} \otimes \cat B.
\end{equation*}
It can also be described by the following formula:
\begin{equation}
    (i^{\leq 0}_*(F))_A^B \cong \int^{B' \in \tau^{\leq 0} \cat B} F_A^{B'} \otimes \cat B(B, i^{\leq 0}(B')),
\end{equation}
which also satisfies the adjunction property:
\begin{equation} \label{eq:truncations_image_qfun_strict_adj}
    \compdg(\cat A, \cat B)(i^{\leq 0}_*(F), G) \cong \compdg(\cat A, \tau^{\leq 0}\cat B)(F, G^{i^{\leq 0}}). \tag{$\star$}
\end{equation}
We remark that if $F$ is a quasi-functor and h-projective as a bimodule, then the same is true for $i^{\leq 0} _*F$. In particular, we have a quasi-isomorphism of right $\cat B$-dg-modules:
\[
\cat B(-,i^{\leq 0}(\Phi_F(A))) \xrightarrow{\sim} (i^{\leq 0}_*F)_A,
\]
for a suitable $\Phi_F(A) \in \Ob(\tau^{\leq 0}\cat B)$, for all $A \in \cat A$. So, if $F$ is h-projective, taking nonpositive cohomology in \eqref{eq:truncations_image_qfun_strict_adj} we get an isomorphism
\[
\dercomp(\cat A, \cat B)(i^{\leq 0}_*(F)[n], G) \cong \dercomp(\cat A, \tau^{\leq 0} \cat B)(F[n], G^{i^{\leq 0}})
\]
for $n \geq 0$. Now, since both $\cat A$ and $\tau^{\leq 0} \cat B$ are concentrated in nonpositive degrees, the derived category $\dercomp(\cat A, \tau^{\leq 0} \cat B)$ is endowed with the natural t-structure. Moreover, since $F$ is a quasi-functor, we have:
\[
H^i(F_A^B) \cong H^i(\tau^{\leq 0}\cat B(B, \Phi_F(A)) = 0,
\]
if $i>0$. So, $F[n]$ lies in the aisle $\dercomp(\cat A, \tau^{\leq 0}\cat B)_{\leq 0}$ for all $n\geq 0$, and we have an isomorphism
\[
\dercomp(\cat A, \tau^{\leq 0} \cat B)(F[n], G^{i^{\leq 0}}) \cong \dercomp(\cat A, \tau^{\leq 0} \cat B)(F[n], \tau_{\leq 0}G^{i^{\leq 0}}).
\]
In the end, we get an isomorphism
\[
\dercomp(\cat A, \cat B)(i^{\leq 0}_*(F)[n], G) \cong \dercomp(\cat A, \tau^{\leq 0} \cat B)(F[n], \tau_{\leq 0}G^{i^{\leq 0}})
\]
for $n \geq 0$. Moreoer, if $G$ is a quasi-functor, we see that $\tau_{\leq 0} G^{i^{\leq 0}}$ is also a quasi functor, indeed for all $A \in \cat A$ we have:
\begin{align*}
    \tau_{\leq 0}G_A^{i^{\leq 0}} & \cong \tau_{\leq 0}\cat B(i^{\leq 0}(-),\Phi_G(A)) \\
    &= (\tau^{\leq 0}\cat B)(-,\Phi_G(A)),
\end{align*}
for a suitable object $\Phi_G(A) \in \Ob(\cat B)=\Ob(\tau^{\leq 0}\cat B)$. This implies that the (graded) functor
\[
H^*(\tau^{\leq 0} i^{\leq 0}_*) \colon H^*(\tau^{\leq 0} \hproj^{\mathrm{rqr}}(\cat A, \tau^{\leq 0}\cat B)) \to H^*(\tau^{\leq 0}\hproj^{\mathrm{rqr}}(\cat A, \cat B))
\]
has a right adjoint defined (up to suitable resolutions) by
\[
G \mapsto \tau_{\leq 0} G^{i^{\leq 0}}.
\]

To conclude, we show that both the unit and the counit morphisms of this adjunction are isomorphisms in the suitable (graded) categories. First, we consider
\[
\eta_F \colon F \to \tau_{\leq 0} (i_*^{\leq 0}F)^{i^{\leq 0}},
\]
for a given quasi-functor $F \colon \cat A \to \tau^{\leq 0}\cat B$. We want to show that $H^*(\eta_F)$ is an isomorphism. For all $A \in \cat A$, we have a commutative diagram:
\[
\begin{tikzcd}
H^*(F_A) \arrow[r, "H^*(\eta_F)_A"] \arrow[d, "\sim"]                     & H^*(\tau_{\leq 0} (i_*^{\leq 0}F)^{i^{\leq 0}}_A) \arrow[d, "\sim"]     \\
{H^*(\tau^{\leq 0}\cat B(-,\Phi_F(A)))} \arrow[r] & {H^*(\tau_{\leq 0} \cat B(i^{\leq 0}(-), i^{\leq 0}(\Phi_F(A)))}
\end{tikzcd}
\]
the lower horizontal arrow is clearly an isomorphism, hence the same is true for $H^*(\eta_F)_A$, for all $A$, as we wanted.

Now, we consider:
\[
\varepsilon_G \colon i^{\leq 0}_*(\tau_{\leq 0} G^{i^{\leq 0}}) \to G,
\]
for a given quasi-functor $G \colon \cat A \to \cat B$. We want to show that $H^*(\epsilon_G)$ is an isomorphism. For all $A \in \cat A$, we have a commutative diagram:
\[
\begin{tikzcd}
H^*(i^{\leq 0}_*(\tau_{\leq 0} G^{i^{\leq 0}})_A) \arrow[r, "H^*(\varepsilon_G)_A"] \arrow[d, "\sim"]                     & H^*(G_A) \arrow[d, "\sim"]     \\
{H^*(\cat B(-,\Phi_G(A)))} \arrow[r, equal] & {H^*(\cat B(-,\Phi_G(A)))}
\end{tikzcd}
\]
The left vertical isomorphism is obtained recalling that we have a quasi-isomorphism
\[
\tau_{\leq 0} G^{i^{\leq 0}}_A \cong (\tau^{\leq 0}\cat B)(-,\Phi_G(A))
\]
and hence a quasi-isomorphism
\[
i^{\leq 0}_*(\tau_{\leq 0} G^{i^{\leq 0}})_A \cong \cat B(-,i^{\leq 0}(\Phi_G(A))) = \cat B(-,\Phi_G(A)).
\]
We conclude that $H^*(\varepsilon_G)_A$ is an isomorphism for all $A \in \cat A$, as we wanted.
\end{proof}


\section{t-structures on dg-categories} \label{sec:tstruct_dgcat}
Throughout this section we shall work with $R$-linear dg-categories, where $R$ is a commutative dg-ring (strictly) concentrated in nonpositive degrees. We recall that the homotopy category of an $R$-linear dg-category is naturally an $H^0(R)$-linear category.
\subsection{t-structures on derived categories}
We recall the following result:
\begin{proposition} \label{prop:dercomp_naturaltstruct}
Let $\smallcat a$ be a small dg-category with cohomology concentrated in nonpositive degrees. Then, the derived category $\dercomp(\smallcat a)$ has a natural (non-degenerate) t-structure such that:
\begin{align*}
    \dercomp(\smallcat a)_{\leq 0} &= \{ M \in \dercomp(\smallcat a) : H^i(M) = 0 \ \forall\, i > 0 \}, \\
    \dercomp(\smallcat a)_{\geq 0} &= \{ M \in \dercomp(\smallcat a) : H^i(M) = 0 \ \forall\, i < 0 \}.
\end{align*}
The heart $\dercomp(\smallcat a)$ is identified with the abelian category $\Mod(H^0(\smallcat a))$.
\end{proposition}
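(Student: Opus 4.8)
The plan is to build the truncation functors by hand and read off the t-structure axioms from them. First I would invoke Lemma \ref{lemma:cofibrantresolutions_nonpositive} to replace $\smallcat a$ by a quasi-equivalent dg-category that is \emph{strictly} concentrated in nonpositive degrees; since a quasi-equivalence induces an equivalence of derived categories preserving cohomology, and both the proposed subcategories and the heart are described purely cohomologically, this costs nothing. With $\smallcat a$ (and $R$) strictly nonpositive, I would define endofunctors $\tau_{\leq 0}$ and $\tau_{\geq 1}$ on $\compdg(\smallcat a)$ by applying the smart truncation of $R$-dg-modules objectwise. The one point to verify is that these objectwise truncations are again $\smallcat a$-modules: because every morphism of $\smallcat a$ sits in nonpositive degree and $\smallcat a$ has no positive components, right multiplication never raises degree and carries degree-$0$ cocycles to cocycles, so the subcomplex $\tau_{\leq 0}$ and the quotient $\tau_{\geq 1}$ are stable under the action. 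These functors descend to $\dercomp(\smallcat a)$ via $\dercomp(\smallcat a) \cong H^0(\hproj(\smallcat a))$ and satisfy $H^i(\tau_{\leq 0}M) = H^i(M)$ for $i \leq 0$ and $0$ otherwise, and dually for $\tau_{\geq 1}$.

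Defining $\dercomp(\smallcat a)_{\leq 0}$ and $\dercomp(\smallcat a)_{\geq 0}$ by the stated cohomological conditions, stability under the appropriate shifts is immediate. For the decomposition axiom, the objectwise smart truncation gives a short exact sequence of $\smallcat a$-dg-modules $0 \to \tau_{\leq 0}M \to M \to \tau_{\geq 1}M \to 0$, hence a distinguished triangle in $\dercomp(\smallcat a)$; the cohomology computation above places $\tau_{\leq 0}M \in \dercomp(\smallcat a)_{\leq 0}$ and $\tau_{\geq 1}M \in \dercomp(\smallcat a)_{\geq 1}$.

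The main obstacle is the orthogonality $\Hom_{\dercomp(\smallcat a)}(X,Y)=0$ for $X \in \dercomp(\smallcat a)_{\leq 0}$ and $Y \in \dercomp(\smallcat a)_{\geq 1}$, and this is where I would use nonpositivity of $\smallcat a$ decisively. Replacing $X$ by $\tau_{\leq 0}X$ and $Y$ by $\tau_{\geq 1}Y$, I may assume $X$ is strictly concentrated in degrees $\leq 0$ and $Y$ strictly in degrees $\geq 1$. The representable modules $\smallcat a(-,A)$ are h-projective and, since $\smallcat a$ is nonpositive, are themselves concentrated in degrees $\leq 0$; a generator placed in degree $d \leq 0$ then contributes a free summand concentrated in degrees $\leq d \leq 0$. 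Consequently a standard semifree (cellular) resolution produces an h-projective resolution $P \xrightarrow{\sim} X$ with $P$ concentrated in degrees $\leq 0$. Then $\Hom_{\dercomp(\smallcat a)}(X,Y) = H^0(\compdg(\smallcat a)(P,Y))$, and the complex $\compdg(\smallcat a)(P,Y)$ already vanishes in degree $0$ for degree reasons, as $P$ lives in degrees $\leq 0$ while $Y$ lives in degrees $\geq 1$; this gives the claim. Non-degeneracy follows at once from the cohomological description, since $\bigcap_n \dercomp(\smallcat a)_{\leq -n}$ and $\bigcap_n \dercomp(\smallcat a)_{\geq n}$ are both zero.

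Finally, the heart is the full subcategory of $M$ with $H^i(M)=0$ for $i \neq 0$; each such $M$ is isomorphic in $\dercomp(\smallcat a)$ to $H^0(M)$ placed in degree $0$, which is naturally a right $H^0(\smallcat a)$-module. I would exhibit the quasi-inverse $\Mod(H^0(\smallcat a)) \to \text{heart}$ by restricting along the natural dg-functor $\smallcat a \to H^0(\smallcat a)$, viewing an $H^0(\smallcat a)$-module as an $\smallcat a$-dg-module concentrated in degree $0$, and then check that $M \mapsto H^0(M)$ and this restriction functor are mutually inverse exact equivalences of abelian categories, identifying the heart with $\Mod(H^0(\smallcat a))$.
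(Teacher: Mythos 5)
Your proof is correct and is in substance exactly the argument the paper invokes, since the paper's own proof is just the pointer to \cite{amiot-cluster}: pass to a strictly nonpositive model of $\smallcat a$, observe that objectwise smart truncations of dg-modules are again dg-modules, obtain the decomposition triangle from the resulting short exact sequence, prove orthogonality by resolving the left-hand object by a semifree module concentrated in degrees $\leq 0$ (so that the Hom-complex vanishes in degree $0$ for degree reasons), and identify the heart with $\Mod(H^0(\smallcat a))$ via restriction along $\smallcat a \to H^0(\smallcat a)$. Two small repairs to your justifications: the initial reduction to strict nonpositivity is provided by the quasi-equivalence $\tau^{\leq 0}\smallcat a \to \smallcat a$ of \S\ref{sec:trunc}, not by Lemma~\ref{lemma:cofibrantresolutions_nonpositive} (whose hypothesis is already strict nonpositivity), and the truncation functors descend to $\dercomp(\smallcat a)$ simply because they preserve quasi-isomorphisms, not ``via $\dercomp(\smallcat a) \cong H^0(\hproj(\smallcat a))$'' (smart truncation does not preserve h-projectivity).
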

\begin{proof}
This can be proved by adapting \cite[Lemma 2.2, Proposition 2.3]{amiot-cluster}.
\end{proof}
We remark that in particular the derived category $\dercomp(R)$ of the base dg-ring is endowed with such natural t-structure. 

We now note the simple result:
\begin{lemma} \label{lemma:tstruct_tensorprod}
Let $\smallcat a$ and $\smallcat b$ be small dg-categories with cohomology concentrated in nonpositive degrees. Then, the derived tensor product $\smallcat a \otimes^{\mathbb L} \smallcat b$ has cohomology concentrated in nonpositive degrees. In particular, $\dercomp(\smallcat a \otimes^{\mathbb L} \smallcat b)$ can be endowed with the natural t-structure of Proposition \ref{prop:dercomp_naturaltstruct}.
\end{lemma}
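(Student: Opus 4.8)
The plan is to reduce the claim to the level of hom-complexes and then invoke the already-established Lemma \ref{lemma:tensorproduct_nonpositive}. Recall that for a dg-category, having ``cohomology concentrated in nonpositive degrees'' means precisely that each of its hom-complexes, regarded as an $R$-dg-module, has cohomology concentrated in nonpositive degrees. So it suffices to prove that every hom-complex of $\smallcat a \lotimes \smallcat b$ has this property.

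First I would unwind the derived tensor product of dg-categories. Choosing an h-flat resolution $Q(\smallcat a) \xrightarrow{\sim} \smallcat a$, we have $\smallcat a \lotimes \smallcat b = Q(\smallcat a) \otimes \smallcat b$, whose objects are pairs $(P,B)$ and whose hom-complexes are
\[
(Q(\smallcat a) \otimes \smallcat b)((P,B),(P',B')) = Q(\smallcat a)(P,P') \otimes_R \smallcat b(B,B').
\]
Since $Q(\smallcat a) \to \smallcat a$ is a quasi-equivalence, it is quasi-fully faithful, so each $Q(\smallcat a)(P,P')$ is quasi-isomorphic to the corresponding hom-complex of $\smallcat a$; hence $Q(\smallcat a)$ again has cohomology concentrated in nonpositive degrees. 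By construction of the h-flat resolution, moreover, each $Q(\smallcat a)(P,P')$ is an h-flat $R$-dg-module. Consequently the displayed complex computes the derived tensor product over $R$ of two $R$-dg-modules, each cohomologically concentrated in nonpositive degrees.

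At this point Lemma \ref{lemma:tensorproduct_nonpositive} applies verbatim to each individual hom-complex, yielding that $Q(\smallcat a)(P,P') \otimes_R \smallcat b(B,B')$ has cohomology concentrated in nonpositive degrees. As this holds for all objects $(P,B)$ and $(P',B')$, the dg-category $\smallcat a \lotimes \smallcat b$ has cohomology concentrated in nonpositive degrees. The ``in particular'' clause then follows immediately by applying Proposition \ref{prop:dercomp_naturaltstruct} to the small dg-category $\smallcat a \lotimes \smallcat b$.

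I do not expect a genuine obstacle here: the entire content of the lemma is carried by Lemma \ref{lemma:tensorproduct_nonpositive}, and the rest is bookkeeping. The only point requiring a moment's care is the passage through the resolution $Q(\smallcat a)$ — namely verifying that it preserves cohomological nonpositivity and that its hom-complexes are h-flat, so that the underived $Q(\smallcat a) \otimes \smallcat b$ genuinely computes the derived tensor product hom-complex by hom-complex. Both facts are immediate from the definitions of a quasi-equivalence and of an h-flat resolution.
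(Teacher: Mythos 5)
Your proof is correct. It takes a mildly but genuinely different route from the paper's one-line argument: the paper first replaces $\smallcat a$ and $\smallcat b$ by the quasi-equivalent truncations $\tau^{\leq 0}\smallcat a$ and $\tau^{\leq 0}\smallcat b$ and then invokes Lemma \ref{lemma:cofibrantresolutions_nonpositive} to pick an h-flat resolution that is itself \emph{strictly} concentrated in nonpositive degrees, so that the resulting model of the derived tensor product is strictly nonpositive on the nose. You instead take an arbitrary h-flat resolution $Q(\smallcat a)$, note that quasi-fully faithfulness transports cohomological nonpositivity from $\smallcat a$ to $Q(\smallcat a)$, and then apply the dg-module-level Lemma \ref{lemma:tensorproduct_nonpositive} to each hom-complex $Q(\smallcat a)(P,P') \otimes_R \smallcat b(B,B')$ separately. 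Both routes rest on the same mechanism (truncate, resolve, tensor nonpositive complexes), but yours has the small advantage of not needing Lemma \ref{lemma:cofibrantresolutions_nonpositive} at all --- whose proof, as the paper's own footnote concedes, cites a result of Tabuada that would need to be generalized from ordinary rings to commutative dg-rings --- since the truncation-and-resolution step is carried out inside Lemma \ref{lemma:tensorproduct_nonpositive} at the level of individual $R$-dg-modules, where it is standard. What the paper's route buys in exchange is the slightly stronger, occasionally convenient conclusion that one can choose a model of $\smallcat a \lotimes \smallcat b$ which is strictly (not just cohomologically) concentrated in nonpositive degrees.
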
 
\begin{proof}
This follows for example from Lemma \ref{lemma:cofibrantresolutions_nonpositive}, upon replacing $\smallcat a$ and $\smallcat b$ with the quasi-equivalent truncations $\tau^{\leq 0}\smallcat a$ and $\tau^{\leq 0} \smallcat b$.
\end{proof}

\subsection{t-structures on dg-categories}
t-structures on dg-categories are first understood as t-structures on their homotopy categories.
\begin{definition}
A \emph{t-structure} on a pretriangulated dg-category $\cat A$ is by definition a t-structure on the homotopy category $H^0(\cat A)$ (see \cite{beilinson-pervers}). Given a quasi-functor $F \colon \cat A \to \cat B$ between dg-categories with t-structures, we say that it is \emph{t-exact}  if $H^0(F)$ is, namely if it commutes with the truncation functors (or, equivalently, if it preserves the aisles).
\end{definition}
We shall  denote by $\cat A_{\leq n}$ and $\cat A_{\geq n}$ 
the full
  dg-subcategories of $\cat A$ whose objects are the same as the
  aisles $H^0(\cat A)_{\leq n}$ and $H^0(\cat A)_{\geq n}$. Note that, being full dg-subcategories of the $R$-linear category $\cat A$,  $\cat A_{\leq n}$ and $\cat A_{\geq n}$  are
  in particular $R$-linear.
  
\begin{remark} \label{remark:tstruct_dgcat_aisles_heart}
The full dg-subcategory $\cat A_{\leq 0} \cap \cat A_{\geq 0}$ of $\cat A$ is clearly a dg-enhancement of the heart:
\[
H^0(\cat A_{\leq 0} \cap \cat A_{\geq 0}) = H^0(\cat A)^\heartsuit.
\]
It is worth noticing that $\cat A_{\leq 0} \cap \cat A_{\geq 0}$ has cohomology concentrated in \emph{nonnegative degrees}. Indeed, if $A,B \in \cat A_{\leq 0} \cap \cat A_{\geq 0}$:
\begin{align*}
    H^k(\cat A(A,B))=H^0(\cat A(A,B[k])=0
\end{align*}
for all $k< 0$.
\end{remark}
\subsection{Truncation functors} \label{subsec:truncationfunctors}
We fix a t-structure $(\cat A_{\leq 0}, \cat A_{\geq 0})$ on a dg-category $\cat A$. At the homotopy category level, we know that truncation functors are suitable adjoints of the inclusions $H^0(\cat A_{\leq n}) \hookrightarrow H^0(\cat A)$ and $H^0(\cat A_{\geq n}) \hookrightarrow H^0(\cat A)$. One would hope that those truncation functors lift to quasi-functors $\cat A \to \cat A_{\leq n}$ and $\cat A \to \cat A_{\geq n}$, but this is not the case. What we \emph{do} obtain are truncation quasi-functors
\begin{align*}
    \tau_{\leq n} \colon \tau^{\leq 0} \cat A & \to \tau^{\leq 0} (\cat A_{\leq n}), \\
    \tau_{\geq n} \colon \tau^{\leq 0} \cat A & \to \tau^{\leq 0} (\cat A_{\geq n}),
\end{align*}
where $\tau^{\leq 0} \cat B$ if the truncation of $\cat B$ in degrees $\le 0$, which was defined in \S\ref{sec:trunc}.
\begin{proposition} \label{prop:truncations_adjoints}
Let 
\begin{equation}
    \begin{split}
        i_{\leq n} \colon \tau^{\leq 0} (\cat A_{\leq n}) & \hookrightarrow \tau^{\leq 0} \cat A, \\
        i_{\geq n} \colon \tau^{\leq 0} (\cat A_{\geq n}) & \hookrightarrow \tau^{\leq 0}\cat A
    \end{split}
\end{equation}
be the inclusion dg-functors. Then, $i_{\leq n}$ has a quasi-functor right adjoint
\begin{equation}
    \tau_{\leq n} \colon \tau^{\leq 0} \cat A \to \tau^{\leq 0}( \cat A_{\leq n}),
\end{equation}
and $i_{\geq n}$ has a quasi-functor left adjoint
\begin{equation}
    \tau_{\geq n} \colon \tau^{\leq 0} \cat A \to \tau^{\leq 0} (\cat A_{\geq n}).
\end{equation}
Hence, $i_{\leq n} \dashv \tau_{\leq n}$ and $\tau_{\geq n} \dashv i_{\geq n}$.
\end{proposition}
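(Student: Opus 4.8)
The plan is to reduce the whole statement to the cohomological criterion for adjoint quasi-functors, namely Lemma \ref{lemma:adjoints_quasifunctors_cohomology}. Both $i_{\leq n}$ and $i_{\geq n}$ are \emph{strict} dg-functors, hence in particular quasi-functors, so it suffices to produce a right adjoint (resp.\ a left adjoint) of the induced graded functors $H^*(i_{\leq n})$ and $H^*(i_{\geq n})$; the lemma will then upgrade these to quasi-functor adjoints $\tau_{\leq n}$ and $\tau_{\geq n}$ with the expected cohomology. So from now on everything happens at the level of the graded homotopy categories.

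First I would make the relevant graded categories explicit. Since smart truncation does not alter cohomology in nonpositive degrees, and since $\cat A$ is pretriangulated, for objects $A,B$ one has
\[
H^*(\tau^{\leq 0}\cat A)(A,B) = \bigoplus_{k \leq 0} H^0(\cat A)(A,B[k]);
\]
as $\cat A_{\leq n}$ and $\cat A_{\geq n}$ are \emph{full} dg-subcategories, the graded homs of $\tau^{\leq 0}(\cat A_{\leq n})$ and $\tau^{\leq 0}(\cat A_{\geq n})$ are computed by the very same formula, the objects now ranging over the aisles $H^0(\cat A)_{\leq n}$, resp.\ $H^0(\cat A)_{\geq n}$. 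In particular $H^0(\tau^{\leq 0}\cat A) = H^0(\cat A)$, and on $H^0$ the functor $H^0(i_{\leq n})$ is just the inclusion $H^0(\cat A)_{\leq n} \hookrightarrow H^0(\cat A)$, which admits the usual t-structure truncation $\tau_{\leq n}$ as a right adjoint \cite{beilinson-pervers}; dually $\tau_{\geq n}$ is a left adjoint of $H^0(i_{\geq n})$.

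The main step, and the only genuine subtlety, is to promote these $H^0$-adjunctions to \emph{graded} adjunctions on the truncated categories, and here the restriction to nonpositive degrees is exactly what makes the argument run. Taking $\tau_{\leq n}$ as the object assignment, I would check the graded isomorphism degree by degree: for $X \in H^0(\cat A)_{\leq n}$, $Y \in H^0(\cat A)$ and $k \leq 0$,
\[
H^0(\cat A)(X, Y[k]) = H^0(\cat A)(X[-k], Y) \cong H^0(\cat A)(X[-k], \tau_{\leq n}Y) = H^0(\cat A)(X, (\tau_{\leq n}Y)[k]),
\]
the middle isomorphism being the $H^0$-adjunction $i_{\leq n} \dashv \tau_{\leq n}$ applied to $X[-k]$. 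The point is that $-k \geq 0$ together with $X \in H^0(\cat A)_{\leq n}$ forces $X[-k] \in H^0(\cat A)_{\leq n+k} \subseteq H^0(\cat A)_{\leq n}$, so the adjunction is legitimately available; for $k > 0$ this containment fails, which is precisely why one cannot lift the truncation directly to a quasi-functor $\cat A \to \cat A_{\leq n}$ and must instead pass to $\tau^{\leq 0}$. Summing over $k \leq 0$ yields the natural isomorphism $H^*(\tau^{\leq 0}\cat A)(i_{\leq n}X, Y) \cong H^*(\tau^{\leq 0}(\cat A_{\leq n}))(X, \tau_{\leq n}Y)$, so $\tau_{\leq n}$ is a graded right adjoint of $H^*(i_{\leq n})$.

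The case of $i_{\geq n}$ is entirely dual: now the fixed aisle object sits in the second argument, and for $X \in H^0(\cat A)_{\geq n}$ and $k \leq 0$ one uses $X[k] \in H^0(\cat A)_{\geq n-k} \subseteq H^0(\cat A)_{\geq n}$ to apply $\tau_{\geq n} \dashv i_{\geq n}$ directly, obtaining a graded left adjoint $\tau_{\geq n}$ of $H^*(i_{\geq n})$. Applying Lemma \ref{lemma:adjoints_quasifunctors_cohomology} to each inclusion then produces the quasi-functor adjunctions $i_{\leq n} \dashv \tau_{\leq n}$ and $\tau_{\geq n} \dashv i_{\geq n}$, with $H^*(\tau_{\leq n})$ and $H^*(\tau_{\geq n})$ identified with the t-structure truncations, as claimed. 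The hard part is thus isolated in the shift bookkeeping of the graded adjunction; once the shifts are seen to stay inside the correct aisles, the rest is formal.
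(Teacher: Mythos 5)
Your proposal is correct and follows essentially the same route as the paper's proof: reduce via Lemma \ref{lemma:adjoints_quasifunctors_cohomology} to producing graded adjoints of $H^*(i_{\leq n})$ and $H^*(i_{\geq n})$, then promote the classical $H^0$-truncation adjunctions degree by degree, using exactly the observation that for $k \leq 0$ the shifts $X[-k]$ (resp.\ $X[k]$) remain in the aisle $H^0(\cat A)_{\leq n}$ (resp.\ $H^0(\cat A)_{\geq n}$). Your treatment is if anything slightly more explicit than the paper's, which carries out only the $i_{\leq n}$ case and declares the other ``similar,'' whereas you spell out the dual shift bookkeeping as well.
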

\begin{proof}
 Thanks to Lemma \ref{lemma:adjoints_quasifunctors_cohomology}, it is enough to find a right adjoint of $H^*(i_{\leq n})$ and a left adjoint of $H^*(i_{\geq n})$. We concentrate on the case of $H^*(i_{\leq n})$, the other being similar. We may use the well-known fact that $H^0(i_{\leq n})$ has a right adjoint (denoted by $\tau_{\leq n}$ on objects) and compute, for $k \leq 0$:
\begin{align*}
    H^k(\cat A(i_{\leq n}(X),Y) &\cong H^0(\cat A)(i_{\leq n}(X[-k]),Y) \quad \text{($X[-k]$ lies in $\cat A_{\leq n}$)} \\
    & \cong H^0(\cat A_{\leq n})(X[-k], \tau_{\leq n}(Y)) \quad \text{(right adjoint of $H^0(i_{\leq n})$)} \\
    & \cong H^k(\cat A(X,\tau_{\leq n}(Y)).
\end{align*}
The above isomorphisms give an isomorphism
\[
H^*(\tau^{\leq 0}\cat A(i_{\leq n}(X),Y)) \cong H^*(\tau^{\leq 0}(\cat A_{\leq n})(X, \tau_{\leq n}(Y))),
\]
natural in $X \in H^*(\tau^{\leq 0}(\cat A_{\leq n}))$ and $Y \in H^*(\tau^{\leq 0} \cat A)$, thus yielding a right adjoint $Y \mapsto \tau_{\leq n} Y$ of $H^*(i_{\leq n})$.
\end{proof}

We now want to discuss the higher functoriality of the distinguished triangle $A'' \to A \to A'$ associated to any $A \in \cat A$. We first prove a general lemma.
\begin{lemma} \label{lemma:qfunct_orthogonal}
Let $\cat C$ and $\cat D$ be dg-categories, and let $F, G \colon \cat C \to \cat D$ be quasi-functors. Moreover, let $\cat D_1$ and $\cat D_2$ be full dg-subcategories of $\cat D$ such that:
\begin{itemize}
\item The quasi-essential image of $F$ lies in $\cat D_1$ and the quasi-essential image of $G$ lies in $\cat D_2$.
\item For all $D_1 \in \cat D_1$ and $D_2 \in \cat D_2$, the complex $\cat D(D_1,D_2)$ is acyclic:
\[
H^*(\cat D(D_1,D_2))=0.
\]
\end{itemize}
Then,
\[
\dercomp(\cat C,\cat D)(F,G)=0
\]
\end{lemma}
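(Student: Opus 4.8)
The plan is to avoid computing the end that defines $\dercomp(\cat C,\cat D)(F,G)$ head-on, and instead reduce the problem to a \emph{hom into a zero object} by passing through the dual. Concretely, I would first replace $F$, $G$ and $\cat C$ by suitable h-projective resolutions (harmless by Proposition \ref{prop: ind_rqr_firstargument} and the discussion in \S\ref{ssec:qf}), so that $F$ and $G$ are h-projective as bimodules, the dual $\dual(F)$ is well defined, and the coends below genuinely compute derived compositions. Writing $\Phi_F$ and $\Phi_G$ for the assignments sending an object to the one weakly representing $F_C$, resp.\ $G_C$, the two hypotheses say precisely that $\Phi_F(C)\in\cat D_1$ and $\Phi_G(C)\in\cat D_2$ (up to isomorphism in $H^0(\cat D)$) for every $C\in\cat C$.

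Next I would invoke the adjunction $F\dashv\dual(F)$ of Remark \ref{remark:adjoints_dual_bicategory_bimodules}, which yields a natural isomorphism
\[
\dercomp(\cat C,\cat D)(F,G)\cong\dercomp(\cat C,\cat C)(h_{\cat C},\dual(F)\circ G).
\]
It therefore suffices to show that the composite bimodule $\dual(F)\circ G$ is isomorphic to the zero object in $\dercomp(\cat C,\cat C)$, i.e.\ that all of its components are acyclic; the right-hand hom then vanishes. I would compute these components via the coend formula \eqref{eq:qfun_composition} together with co-Yoneda \eqref{eq:coYoneda}. Since $G$ is right quasi-representable, $G_{C_1}^{D}\simeq\cat D(D,\Phi_G(C_1))$, and since $\dual(F)$ is left quasi-representable (Remark \ref{remark:left_qrep_bimod_dual}, cf.\ the computation in the proof of Lemma \ref{lemma:adjoints_quasifunctors_cohomology}), $\dual(F)^{C_2}_{D}\simeq\cat D(\Phi_F(C_2),D)$. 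Plugging these into
\[
(\dual(F)\circ G)_{C_1}^{C_2}\cong\int^{D} G_{C_1}^{D}\otimes\dual(F)^{C_2}_{D}
\]
and contracting the coend through \eqref{eq:coYoneda} gives $(\dual(F)\circ G)_{C_1}^{C_2}\simeq\cat D(\Phi_F(C_2),\Phi_G(C_1))$. As $\Phi_F(C_2)\in\cat D_1$ and $\Phi_G(C_1)\in\cat D_2$, the acyclicity hypothesis makes this complex acyclic for all $C_1,C_2$, hence $\dual(F)\circ G\cong 0$ and the claim follows.

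The main obstacle I anticipate is essentially bookkeeping: making sure the coend above really computes the \emph{derived} composition and that co-Yoneda applies (which is exactly what the h-projectivity reductions in the first step secure), and keeping the variance conventions for $\dual(F)$ straight so that the final complex emerges as $\cat D(\Phi_F(C_2),\Phi_G(C_1))$ with first argument in $\cat D_1$ and second in $\cat D_2$, matching the hypothesis. I would emphasize that the detour through $\dual(F)$ is what keeps the argument clean: a direct attack would present $\dercomp(\cat C,\cat D)(F,G)$ as a homotopy end over $C$ of the complexes $\RHom_{\cat D}(F_C,G_C)\simeq\cat D(\Phi_F(C),\Phi_G(C))$, and although each of these is acyclic one would then have to argue that the homotopy end of a pointwise-acyclic diagram is acyclic — a correct but more delicate point that reducing to $\dercomp(\cat C,\cat C)(h_{\cat C},0)=0$ sidesteps entirely.
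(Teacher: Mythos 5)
Your proposal is correct and takes essentially the same route as the paper's proof: both pass through the adjunction $F \dashv \dual(F)$ (Remark \ref{remark:adjoints_dual_bicategory_bimodules}) to reduce the claim to showing that the composite bimodule $\dual(F)\circ G$ is acyclic, and both compute that composite by a coend/co-Yoneda contraction down to $\cat D(\Phi_F(C'), \Phi_G(C))$, which vanishes in cohomology by hypothesis. The only cosmetic difference is the order of operations — the paper contracts the coend using the quasi-representability of $G$ first and then identifies $\dual(F)^{C'}_{\Phi_G(C)}$ via Yoneda, while you substitute both quasi-representations (of $G$ and of the left quasi-representable $\dual(F)$) before contracting — which is harmless since, after your h-projectivity reductions, all the substitutions are along homotopy equivalences.
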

\begin{proof}
We can assume without loss of generality that $\cat C$ is h-projective, and that $F$ and $G$ are h-projective bimodules. Consider the dual bimodule of $F$: 
\[
\dual(F)^C_D = \compdg(\cat D)(F_C, h_D).
\]
From \cite[Corollary 6.6]{genovese-adjunctions} we have that $\dual(F)$ is the right adjoint of $F$ in the (derived) bicategory of dg-bimodules. In particular this entails a natural isomorphism
\[
\dercomp(\cat C, \cat D)(F,G) \cong 
\dercomp(\cat C, \cat C)(h_{\cat C},\dual(F) \circ G).
\]
Next, we have isomorphisms in $H^0(\compdg(R))$:
\begin{align*}
(\dual(F) \circ G)_C^{C'} & = \int^D G_C^D \otimes \dual(F)^{C'}_D \qquad \text{(cf. \eqref{eq:qfun_composition})} \\
& \cong \int^D \cat D (D,\Phi_G(C)) \otimes \dual(F)^{C'}_D \qquad \text{($G_C \cong \cat D(-,\Phi_G(C))$ in $H^0(\compdg(\cat D))$)}\\
&\cong \dual(F)^{C'}_{\Phi_G(C)} \qquad \text{(cf. \eqref{eq:coYoneda})} \\
& = \compdg(\cat D)(F_{C'}, h_{\Phi_G(C)}) \\
& \cong \compdg(\cat D)(h_{\Phi_F(C')}, h_{\Phi_G(C)}) \qquad \text{($F_{C'} \cong \cat D(-,\Phi_F(C'))$ in $H^0(\compdg(\cat D))$)}\\
& \cong \cat D(\Phi_F(C'), \Phi_G(C)).
\end{align*}
Now, the complex $\cat D(\Phi_F(C'), \Phi_G(C))$ is acyclic by hypothesis, since $\Phi_F(C') \in \cat D_1$ and $\Phi_G(C) \in \cat D_2$. We conclude that the bimodule $\dual(F) \circ G$ is acyclic, hence
\[
\dercomp(\cat C,  \cat D)(F,G) \cong 
\dercomp(\cat C ,  \cat C)(h_{\cat C},\dual(F) \circ G) \cong 0,
\]
as claimed.
\end{proof}
\begin{proposition}
Let $\cat A$ be our given pretriangulated dg-category endowed with a t-structure. There is a distinguished triangle in the derived category $\dercomp(\tau^{\leq 0}\cat A, \tau^{\leq 0}\cat A)$:
\begin{equation} \label{eq:funct_disttria_tstruct}
    i_{\leq 0} \tau_{\leq 0} \xrightarrow{\varepsilon} \operatorname{id} \xrightarrow{\eta} i_{\geq 1} \tau_{\geq 1},
\end{equation}
where $\varepsilon$ and $\eta$ are respectively the counit and unit morphisms of the adjunctions $i_{\leq 0} \dashv \tau_{\leq 0}$ and $\tau_{\geq 1} \dashv i_{\geq 1}$.
\end{proposition}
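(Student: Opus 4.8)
The plan is to realize the desired triangle as the cone triangle of the counit $\varepsilon$ and then identify its third vertex with $i_{\geq 1}\tau_{\geq 1}$. Since $\dercomp(\tau^{\leq 0}\cat A, \tau^{\leq 0}\cat A)$ is a triangulated category, I would first complete $\varepsilon \colon i_{\leq 0}\tau_{\leq 0} \to \operatorname{id}$ to a distinguished triangle
\[
i_{\leq 0}\tau_{\leq 0} \xrightarrow{\varepsilon} \operatorname{id} \xrightarrow{p} C \to i_{\leq 0}\tau_{\leq 0}[1],
\]
with $C$ the cone. The whole problem then reduces to producing an isomorphism $C \cong i_{\geq 1}\tau_{\geq 1}$ carrying $p$ to $\eta$, and the engine driving this is the orthogonality of the two aisles.

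Concretely, I would apply Lemma \ref{lemma:qfunct_orthogonal} with $\cat D = \tau^{\leq 0}\cat A$, $\cat D_1 = \tau^{\leq 0}(\cat A_{\leq 0})$ and $\cat D_2 = \tau^{\leq 0}(\cat A_{\geq 1})$: the quasi-essential image of $i_{\leq 0}\tau_{\leq 0}$ lies in $\cat D_1$ and that of $i_{\geq 1}\tau_{\geq 1}$ lies in $\cat D_2$, while for $D_1 \in \cat D_1$ and $D_2 \in \cat D_2$ the truncated complex $(\tau^{\leq 0}\cat A)(D_1,D_2) = \tau_{\leq 0}\cat A(D_1,D_2)$ is acyclic, since $H^k\cat A(D_1,D_2) = H^0(\cat A)(D_1, D_2[k]) = 0$ for $k \leq 0$ (here $D_2[k] \in \cat A_{\geq 1}$ and $D_1 \in \cat A_{\leq 0}$). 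This yields $\dercomp(\tau^{\leq 0}\cat A, \tau^{\leq 0}\cat A)(i_{\leq 0}\tau_{\leq 0}, i_{\geq 1}\tau_{\geq 1}) = 0$; in particular $\eta \circ \varepsilon = 0$, so applying $\dercomp(-, i_{\geq 1}\tau_{\geq 1})$ to the cone triangle and invoking this vanishing, the morphism $\eta$ lifts to $\widetilde\eta \colon C \to i_{\geq 1}\tau_{\geq 1}$ with $\widetilde\eta \circ p = \eta$.

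The remaining step, showing that $\widetilde\eta$ is an isomorphism, is the one I expect to be the main obstacle. Because a morphism of bimodules is invertible exactly when it is a columnwise quasi-isomorphism, it suffices to check that $\widetilde\eta_A \colon C_A \to (i_{\geq 1}\tau_{\geq 1})_A$ is a quasi-isomorphism of right $\tau^{\leq 0}\cat A$-modules for each $A$. I would evaluate the cone triangle at $A$ and compare it, via the Yoneda module $X \mapsto \tau^{\leq 0}\cat A(-,X)$, with the classical truncation triangle $\tau_{\leq 0}A \to A \to \tau_{\geq 1}A$ of the t-structure on $H^0(\cat A)$; the columns $(i_{\leq 0}\tau_{\leq 0})_A$, $\operatorname{id}_A$, $(i_{\geq 1}\tau_{\geq 1})_A$ are quasi-represented by $\tau_{\leq 0}A$, $A$, $\tau_{\geq 1}A$, and both $p_A$ and $\eta_A$ correspond to the canonical map $A \to \tau_{\geq 1}A$, i.e.\ the unit of $\tau_{\geq 1} \dashv i_{\geq 1}$ from Proposition \ref{prop:truncations_adjoints}. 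The universal property of that unit, together with full faithfulness of Yoneda and a further orthogonality vanishing (of the type $H^0(\cat A)(\tau_{\leq 0}A[1], \tau_{\geq 1}A) = 0$) to rigidify the comparison, should then force $\widetilde\eta_A$ to reduce to the identity on the quasi-representing module. The genuinely delicate point is that $\tau^{\leq 0}\cat A$ is \emph{not} pretriangulated, so the classical truncation triangle does not automatically transport to a distinguished triangle of representables in $\dercomp(\tau^{\leq 0}\cat A)$; controlling how the smart truncation of the hom-complexes interacts with the cone is where the argument must be carried out most carefully, and I would rely throughout on the cohomological adjunction machinery of Lemma \ref{lemma:adjoints_quasifunctors_cohomology} to keep the identifications natural.
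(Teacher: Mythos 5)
Your first two steps are, step for step, the paper's own proof: there too one completes $\varepsilon$ to a distinguished triangle $i_{\leq 0}\tau_{\leq 0} \xrightarrow{\varepsilon} \operatorname{id} \to \cone(\varepsilon)$, applies Lemma \ref{lemma:qfunct_orthogonal} to the pair $\tau^{\leq 0}(\cat A_{\leq 0})$, $\tau^{\leq 0}(\cat A_{\geq 1})$ to obtain the vanishing $\dercomp(\tau^{\leq 0}\cat A, \tau^{\leq 0}\cat A)(i_{\leq 0}\tau_{\leq 0}, i_{\geq 1}\tau_{\geq 1})=0$, and uses it to factor $\eta$ through the cone. The genuine gap is exactly the step you defer: you never prove that $\widetilde{\eta}$ is an isomorphism, and the mechanism you sketch for it cannot get started, because the (derived) Yoneda lemma controls morphisms \emph{between quasi-representable modules}, whereas at this stage nothing identifies $C_A=\cone(\varepsilon)_A$ with any quasi-representable module; in particular your assertion that ``both $p_A$ and $\eta_A$ correspond to the canonical map $A \to \tau_{\geq 1}A$'' is not yet meaningful for $p_A$. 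What the paper does before any Yoneda argument is precisely to supply this missing identification: it first proves that $\cone(\varepsilon)$ is itself a quasi-functor, with $\cone(\varepsilon)_A$ quasi-represented by the cone $\Phi_{\cone(\varepsilon)}(A)$ of $\tau_{\leq 0}A \to A$ formed in the triangulated category $H^0(\cat A)$ (this is where pretriangulatedness of $\cat A$ enters). Only once every vertex is quasi-representable does the derived Yoneda lemma transport the comparison diagram to a diagram in $H^0(\cat A)$ whose rows are two truncation triangles of $A$ with identities on the first two terms; then $\tilde{f}_A$ is an isomorphism, hence so is $f$.

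Moreover, the point you single out as ``genuinely delicate'' is indeed where all the remaining content sits, and your suspicion is more than justified. The paper obtains the quasi-representability of $\cone(\varepsilon)$ by asserting that the triangle $\tau_{\leq 0}A \to A \to \Phi_{\cone(\varepsilon)}(A)$ of $H^0(\cat A)$ induces a distinguished triangle of \emph{truncated} representables in $\dercomp(\tau^{\leq 0}\cat A)$. If that were so, then, since $H^1(\tau^{\leq 0}\cat A(B,\tau_{\leq 0}A))=0$, the long exact sequence would force $H^0(\cat A)(B,A) \to H^0(\cat A)(B,\tau_{\geq 1}A)$ to be surjective for every $B$; but the cokernel of this map is the image of composition with the connecting morphism $\delta \colon \tau_{\geq 1}A \to \tau_{\leq 0}A[1]$, which is nonzero for $B=\tau_{\geq 1}A$ whenever the truncation triangle of $A$ does not split. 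So the truncated Yoneda embedding $H^0(\cat A) \to \dercomp(\tau^{\leq 0}\cat A)$ is \emph{not} exact, and the identification of the cone has to be carried out where Yoneda \emph{is} exact: push forward along $i^{\leq 0}_*$ into $\dercomp(\tau^{\leq 0}\cat A, \cat A)$, i.e.\ work with bimodules valued in the untruncated pretriangulated $\cat A$, whose representables carry the full hom-complexes and for which the cone of $\cat A(-,\tau_{\leq 0}A) \to \cat A(-,A)$ is $\cat A(-,\tau_{\geq 1}A)$ on the nose; this is also the form in which the triangle \eqref{eq:funct_disttria_tstruct} is actually used, through Lemma \ref{lemma:truncations_image_qfun}, in the proof of Theorem \ref{thm:tstruct_quasifunctors}. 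In short: you located the crux correctly, but your argument stops there; filling the gap requires establishing quasi-representability of the cone in a setting (untruncated target) where the obstruction above disappears, rather than invoking Yoneda in the truncated setting where it fails.
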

\begin{proof}
We take the following distinguished triangle in the derived category $\dercomp(\tau^{\leq 0} \cat A, \tau^{\leq 0} \cat A)$:
\[
i_{\leq 0} \tau_{\leq 0} \xrightarrow{\varepsilon} \operatorname{id} \to \cone(\varepsilon)
\]
We first claim that $\cone(\varepsilon)$ is a quasi-functor $\tau^{\leq 0} \cat A \to \tau^{\leq 0} \cat A$. Indeed, the above distinguished triangle induces a distinguished triangle
\[
(i_{\leq 0} \tau_{\leq 0})_A \xrightarrow{\varepsilon} \tau^{\leq 0}\cat A(-,A) \to \cone(\varepsilon)_A
\]
in $\dercomp(\tau^{\leq 0}\cat A)$, for all $A \in \tau^{\leq 0} \cat A$. Now, we consider the following commutative diagram in $\dercomp(\tau^{\leq 0}\cat A)$:
\[
\begin{tikzcd}
(i_{\leq 0} \tau_{\leq 0})_A \arrow[r] & \tau^{\leq 0}\cat A(-,A) \\ \tau^{\leq 0}\cat A(-, i_{\leq 0} \Phi_{\tau_{\leq 0}}(A)) \arrow[r] \arrow[u, "\approx"] & \tau^{\leq 0}\cat A(-,A) \arrow[u, equal]
\end{tikzcd}
\]
By the Yoneda lemma, the morphism $\tau^{\leq 0}\cat A(-, i_{\leq 0} \Phi_{\tau_{\leq 0}}(A)) \to \tau^{\leq 0}\cat A(-,A)$ induces a morphism $i_{\leq 0} \Phi_{\tau_{\leq 0}}(A) \to A$ in $H^0(\cat A) = H^0(\tau^{\leq 0}\cat A)$, and we may take its cone, thus obtaining a distinguished triangle
\[
i_{\leq 0} \Phi_{\tau_{\leq 0}}(A) \to A \to \Phi_{\cone(\varepsilon)}(A)
\]
in $H^0(\cat A)$, and thus a distinguished triangle
\[
\tau^{\leq 0}\cat A(-, i_{\leq 0} \Phi_{\tau_{\leq 0}}(A)) \to \tau^{\leq 0}\cat A(-,A) \to \tau^{\leq 0}\cat A(-,\Phi_{\cone(\varepsilon)}(A))
\]
in $\dercomp(\tau^{\leq 0}\cat A)$. We conclude that there exists an isomorphism
\[
\tau^{\leq 0}\cat A(-,\Phi_{\cone(\varepsilon)}(A)) \xrightarrow{\approx} \cone(\varepsilon)_A,
\]
in $\dercomp(\tau^{\leq 0}\cat A)$, as we wanted.

Next, we observe that by Lemma \ref{lemma:qfunct_orthogonal} we have
\[
\dercomp(\tau^{\leq 0}\cat A, \tau^{\leq 0}\cat A)(i_{\leq 0} \tau_{\leq 0}, i_{\geq 1} \tau_{\geq 1})=0,
\]
since $\tau^{\leq 0}\cat A(A,B)\cong 0$ if $A \in \tau^{\leq 0}\cat A_{\leq 0}$ and $B \in \tau^{\leq 0}\cat A_{\geq 1}$. This implies that
\[
(i_{\leq 0} \tau_{\leq 0} \xrightarrow{\varepsilon} \operatorname{id} \xrightarrow{\eta} i_{\geq 1} \tau_{\geq 1}) = 0,
\]
and we can find a morphism
\[
f \colon \cone(\varepsilon) \to i_{\geq 1} \tau_{\geq 1},
\]
such that the composition $1 \to \cone(\varepsilon) \xrightarrow{f} i_{\geq 1} \tau_{\geq 1}$ is equal to $\eta \colon 1 \to i_{\geq 1} \tau_{\geq 1}$ in $\dercomp(\tau^{\leq 0}\cat A, \tau^{\leq 0}\cat A)$. Hence, for all $A \in \tau^{\leq 0}\cat A$, we have a commutative diagram in $\dercomp(\tau^{\leq 0}\cat A)$:
\[
\begin{tikzcd}
{i_{\leq 0}{\tau_{\leq 0}}_A} \arrow[r, "\varepsilon_A"] \arrow[d, equal] & {h_A} \arrow[r] \arrow[d, equal] & {\cone(\varepsilon)_A} \arrow[d, "f_A"] \\
{i_{\leq 0}{\tau_{\leq 0}}_A} \arrow[r]           & {h_A} \arrow[r, "\eta_A"]           & {i_{\geq 1}{\tau_{\geq 1}}_A.}         
\end{tikzcd}
\]
Thanks to the derived Yoneda lemma, this commutative diagram is uniquely induced by the following commutative diagram in $H^0(\cat A)$:
\[
\begin{tikzcd}
{i_{\leq 0}\Phi_{\tau_{\leq 0}}(A)} \arrow[r, "\tilde{\varepsilon}_A"] \arrow[d, equal] & {A} \arrow[r] \arrow[d, equal] & {\Phi_{\cone(\varepsilon)}(A)} \arrow[d, "\tilde{f}_A"] \\
{i_{\leq 0}\Phi_{\tau_{\leq 0}}(A)} \arrow[r]           & {A} \arrow[r, "\tilde{\eta}_A"]           & {i_{\geq 1}{\Phi_{\tau_{\geq 1}}}(A).}         
\end{tikzcd}
\]
The morphism $\tilde{f}_A$ is necessarily an isomorphism, since the rows are canonical (functorial) distinguished triangles associated to $A$ with respect to the t-structure on $H^0(\cat A)$. We deduce that $f_A$ is also an isomorphism in $\dercomp(\tau^{\leq 0}\cat A)$ for all $A$, hence $f \colon \cone(\varepsilon) \to i_{\geq 1}\tau_{\geq 1}$ is an isomorphism in $\dercomp(\tau^{\leq 0}\cat A, \tau^{\leq 0}\cat A)$, and we conclude.
\end{proof}

\subsection{t-structure on the dg-category of quasi-functors} \label{subsec:tstruct_quasifunctors}
The goal of this subsection is to endow the dg-category of quasi-functors $\RHom(\smallcat a,\cat B)$ with a natural t-structure, assuming that $\cat B$ is a pretriangulated dg-category with a t-structure and that $\smallcat a$ is small and concentrated in nonpositive degrees. This t-structure is in fact defined ``objectwise''
(see Remark \ref{remark:tstruct_quasifunctors_cohomologies}).

\begin{theorem} \label{thm:tstruct_quasifunctors}
Let $\cat B$ be a pretriangulated dg-category endowed with a t-structure $(\cat B_{\leq 0}, \cat B_{\geq 0})$, and let $\smallcat a$ be a small dg-category with cohomology concentrated in nonpositive degrees. Assume that $\smallcat a$ or $\cat B$ is h-flat. Then, the dg-category of quasi-functors $\hproj^\mathrm{rqr}(\smallcat a,\cat B)$ has a t-structure such that
\begin{align*}
    \hproj^\mathrm{rqr}(\smallcat a,\cat B)_{\leq 0} &= \hproj^\mathrm{rqr}(\smallcat a,\cat B_{\leq 0}), \\
    \hproj^\mathrm{rqr}(\smallcat a,\cat B)_{\geq 0} &= \hproj^\mathrm{rqr}(\smallcat a,\cat B_{\geq 0}).
\end{align*}
\end{theorem}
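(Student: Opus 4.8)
The plan is to verify the axioms of a t-structure on the triangulated category $H^0(\hproj^\mathrm{rqr}(\smallcat a, \cat B))$, which is triangulated because $\cat B$ is pretriangulated and hence so is $\RHom(\smallcat a,\cat B) = \hproj^\mathrm{rqr}(\smallcat a,\cat B)$, with cones computed objectwise. I take as candidate aisles the full subcategories $\hproj^\mathrm{rqr}(\smallcat a, \cat B_{\leq 0})$ and $\hproj^\mathrm{rqr}(\smallcat a, \cat B_{\geq 0})$, i.e. the quasi-functors $F$ whose quasi-representing objects $\Phi_F(A)$ all lie in $\cat B_{\leq 0}$ (resp. $\cat B_{\geq 0}$). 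Closure under the shifts, $\hproj^\mathrm{rqr}(\smallcat a, \cat B_{\leq 0})[1] \subseteq \hproj^\mathrm{rqr}(\smallcat a, \cat B_{\leq 0})$ and $\hproj^\mathrm{rqr}(\smallcat a, \cat B_{\geq 0})[-1] \subseteq \hproj^\mathrm{rqr}(\smallcat a, \cat B_{\geq 0})$, is immediate from the identity $\Phi_{F[n]}(A) = \Phi_F(A)[n]$ together with the corresponding closure of the aisles of the t-structure on $\cat B$. This leaves the orthogonality and the decomposition triangle, which are the two substantial points.

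For orthogonality, I would take $F$ with image in $\cat B_{\leq 0}$ and $G$ with image in $\cat B_{\geq 1}$, and show $\dercomp(\smallcat a, \cat B)(F, G) = 0$. By Remark \ref{remark:adjoints_dual_bicategory_bimodules} this group is $\dercomp(\smallcat a, \smallcat a)(h_{\smallcat a}, \dual(F)\circ G)$, and exactly the computation in the proof of Lemma \ref{lemma:qfunct_orthogonal} identifies $(\dual(F)\circ G)_C^{C'} \simeq \cat B(\Phi_F(C'), \Phi_G(C))$. Since $\Phi_F(C') \in \cat B_{\leq 0}$ and $\Phi_G(C) \in \cat B_{\geq 1}$, orthogonality of the t-structure on $\cat B$ gives $H^k(\cat B(\Phi_F(C'), \Phi_G(C))) = \Hom_{H^0(\cat B)}(\Phi_F(C'), \Phi_G(C)[k]) = 0$ for all $k \leq 0$, so $\dual(F)\circ G$ is cohomologically concentrated in strictly positive degrees. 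As $\smallcat a$ is concentrated in nonpositive degrees, so is $\smallcat a \otimes^{\mathbb L} \opp{\smallcat a}$ (Lemma \ref{lemma:tstruct_tensorprod}), and $\dercomp(\smallcat a, \smallcat a)$ carries the natural t-structure of Proposition \ref{prop:dercomp_naturaltstruct}; with respect to it $h_{\smallcat a} \in \dercomp(\smallcat a, \smallcat a)_{\leq 0}$ while $\dual(F)\circ G \in \dercomp(\smallcat a, \smallcat a)_{\geq 1}$, so the defining orthogonality of that t-structure forces the $\Hom$ to vanish. Note this is a genuine refinement of Lemma \ref{lemma:qfunct_orthogonal}: there full acyclicity of $\cat B(\cat B_{\leq 0}, \cat B_{\geq 1})$ was assumed, which fails here, and it is precisely the natural t-structure on $\dercomp(\smallcat a, \smallcat a)$ that rescues the conclusion.

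The decomposition triangle is the main obstacle, and it is where the hypothesis on $\smallcat a$ is essential. The difficulty is that the truncation functors are not quasi-functors $\cat B \to \cat B$ but only $\tau^{\leq 0}\cat B \to \tau^{\leq 0}\cat B$ (cf. \S\ref{subsec:truncationfunctors}), so one cannot simply post-compose $F$ with $\tau_{\leq 0}$ and $\tau_{\geq 1}$. My plan is to use Lemma \ref{lemma:truncations_image_qfun} — valid because $\smallcat a$ has cohomology in nonpositive degrees — to identify, at the homotopy category level, $\hproj^\mathrm{rqr}(\smallcat a, \cat B)$ with $\hproj^\mathrm{rqr}(\smallcat a, \tau^{\leq 0}\cat B)$, replacing a given $F$ by a quasi-functor $\tilde F \colon \smallcat a \to \tau^{\leq 0}\cat B$. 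On $\tau^{\leq 0}\cat B$ one has the truncation triangle $i_{\leq 0}\tau_{\leq 0} \to \operatorname{id} \to i_{\geq 1}\tau_{\geq 1}$ of \eqref{eq:funct_disttria_tstruct} (for $\cat B$); composing it with $\tilde F$ via the coend \eqref{eq:qfun_composition}, which for fixed $\tilde F$ is an exact functor of the first variable, produces a distinguished triangle
\[
i_{\leq 0}\tau_{\leq 0}\circ\tilde F \to \tilde F \to i_{\geq 1}\tau_{\geq 1}\circ\tilde F
\]
of quasi-functors $\smallcat a \to \tau^{\leq 0}\cat B$. Transporting back along the quasi-equivalence of Lemma \ref{lemma:truncations_image_qfun} then yields a distinguished triangle $F' \to F \to F''$ in $H^0(\hproj^\mathrm{rqr}(\smallcat a, \cat B))$.

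Finally I would check that $F'$ and $F''$ land in the candidate aisles: by construction $\Phi_{F'}(A) = \tau_{\leq 0}\Phi_{\tilde F}(A) \in \cat B_{\leq 0}$ and $\Phi_{F''}(A) = \tau_{\geq 1}\Phi_{\tilde F}(A) \in \cat B_{\geq 1}$, so $F' \in \hproj^\mathrm{rqr}(\smallcat a, \cat B_{\leq 0})$ and $F'' \in \hproj^\mathrm{rqr}(\smallcat a, \cat B_{\geq 1})$, as required. I expect the delicate part to be the bookkeeping around Lemma \ref{lemma:truncations_image_qfun} and the composition of quasi-functors — ensuring the transported triangle is genuinely distinguished and that the objectwise truncations assemble into the claimed quasi-functors; by contrast the orthogonality, once rephrased through the natural t-structure on $\dercomp(\smallcat a, \smallcat a)$, is comparatively formal.
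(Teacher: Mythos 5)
Your proof is correct, and its skeleton coincides with the paper's: the same candidate aisles, the same (easy) closure under shifts via $\Phi_{F[n]} \cong \Phi_F[n]$, and the same construction of the decomposition triangle by passing to $\tau^{\leq 0}\cat B$ through Lemma \ref{lemma:truncations_image_qfun} and composing $F$ with the functorial truncation triangle \eqref{eq:funct_disttria_tstruct}. Where you genuinely diverge is the orthogonality axiom. The paper stays inside $\tau^{\leq 0}\cat B$ for this step as well: between $A \in \cat B_{\leq 0}$ and $B \in \cat B_{\geq 1}$ the hom-complex $\tau^{\leq 0}\cat B(A,B) = \tau_{\leq 0}\bigl(\cat B(A,B)\bigr)$ is \emph{genuinely acyclic} (the truncation kills the positive cohomology, and orthogonality in $H^0(\cat B)$ kills the cohomology in degrees $\leq 0$), so Lemma \ref{lemma:qfunct_orthogonal} applies verbatim as a black box. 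You instead work in $\cat B$ itself, where -- as you correctly note -- those hom-complexes need not be acyclic, rerun the coend computation from the proof of that lemma to identify $\dual(F)\circ G$ objectwise with $\cat B(\Phi_F(-),\Phi_G(-))$, observe that this bimodule is cohomologically concentrated in strictly positive degrees, and conclude from the orthogonality of the natural t-structure on $\dercomp(\smallcat a,\smallcat a)$ (Proposition \ref{prop:dercomp_naturaltstruct} together with Lemma \ref{lemma:tstruct_tensorprod}), using that the diagonal bimodule $h_{\smallcat a}$ lies in the nonpositive aisle because $\smallcat a$ does. Both routes are valid and rest on the same hypothesis on $\smallcat a$; the paper's is more economical, since one truncation trick serves all the axioms and the auxiliary lemma is reused unchanged, while yours isolates exactly what orthogonality needs and amounts to a genuine sharpening of Lemma \ref{lemma:qfunct_orthogonal} (vanishing of morphisms into a bimodule concentrated in positive degrees, rather than into an acyclic one) when the source category is cohomologically nonpositive.
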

\begin{proof}
Upon taking a suitable resolution, we may assume that $\smallcat a$ is h-projective. Recalling Lemma \ref{lemma:truncations_image_qfun}, we want to check that the triangulated category 
\begin{align*}
H^0(\hproj^\mathrm{rqr}(\smallcat a,\cat B)) &\cong H^0(\hproj^\mathrm{rqr}(\smallcat a,\tau^{\leq 0}\cat B)) \\
&\cong \dercomp^\mathrm{rqr}(\smallcat a, \tau^{\leq 0}\cat B)
\end{align*}
has the t-structure described above, where $\dercomp^\mathrm{rqr}(\smallcat a, \tau^{\leq 0}\cat B)$ is the full category of $\dercomp(\smallcat a, \tau^{\leq 0}\cat B)$ spanned by the right quasi-representable bimodules.

First, $\dercomp^\mathrm{rqr}(\smallcat a, \tau^{\leq 0}\cat B_{\leq 0})$ is closed under nonnegative shifts, because $H^0(\cat B)_{\leq 0}$ has this property. Analogously, $\dercomp^\mathrm{rqr}(\smallcat a, \tau^{\leq 0}\cat B_{\geq 0})$ is closed under nonpositive shifts.

Next, let $F \colon \smallcat a \to \tau^{\leq 0}\cat B$ be a quasi-functor. From the distinguished triangle \eqref{eq:funct_disttria_tstruct} we get a distinguished triangle (in $\dercomp(\smallcat a, \tau^{\leq 0}\cat B)$) of quasi-functors $\smallcat a \to \tau^{\leq 0} \cat B$, by precomposing with $F$:
\begin{equation}
    i_{\leq 0} \tau_{\leq 0} F \to F \to i_{\geq 1} \tau_{\geq 1} F,
\end{equation}
and it is clear that the composition $i_{\leq 0} \tau_{\leq 0} F$ has quasi-essential image in $\tau^{\leq 0} \cat B_{\leq 0}$, and analogously $i_{\geq 1} \tau_{\geq 1} F$ has quasi-essential image in $\tau^{\leq 0} \cat B_{\geq 1}$. To conclude, we only need to check that given quasi-functors $F, G \colon \smallcat a \to \tau^{\leq 0} \cat B$ such that $F$ has quasi-essential image in $\tau^{\leq 0} \cat B_{\geq 0}$ and $G$ has quasi-essential image in $\tau^{\leq 0} \cat B_{\geq 1}$, then $\dercomp^\mathrm{rqr}(\smallcat a, \tau^{\leq 0}\cat B)(F,G) = 0$. This follows from Lemma \ref{lemma:qfunct_orthogonal}, because $\tau^{\leq 0}\cat B(A,B)\cong 0$ if $A \in \tau^{\leq 0}\cat B_{\leq 0}$ and $B \in \tau^{\leq 0}\cat B_{\geq 1}$.
\end{proof}

The t-structure on the dg-category of quasi-functors is well-behaved with respect to pre- and post-composition, namely:
\begin{proposition} \label{prop:tstruct_precomposition}
Let $F \colon \smallcat a' \to \smallcat a$ be a quasi-functor between small dg-categories with cohomology concentrated in nonpositive degrees, and let $\cat B$ be a pretriangulated dg-category with a t-structure. Assume that either $\smallcat a$ or $\cat B$ is h-flat. Then, the induced precomposition quasi-functor
\[
F^* \colon \hproj^\mathrm{rqr}(\smallcat a, \cat B) \to \hproj^\mathrm{rqr}(\smallcat a', \cat B)
\]
is t-exact.
\end{proposition}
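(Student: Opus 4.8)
The plan is to verify directly that $H^0(F^*)$ preserves both aisles of the t-structures constructed in Theorem \ref{thm:tstruct_quasifunctors}; by the definition of t-exactness this is precisely what must be checked. Taking suitable resolutions (using Proposition \ref{prop: ind_rqr_firstargument} together with Lemma \ref{lemma:cofibrantresolutions_nonpositive}), I would first arrange that $\smallcat a$, $\smallcat a'$ and $F$ are all h-projective, so that the composite quasi-functor $F^*(G) = G \circ F$ is computed by the coend formula \eqref{eq:qfun_composition} and precomposition is a genuine operation on the relevant categories of h-projective right quasi-representable bimodules.

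Next I would recall, from Theorem \ref{thm:tstruct_quasifunctors} and its proof via Lemma \ref{lemma:truncations_image_qfun}, that the aisle $\hproj^\mathrm{rqr}(\smallcat a, \cat B)_{\leq 0} = \hproj^\mathrm{rqr}(\smallcat a, \cat B_{\leq 0})$ consists precisely of those quasi-functors $G \colon \smallcat a \to \cat B$ whose quasi-essential image lies in $\cat B_{\leq 0}$, that is, with $\Phi_G(A) \in \cat B_{\leq 0}$ for every $A \in \smallcat a$; the analogous description holds for $\cat B_{\geq 0}$, and identically with $\smallcat a'$ in place of $\smallcat a$. Thus it suffices to show that precomposition with $F$ preserves the property of having quasi-essential image in $\cat B_{\leq 0}$ (respectively $\cat B_{\geq 0}$). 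This is exactly where the objectwise nature of the t-structure does the work.

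The key computation is that composition of quasi-functors is computed on cohomology by composition of the associated graded functors (Remark \ref{remark:qfun_notations}): one has $H^*(G \circ F) \cong H^*(G) \circ H^*(F)$, so on objects $\Phi_{G \circ F}(A') \cong \Phi_G(\Phi_F(A'))$ for all $A' \in \smallcat a'$. Since $\Phi_F(A')$ is an object of $\smallcat a$, if $G$ has quasi-essential image in $\cat B_{\leq 0}$ then $\Phi_G(\Phi_F(A')) \in \cat B_{\leq 0}$, and hence $F^*(G) = G \circ F$ again has quasi-essential image in $\cat B_{\leq 0}$. The identical argument applied to $\cat B_{\geq 0}$ shows the other aisle is preserved. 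Therefore $H^0(F^*)$ preserves both aisles and $F^*$ is t-exact.

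I expect the only genuine subtlety to be bookkeeping rather than anything conceptual: one must ensure that $G \circ F$ is formed under the correct h-projectivity hypotheses so that \eqref{eq:qfun_composition} applies and that $H^*$ is functorial for this composition, and one must invoke the precise identification of the aisles with classes of quasi-functors of bounded quasi-essential image furnished by Theorem \ref{thm:tstruct_quasifunctors}. Since both of these are already established in the preceding sections, no new obstacle arises, and the whole argument reduces to the objectwise observation $\Phi_{G\circ F}(A')\cong \Phi_G(\Phi_F(A'))$.
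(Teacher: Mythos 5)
Your proof is correct, and since the paper's own proof of this proposition is literally the single word ``Straightforward,'' your argument is exactly the fleshed-out version of the intended objectwise reasoning: by Theorem \ref{thm:tstruct_quasifunctors} the aisles of $\hproj^\mathrm{rqr}(\smallcat a,\cat B)$ are characterized by quasi-essential image in $\cat B_{\leq 0}$ (resp. $\cat B_{\geq 0}$), and the identification $\Phi_{G\circ F}(A')\cong \Phi_G(\Phi_F(A'))$ (via the coend formula \eqref{eq:qfun_composition} and co-Yoneda \eqref{eq:coYoneda}, legitimate after the h-projective replacements you describe) shows precomposition preserves both aisles. There is no gap; the bookkeeping points you flag are precisely the ones that need, and receive, attention.
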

\begin{proof}
Straightforward.
\end{proof}
\begin{proposition} \label{prop:tstruct_postcomposition}
Let $F \colon \cat B \to \cat B'$ a left t-exact (respectively right t-exact, t-exact) quasi-functor between dg-categories endowed with a t-structure, and let $\smallcat a$ be a small dg-category with cohomology concentrated in nonpositive degrees. Assume that either $\smallcat a$ or $\cat B$ is h-flat. Then, the induced postcomposition quasi-functor
\[
F_* \colon \hproj^\mathrm{rqr}(\smallcat a, \cat B) \to \hproj^\mathrm{rqr}(\smallcat a, \cat B')
\]
is left t-exact (respectively right t-exact, t-exact).
\end{proposition}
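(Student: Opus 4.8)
The plan is to reduce everything to the objectwise description of the t-structure furnished by Theorem \ref{thm:tstruct_quasifunctors}. First I would replace $\smallcat a$ by an h-projective (hence h-flat) resolution concentrated in nonpositive degrees, using Lemma \ref{lemma:cofibrantresolutions_nonpositive}, and transport the statement along the induced quasi-equivalence via Proposition \ref{prop: ind_rqr_firstargument}. This single reduction simultaneously guarantees that both $\hproj^\mathrm{rqr}(\smallcat a, \cat B)$ and $\hproj^\mathrm{rqr}(\smallcat a, \cat B')$ satisfy the h-flatness hypothesis of Theorem \ref{thm:tstruct_quasifunctors}, so that both carry t-structures whose aisle and co-aisle are $\hproj^\mathrm{rqr}(\smallcat a, \cat B_{\leq 0})$, $\hproj^\mathrm{rqr}(\smallcat a, \cat B_{\geq 0})$ (and the analogous subcategories for $\cat B'$). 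The key point to keep in mind is that membership in these aisles is precisely the condition that the quasi-essential image of a quasi-functor $G \colon \smallcat a \to \cat B$ lie in $\cat B_{\leq 0}$, respectively $\cat B_{\geq 0}$; concretely, that $\Phi_G(a) \in \Ob(\cat B_{\leq 0})$ (resp. $\Ob(\cat B_{\geq 0})$) for every object $a$ of $\smallcat a$.

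Next I would unwind the effect of $F_*$ at the level of objects. By definition $F_*(G) = F \circ G$ is the composite of quasi-functors $\smallcat a \xrightarrow{G} \cat B \xrightarrow{F} \cat B'$; after replacing $F$ or $G$ by an h-projective bimodule, this composite is computed by the coend \eqref{eq:qfun_composition}. The same co-Yoneda reduction already performed in the proof of Lemma \ref{lemma:qfunct_orthogonal} shows that $F \circ G$ is again right quasi-representable, with $(F \circ G)_a \cong \cat B'(-, \Phi_F(\Phi_G(a)))$; equivalently, at the level of graded homotopy categories $H^*(F_* G) \cong H^*(F) \circ H^*(G)$, so on objects $\Phi_{F \circ G}(a) = \Phi_F(\Phi_G(a))$. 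Thus the quasi-essential image of $F_*(G)$ is exactly $H^0(F)$ applied to the quasi-essential image of $G$.

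With this identification the conclusion is immediate from the meaning of (left/right) t-exactness of $F$, which (in the standard convention) says that $H^0(F)$ carries $\Ob(\cat B_{\geq 0})$ into $\Ob(\cat B'_{\geq 0})$ when $F$ is left t-exact, respectively $\Ob(\cat B_{\leq 0})$ into $\Ob(\cat B'_{\leq 0})$ when $F$ is right t-exact. Indeed, if $F$ is left t-exact and $G \in \hproj^\mathrm{rqr}(\smallcat a, \cat B)_{\geq 0}$, then $\Phi_G(a) \in \cat B_{\geq 0}$ for all $a$, so $\Phi_{F \circ G}(a) = \Phi_F(\Phi_G(a)) \in \cat B'_{\geq 0}$, whence $F_*(G) \in \hproj^\mathrm{rqr}(\smallcat a, \cat B'_{\geq 0}) = \hproj^\mathrm{rqr}(\smallcat a, \cat B')_{\geq 0}$; this is precisely left t-exactness of $F_*$. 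The right t-exact case is the mirror argument using the aisle $\cat B_{\leq 0}$, and the t-exact case follows by combining the two.

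I do not expect a serious obstacle. The only step requiring genuine care is the identification $\Phi_{F \circ G}(a) = \Phi_F(\Phi_G(a))$, i.e.\ that postcomposition of quasi-functors acts on quasi-essential images through $H^0(F)$; this is exactly what the coend computation of Lemma \ref{lemma:qfunct_orthogonal} (equivalently, the functoriality of $H^*(-)$ on quasi-functors recorded in Remark \ref{remark:qfun_notations}) supplies. Everything else is a direct translation through the objectwise characterization of the t-structure on quasi-functors.
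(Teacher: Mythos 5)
Your proposal is correct, and it fills in exactly the argument the paper intends: the paper's own proof is literally the single word ``Straightforward'', and the straightforward route is precisely yours --- reduce to $\smallcat a$ h-projective, use that the t-structure of Theorem \ref{thm:tstruct_quasifunctors} is detected objectwise on quasi-essential images, and observe via the co-Yoneda computation (as in Lemma \ref{lemma:qfunct_orthogonal} and Remark \ref{remark:qfun_notations}) that $\Phi_{F\circ G}(a)\cong \Phi_F(\Phi_G(a))$, so $F_*$ preserves whichever aisle $H^0(F)$ preserves. The only cosmetic point is that in the case where $\smallcat a$ (rather than $\cat B$) is h-flat, the replacement of $\smallcat a$ by $Q(\smallcat a)$ is justified by Corollary \ref{coroll: ind_rqr_firstargument} (quasi-equivalence invariance of $\RHom$) rather than by Proposition \ref{prop: ind_rqr_firstargument} verbatim, whose hypothesis asks for $\cat B$ h-flat --- the same implicit convention the paper uses in Theorem \ref{thm:tstruct_quasifunctors}.
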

\begin{proof}
Straightforward.
\end{proof}

Let us now discuss the heart of the t-structure induced on the category of quasi-functors.
\begin{proposition} \label{prop:tstruct_quasifunctors_heart}
Let $\smallcat a$ be a small dg-category with cohomology concentrated in nonpositive degrees, and let $\cat B$ be a pretriangulated dg-category with a t-structure $(\cat B_{\leq 0}, \cat B_{\geq 0})$. Assume that $\smallcat a$ or $\cat B$ is h-flat. Then, there is an equivalence of abelian categories:
\begin{equation}
    H^0(\hproj^{\mathrm{rqr}}(\smallcat a, \cat B))^\heartsuit \cong \Fun(H^0(\smallcat a), H^0(\cat B)^\heartsuit ).
\end{equation}
\end{proposition}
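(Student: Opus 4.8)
The plan is to realise the heart as a category of quasi-functors landing in the heart-enhancement of $\cat B$, and then to invoke Proposition \ref{prop:H0_equiv_iftarget_H0}. First I would reduce to the case where $\smallcat a$ is h-projective: a cofibrant replacement $q \colon Q(\smallcat a) \to \smallcat a$ induces, by Proposition \ref{prop: ind_rqr_firstargument}, a quasi-equivalence $\hproj^{\mathrm{rqr}}(Q(\smallcat a), \cat B) \simeq \hproj^{\mathrm{rqr}}(\smallcat a, \cat B)$, which is t-exact by Proposition \ref{prop:tstruct_precomposition} and hence restricts to an equivalence of hearts. Since $q$ is a quasi-equivalence, $Q(\smallcat a)$ still has cohomology concentrated in nonpositive degrees and $H^0(Q(\smallcat a)) \cong H^0(\smallcat a)$, so the target $\Fun(H^0(\smallcat a), H^0(\cat B)^\heartsuit)$ is unchanged as well.

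Set $\cat C = \cat B_{\leq 0} \cap \cat B_{\geq 0}$, the full dg-subcategory of $\cat B$ enhancing the heart, so that $H^0(\cat C) = H^0(\cat B)^\heartsuit$; by Remark \ref{remark:tstruct_dgcat_aisles_heart} its hom-complexes have cohomology concentrated in \emph{nonnegative} degrees. By Theorem \ref{thm:tstruct_quasifunctors} the two aisles consist of the quasi-functors with quasi-essential image in $\cat B_{\leq 0}$, respectively $\cat B_{\geq 0}$; intersecting, the heart consists precisely of the quasi-functors whose quasi-essential image lies in $\cat C$. I would identify this heart with $H^0(\hproj^{\mathrm{rqr}}(\smallcat a, \cat C))$ via postcomposition with the full inclusion $\cat C \hookrightarrow \cat B$: this is quasi-fully faithful, because (as in the computation in the proof of Lemma \ref{lemma:qfunct_orthogonal}) the morphism complex between two quasi-functors $F,G$ is computed up to quasi-isomorphism as an end of the complexes $\cat C(\Phi_F(A), \Phi_G(A))$, and these are unchanged when $\cat C$ is replaced by $\cat B$ since $\cat C$ is full; its quasi-essential image is exactly the heart, as one may always choose the objects $\Phi_F(A)$ in $\Ob(\cat C)$.

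The key point, which is also the main obstacle, is that $\cat C$ is \emph{not} cohomologically concentrated in degree $0$ — it carries the higher $\Ext$-groups of the heart — so Proposition \ref{prop:H0_equiv_iftarget_H0} cannot be applied to $\cat C$ directly. To remedy this I would pass to $\tau^{\leq 0}\cat C$, which by the previous paragraph is cohomologically concentrated in degree $0$ with $H^0(\tau^{\leq 0}\cat C) = H^0(\cat B)^\heartsuit$. Since $\smallcat a$ is h-projective and concentrated in nonpositive degrees, Lemma \ref{lemma:truncations_image_qfun} applied with target $\cat C$ gives a quasi-equivalence $\tau^{\leq 0}\hproj^{\mathrm{rqr}}(\smallcat a, \tau^{\leq 0}\cat C) \xrightarrow{\sim} \tau^{\leq 0}\hproj^{\mathrm{rqr}}(\smallcat a, \cat C)$, and taking $H^0$ (which is unaffected by applying $\tau^{\leq 0}$) yields an equivalence $H^0(\hproj^{\mathrm{rqr}}(\smallcat a, \tau^{\leq 0}\cat C)) \cong H^0(\hproj^{\mathrm{rqr}}(\smallcat a, \cat C))$.

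Finally I would apply Proposition \ref{prop:H0_equiv_iftarget_H0} to $\smallcat a$ (cohomology in nonpositive degrees, and h-flat since h-projective) and $\tau^{\leq 0}\cat C$ (cohomology in degree $0$), getting $H^0(\hproj^{\mathrm{rqr}}(\smallcat a, \tau^{\leq 0}\cat C)) \cong \Fun(H^0(\smallcat a), H^0(\tau^{\leq 0}\cat C)) = \Fun(H^0(\smallcat a), H^0(\cat B)^\heartsuit)$ via $F \mapsto H^0(F)$. Composing the three identifications gives the claimed equivalence, which is again $F \mapsto H^0(F)$; being an additive equivalence between abelian categories it is automatically an equivalence of abelian categories.
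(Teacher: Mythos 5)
Your proposal is correct and follows essentially the same route as the paper's own proof: reduce to $\smallcat a$ h-projective, identify the heart with quasi-functors landing in $\cat C = \cat B_{\leq 0} \cap \cat B_{\geq 0}$, use Remark \ref{remark:tstruct_dgcat_aisles_heart} to see that $\tau^{\leq 0}\cat C$ is quasi-equivalent to $H^0(\cat B)^\heartsuit$, pass through Lemma \ref{lemma:truncations_image_qfun}, and conclude with Proposition \ref{prop:H0_equiv_iftarget_H0}. The only difference is that you spell out details the paper leaves implicit (t-exactness of the reduction step and quasi-full faithfulness of postcomposition with $\cat C \hookrightarrow \cat B$), which is harmless.
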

\begin{proof}
Without loss of generality, we assume that $\smallcat a$ is h-projective. By definition, the heart $H^0(\hproj^{\mathrm{rqr}}(\smallcat a, \cat B))^\heartsuit$ is the subcategory of $H^0(\hproj^{\mathrm{rqr}}(\smallcat a, \cat B)$ of quasi-functors whose quasi-essential image is contained in $\cat B_{\leq 0} \cap \cat B_{\geq 0}$. As a dg-category, $\cat B_{\leq 0} \cap \cat B_{\geq 0}$ has cohomology concentrated in nonnegative degrees (see Remark \ref{remark:tstruct_dgcat_aisles_heart}), hence
\[
\tau^{\leq 0}(\cat B_{\leq 0} \cap \cat B_{\geq 0}) \cong H^0(\cat B)^\heartsuit.
\]
By Lemma \ref{lemma:truncations_image_qfun}, we therefore deduce that
\[
H^0(\hproj^{\mathrm{rqr}}(\smallcat a, \cat B))^\heartsuit \cong H^0(\hproj^{\mathrm{rqr}}(\smallcat a, H^0(\cat B)^\heartsuit)),
\]
and from Lemma \ref{prop:H0_equiv_iftarget_H0} we immediately conclude.
\end{proof}
\begin{remark} \label{remark:tstruct_quasifunctors_cohomologies}
From the proof of Theorem \ref{thm:tstruct_quasifunctors} it is clear that the truncation functors of the t-structure on $\hproj^\mathrm{rqr}(\smallcat a, \cat B)$ are given by composition with the truncations $\tau_{\leq 0}$ and $\tau_{\geq 0}$ of the t-structure of $\cat B$. Next, if we identify the heart of this t-structure with $\Fun(H^0(\smallcat a),H^0(\cat B)^\heartsuit)$ according to the above Proposition \ref{prop:tstruct_quasifunctors_heart}, it is clear that t-cohomologies with respect to $\hproj^\mathrm{rqr}(\smallcat a, \cat B)$ are computed objectwise:
\begin{equation}
    H^i_t(F)(A) = H^i_t(\Phi_F(A)),
\end{equation}
for $F \in  \hproj^\mathrm{rqr}(\smallcat a,\cat B)$, where here $\Phi_F$ is the induced functor $H^0(\smallcat a) \to H^0(\cat B)$ (cf. Remark \ref{remark:qfun_notations}) and where $H^i_t$ denotes the 
cohomological functor associated to the t-structure, respectively on $\hproj^\mathrm{rqr}(\smallcat a, \cat B)$ and on $\cat B$.
\end{remark}

Finally, we discuss the behaviour of the t-structures on functor dg-categories with respect to the duality \eqref{eq:quasifunctors_duality}.
\begin{proposition} \label{prop:duality_quasifunctor_texact}
Let $\smallcat a$ be a small dg-category with cohomology concentrated in nonpositive degrees, and let $\cat B$ be a pretriangulated dg-category with a t-structure. Assume that either $\smallcat a$ or $\cat B$ is h-flat. Then, the quasi-equivalence \eqref{eq:quasifunctors_duality}
\[
\dual_{\smallcat a, \cat B} \colon \hproj^\mathrm{rqr}(\smallcat a, \cat B)  \xrightarrow{\sim} \opp{\hproj^\mathrm{rqr}(\opp{\smallcat a}, \opp{\cat B})}
\]
is t-exact.
\end{proposition}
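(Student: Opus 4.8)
The plan is to show that $\dual_{\smallcat a, \cat B}$ preserves both aisles; since it is a quasi-equivalence by Proposition \ref{prop:duality_quasifunctors}, this is precisely the assertion of t-exactness. First I would unwind the t-structure on the target. Writing $\cat C = \hproj^\mathrm{rqr}(\opp{\smallcat a}, \opp{\cat B})$, note that $\opp{\smallcat a}$ still has cohomology concentrated in nonpositive degrees (its hom-complexes are those of $\smallcat a$) and that $\opp{\cat B}$ is pretriangulated with the opposite t-structure, for which $\opp{\cat B}_{\leq 0}$ and $\opp{\cat B}_{\geq 0}$ have the same objects as $\cat B_{\geq 0}$ and $\cat B_{\leq 0}$ respectively; the h-flatness hypothesis is likewise inherited. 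Hence Theorem \ref{thm:tstruct_quasifunctors} applies to $\cat C$ and, combined with the degree reversal of the opposite t-structure on $\opp{\cat C}$ (whose aisles swap and keep the same objects), the target aisles become
\[
\opp{\cat C}_{\leq 0} = \hproj^\mathrm{rqr}(\opp{\smallcat a}, \opp{(\cat B_{\leq 0})}), \qquad \opp{\cat C}_{\geq 0} = \hproj^\mathrm{rqr}(\opp{\smallcat a}, \opp{(\cat B_{\geq 0})}).
\]

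The key step is then a direct inspection of quasi-essential images. By Theorem \ref{thm:tstruct_quasifunctors}, a quasi-functor $F$ lies in $\hproj^\mathrm{rqr}(\smallcat a, \cat B)_{\leq 0}$ exactly when each object $\Phi_F(A)$ quasi-representing $F_A$ lies in $\cat B_{\leq 0}$. By Remark \ref{remark:dual_quasifun_cohomology} we have $H^*(\dual(F)) \cong \opp{H^*(F)}$, so the object quasi-representing $\dual(F)_A$ is the same object $\Phi_F(A)$, now regarded in $\opp{\cat B}$. Therefore, if $F$ has quasi-essential image in $\cat B_{\leq 0}$, then $\dual(F)$ has quasi-essential image in $\opp{(\cat B_{\leq 0})}$, which is exactly the description of $\opp{\cat C}_{\leq 0}$ obtained above; and symmetrically for the coaisle. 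This shows $\dual_{\smallcat a, \cat B}$ carries $\hproj^\mathrm{rqr}(\smallcat a, \cat B)_{\leq 0}$ into $\opp{\cat C}_{\leq 0}$ and $\hproj^\mathrm{rqr}(\smallcat a, \cat B)_{\geq 0}$ into $\opp{\cat C}_{\geq 0}$, as required. (One may equivalently phrase this via the naturality of $\dual$ with respect to the inclusions $\cat B_{\leq 0} \hookrightarrow \cat B$ and $\cat B_{\geq 0} \hookrightarrow \cat B$, whose induced postcomposition quasi-functors identify the aisles as full dg-subcategories by Theorem \ref{thm:tstruct_quasifunctors}.)

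I expect the only genuine subtlety to be the bookkeeping of shift conventions when forming the opposite t-structure (the reversal $\opp{\cat B}_{\leq n} = \opp{(\cat B_{\geq -n})}$, so that the heart is preserved and the two aisles are interchanged) together with checking that the hypotheses of Theorem \ref{thm:tstruct_quasifunctors} — nonpositive concentration and h-flatness — transfer to $\opp{\smallcat a}$ and $\opp{\cat B}$. Once these conventions are pinned down, the statement is essentially formal, since taking the dual does not change the underlying objects of the quasi-essential image and the t-structure on $\hproj^\mathrm{rqr}$ is detected objectwise (cf. Remark \ref{remark:tstruct_quasifunctors_cohomologies}).
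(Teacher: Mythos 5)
Your proposal is correct and takes essentially the same route as the paper's own proof: both hinge on Remark \ref{remark:dual_quasifun_cohomology} (that $H^*(\dual(F)) \cong \opp{H^*(F)}$, so the dual does not change the objects in the quasi-essential image) combined with the bookkeeping that $\opp{(\cat B_{\leq 0})} = (\opp{\cat B})_{\geq 0}$ and that passing to $\opp{\hproj^\mathrm{rqr}(\opp{\smallcat a}, \opp{\cat B})}$ swaps the aisles back. The only cosmetic differences are that you check both aisles explicitly and verify that the hypotheses of Theorem \ref{thm:tstruct_quasifunctors} transfer to $\opp{\smallcat a}$ and $\opp{\cat B}$, whereas the paper treats only the aisle $\leq 0$ and leaves the symmetric case implicit.
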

\begin{proof}
Let $F \in \hproj^\mathrm{rqr}(\smallcat a, \cat B)_{\leq 0}$. By definition, $H^*(F)$ factors through $H^*(\cat B_{\leq 0})$. By Remark \ref{remark:dual_quasifun_cohomology}, we know that the dual $\dual(F)$ is such that $H^*(\dual F)\cong \opp{H^*(F)}$, so by hypothesis it factors through $\opp{H^*(\cat B_{\leq 0})}$. This is precisely $H^*((\opp{\cat B})_{\geq 0})$, considering the opposite t-structure. This means that $\dual(F) \in \hproj^\mathrm{rqr}(\opp{\smallcat a}, \opp{\cat B})_{\geq 0}$, hence (again considering the opposite t-structure) that
\[
\dual(F) \in (\opp{\hproj^\mathrm{rqr}(\opp{\smallcat a}, \opp{\cat B})})_{\leq 0},
\]
as claimed.
\end{proof}

\section{Change of rings} \label{section:changeofrings}
In this part, we fix a commutative dg-ring $R$ and a commutative $R$-dg-algebra $R \to S$. In contrast to the previous sections, we don't need to assume that $R$ or $S$ are concentrated in nonpositive degrees for the specific purpose of discussing base change along $R \to S$. We denote respectively by $\kat{dgCat}(R)$ and $\kat{dgCat}(S)$ the categories of small $R$-linear and $S$-linear dg-categories. 
\subsection{Restriction of scalars}
Restriction of scalars gives a functor
\begin{equation}
    \begin{split}
        \kat{dgCat}(S) &\to \kat{dgCat}(R), \\
        \smallcat a & \mapsto \smallcat a_R.
    \end{split}
\end{equation}
This functor is moreover lax monoidal, indeed there is a natural $R$-linear functor
\begin{equation*}
    \smallcat a_R \otimes_R \smallcat b_R \to (\smallcat a \otimes_S \smallcat b)_R.
\end{equation*}
It is also immediate to see that it preserves quasi-equivalences, hence inducing a restriction of scalars functor
\begin{equation} \label{eq:restriction_hqe}
\begin{split}
    \Hqe(S) & \to \Hqe(R), \\
    \smallcat a & \mapsto \smallcat a_R,
\end{split}
\end{equation}
where $\Hqe(R)$ and $\Hqe(S)$ are respectively the homotopy categories of $R$-linear and $S$-linear dg-categories.

Notice moreover that we have a natural $R$-linear dg-functor
\begin{equation}
    \compdg(S)_R \to \compdg(R)
\end{equation}
induced by restriction of scalars. From this, we see that any $S$-linear dg-bimodule $F \in \compdg_S(\smallcat a,\smallcat b)$ naturally induces a $R$-linear dg-bimodule $F_R \in \compdg_R(\smallcat a_R, \smallcat b_R)$. This actually gives an $R$-linear dg-functor:
\begin{equation}
\begin{split}
    \compdg_S(\smallcat a,\smallcat b)_R & \to \compdg_R(\smallcat a_R, \smallcat b_R), \\
    F & \mapsto F_R.
\end{split}
\end{equation}

The above discussion can be held verbatim also for locally small dg-categories, avoiding the reference to the categories $\kat{dgCat}(R)$ or $\kat{dgCat}(S)$.

If $\cat A$ is a $S$-linear dg-category, we will sometimes abuse notation and write again $\cat A$ instead of $\cat A_R$ for its underlying $R$-linear dg-category.
\subsection{Coextension of scalars} \label{subsec:coextension}
The commutative dg-ring $S$ can obviously be viewed as a dg-category with a single object $\diamondsuit$ and $\Hom(\diamondsuit,\diamondsuit)=S$, by assumption concentrated in nonpositive degrees. Our goal is to find a right adjoint to the restriction of scalars $\Hqe(S) \to \Hqe(R)$. 
In practice, we will work in the (slightly) more general setting of locally $\mathbb U$-small dg-categories, which are still small with respect to the bigger universe $\mathbb V$. This will allow us to take the suitable (cofibrant, h-projective, h-flat) resolutions of the given dg-categories: they will still be $\mathbb U$-locally small up to quasi-equivalence.

\subsubsection{The $S$-linear structure on $\compdg_R(S,\cat B)$} 
In analogy with the classical result, the coextension of scalars of an $R$-linear dg-category $\cat B$ along $R \to S$ will be described by an $S$-linear category of $R$-linear quasi-functors $S \to \cat B$. Such quasi-functors are in particular dg-bimodules, namely objects of $\compdg_R(S,\cat B)$.
\begin{lemma} \label{lemma:bimodules_tensor_S_Slinear}
Let $\cat B$ be an $R$-linear dg-category. The dg-category $\compdg_R(S,\cat B)$ has a natural $S$-linear structure defined as follows. Given a morphism $\varphi \colon X \to Y$ in $\compdg_R(S,\cat B)$ and $s \in S$, we set
\begin{equation}
    (s\varphi)^B_{\diamondsuit} = Y_s^{1_B} \varphi^B_{\diamondsuit} = (-1)^{|\varphi||s|} \varphi^B_{\diamondsuit} X_s^{1_B}.
\end{equation}
Since $S$ is commutative, $s\varphi \colon X \to Y$ is a well defined natural transformation. In particular, we have
\begin{equation}
    (s 1_X)^B_{\diamondsuit} = X^{1_B}_s.
\end{equation}
\end{lemma}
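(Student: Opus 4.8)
The plan is to realize the prescribed $S$-action as post-composition with ``multiplication by $s$'' endomorphisms, and then to reduce every assertion to the strict functoriality of the bimodules together with the graded commutativity of $S$. Concretely, for fixed $Y \in \compdg_R(S, \cat B)$ and $s \in S$, I would first introduce the family of structure maps $Y_s^{1_B} = Y(1_B, s) \colon Y^B_{\diamondsuit} \to Y^B_{\diamondsuit}$ and claim that, assembled over all $B$, they constitute a morphism $Y_s \colon Y \to Y$ of degree $|s|$ in $\compdg_R(S, \cat B)$. This is the heart of the argument. To verify naturality of $Y_s$ against an arbitrary morphism $(g, t) \colon (B, \diamondsuit) \to (B', \diamondsuit)$ of $\opp{\cat B} \otimes S$, I would compute both composites $Y(g, t) \circ Y(1_B, s)$ and $Y(1_{B'}, s) \circ Y(g, t)$ using functoriality of $Y$ and the composition rule $(g_2 \otimes t_2)(g_1 \otimes t_1) = (-1)^{|t_2||g_1|}(g_2 g_1) \otimes (t_2 t_1)$ of the tensor product of dg-categories. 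Both reduce to a scalar multiple of $Y(g, st)$, and the two Koszul prefactors match precisely after invoking the graded commutativity identity $ts = (-1)^{|s||t|} st$ in $S$; this is the single place where commutativity of $S$ is genuinely used.

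Granting this, I would define $s\varphi := Y_s \circ \varphi$, so that $(s\varphi)^B_{\diamondsuit} = Y_s^{1_B}\, \varphi^B_{\diamondsuit}$ is automatically a well-defined morphism $X \to Y$ of degree $|s| + |\varphi|$, being a composite of two morphisms in $\compdg_R(S, \cat B)$. The equality with the second expression $(-1)^{|\varphi||s|}\varphi^B_{\diamondsuit} X_s^{1_B}$ is then immediate: it is exactly the naturality of $\varphi \colon X \to Y$ evaluated at the morphism $(1_B, s)$, namely $Y(1_B, s)\, \varphi^B_{\diamondsuit} = (-1)^{|s||\varphi|}\varphi^B_{\diamondsuit}\, X(1_B, s)$, and this step needs no commutativity. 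Specializing to $\varphi = 1_X$ (degree $0$) yields $(s 1_X)^B_{\diamondsuit} = X_s^{1_B}$, establishing the final displayed formula.

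It then remains to check that $s \otimes \varphi \mapsto s\varphi$ truly upgrades $\compdg_R(S, \cat B)$ to an $S$-linear dg-category, and here I would only indicate the verifications, as they become formal once $Y_s$ is understood. Associativity $(ss')\varphi = s(s'\varphi)$ and unitality $1\varphi = \varphi$ follow from $Y_{ss'} = Y_s \circ Y_{s'}$ and $Y_1 = 1_Y$, again by functoriality of $Y$, the decomposition $(1_B, ss') = (1_B, s)(1_B, s')$ carrying no sign since $|1_B| = 0$. Compatibility with the differential, giving each hom-complex the structure of an $S$-dg-module, follows from $d(Y_s) = Y_{ds}$, which holds because $Y$ is a dg-functor and $d(1_B \otimes s) = 1_B \otimes ds$. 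Finally, $S$-bilinearity and centrality of composition reduce to associativity of composition in $\compdg_R(S, \cat B)$ together with the ability to slide the $Y_s$ past morphisms via naturality, the relevant signs being controlled once more by graded commutativity of $S$.

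The step I expect to be the main obstacle is purely the sign bookkeeping in the naturality computation for $Y_s$: one must simultaneously track the Koszul signs arising from composition in $\opp{\cat B} \otimes S$, from moving $\varphi$ past $Y_s$, and from graded commutativity of $S$, and confirm that they cancel. Everything else is a formal consequence of functoriality, so I would organize the write-up so that commutativity of $S$ is invoked exactly once, in establishing that $Y_s$ is a bona fide morphism of bimodules.
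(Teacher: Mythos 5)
Your proof is correct, and it supplies exactly the routine verification that the paper's own proof compresses into the single word ``Straightforward'': realizing the scalar action through the endomorphisms $Y_s = Y(1_B \otimes s)$, checking their graded naturality via the Koszul sign rule for composition in $\opp{\cat B} \otimes S$ together with graded commutativity of $S$, obtaining the second displayed formula from graded naturality of $\varphi$ at $1_B \otimes s$, and deducing associativity, unitality, and compatibility with differentials from functoriality of $Y$. The only nitpick is in your final paragraph: sliding $Y_s$ past a morphism $\psi$ (needed for $S$-bilinearity of composition) is governed by the graded naturality of $\psi$, not ``once more by graded commutativity of $S$'' — a misattribution of where the sign comes from, which does not affect the validity of the argument.
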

\begin{proof}
Straightforward.
\end{proof}
\begin{remark} \label{remark:coextension_dgmS}
Let $F \in \compdg_R(S,\cat B)$. Then, the complex $F_{\diamondsuit}^B$ has a natural structure of $S$-dg-module, for all $B \in \cat B$. Indeed, set:
\begin{equation}
\begin{split}
    S \otimes_R F_{\diamondsuit}^B & \to F_{\diamondsuit}^B, \\
    s \otimes x & \mapsto F_s^{1_B}(x).
    \end{split}
\end{equation}
In particular, for any $f \colon B \to B'$ in $\cat B$ and $s \in S$, the map
\[
F_s^f \colon F_{\diamondsuit}^{B'} \to F_{\diamondsuit}^B
\]
is $S$-linear.
\end{remark}
\begin{proposition}
The $S$-linear dg-category $\compdg_R(S,\cat B)$ is tensored over $\compdg(S)$. Namely, for any $V \in \compdg(S)$ and $F \in \compdg_R(S,\cat B)$, there is an object
\begin{equation*}
    V \otimes_S F \in \compdg_R(S,\cat B)
\end{equation*}
together with a natural $S$-linear isomorphism
\begin{equation} \label{eq:coextension_tensor}
    \compdg_R(S,\cat B)(V \otimes_S F, G) \cong \compdg(S)(V, \compdg_R(S,\cat B)(F,G)).
\end{equation}
The tensor $V \otimes_S F$ is thus dg-functorial in both variables.

The $S$-linear dg-category $\compdg_R(S,\cat B)$ is also cotensored over $\compdg(S)$. Namely, for any $V \in \compdg(S)$ and $G \in \compdg_R(S,\cat B)$, there is an object
\begin{equation*}
    \Hom_S(V, G) \in \compdg_R(S,\cat B)
\end{equation*}
together with a natural $S$-linear isomorphism
\begin{equation} \label{eq:coextension_cotensor}
    \compdg_R(S,\cat B)(F, \Hom_S(V,G)) \cong \compdg(S)(V, \compdg_R(S,\cat B)(F,G)).
\end{equation}
The cotensor $\Hom_S(V, G)$ is thus dg-functorial in both variables.
\end{proposition}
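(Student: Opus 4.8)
The plan is to define both the tensor and the cotensor \emph{objectwise}, exploiting the fact (Remark \ref{remark:coextension_dgmS}) that for $F \in \compdg_R(S,\cat B)$ each complex $F_\diamondsuit^B$ carries a natural $S$-dg-module structure, together with the fact that the category $\compdg(S)$ of $S$-dg-modules is closed symmetric monoidal — the symmetry and the tensor-hom adjunction $\compdg(S)(V \otimes_S M, N) \cong \compdg(S)(V, \compdg(S)(M,N))$ being available because $S$ is commutative. Concretely, I would set
\[
(V \otimes_S F)_\diamondsuit^B = V \otimes_S F_\diamondsuit^B, \qquad \Hom_S(V,G)_\diamondsuit^B = \Hom_S(V, G_\diamondsuit^B),
\]
with tensor and internal hom computed in $\compdg(S)$ (so $\Hom_S(M,N)$ is the complex of $S$-linear maps, which coincides with $\compdg(S)(M,N)$). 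First I would check these are genuine objects of $\compdg_R(S,\cat B)$: the $S$-variable structure is the $S$-dg-module structure just written, while the functoriality in $B \in \opp{\cat B}$ is obtained by applying the functors $V \otimes_S -$ and $\Hom_S(V,-)$ to the transition maps $F_s^f$, which are $S$-linear by Remark \ref{remark:coextension_dgmS}. dg-functoriality in both variables is then immediate from that of $\otimes_S$ and $\Hom_S$ on $\compdg(S)$.

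The key computational step is a description of the morphism complexes. Using the end formula for bimodule morphisms together with Fubini, and computing the end over the one-object dg-category $S$, I would establish
\[
\compdg_R(S,\cat B)(F,G) = \int_{\diamondsuit, B} \compdg(R)(F_\diamondsuit^B, G_\diamondsuit^B) = \int_B \compdg(S)(F_\diamondsuit^B, G_\diamondsuit^B),
\]
the point being that the inner end $\int_\diamondsuit \compdg(R)(F_\diamondsuit^B, G_\diamondsuit^B)$ is exactly the equivariance condition $s\varphi = (-1)^{|s||\varphi|}\varphi s$ of \eqref{eq:end}, i.e. the subcomplex $\compdg(S)(F_\diamondsuit^B, G_\diamondsuit^B)$ of $S$-linear maps.

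Granting this, the tensor adjunction \eqref{eq:coextension_tensor} follows from the chain
\begin{align*}
\compdg_R(S,\cat B)(V \otimes_S F, G)
&= \int_B \compdg(S)(V \otimes_S F_\diamondsuit^B, G_\diamondsuit^B) \\
&\cong \int_B \compdg(S)(V, \compdg(S)(F_\diamondsuit^B, G_\diamondsuit^B)) \\
&\cong \compdg(S)\Bigl(V, \int_B \compdg(S)(F_\diamondsuit^B, G_\diamondsuit^B)\Bigr) \\
&= \compdg(S)(V, \compdg_R(S,\cat B)(F,G)),
\end{align*}
where the first isomorphism is the tensor-hom adjunction in $\compdg(S)$ and the second is the compatibility of ends with hom-complexes. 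The cotensor adjunction \eqref{eq:coextension_cotensor} is proved by the same chain, now starting from $\compdg_R(S,\cat B)(F, \Hom_S(V,G)) = \int_B \compdg(S)(F_\diamondsuit^B, \Hom_S(V, G_\diamondsuit^B))$ and using the symmetry of $\otimes_S$ to rewrite $\compdg(S)(M, \Hom_S(V,N)) \cong \compdg(S)(V, \compdg(S)(M,N))$.

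The conceptual content here is small: this is the standard statement that a dg-category of presheaves valued in a closed monoidal dg-category is tensored and cotensored, computed pointwise. The only genuine care needed — and the step I expect to be the main, if modest, obstacle — is \emph{tracking the $S$-linearity}: one must verify that the constructed isomorphisms are compatible with the $S$-actions of Lemma \ref{lemma:bimodules_tensor_S_Slinear}, so that they are $S$-linear and not merely $R$-linear, and that they are natural in all variables. Since every arrow in the displayed chains is built from the manifestly $S$-linear tensor-hom adjunction for $S$-dg-modules and from the $S$-linear transition maps $F_s^f$, this reduces to a routine bookkeeping check rather than a real difficulty.
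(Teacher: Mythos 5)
Your proposal is correct and follows essentially the same route as the paper: the paper's proof consists precisely of the objectwise definitions $(V \otimes_S F)_{\diamondsuit}^B = V \otimes_S F_{\diamondsuit}^B$ and $\Hom_S(V,F)_{\diamondsuit}^B = \compdg(S)(V, F_{\diamondsuit}^B)$, declaring the remaining verification straightforward. Your end-formula computation $\compdg_R(S,\cat B)(F,G) \cong \int_B \compdg(S)(F_\diamondsuit^B, G_\diamondsuit^B)$ and the tensor-hom chain simply spell out that verification explicitly.
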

\begin{proof}
By Remark \ref{remark:coextension_dgmS}, we know that for any object $F \in \compdg_R(S,\cat B)$, the complex $F_{\diamondsuit}^B$ has a natural $S$-linear structure. Hence, we may set:
\begin{align}
    (V \otimes_S F)_{\diamondsuit}^B &= V \otimes_S F_{\diamondsuit}^B, \\
    \Hom_S(V,F)_{\diamondsuit}^B &= \compdg(S)(V, F_{\diamondsuit}^B).
\end{align}
Now, the verification is straightforward.
\end{proof}
\begin{corollary} \label{coroll:cotensor_acyclic}
Assume $V \in \compdg(S)$ is h-projective and $F \in \compdg_R(S,\cat B)$ is acyclic. Then, $\Hom_S(V,F)$ is acyclic.
\end{corollary}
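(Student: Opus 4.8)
The plan is to reduce the statement to the corresponding fact about chain complexes of $S$-modules, exploiting the objectwise $S$-linear structure established in Remark \ref{remark:coextension_dgmS}. Recall that the cotensor was defined by the formula
\[
\Hom_S(V,F)_{\diamondsuit}^B = \compdg(S)(V, F_{\diamondsuit}^B),
\]
so that acyclicity of $\Hom_S(V,F)$ as an object of $\compdg_R(S,\cat B)$ amounts to acyclicity of each complex $\compdg(S)(V, F_{\diamondsuit}^B)$, i.e.\ the hom-complex of $S$-dg-modules from $V$ into $F_{\diamondsuit}^B$. Thus the whole claim is \emph{pointwise} in $B \in \cat B$, and I would first make this reduction explicit: a bimodule $X \in \compdg_R(S,\cat B)$ is acyclic precisely when $H^*(X_{\diamondsuit}^B)=0$ for every object $B$.

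With that reduction in hand, the essential input is the standard fact that if $V$ is an h-projective $S$-dg-module and $W$ is an acyclic $S$-dg-module, then the hom-complex $\compdg(S)(V,W)$ is acyclic. This is exactly the defining property of h-projectivity (the hom-complex out of an h-projective object into an acyclic object is acyclic), so I would invoke it directly. The only point requiring a line of care is that $F$ being acyclic \emph{as an object of $\compdg_R(S,\cat B)$} means $H^*(F_{\diamondsuit}^B)=0$ for all $B$, and that this acyclicity of $F_{\diamondsuit}^B$ holds whether we regard it as an $R$-dg-module or (via Remark \ref{remark:coextension_dgmS}) as an $S$-dg-module, since the underlying complex is the same. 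Hence each $W = F_{\diamondsuit}^B$ is an acyclic $S$-dg-module, and applying the h-projectivity property with this $W$ and the given h-projective $V$ yields that $\compdg(S)(V, F_{\diamondsuit}^B)$ is acyclic.

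Putting these together, I would conclude that $H^*(\Hom_S(V,F)_{\diamondsuit}^B) = H^*(\compdg(S)(V,F_{\diamondsuit}^B)) = 0$ for every $B \in \cat B$, which is precisely the assertion that $\Hom_S(V,F)$ is acyclic in $\compdg_R(S,\cat B)$.

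I do not anticipate a serious obstacle here: the statement is genuinely elementary once the cotensor formula is unwound, and the proof is essentially a one-line application of h-projectivity performed objectwise. The most delicate bookkeeping point is keeping straight which ring the hom-complex and h-projectivity are taken over—everything relevant happens over $S$, using the $S$-linear structure on the complexes $F_{\diamondsuit}^B$—but this is a matter of careful reading rather than a real difficulty. If anything, I would flag that one should note $V$ is assumed h-projective as an $S$-dg-module, matching the $S$-linear interpretation of $F_{\diamondsuit}^B$, so that the hom-complexes in the cotensor formula are the correct ones.
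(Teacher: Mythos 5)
Your proposal is correct and is essentially the paper's own argument: the paper's proof consists precisely of unwinding the cotensor formula $\Hom_S(V,F)_{\diamondsuit}^B = \compdg(S)(V,F_{\diamondsuit}^B)$ and applying, objectwise in $B$, the defining property of h-projectivity against the acyclic $S$-dg-modules $F_{\diamondsuit}^B$. Your added remarks on base-ring bookkeeping (acyclicity being independent of whether one views $F_{\diamondsuit}^B$ over $R$ or $S$) are a harmless elaboration of what the paper leaves implicit.
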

\begin{proof}
This follows directly from the characterization $\Hom_S(V,F)_{\diamondsuit}^B = \compdg(S)(V, F_{\diamondsuit}^B)$.
\end{proof}

\subsubsection{Statement of the main results}

Next, we assume that $\cat A$ is an $S$-linear dg-category and $\cat B$ is an $R$-linear dg-category, with $\cat A$ being h-projective and $\cat B$ being h-flat (respectively as an $S$-linear and an $R$-linear dg-category). Clearly, $\hproj_R(S,\cat B)$ and $\hproj_R^\mathrm{rqr}(S,\cat B)$ inherit the $S$-linear structure from $\compdg_R(S,\cat B)$, and will be viewed as $S$-linear dg-categories. If we take the restriction $\hproj_R(S,\cat B)_R$ we clearly fall back to the $R$-linear dg-category of h-projective $S$-$\cat B$-dg-bimodules. The main result we want to achieve is the following:
\begin{theorem} \label{thm:coext_univproperty}
Let $\cat A$ be an $S$-linear dg-category and $\cat B$ an $R$-linear dg-category, with $\cat A$ having h-projective complexes of morphisms and $\cat B$ having h-flat complexes of morphisms (respectively of $S$-modules and of $R$-modules). There is a natural quasi-equivalence of $R$-linear dg-categories
\begin{equation} \label{eq:coext_univproperty}
     \hproj_R^{\mathrm{rqr}}(\cat A_R, \cat B) \xrightarrow{\sim} \hproj_S^{\mathrm{rqr}}(\cat A, \hproj_R^\mathrm{rqr}(S,\cat B))_R .
\end{equation}
\end{theorem}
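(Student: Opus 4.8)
The plan is to realize the asserted map as the enriched version of the adjunction between restriction of scalars and coextension of scalars, writing $\cat B_{(S)} = \hproj_R^{\mathrm{rqr}}(S,\cat B)$ for the coextension. The strategy is to construct an explicit comparison dg-functor and then prove it is quasi-fully faithful and quasi-essentially surjective. Throughout I use the standing hypotheses that $\cat A$ is h-projective over $S$ and $\cat B$ is h-flat over $R$: these guarantee (together with Proposition \ref{prop: ind_rqr_firstargument}) that both $\hproj_R^{\mathrm{rqr}}(\cat A_R,\cat B)$ and $\hproj_S^{\mathrm{rqr}}(\cat A,\cat B_{(S)})$ are well defined and that I may silently pass to h-projective or h-flat resolutions whenever convenient.

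First I would build the $R$-linear \emph{evaluation} quasi-functor $\epsilon \colon (\cat B_{(S)})_R \to \cat B$ given by the tautological bimodule $(P,B)\mapsto P_{\diamondsuit}^{B}$. Since any $P\in\cat B_{(S)}$ satisfies $P_{\diamondsuit}\cong \cat B(-,\Phi_P(\diamondsuit))$, this bimodule is right quasi-representable, so $\epsilon$ is a genuine quasi-functor with $H^0(\epsilon)(P)=\Phi_P(\diamondsuit)$. Postcomposition with $\epsilon$ (composition of quasi-functors as in Remark \ref{remark:qfun_notations}), applied after restriction of scalars, defines the $R$-linear dg-functor $\Theta\colon \hproj_S^{\mathrm{rqr}}(\cat A,\cat B_{(S)})_R \to \hproj_R^{\mathrm{rqr}}(\cat A_R,\cat B)$, $G\mapsto \epsilon\circ G_R$. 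This $\Theta$ is natural in $\cat A$ and $\cat B$ by construction, and proving it is a quasi-equivalence yields the theorem: the desired natural quasi-equivalence is its quasi-inverse.

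For quasi-essential surjectivity I would exhibit an explicit inverse ``currying''. Given $F\in\hproj_R^{\mathrm{rqr}}(\cat A_R,\cat B)$ and $A\in\cat A$, the left action makes $A\mapsto F_A$ a strict dg-functor $\cat A_R\to\compdg(\cat B)$; composing the strict central map $S\to\cat A(A,A)$ coming from the $S$-linear structure of $\cat A$ (cf. Lemma \ref{lemma:bimodules_tensor_S_Slinear} and Remark \ref{remark:coextension_dgmS}) with $F$ endows $F_A$ with a \emph{strict} $S$-$\cat B$-bimodule structure $\widetilde{F_A}\in\compdg_R(S,\cat B)$. The crucial point is that $(\widetilde{F_A})_{\diamondsuit}=F_A\cong\cat B(-,\Phi_F(A))$ remains right quasi-representable, so (after an h-projective resolution) $\widetilde{F_A}\in\cat B_{(S)}$, and one checks that $A\mapsto\widetilde{F_A}$ is the object-function of an $S$-linear quasi-functor $G\colon\cat A\to\cat B_{(S)}$, represented by $G_A^{P}=\compdg_R(S,\cat B)(P,\widetilde{F_A})$. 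Since $\epsilon(\widetilde{F_A})=F_A$, we get $\epsilon\circ G_R\cong F$; this both proves essential surjectivity and produces the candidate inverse $F\mapsto G$. Note that one must \emph{not} naively coinduce the whole bimodule $F_A$ (coinduction of a representable need not be quasi-representable): the $S$-structure is instead extracted from $\cat A$ being $S$-linear.

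The main obstacle is quasi-full faithfulness, i.e. matching hom-complexes. Using the co-Yoneda reduction \eqref{eq:coYoneda} together with the adjunction $G\dashv\dual(G)$ in the derived bicategory of bimodules (Remark \ref{remark:adjoints_dual_bicategory_bimodules}, exactly as exploited in the proof of Lemma \ref{lemma:qfunct_orthogonal}), each hom-complex reduces, up to quasi-isomorphism, to an end: on the right $\hproj_R^{\mathrm{rqr}}(\cat A_R,\cat B)(F,F')\simeq \int_{A\in\cat A}\cat B(\Phi_F(A),\Phi_{F'}(A))$ computed in the $R$-enriched sense over $\cat A_R$, and on the left $\hproj_S^{\mathrm{rqr}}(\cat A,\cat B_{(S)})(G,G')\simeq \int_{A\in\cat A}\cat B_{(S)}(\widetilde{F_A},\widetilde{F'_A})$ computed in the $S$-enriched sense over $\cat A$. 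Here $\cat B_{(S)}(\widetilde{F_A},\widetilde{F'_A})=\compdg_R(S,\cat B)(\widetilde{F_A},\widetilde{F'_A})$ is the $S$-equivariant subcomplex of $\compdg(\cat B)(F_A,F'_A)$, identified through the cotensor adjunction \eqref{eq:coextension_cotensor}. The heart of the matter is then a Fubini-type collapse: restricting the $S$-enriched end back to $R$ recovers the $R$-enriched end over $\cat A_R$, \emph{because} the end condition over the $S$-linear category $\cat A$ already encodes the required $S$-equivariance — concretely, imposing $f\varphi_A=\pm\varphi_{A'}f$ for \emph{all} $f\in\cat A(A,A')$ includes the central morphisms $f=s\,1_A$, which forces each $\varphi_A$ to be $S$-equivariant. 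Making this precise at the level of complexes (rather than merely cohomology), while tracking the $S$-actions through the (co)end calculus, is the delicate and bookkeeping-heavy step; everything surrounding it is formal.
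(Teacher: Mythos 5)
Your overall architecture is sound and partly parallels the paper: the currying construction you use for quasi-essential surjectivity (extracting a strict $S$-$\cat B$-bimodule structure $\widetilde{F_A}$ from the central maps $S \to \cat A(A,A)$) is exactly the paper's strict adjoint equivalence $l \dashv r$ of Proposition \ref{prop:coextension_scalars_dg_strict}, and your evaluation quasi-functor $\epsilon$ is the paper's $r$. However, your quasi-full faithfulness step has a genuine gap. You claim that, via Remark \ref{remark:adjoints_dual_bicategory_bimodules} and \eqref{eq:coYoneda}, the hom-complex reduces up to quasi-isomorphism to the strict end $\int_{A}\cat B(\Phi_F(A),\Phi_{F'}(A))$. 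This expression is not even well defined: $A \mapsto \Phi_F(A)$ is \emph{not} a dg-functor (only $H^*(F)$ is a graded functor), so there is no strict bimodule $(A,A') \mapsto \cat B(\Phi_F(A'),\Phi_{F'}(A))$ against which to form an enriched end. What the adjunction $F \dashv \dual(F)$ actually gives is $\dercomp(\cat A,\cat B)(F,F') \cong \dercomp(\cat A,\cat A)(h_{\cat A},\dual(F)\circ F')$, a \emph{derived} hom from the diagonal, i.e.\ a homotopy end, and the composite $\dual(F)\circ F'$ is identified with $\cat B(\Phi_F(-),\Phi_{F'}(-))$ only objectwise and only up to quasi-isomorphism. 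Lemma \ref{lemma:qfunct_orthogonal} can exploit this because acyclicity is an objectwise, quasi-isomorphism-invariant condition; your Fubini/equivariance collapse, by contrast, is an argument about elements $(\varphi_A)_A$ of a strict end as in \eqref{eq:end}, and strict ends are neither invariant under quasi-isomorphism nor defined for homotopy-coherent diagrams. So the step you defer as ``delicate bookkeeping'' is precisely where the proof breaks: the $S$-equivariance collapse is valid for strict bimodules versus strict functors, but you are applying it to data that only exists up to homotopy.

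This is exactly the difficulty the paper's proof is organized around. The paper never computes quasi-functor hom-complexes through quasi-representatives. Instead it (i) proves the strict equivalence $\compdg_R(\cat A_R,\cat B) \cong \Fun_S(\cat A,\compdg_R(S,\cat B))_R$, where your equivariance observation is applied to honest bimodules and strict functors and is unproblematic; (ii) checks that $l$ and $r$ preserve acyclics, h-projectives and right quasi-representable objects (Lemmas \ref{lemma:coextension_scalars_hproj} and \ref{lemma:coextension_scalars_hprojrqr}); and (iii) proves a separate \emph{strictification} theorem (Lemmas \ref{lemma:rqr-to-functors} and \ref{lemma:rqr-to-functors_quasiequivalences}): the dg-category of quasi-functors $\hproj_S^{\mathrm{rqr}}(\cat A,\hproj_R^{\mathrm{rqr}}(S,\cat B))$ is quasi-equivalent to the dg-category $\Fun_S^{\mathrm{hp}}(\cat A,\hproj_R^{\mathrm{rqr}}(S,\cat B))$ of strict h-projective functors, via the coend functor $L(F)(A)=\int^X F_A^X\otimes_S X$ and its right adjoint $T$, with careful h-projectivity arguments making the Yoneda-type reductions homotopy-invariant. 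Step (iii) is the real content that your proposal is missing; to repair your argument you would need to prove such a strictification statement (or an equivalent comparison of derived ends over $\cat A$ and over $\cat A_R$), at which point you would essentially be reproducing the paper's proof.
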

\begin{corollary} \label{coroll:coext_hqe_univproperty}
Let $\smallcat a \in \Hqe(S)$ and let $\smallcat b \in \Hqe(R)$. Let $Q(\smallcat b) \to \smallcat b$ be a (functorial) cofibrant replacement of $\smallcat b$, so that $Q(\smallcat b)$ is h-flat. There is a natural bijection
\begin{equation} \label{eq:coext_hqe_univproperty}
    \Hqe(R)(\smallcat a_R, \smallcat b) \cong \Hqe(S)(\smallcat a, \hproj_R^\mathrm{rqr}(S,Q(\smallcat b))),
\end{equation}
so that $\smallcat b \mapsto \hproj_R^\mathrm{rqr}(S,Q(\smallcat b))$ defines a right adjoint of the restriction of scalars $\Hqe(S) \to \Hqe(R)$.
\end{corollary}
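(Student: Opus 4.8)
The plan is to deduce the corollary from Theorem \ref{thm:coext_univproperty} by passing to isomorphism classes of quasi-functors. Recall that for any $R$-linear dg-categories $X,Y$ one has
\[
\Hqe(R)(X,Y) = \Iso(H^0(\RHom_R(X,Y))),
\]
and by Corollary \ref{coroll: ind_rqr_firstargument} the internal hom $\RHom_R$ is computed by $\hproj_R^\mathrm{rqr}$ as soon as one of its two arguments is h-flat. So I would first fix resolutions: a functorial h-projective (cofibrant) replacement $Q_S(\smallcat a) \xrightarrow{\sim} \smallcat a$ in $\kat{dgCat}(S)$, and the given h-flat replacement $Q(\smallcat b) \xrightarrow{\sim} \smallcat b$ in $\kat{dgCat}(R)$. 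Since restriction of scalars preserves quasi-equivalences (cf. \eqref{eq:restriction_hqe}), neither side of the desired bijection changes when $\smallcat a$ and $\smallcat b$ are replaced by these resolutions.

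With $\cat A = Q_S(\smallcat a)$ (h-projective over $S$) and $\cat B = Q(\smallcat b)$ (h-flat over $R$), Theorem \ref{thm:coext_univproperty} applies and gives a natural quasi-equivalence of $R$-linear dg-categories
\[
\hproj_R^{\mathrm{rqr}}(Q_S(\smallcat a)_R, Q(\smallcat b)) \xrightarrow{\sim} \hproj_S^{\mathrm{rqr}}(Q_S(\smallcat a), \hproj_R^\mathrm{rqr}(S,Q(\smallcat b)))_R.
\]
I would then apply $\Iso(H^0(-))$ to both sides. On the left, using that $Q(\smallcat b)$ is h-flat, Corollary \ref{coroll: ind_rqr_firstargument} identifies the source with $\RHom_R(Q_S(\smallcat a)_R, Q(\smallcat b)) \cong \RHom_R(\smallcat a_R, \smallcat b)$, so that $\Iso(H^0(-)) = \Hqe(R)(\smallcat a_R, \smallcat b)$. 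On the right, I would use that restriction of scalars alters neither the objects nor the degree-$0$ invertible morphisms, whence $\Iso(H^0(\cat C_R)) = \Iso(H^0(\cat C))$ for any $S$-linear $\cat C$; combined with the $S$-linear instance of Corollary \ref{coroll: ind_rqr_firstargument} (valid since $Q_S(\smallcat a)$ is h-flat over $S$), this rewrites the target as $\Iso(H^0(\RHom_S(\smallcat a, \hproj_R^\mathrm{rqr}(S,Q(\smallcat b))))) = \Hqe(S)(\smallcat a, \hproj_R^\mathrm{rqr}(S,Q(\smallcat b)))$. This is precisely the bijection \eqref{eq:coext_hqe_univproperty}.

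To upgrade the bijection to an adjunction, I would verify naturality in both $\smallcat a \in \Hqe(S)$ and $\smallcat b \in \Hqe(R)$. Functoriality of $\smallcat b \mapsto \hproj_R^\mathrm{rqr}(S,Q(\smallcat b))$ as a functor $\Hqe(R) \to \Hqe(S)$ follows from the functoriality of $Q$ together with the internal-hom functoriality of $\hproj_R^\mathrm{rqr}(S,-)$, and naturality of the bijection reduces to the asserted naturality of the quasi-equivalence in Theorem \ref{thm:coext_univproperty} (in $\cat A$ and in $\cat B$), transported through the two applications of Corollary \ref{coroll: ind_rqr_firstargument} and through $\Iso(H^0(-))$, each of which is functorial. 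By the standard characterization of adjoints via a hom-set bijection natural in both arguments, this exhibits $\smallcat b \mapsto \hproj_R^\mathrm{rqr}(S,Q(\smallcat b))$ as a right adjoint to restriction of scalars. The main obstacle is the bookkeeping of this naturality: one must check that the identifications respect the composition of morphisms in $\Hqe$ (themselves isomorphism classes of quasi-functors, composed via coends as in \eqref{eq:qfun_composition}), so that the abstract naturality of Theorem \ref{thm:coext_univproperty} genuinely descends to a natural transformation of the two hom-functors. The passage through restriction of scalars, which touches only the linear structure and not the isomorphism classes, is what makes this descent clean.
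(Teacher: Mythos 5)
Your proof is correct and follows exactly the route the paper intends: the corollary is deduced from Theorem \ref{thm:coext_univproperty} by resolving $\smallcat a$ and $\smallcat b$, identifying $\Hqe$-morphism sets with isomorphism classes in $H^0$ of the quasi-functor dg-categories via Corollary \ref{coroll: ind_rqr_firstargument}, and observing that restriction of scalars does not change these isomorphism classes. The paper presents the corollary as an immediate consequence with no separate proof, so your write-up simply makes the implicit bookkeeping (resolutions, passage to $\Iso(H^0(-))$, and naturality) explicit.
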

\begin{remark} \label{remark:notation_coext_rings}
The $S$-linear dg-category $\hproj_R^\mathrm{rqr}(S,Q(\cat B)))$ is uniquely determined up to $S$-linear quasi-equivalence, and will be denoted by $\cat B_{(S)}$. The dg-category $\cat B_{(S)}$, or more precisely the mapping
\begin{equation}
\cat B \mapsto \cat B_{(S)}
\end{equation}
will be sometimes called \emph{coextension of scalars}.
\end{remark}

The proof of \eqref{eq:coext_univproperty} will be based on the analogous result in the ``strict dg-functorial setting'', namely, the following proposition.
\begin{proposition} \label{prop:coextension_scalars_dg_strict}
There is a natural adjoint equivalence of $R$-linear dg-categories:
\begin{equation} \label{eq:coextension_scalars_dg_strict}
    l \colon \compdg_R(\cat A_R, \cat B) \leftrightarrows \Fun_S(\cat A,\compdg_R(S,\cat B))_R : r,
\end{equation}
where
\begin{equation}
    \begin{split}
        l(G)(A)_{\diamondsuit}^B &= G_A^B, \\
        l(G)_s^{f} &= G^{f}_{s 1_A}, \quad s \in S.
    \end{split}
\end{equation}
and
\begin{equation}
    r(F)_A^B  = F(A)^B_{\diamondsuit} \in \compdg(R).
\end{equation}
\end{proposition}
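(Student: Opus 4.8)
The plan is to verify directly that $l$ and $r$, as defined by the displayed formulas, are well-defined $R$-linear dg-functors and that they are mutually inverse; this gives the adjoint equivalence (in fact, a genuine isomorphism of $R$-linear dg-categories, with identity unit and counit). No use of h-projectivity of $\cat A$ or h-flatness of $\cat B$ is needed at this strict level, so I would carry out the whole argument for arbitrary $\cat A$ and $\cat B$.

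First I would check that $l$ is well defined. Given $G \in \compdg_R(\cat A_R, \cat B)$ and $A \in \cat A$, one must see that the data $l(G)(A)_{\diamondsuit}^B = G_A^B$ and $l(G)(A)_s^f = G_{s1_A}^f$ assemble into an object of $\compdg_R(S,\cat B)$, i.e. a dg-functor $\opp{\cat B}\otimes_R S \to \compdg(R)$. Since a morphism $(B',\diamondsuit)\to(B,\diamondsuit)$ in $\opp{\cat B}\otimes_R S$ is an element of $\cat B(B,B')\otimes_R S$, this amounts to bilinearity and functoriality of $(f,s)\mapsto G_{s1_A}^f$. Functoriality reduces to that of $G$ together with the identity $(s1_A)(s'1_A)=(ss')1_A$ in $\cat A_R$ and the interchange of the $\cat B$- and $\cat A$-actions; here commutativity of $S$ is exactly what makes $s\mapsto s1_A$ a map of dg-rings $S\to\cat A(A,A)$ and hence makes the $S$-action well defined, precisely as in Lemma \ref{lemma:bimodules_tensor_S_Slinear}. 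I would then check that $l(G)$ is $S$-\emph{linear} as a functor: on $g\in\cat A(A,A')$ one sets $l(G)(g)=G_g$, and writing $sg=(s1_{A'})g$ the required equality $l(G)(sg)=s\cdot l(G)(g)$ becomes, after evaluation at $B$, the functoriality identity $G_{sg}^{1_B}=G_{s1_{A'}}^{1_B}\circ G_g^{1_B}$, where the right-hand $S$-action is the one on $\compdg_R(S,\cat B)$ supplied by Lemma \ref{lemma:bimodules_tensor_S_Slinear}.

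Next I would establish that $r$ is well defined: given an $S$-linear dg-functor $F\colon\cat A\to\compdg_R(S,\cat B)$, the complexes $r(F)_A^B=F(A)_{\diamondsuit}^B$ carry a $\cat B$-action coming from the bimodule structure of each $F(A)$ and an $\cat A_R$-action obtained by applying $F$ to morphisms of $\cat A_R$ and evaluating at $\diamondsuit$; checking that these commute and are jointly functorial shows $r(F)\in\compdg_R(\cat A_R,\cat B)$. Both $l$ and $r$ are manifestly $R$-linear and compatible with differentials and compositions, so they are $R$-linear dg-functors. For the composites, $r\circ l=\mathrm{id}$ is immediate since $r(l(G))_A^B=l(G)(A)_{\diamondsuit}^B=G_A^B$ and the actions match by construction. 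The only nontrivial point is $l\circ r=\mathrm{id}$: one must recover the full $S$-$\cat B$-bimodule structure of $F(A)$ from $l(r(F))(A)$. Using $S$-linearity of $F$ one has $F(s1_A)=s\cdot 1_{F(A)}$, and by Lemma \ref{lemma:bimodules_tensor_S_Slinear} $(s\cdot 1_{F(A)})_{\diamondsuit}^B=F(A)_s^{1_B}$, so that $l(r(F))(A)_s^f=r(F)_{s1_A}^f=F(A)_s^f$ is exactly the $S$-action of $F(A)$; hence $l\circ r=\mathrm{id}$. This shows $l$ and $r$ are mutually inverse, which yields the adjoint equivalence \eqref{eq:coextension_scalars_dg_strict}.

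All of these verifications are routine bookkeeping, and I expect the main obstacle to be purely organizational rather than conceptual: keeping track of the Koszul signs throughout (they enter via the two expressions for $(s\varphi)_{\diamondsuit}^B$ in Lemma \ref{lemma:bimodules_tensor_S_Slinear} and via the functoriality of bimodule actions), and using commutativity of $S$ consistently to identify the $S$-action on $l(G)(A)$ with the endomorphism action $s\mapsto s1_A$. This is the only place where the hypotheses genuinely enter, and it is what guarantees that the equivalence is natural and strict.
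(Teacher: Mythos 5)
Your proposal is correct and matches what the paper intends: the paper states Proposition \ref{prop:coextension_scalars_dg_strict} without proof, treating it (like the neighbouring Lemma \ref{lemma:bimodules_tensor_S_Slinear}) as a routine strict verification, and your direct check that $l$ and $r$ are well-defined mutually inverse $R$-linear dg-functors --- with commutativity of $S$ entering via $s \mapsto s1_A$ and the $S$-linearity of $F$ recovering the $S$-action in $l\circ r = \mathrm{id}$ --- is exactly that omitted bookkeeping. Your observation that no h-projectivity of $\cat A$ or h-flatness of $\cat B$ is needed at this strict level is also accurate; those hypotheses only matter for the homotopical statements that follow.
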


\subsubsection{A closer look at $\Fun_S(\cat A,\compdg_R(S,\cat B))$}

We now start a deeper investigation of the $S$-linear dg-category $\Fun_S(\cat A,\compdg_R(S,\cat B))$. Being a dg-category of functors with values in $\compdg_R(S,\cat B)$, it has natural notions of acyclic objects, h-projective objects and quasi-isomorphisms.
\begin{definition} \label{def:Funhp_acyclics_qis}
A (closed, degree $0$) natural transformation $F \to G$ between functors $F, G \in \Fun_S(\cat A, \compdg_R(S,\cat B))$ is a \emph{quasi-isomorphism} if the components $F(A) \to G(A)$ are quasi-iso\-morphisms in $\compdg_R(S,\cat B)$ for all $A \in \cat A$. An element $F \in \Fun_S(\cat A, \compdg_R(S,\cat B))$ is \emph{acyclic} if $F(A)$ is acyclic in $\compdg_R(S,\cat B)$ for all objects $A \in \cat A$. An element $P \in \Fun_S(\cat A, \compdg_R(S,\cat B))$ is \emph{h-projective} if $\Hom(P,F)$ is acyclic for any acyclic $F$. We shall denote by $\Fun_S^\mathrm{hp}(\cat A, \compdg_R(S,\cat B))$ the full dg-subcategory of $\Fun_S(\cat A, \compdg_R(S,\cat B))$ spanned by h-projective objects.
\end{definition}
\begin{remark} \label{remark:Funhp_qis_invert}
The $S$-linear dg-category $\Fun_S(\cat A, \compdg_R(S,\cat B))$ is (strongly) pretriangulated, since $\compdg_R(S,\cat B)$ is. It is immediate to see that $\varphi \colon F \to G$ is a quasi-isomorphism if and only if its cone $\cone(\varphi)$ is acyclic in the sense of the above Definition \ref{def:Funhp_acyclics_qis}. Moreover, it also straightforward to check that $\cone(\varphi)$ is h-projective if both $F$ and $G$ are, again in the sense of Definition \ref{def:Funhp_acyclics_qis}. Hence, if $\varphi$ is a quasi-isomorphism between h-projective objects in $\Fun_S(\cat A, \compdg_R(S,\cat B))$, the cone $\cone(\varphi)$ is both acyclic and h-projective. This immediately implies that 
\[
\cone(\varphi) \cong 0 \quad  \text{in $H^0(\Fun_S(\cat A, \compdg_R(S,\cat B)))$},
\]
which in turn implies that $\varphi \colon F \to G$ is an isomorphism in $H^0(\Fun_S(\cat A, \compdg_R(S,\cat B)))$. This discussion can be summarized in the following slogan: \emph{quasi-isomorphisms in $\Fun_S^\mathrm{hp}(\cat A, \compdg_R(S,\cat B))$ are invertible in $H^0$.}
\end{remark}
Since by assumption the $S$-linear dg-category $\cat A$ is h-projective, it is natural to expect that objects in $\Fun_S^\mathrm{hp}(\cat A, \compdg_R(S,\cat B))$ are termwise h-projective.
\begin{lemma} \label{lemma:F(A)_hproj}
Recall that we are assuming $\cat A$ to be h-projective. Let $F \in \Fun_S(\cat A, \compdg_R(S,\cat B))$ be h-projective. Then, $F(A)$ is h-projective for all $A \in \cat A$. In particular, we may identify: 
\[
\Fun^{\mathrm{hp}}_S(\cat A, \hproj_R(S,\cat B)) = \Fun^{\mathrm{hp}}_S(\cat A, \compdg_R(S,\cat B)).
\]
\end{lemma}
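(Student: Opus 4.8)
The plan is to identify the condition ``$F(A)$ is h-projective in $\compdg_R(S,\cat B)$'' with the h-projectivity of $F$ inside the functor category, by exhibiting evaluation at $A$ as a left adjoint whose right adjoint is built from the cotensor of the previous proposition. Concretely, for a fixed $A \in \cat A$ I would first record the enriched adjunction
\[
\compdg_R(S,\cat B)(F(A), N) \cong \Fun_S(\cat A, \compdg_R(S,\cat B))(F, \Hom_S(\cat A(-,A), N)),
\]
natural in $F \in \Fun_S(\cat A, \compdg_R(S,\cat B))$ and $N \in \compdg_R(S,\cat B)$, where $\Hom_S(\cat A(-,A), N)$ denotes the $S$-linear functor $A' \mapsto \Hom_S(\cat A(A',A), N)$ obtained by cotensoring $N$ against the representable $S$-dg-modules $\cat A(A',A)$ (this is a functor $\cat A \to \compdg_R(S,\cat B)$ by functoriality of the cotensor). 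This is just the statement that the right adjoint of evaluation at $A$ is computed by the cotensor against the representable; it follows from the defining property \eqref{eq:coextension_cotensor} of the cotensor together with the enriched Yoneda lemma, the only routine verification being the end computation $\int_{A'} \compdg(S)(\cat A(A',A), \compdg_R(S,\cat B)(F(A'),N)) \cong \compdg_R(S,\cat B)(F(A),N)$.

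Given this, the argument is short. Fix an acyclic $N \in \compdg_R(S,\cat B)$; I want $\compdg_R(S,\cat B)(F(A), N)$ to be acyclic. By the displayed isomorphism this complex is $\Fun_S(\cat A, \compdg_R(S,\cat B))(F, \Hom_S(\cat A(-,A), N))$, so since $F$ is h-projective it suffices to show that the target $\Hom_S(\cat A(-,A), N)$ is acyclic in the sense of Definition \ref{def:Funhp_acyclics_qis}, i.e.\ objectwise. But for each $A'$ the complex $\cat A(A',A)$ is h-projective in $\compdg(S)$, because $\cat A$ is assumed h-projective as an $S$-linear dg-category, and $N$ is acyclic; hence $\Hom_S(\cat A(A',A), N)$ is acyclic by Corollary \ref{coroll:cotensor_acyclic}. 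Therefore $\Hom_S(\cat A(-,A), N)$ is objectwise acyclic, the morphism complex $\Fun_S(\cat A, \compdg_R(S,\cat B))(F, \Hom_S(\cat A(-,A), N))$ is acyclic because $F$ is h-projective, and so $F(A)$ is h-projective, as desired.

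For the final identification, the first part shows that any h-projective $F \in \Fun_S(\cat A, \compdg_R(S,\cat B))$ automatically takes values in $\hproj_R(S,\cat B)$, so it already lies in $\Fun_S(\cat A, \hproj_R(S,\cat B))$; conversely the two notions of h-projective object agree because $\hproj_R(S,\cat B)$ is a full dg-subcategory and the acyclic test objects are literally the same. I expect the only genuinely delicate point to be pinning down the variance and naturality in the adjunction isomorphism --- in particular checking that the $S$-linear structures on all the hom- and cotensor-complexes match up so that the end computation is an honest isomorphism of $R$-dg-modules --- while the two acyclicity implications are then immediate from Corollary \ref{coroll:cotensor_acyclic} and the definition of h-projectivity in the functor category.
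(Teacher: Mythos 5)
Your proof is correct and is essentially the paper's own argument: the paper computes $\compdg_R(S,\cat B)(F(A),G) \cong \compdg(\cat A)(\cat A(-,A), \compdg_R(S,\cat B)(F(-),G)) \cong \Fun_S(\cat A, \compdg_R(S,\cat B))(F, \Hom_S(\cat A(-,A),G))$ via Yoneda and the cotensor property \eqref{eq:coextension_cotensor}, which is exactly your adjunction ``evaluation at $A$ is left adjoint to cotensoring against the representable,'' and then concludes just as you do from Corollary \ref{coroll:cotensor_acyclic} and the h-projectivity of $F$.
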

\begin{proof}
Let $A \in \cat A$, and let $G \in \compdg_R(S,\cat B)$ be acyclic. We compute:
\begin{align*}
\compdg_R(S,\cat B)(F(A),G) & \cong \compdg(\cat A)(\cat A(-,A), \compdg_R(S,\cat B)(F(-),G) \qquad \text{(Yoneda)} \\
& \cong \Fun_S(\cat A, \compdg_R(S,\cat B))(F, \Hom_S(\cat A(-,A),G)). \qquad \text{(cf. \eqref{eq:coextension_cotensor})}
\end{align*}
Now, $\Hom_S(\cat A(-,A),G)$ is acyclic by Corollary \ref{coroll:cotensor_acyclic}, because $G$ is acyclic and $\cat A(A',A)$ is h-projective for all $A' \in \cat A$, and we conclude using that $F$ is h-projective.
\end{proof}

\subsubsection{$\hproj_S^\mathrm{rqr}(\cat A, \hproj_R^\mathrm{rqr}(S,\cat B))$ as a subcategory of $\Fun_S(\cat A,\compdg_R(S,\cat B))$}

We observe that we have full $R$-linear dg-subcategories:
\[
\hproj_R^{\mathrm{rqr}}(\cat A_R, \cat B) \subseteq \hproj_R(\cat A_R, \cat B) \subseteq \compdg_R(\cat A_R, \cat B),
\]
and on the other hand we would like to identify $\hproj_S^\mathrm{rqr}(\cat A, \hproj_R^\mathrm{rqr}(S,\cat B))$ with a full dg-subcategory of $\Fun_S(\cat A,\compdg_R(S,\cat B))$, at least up to ($S$-linear) quasi-equivalence. This is first achieved in the following lemma, where we actually prove something more than that.

\begin{lemma} \label{lemma:rqr-to-functors}
There are $S$-linear quasi-fully faithful dg-functors
\begin{align*}
    L \colon \hproj_S^\mathrm{rqr}(\cat A, \hproj_R^\mathrm{rqr}(S,\cat B)) & \hookrightarrow \Fun_S(\cat A,\compdg_R(S,\cat B)), \\
    L' \colon \hproj_S^\mathrm{rqr}(\cat A, \hproj_R(S,\cat B)) & \hookrightarrow \Fun_S(\cat A,\compdg_R(S,\cat B)),
\end{align*}
defined as follows:
\begin{equation} \label{eq:ladj_restriction_ringchange}
\begin{split}
        F \mapsto L(F)(A) &= \int^{X}  F_A^X \otimes_S X, \quad \textrm{$(X \in \hproj^\mathrm{rqr}_R(S,\cat B))$,} \\
        F \mapsto L'(F)(A) &= \int^{X}  F_A^X \otimes_S X, \quad \textrm{$(X \in \hproj_R(S,\cat B))$.}
\end{split}    
\end{equation}
where $F_A^X \otimes_S X$ is the tensor in $\compdg_R(S,\cat B)$, see \eqref{eq:coextension_tensor}.
\end{lemma}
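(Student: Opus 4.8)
The plan is to present $L$ (and, verbatim, $L'$) as the \emph{realization} functor of a nerve--realization adjunction and to reduce quasi-full-faithfulness to the derived co-Yoneda lemma \eqref{eq:coYoneda}. Throughout write $\cat P = \hproj_R^{\mathrm{rqr}}(S,\cat B)$; for $L'$ one replaces $\cat P$ by $\hproj_R(S,\cat B)$ everywhere, and the argument is identical because what is used below is only that every object of $\cat P$ is h-projective in $\compdg_R(S,\cat B)$ and that the source bimodules are right quasi-representable over $\cat P$. First I would check that $L$ is a strict $S$-linear dg-functor: for $F \in \hproj_S^{\mathrm{rqr}}(\cat A, \cat P)$ the assignment $A \mapsto L(F)(A) = \int^{X} F_A^X \otimes_S X$ is covariantly functorial in $A$ (a morphism $A \to A'$ induces $F_A^X \to F_{A'}^X$, hence a map of coends) and dg-functorial in $F$ via $\int^{X} \varphi_A^X \otimes_S \operatorname{id}_X$, the $S$-linearity being inherited from the $S$-linear tensor $\otimes_S$ of \eqref{eq:coextension_tensor}; this is routine, with sizes handled as in Remark \ref{remark:coend_size}. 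The conceptual point is that, writing $\Psi(M) = \int^{X} M^X \otimes_S X$ for a right $\cat P$-dg-module $M$, one has $L(F)(A) = \Psi(F_A)$, and $\Psi$ is left adjoint to the \emph{nerve} $N \colon \compdg_R(S,\cat B) \to \compdg(\cat P)$, $N(Z) = \compdg_R(S,\cat B)(-,Z)|_{\cat P}$; this adjunction combines the tensor--hom adjunction \eqref{eq:coextension_tensor} with the fact that the contravariant hom turns the defining coend of $\Psi$ into an end.

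The heart of the proof is to show that the unit $\eta_M \colon M \to N\Psi(M)$, i.e.
\[
\eta_M^X \colon M^X \to \compdg_R(S,\cat B)\bigl(X, \int^{Y} M^{Y}\otimes_S Y\bigr),
\]
is a quasi-isomorphism of right $\cat P$-dg-modules whenever $M$ is h-projective and right quasi-representable. I would argue by reduction to the representable case: if $M \cong \cat P(-,P)$ is representable, then $\Psi(M) \cong P$ by co-Yoneda \eqref{eq:coYoneda}, and $N\Psi(M)^X = \compdg_R(S,\cat B)(X,P) = \cat P(X,P) = M^X$ (using that $\cat P$ is a full subcategory and $P \in \cat P$), so $\eta_M$ is an isomorphism. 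For general h-projective, right quasi-representable $M$, choose $P$ and a quasi-isomorphism $\alpha \colon \cat P(-,P) \xrightarrow{\sim} M$; both source and target are h-projective, so $\Psi(\alpha)$ is a quasi-isomorphism (realization preserves quasi-isomorphisms between h-projectives), and hence $N\Psi(\alpha)$ is a componentwise quasi-isomorphism, each $X \in \cat P$ being h-projective in $\compdg_R(S,\cat B)$. Naturality of $\eta$ then forces $\eta_M$ to be a quasi-isomorphism.

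To assemble quasi-full-faithfulness, let $F,G \in \hproj_S^{\mathrm{rqr}}(\cat A,\cat P)$. Since $\cat A$ and $G$ are h-projective, each $G_A$ is h-projective as a right $\cat P$-dg-module (by \cite[Lemma 3.4]{canonaco-stellari-internalhoms}, cf. Lemma \ref{lemma:F(A)_hproj}) and right quasi-representable because $G$ is, so the previous step applies to $M = G_A$ for every $A$. Unwinding $\Psi \dashv N$ and the naturality of its unit, the map induced by the dg-functor $L$ on hom-complexes,
\[
\compdg_S(\cat A,\cat P)(F,G) \to \Fun_S(\cat A,\compdg_R(S,\cat B))(L(F),L(G)) = \int_{A} \compdg_R(S,\cat B)(\Psi(F_A),\Psi(G_A)),
\]
is identified with $\compdg_S(\cat A,\cat P)(F,\eta_G)$, where $\eta_G \colon G \to N\Psi(G)$ is the componentwise unit. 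By the previous paragraph $\eta_G$ is a componentwise quasi-isomorphism of bimodules, so its cone is acyclic; since $F$ is h-projective, $\compdg_S(\cat A,\cat P)(F,-)$ sends acyclic bimodules to acyclic complexes, whence $\compdg_S(\cat A,\cat P)(F,\eta_G)$ is a quasi-isomorphism. As $\hproj_S^{\mathrm{rqr}}$ is a full dg-subcategory, this is exactly quasi-full-faithfulness of $L$.

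The main obstacle is the unit computation of the second paragraph: one must track carefully \emph{which} objects are h-projective --- the module $G_A$, and the objects $X \in \cat P$ viewed inside $\compdg_R(S,\cat B)$ --- so that both $\Psi$ and $N$ preserve the relevant quasi-isomorphisms and so that the co-Yoneda identification, which is strict only on representables, can be transported along $\alpha$. Here the hypothesis of right quasi-representability is used essentially, since for a generic h-projective $M$ the nerve $N$ need not commute with the coend defining $\Psi$. The identification of ``$L$ on homs'' with postcomposition by $\eta_G$ is then pure naturality of the adjunction unit, and the final step is clean precisely because we may invoke h-projectivity of the whole bimodule $F$ rather than termwise estimates.
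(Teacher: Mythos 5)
Your proposal is correct and follows essentially the same route as the paper: the paper also factors $L$ through the tensor--hom adjunction (its functor $T(G)_A^X = \compdg_R(S,\cat B)(X,G(A))$ is exactly your nerve $N$ applied pointwise), reduces quasi-full-faithfulness to the unit $\eta_F \colon F \to TL(F)$ being a quasi-isomorphism via h-projectivity of the source bimodule, and verifies this using right quasi-representability plus h-projectivity of $F_A$ and of the objects of $\hproj_R^{\mathrm{rqr}}(S,\cat B)$. Your treatment of the unit (strict co-Yoneda on representables, then transport along the homotopy equivalence $\cat P(-,P) \simeq F_A$) is only a cosmetic variant of the paper's identification $L(F)(A) \cong \Phi_F(A)$ in Remark \ref{remark:leftadj_coend_representable}.
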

\begin{remark} \label{remark:rqr-to-functors}
It is worth mentioning where the definition of the above dg-functors $L$ and $L'$ in \eqref{eq:ladj_restriction_ringchange} come from. First, we have a dg-functor
\begin{equation*}
\begin{split}
T \colon \Fun_S(\cat A, & \compdg_R(S,\cat B)) \to \compdg_S(\cat A, \hproj^\mathrm{rqr}_R(S,\cat B)), \\
G &\mapsto T(G)_A^X = \compdg_R(S,\cat B)(X,G(A))
\end{split}
\end{equation*}
We look for a left adjoint to $T$, by computing as follows:
\begin{align}
    \compdg_S(\cat A, \hproj^\mathrm{rqr}_R(S,\cat B))(F, T(G)) &\cong \int_{A,X} \compdg(S)(F_A^X, \compdg_R(S,\cat B)(X,G(A))) \nonumber \\
    &\cong \int_{A,X} \compdg_R(S,\cat B)(F_A^X \otimes_S X, G(A)) \quad \text{(by \eqref{eq:coextension_tensor})} \nonumber \\
    & \cong \int_A \compdg_R(S,\cat B)(\int^X F_A^X \otimes_S X, G(A)) \nonumber \\
    & \cong \Fun_S(\cat A, \compdg_R(S,\cat B))(\int^X F^X \otimes_S X, G) \nonumber \\
    & = \Fun_S(\cat A, \compdg_R(S,\cat B))(L(F), G). \label{eq:rqr-to-functors_adj_1}
\end{align}
We may also define
\[
T' \colon \Fun_S(\cat A, \compdg_R(S,\cat B)) \to \compdg_S(\cat A, \hproj_R(S,\cat B))
\]
with a similar formula:
\[
G \mapsto T(G)_A^X = \compdg_R(S,\cat B)(X,G(A)).
\]
A similar computation as above shows that
\begin{equation}
 \compdg_S(\cat A, \hproj_R(S,\cat B))(F, T'(G)) \cong \Fun_S(\cat A, \compdg_R(S,\cat B))(L'(F), G). \label{eq:rqr-to-functors_adj_2}
\end{equation}
Clearly, isomorphisms \eqref{eq:rqr-to-functors_adj_1} and \eqref{eq:rqr-to-functors_adj_2} yield natural (closed, degree $0$) morphisms
\begin{align}
    \eta_F \colon F \to TL(F), \label{eq:rqr-to-functors_unit_1} \\
    \eta'_{F'} \colon F' \to T'L'(F'), \label{eq:rqr-to-functors_unit_2}
\end{align}
respectively in $\compdg_S(\cat A, \hproj^\mathrm{rqr}_R(S,\cat B))$ and $\compdg_S(\cat A, \hproj_R(S,\cat B))$, and natural (closed, degree $0$) morphisms
\begin{align}
    \varepsilon_G \colon LT(G) \to G, \label{eq:rqr-to-functors_counit_1} \\
    \varepsilon'_{G'} \colon L'T'(G') \to G', \label{eq:rqr-to-functors_counit_2}
\end{align}
in $\Fun_S(\cat A, \compdg_R(S,\cat B))$.

We also remark that the coend which defines $L$ (and respectively $L'$) exists once we work in the ``enlarged universe'' $\mathbb V$. See \S \ref{subsec:sizes} and Remark \ref{remark:coend_size}.
\end{remark}
\begin{remark} \label{remark:leftadj_coend_representable}
Let $F \in \hproj_S^\mathrm{rqr}(\cat A, \hproj_R^\mathrm{rqr}(S,\cat B))$. By assumption, $\cat A$ is h-projective, hence \cite[Lemma 3.4]{canonaco-stellari-internalhoms} for a fixed $A \in \cat A$ the dg-module $F_A$ is also h-projective and we have an isomorphism in $H^0(\compdg_S(\hproj_R^\mathrm{rqr}(S,\cat B)))$:
\[
F_A \cong \Hom(-,\Phi_F(A)),
\]
for some $\Phi_F(A) \in \hproj_R^\mathrm{rqr}(S,\cat B) \subseteq \compdg_R(S,\cat B)$. We also have isomorphisms in $H^0(\compdg_R(S,\cat B))$:
\begin{align*}
    \Hom(\int^X F_A^X \otimes_S X, -) &\cong \int_X \Hom(F_A^X \otimes_S X, -) \\
    &\cong \int_X \compdg(S)(F_A^X, \Hom(X,-)) \qquad \text{(recall \eqref{eq:coextension_tensor})}  \\
    &\cong \int_X \compdg(S)(\Hom(X,\Phi_F(A)), \Hom(X,-)) \\
    &\cong \Hom(\Phi_F(A), -). \qquad \text{(Yoneda)}
\end{align*}
We conclude that, for a fixed $A \in \cat A$, we have an isomorphism in $H^0(\compdg_R(S,\cat B))$:
\begin{equation}
L(F)(A) \cong \int^X F_A^X \otimes_S X \cong \Phi_F(A), \label{eq:leftadj_coend_representable_1}
\end{equation}

With an analogous (actually, nearly identical) argument, we may also show the following claim. Let $F \in \hproj_S^\mathrm{rqr}(\cat A, \hproj_R(S,\cat B))$, so that we have an isomorphism in $H^0((\compdg_S(\hproj_R(S,\cat B)))$:
\[
F_A \cong \Hom(-,\Phi_F(A))
\]
for some $\Phi_F(A) \in \hproj_R(S,\cat B) \subseteq \compdg_R(S,\cat B)$. Then, we conclude that we have an isomorphism in $H^0(\compdg_R(S,\cat B))$:
\begin{equation}
L'(F)(A) \cong \int^X F_A^X \otimes_S X \cong \Phi_F(A), \label{eq:leftadj_coend_representable_2}
\end{equation}
for a fixed $A \in \cat A$.
\end{remark}
\begin{proof}[Proof of Lemma \ref{lemma:rqr-to-functors}]
We show that $L$ is quasi-fully faithful; the same claim for $L'$ is proven in a completely analogous way.  To do so, we check that the composition
\begin{equation*}
    \Hom(F_1, F_2) \xrightarrow{{\eta_{F_2}}_*} \Hom(F_1, TL(F_2)) \cong \Hom(L(F_1),L(F_2))
\end{equation*}
is a quasi-isomorphism, using the notations of the above Remark \ref{remark:rqr-to-functors}. Since we are assuming $F_1$ to be h-projective, to check that $\Hom(F_1, F_2) \xrightarrow{{\eta_{F_2}}_*} \Hom(F_1, TL(F_2))$ is a quasi-isomorphism it is enough to see that the unit map \eqref{eq:rqr-to-functors_unit_1}
\begin{equation*}
    \eta_{F} \colon F \to TL(F)
\end{equation*}
is a quasi-isomorphism for all $F \in \hproj_S^\mathrm{rqr}(\cat A, \hproj_R^\mathrm{rqr}(S,\cat B))$. If $A \in \cat A$, we get from the above Remark \ref{remark:leftadj_coend_representable} the following isomorphisms, respectively in $H^0((\compdg_S(\hproj_R(S,\cat B)))$ and $H^0(\compdg_R(S,\cat B))$:
\begin{align*}
F_A &\cong \Hom(-,\Phi_F(A)), \\
L(F)(A) &\cong \int^X F_A^X \otimes_S X \cong \Phi_F(A).
\end{align*}
Moreover, we have a commutative diagram:
\begin{equation*}
    \begin{tikzcd}
F_A^X \arrow[r, "\eta_F"] \arrow[rdd, "\approx"'] & TL(F)_A^X \arrow[d, equal]                    \\
                                        & {\Hom(X,L(F)(A))} \arrow[d, "\approx"] \\
                                        & {\Hom(X,\Phi_F(A))},                  
\end{tikzcd}
\end{equation*}
We conclude that $\eta_F$ is a quasi-isomorphism, as claimed. 
\end{proof}
In order to prove Theorem \ref{thm:coext_univproperty}, we need to investigate the behaviour of the involved dg-functors with respect to h-projective objects.
\begin{lemma} \label{lemma:changeofrings_hproj_preservation}
Consider the $S$-linear dg-functors
\begin{align*}
    L \colon \hproj_S^\mathrm{rqr}(\cat A, \hproj_R^\mathrm{rqr}(S,\cat B)) & \hookrightarrow \Fun_S(\cat A,\compdg_R(S,\cat B)), \\
    L' \colon \hproj_S^\mathrm{rqr}(\cat A, \hproj_R(S,\cat B)) & \hookrightarrow \Fun_S(\cat A,\compdg_R(S,\cat B)),
\end{align*}
of Lemma \ref{lemma:rqr-to-functors}. They both have essential image in $\Fun_S^\mathrm{hp}(\cat A, \compdg_R(S,\cat B))$.
\end{lemma}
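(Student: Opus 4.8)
The plan is to exploit the adjunctions $L \dashv T$ and $L' \dashv T'$ from Remark \ref{remark:rqr-to-functors}, reducing the claim to the h-projectivity of $F$ as a bimodule together with the observation that $T$ (respectively $T'$) sends acyclic functors to acyclic bimodules. I would treat $L$ in detail, the argument for $L'$ being identical upon replacing $\hproj_R^\mathrm{rqr}(S,\cat B)$ by $\hproj_R(S,\cat B)$ throughout.

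First I would recall from Definition \ref{def:Funhp_acyclics_qis} that to show $L(F) \in \Fun_S^\mathrm{hp}(\cat A, \compdg_R(S,\cat B))$ it is enough to prove that $\Hom(L(F), G)$ is acyclic for every acyclic $G \in \Fun_S(\cat A, \compdg_R(S,\cat B))$. The adjunction isomorphism \eqref{eq:rqr-to-functors_adj_1} then provides a natural isomorphism of complexes
\[
\Fun_S(\cat A, \compdg_R(S,\cat B))(L(F), G) \cong \compdg_S(\cat A, \hproj_R^\mathrm{rqr}(S,\cat B))(F, T(G)),
\]
so the task becomes showing that the right-hand side is acyclic.

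The key step is to check that $T(G)$ is an acyclic bimodule whenever $G$ is acyclic. Since $T(G)_A^X = \compdg_R(S,\cat B)(X, G(A))$ with $G(A)$ acyclic in $\compdg_R(S,\cat B)$ and each $X \in \hproj_R^\mathrm{rqr}(S,\cat B)$ h-projective, the complex $\compdg_R(S,\cat B)(X, G(A))$ is acyclic for all $A$ and $X$; as acyclicity of a bimodule is tested componentwise, this says precisely that $T(G)$ is acyclic. Because $F$ is h-projective as a bimodule, $\compdg_S(\cat A, \hproj_R^\mathrm{rqr}(S,\cat B))(F, T(G))$ is then acyclic, and the displayed isomorphism transports this conclusion back to $\Hom(L(F), G)$. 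The same reasoning applied with \eqref{eq:rqr-to-functors_adj_2}, the functor $T'$, and h-projective objects $X \in \hproj_R(S,\cat B)$ settles the case of $L'$.

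I do not expect a genuine obstacle here, since the entire content is the reduction via adjunction. The single point that needs care is that the target dg-categories $\hproj_R^\mathrm{rqr}(S,\cat B)$ and $\hproj_R(S,\cat B)$ consist of \emph{h-projective} bimodules: this is exactly what makes $\compdg_R(S,\cat B)(X, G(A))$ acyclic and hence forces $T(G)$ (respectively $T'(G)$) to be acyclic. Without this hypothesis the argument would break down at precisely this step.
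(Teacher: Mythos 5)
Your proof is correct and follows essentially the same route as the paper: both reduce the h-projectivity of $L(F)$ (resp. $L'(F)$) to the adjunction isomorphisms \eqref{eq:rqr-to-functors_adj_1} and \eqref{eq:rqr-to-functors_adj_2}, observe that $T(G)_A^X = \compdg_R(S,\cat B)(X,G(A))$ is acyclic componentwise because $X$ is h-projective and $G(A)$ is acyclic, and then invoke the h-projectivity of $F$ as a bimodule. Your write-up is in fact slightly more careful than the paper's (which swaps the target categories of $T$ and $T'$ in a harmless typo), so nothing further is needed.
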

\begin{proof}
We recall the isomorphisms \eqref{eq:rqr-to-functors_adj_1} and \eqref{eq:rqr-to-functors_adj_2}:
\begin{align*}
   \compdg_S(\cat A, \hproj_R^\mathrm{rqr}(S,\cat B))(F,T(G))  & \cong \Fun_S(\cat A,\compdg_R(S,\cat B))(L(F),G), \\
    \compdg_S(\cat A, \hproj_R(S,\cat B))(F', T'(G')) & \cong \Fun_S(\cat A, \compdg_R(S,\cat B))(L'(F'), G').
\end{align*}
If $F \in \hproj_S^\mathrm{rqr}(\cat A, \hproj_R^\mathrm{rqr}(S,\cat B))$ (resp. $F \in \hproj_S^\mathrm{rqr}(\cat A, \hproj_R^\mathrm{rqr}(S,\cat B))$), then $L(F)$ (resp. $L'(F)$) is h-projective in $\Fun_S(\cat A, \compdg_R(S,\cat B))$ (recall Definition \ref{def:Funhp_acyclics_qis}). This holds because of the above isomorphisms, and the fact that $T(Y)$ (resp. $T'(Y)$) is an acyclic object in $\compdg_S(\cat A, \hproj_R(S,\cat B))$ (resp. in $\compdg_S(\cat A, \hproj_R^\mathrm{rqr}(S,\cat B))$), if $Y \in \Fun_S(\cat A, \compdg_R(S,\cat B))$ is acyclic in the sense of Definition \ref{def:Funhp_acyclics_qis}. Indeed:
\[
T(Y)_A^X = \compdg_R(S,\cat B)(X,Y(A)),
\]
is acyclic since $X$ is h-projective and $Y(A)$ is acyclic, and a similar argument proves the claim for $T'(Y)$.
\end{proof}
We can now characterize the actual quasi-essential images of $L$ and $L'$.
\begin{lemma} \label{lemma:rqr-to-functors_quasiequivalences}
The $S$-linear dg-functors $L$ and $L'$ of Lemma \ref{lemma:rqr-to-functors} induce quasi-equivalences
\begin{align}
    L \colon \hproj_S^\mathrm{rqr}(\cat A, \hproj_R^\mathrm{rqr}(S,\cat B)) & \xrightarrow{\sim} \Fun_S^\mathrm{hp}(\cat A,\hproj_R^\mathrm{rqr}(S,\cat B)), \\
    L' \colon \hproj_S^\mathrm{rqr}(\cat A, \hproj_R(S,\cat B)) & \xrightarrow{\sim} \Fun_S^\mathrm{hp}(\cat A,\hproj_R(S,\cat B)).
\end{align}
\end{lemma}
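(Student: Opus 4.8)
The plan is to combine the two lemmas just established. By Lemma \ref{lemma:rqr-to-functors} both $L$ and $L'$ are quasi-fully faithful, and by Lemma \ref{lemma:changeofrings_hproj_preservation} they take values in the h-projective functor subcategories $\Fun_S^\mathrm{hp}(\cat A, \hproj_R^\mathrm{rqr}(S,\cat B))$ and $\Fun_S^\mathrm{hp}(\cat A, \hproj_R(S,\cat B))$ respectively. Hence the only thing left to verify is \emph{quasi-essential surjectivity} onto these subcategories. I would treat $L$ in detail; the argument for $L'$ is verbatim, with $\hproj_R(S,\cat B)$ in place of $\hproj_R^\mathrm{rqr}(S,\cat B)$ and $T'$ in place of $T$.

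For quasi-essential surjectivity I would use the counit of the dg-adjunction $L \dashv T$ from Remark \ref{remark:rqr-to-functors}. Fix $G \in \Fun_S^\mathrm{hp}(\cat A, \hproj_R^\mathrm{rqr}(S,\cat B))$ and consider $T(G)$, given by $T(G)_A^X = \compdg_R(S,\cat B)(X, G(A))$. The key observation is that, since $G$ takes values in $\hproj_R^\mathrm{rqr}(S,\cat B)$, for each $A$ the right module $T(G)_A$ is (strictly) the representable $\hproj_R^\mathrm{rqr}(S,\cat B)(-, G(A))$; in particular $T(G)$ is right quasi-representable, with representing object $G(A)$. Unwinding the adjunction \eqref{eq:rqr-to-functors_adj_1} at $F = T(G)$, the counit $\varepsilon_G \colon LT(G) \to G$ is objectwise the canonical evaluation map
\[
LT(G)(A) = \int^X \compdg_R(S,\cat B)(X, G(A)) \otimes_S X \longrightarrow G(A),
\]
which is an isomorphism by the co-Yoneda lemma (density), precisely because $G(A)$ itself lies in $\hproj_R^\mathrm{rqr}(S,\cat B)$ and therefore occurs among the objects $X$ one coends over. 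This is the same computation already carried out in Remark \ref{remark:leftadj_coend_representable}, cf. \eqref{eq:coYoneda}. Thus $\varepsilon_G$ is a quasi-isomorphism.

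It then remains to replace $T(G)$ by an object of $\hproj_S^\mathrm{rqr}(\cat A, \hproj_R^\mathrm{rqr}(S,\cat B))$ without altering its image under $L$ up to quasi-isomorphism, since $T(G)$ is right quasi-representable but need not be h-projective as a bimodule. I would choose an h-projective resolution $p \colon F \xrightarrow{\sim} T(G)$; right quasi-representability is invariant under quasi-isomorphism of bimodules (immediate from the definition in \S\ref{ssec:qf}), so $F \in \hproj_S^\mathrm{rqr}(\cat A, \hproj_R^\mathrm{rqr}(S,\cat B))$. Finally I would check that $L(p) \colon L(F) \to LT(G)$ is a quasi-isomorphism: for each $A$, both $F_A$ and $T(G)_A$ are h-projective right modules (the latter being representable, the former because $\cat A$ is h-projective and $F$ is an h-projective bimodule, by \cite[Lemma 3.4]{canonaco-stellari-internalhoms}), so $p_A$ is a homotopy equivalence and is thus preserved by the dg-functor $\int^X (-)_A^X \otimes_S X$. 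Composing, $L(F) \xrightarrow{\sim} LT(G) \xrightarrow[\varepsilon_G]{\sim} G$ exhibits $G$ in the quasi-essential image, which concludes.

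The main obstacle I anticipate is the second paragraph: correctly identifying $\varepsilon_G$ objectwise with the density coend and justifying that co-Yoneda applies, which hinges exactly on $G$ landing in the h-projective, right-quasi-representable modules and on working in the enlarged universe $\mathbb V$ so that the coend over all such $X$ exists (cf. \S\ref{subsec:sizes} and Remark \ref{remark:coend_size}). By contrast, the passage to the h-projective resolution and the verification that $L$ preserves it are routine once the objectwise h-projectivity of $F_A$ and $T(G)_A$ is noted.
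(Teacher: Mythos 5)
Your proof is correct and takes essentially the same route as the paper's: quasi-fully faithfulness and h-projectivity of values from the two preceding lemmas, then quasi-essential surjectivity via the counit $\varepsilon_G \colon LT(G) \to G$, an h-projective resolution $Q(T(G))$ (right quasi-representable because $T(G)_A$ is representable), the homotopy equivalence $Q(T(G))_A \to T(G)_A$ preserved by the coend functor, and the co-Yoneda identification of $LT(G)(A)$ with $G(A)$ as in Remark \ref{remark:leftadj_coend_representable}. The only cosmetic differences are that the paper derives the containment of the image of $L$ in $\Fun_S^\mathrm{hp}(\cat A,\hproj_R^\mathrm{rqr}(S,\cat B))$ from Lemma \ref{lemma:F(A)_hproj} together with Remark \ref{remark:leftadj_coend_representable} (rather than from Lemma \ref{lemma:changeofrings_hproj_preservation} alone), and explicitly cites Remark \ref{remark:Funhp_qis_invert} to convert the final quasi-isomorphism between h-projective objects into an isomorphism in $H^0$ --- both points your argument supplies implicitly.
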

\begin{proof}
We already know from Lemma \ref{lemma:rqr-to-functors} that $L$ and $L'$ are quasi-fully faithful. Moreover, since by assumption $\cat A$ is h-projective, we may identify
\[
\Fun^{\mathrm{hp}}_S(\cat A, \hproj_R(S,\cat B)) = \Fun^{\mathrm{hp}}_S(\cat A, \compdg_R(S,\cat B)),
\]
see Lemma \ref{lemma:F(A)_hproj}. Hence, the quasi-essential image of $L'$ lies in $\Fun^{\mathrm{hp}}_S(\cat A, \hproj_R(S,\cat B))$. As for $L$, we recall the isomorphism \eqref{eq:leftadj_coend_representable_1} in in $H^0(\compdg_R(S,\cat B))$ from Remark \ref{remark:leftadj_coend_representable}:
\[
L(F)(A) \cong \Phi_F(A),
\]
which holds for $F \in \hproj_S^\mathrm{rqr}(\cat A, \hproj_R^\mathrm{rqr}(S,\cat B))$ and for a fixed $A \in \cat A$. The object $\Phi_F(A)$ lies in $\hproj_R^\mathrm{rqr}(S,\cat B)$, hence the same is true for $L(F)(A)$.

We now prove that $H^0(L)$ is essentially surjective. Let $G \in \Fun_S^{\mathrm{hp}}(\cat A, \hproj_R^\mathrm{rqr}(S,\cat B))$. We are going to prove that
\begin{equation} \label{eq:hproj-fun-res}
    L(Q(T(G)) \to LT(G) \xrightarrow{\varepsilon_G} G
\end{equation}
is a quasi-isomorphism, where $Q(T(G)) \to T(G)$ is an h-projective resolution of $T(G)$ and $\varepsilon_G$ is the natural morphism \eqref{eq:rqr-to-functors_counit_1}. Both $G$ and $L(Q(T(G)))$ are h-projective (Lemma \ref{lemma:changeofrings_hproj_preservation}), hence this quasi-isomorphism will be an isomorphism in $H^0(\Fun_S^{\mathrm{hp}}(\cat A, \hproj_R^\mathrm{rqr}(S,\cat B)))$ (Remark \ref{remark:Funhp_qis_invert}), as we want. Let $A \in \cat A$; since by hypothesis $G(A) \in \hproj_R^\mathrm{rqr}(S,\cat B)$, we have that
\begin{equation*}
    T(G)_A = \hproj_R^{\mathrm{rqr}}(S,\cat B)(-,G(A))
\end{equation*}
is a representable dg-module. Next, we have a quasi-isomorphism
\begin{equation} \label{eq:hproj-fun-res-A}
    Q(T(G))_A \to T(G)_A = \hproj_R^{\mathrm{rqr}}(S,\cat B)(-,G(A)). \tag{$\ast$}
\end{equation}
Since $\cat A$ is h-projective, $Q(T(G))_A$ is an h-projective dg-module \cite[Lemma 3.4]{canonaco-stellari-internalhoms}, and the same is true for the representable dg-module $\hproj_R^{\mathrm{rqr}}(S,\cat B)(-,G(A))$. Hence, \eqref{eq:hproj-fun-res-A} is a homotopy equivalence. Next, we have a commutative diagram
\begin{equation*}
 \begin{tikzcd}
L(Q(T(G)))(A) = \displaystyle\int^X Q(T(G))_A^X \otimes_S X \arrow[r, "\eqref{eq:hproj-fun-res}_A" ] \arrow[d, "\approx"'] & G(A) \arrow[d, equal]                    \\
       \displaystyle\int^X \hproj_R^{\mathrm{rqr}}(S,B)(X,G(A)) \otimes_S X       \arrow[r,"\sim"]   & G(A),
\end{tikzcd}
\end{equation*}
where the isomorphism of the bottom row follows from a similar argument as in Remark \ref{remark:leftadj_coend_representable}, and the left vertical arrow is a homotopy equivalence induced by \eqref{eq:hproj-fun-res-A}.  We conclude that indeed $L(Q(T(G)) \to G$ is a quasi-isomorphism, hence a homotopy equivalence, and we are done.

To show that $H^0(L')$ is essentially surjective, we can argue in the same way as above, replacing $L$ with $L'$ and $T$ with $T'$.
\end{proof}

\subsubsection{The proof of Theorem \ref{thm:coext_univproperty}}
We now have the tools to prove our main result. For our convenience, we recall the adjoint ($R$-linear) equivalence of Proposition \ref{prop:coextension_scalars_dg_strict}:
\begin{equation}
    l \colon \compdg_R(\cat A_R, \cat B) \leftrightarrows \Fun_S(\cat A,\compdg_R(S,\cat B))_R : r,
\end{equation}
where
\begin{equation}
    \begin{split}
        l(G)(A)_{\diamondsuit}^B &= G_A^B, \\
        l(G)_s^{f} &= G^{f}_{s 1_A}, \quad s \in S.
    \end{split}
\end{equation}
and
\begin{equation}
    r(F)_A^B  = F(A)^B_{\diamondsuit} \in \compdg(R).
\end{equation}
We also recall that both $\cat A$ is assumed to be an $S$-linear h-projective dg-category, and $\cat B$ is assumed to be an $R$-linear h-flat dg-category.

We now prove a lemma on preservation of h-projective objects.
\begin{lemma} \label{lemma:coextension_scalars_hproj}
Both dg-functors $r$ and $l$ preserve acyclic objects. Hence, they also preserve h-projective objects, and they induce an adjoint equivalence
\begin{equation} \label{eq:coextension_scalars_hproj}
    l \colon \hproj_R(\cat A_R, \cat B) \leftrightarrows \Fun^\mathrm{hp}_S(\cat A,\hproj_R(S,\cat B))_R : r.
\end{equation}
\end{lemma}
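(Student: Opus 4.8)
The plan is to leverage the explicit pointwise formulas for $l$ and $r$ together with the fact that, by Proposition \ref{prop:coextension_scalars_dg_strict}, they already constitute an adjoint equivalence of dg-categories \emph{before} any restriction to h-projectives; everything else is then formal.

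First I would check that both functors preserve acyclic objects, which is immediate from the descriptions. For $l$, the identity $l(G)(A)_{\diamondsuit}^B = G_A^B$ shows that if $G$ is acyclic in $\compdg_R(\cat A_R, \cat B)$ — meaning each $G_A^B$ is an acyclic $R$-dg-module — then each component $l(G)(A)$ is acyclic in $\compdg_R(S,\cat B)$, so $l(G)$ is acyclic in the sense of Definition \ref{def:Funhp_acyclics_qis}. Dually, the formula $r(F)_A^B = F(A)_{\diamondsuit}^B$ shows that $r$ carries acyclic objects of $\Fun_S(\cat A, \compdg_R(S,\cat B))$ to acyclic bimodules.

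Next I would deduce preservation of h-projectivity formally. As the adjunction $l \dashv r$ is an \emph{adjoint equivalence}, $r$ is also left adjoint to $l$, so both natural isomorphisms of Hom-complexes $\Fun_S(\cat A, \compdg_R(S,\cat B))(l(G), F) \cong \compdg_R(\cat A_R, \cat B)(G, r(F))$ and $\compdg_R(\cat A_R, \cat B)(r(F), G) \cong \Fun_S(\cat A, \compdg_R(S,\cat B))(F, l(G))$ are available. Hence, if $P$ is h-projective and $Y$ is acyclic, then $\Fun_S(\cat A, \compdg_R(S,\cat B))(l(P), Y) \cong \compdg_R(\cat A_R, \cat B)(P, r(Y))$, and the right-hand side is acyclic because $r(Y)$ is acyclic by the previous step and $P$ is h-projective; so $l(P)$ is h-projective. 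The symmetric argument, using the second isomorphism, shows that $r$ sends h-projectives to h-projectives.

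Finally I would assemble the restricted equivalence. Since $\cat A$ is h-projective, Lemma \ref{lemma:F(A)_hproj} gives $\Fun^\mathrm{hp}_S(\cat A, \hproj_R(S,\cat B)) = \Fun^\mathrm{hp}_S(\cat A, \compdg_R(S,\cat B))$, so the essential image of the restriction of $l$ to h-projectives is exactly the target claimed. As $l$ and $r$ now restrict to the respective h-projective subcategories and were already quasi-inverse with invertible unit and counit, these restrictions form the asserted adjoint equivalence $l \colon \hproj_R(\cat A_R, \cat B) \leftrightarrows \Fun^\mathrm{hp}_S(\cat A, \hproj_R(S,\cat B))_R : r$. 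I do not expect a genuine obstacle; the only point demanding care is that the adjunction isomorphisms of Proposition \ref{prop:coextension_scalars_dg_strict} must be read as isomorphisms of Hom-\emph{complexes} rather than merely on $H^0$, so that the acyclicity criterion defining h-projectivity in Definition \ref{def:Funhp_acyclics_qis} transfers correctly.
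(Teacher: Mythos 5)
Your proposal is correct and follows essentially the same route as the paper's proof: acyclics are preserved by inspection of the pointwise formulas, both Hom-complex isomorphisms coming from the adjoint equivalence of Proposition \ref{prop:coextension_scalars_dg_strict} then yield preservation of h-projectives formally, and Lemma \ref{lemma:F(A)_hproj} identifies $\Fun^\mathrm{hp}_S(\cat A,\hproj_R(S,\cat B))$ with $\Fun^\mathrm{hp}_S(\cat A,\compdg_R(S,\cat B))$. In fact you spell out the h-projectivity step that the paper leaves as "we easily conclude," so no gaps remain.
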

\begin{proof}
Recall from Lemma \ref{lemma:F(A)_hproj} that we may identify
\[
\Fun^{\mathrm{hp}}_S(\cat A, \hproj_R(S,\cat B)) = \Fun^{\mathrm{hp}}_S(\cat A, \compdg_R(S,\cat B)).
\]
Recalling Definition \ref{def:Funhp_acyclics_qis}, it is immediate from the definition that $r$ and $l$ preserve acyclic objects.\footnote{By the way, being acyclic does not depend on the base dg-ring $R$ or $S$.} Then, since $r$ and $l$ are mutually inverse dg-equivalences, we have ($R$-linear) isomorphisms
\begin{align*}
    \Hom(l(G),F) & \cong \Hom(G,r(F)), \\
    \Hom(r(F),G) & \cong \Hom(F,l(G)).
\end{align*}
From this, we easily conclude that both $r$ and $l$ preserve h-projective objects.
\end{proof}

We now prove:
\begin{lemma} \label{lemma:coextension_scalars_hprojrqr}
The adjoint equivalence \eqref{eq:coextension_scalars_hproj} of Lemma \ref{lemma:coextension_scalars_hproj} restricts to an $R$-linear adjoint equivalence
\begin{equation} \label{eq:coextension_scalars_hprojrqr}
    l \colon \hproj_R^\mathrm{rqr}(\cat A_R, \cat B) \leftrightarrows \Fun^\mathrm{hp}_S(\cat A,\hproj_R^\mathrm{rqr}(S,\cat B))_R : r
\end{equation}
\end{lemma}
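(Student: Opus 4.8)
The plan is to take the adjoint equivalence $l \leftrightarrows r$ furnished by Lemma \ref{lemma:coextension_scalars_hproj} as given and merely verify that it carries the right quasi-representable full subcategories onto each other. Since $l$ and $r$ are already mutually inverse dg-functors on $\hproj_R(\cat A_R, \cat B)$ and $\Fun_S^\mathrm{hp}(\cat A, \hproj_R(S, \cat B))_R$, it will suffice to characterize on objects when $G$ lands in $\hproj_R^\mathrm{rqr}(\cat A_R, \cat B)$ purely in terms of $l(G)$, and symmetrically for $r$; the adjunction data then restricts automatically.

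The main point, which I would record first, is that both $l$ and $r$ preserve the \emph{underlying right $\cat B$-dg-module} strictly. Indeed, from the defining formula $l(G)(A)_\diamondsuit^B = G_A^B$ of Proposition \ref{prop:coextension_scalars_dg_strict}, for a fixed $A \in \cat A$ the right $\cat B$-dg-module $B \mapsto l(G)(A)_\diamondsuit^B$ is literally equal to $G_A = (B \mapsto G_A^B)$. Now an object $X \in \compdg_R(S, \cat B)$, with $S$ viewed as the one-object dg-category $\diamondsuit$, is right quasi-representable precisely when its single underlying right $\cat B$-dg-module $X_\diamondsuit$ is quasi-isomorphic to some $\cat B(-, \Phi)$. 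Hence $G_A$ is quasi-isomorphic to a representable module for every $A$ if and only if $l(G)(A)$ is right quasi-representable for every $A$. Combined with Lemma \ref{lemma:F(A)_hproj} — which guarantees that $l(G)(A)$ is h-projective because $\cat A$ is h-projective — this shows that $G \in \hproj_R^\mathrm{rqr}(\cat A_R, \cat B)$ if and only if $l(G) \in \Fun_S^\mathrm{hp}(\cat A, \hproj_R^\mathrm{rqr}(S, \cat B))$.

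Symmetrically, from $r(F)_A^B = F(A)_\diamondsuit^B$ the right $\cat B$-dg-module $r(F)_A$ equals the underlying right $\cat B$-module $F(A)_\diamondsuit$ of the bimodule $F(A)$. Thus if $F \in \Fun_S^\mathrm{hp}(\cat A, \hproj_R^\mathrm{rqr}(S, \cat B))$, then each $F(A)$ is right quasi-representable, whence each $r(F)_A$ is quasi-isomorphic to a representable module, so $r(F)$ is right quasi-representable; moreover it is h-projective because $r$ preserves h-projective objects by Lemma \ref{lemma:coextension_scalars_hproj}. Putting the two matchings together with the fact that $l$ and $r$ are mutually inverse on the larger categories yields the claimed restricted adjoint equivalence.

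I do not anticipate a genuine obstacle: the whole argument hinges on the single observation that right quasi-representability is a condition on the underlying right $\cat B$-module and that this module is transported on the nose — not merely up to quasi-isomorphism — by $l$ and $r$. The only step needing a moment of care is confirming that, for an $S$-$\cat B$-bimodule, ``right quasi-representable'' unwinds to exactly the objectwise condition defining $\hproj_R^\mathrm{rqr}(\cat A_R, \cat B)$, since $S$ here contributes only the single object $\diamondsuit$.
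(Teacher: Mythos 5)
Your proposal is correct and follows essentially the same route as the paper: both arguments rest on the observation that $l(G)(A)_\diamondsuit = G_A$ and $r(F)_A = F(A)_\diamondsuit$ hold on the nose, so right quasi-representability (an objectwise condition on the underlying right $\cat B$-modules, since $S$ has the single object $\diamondsuit$) is transported in both directions, with h-projectivity already handled by Lemma \ref{lemma:coextension_scalars_hproj} and Lemma \ref{lemma:F(A)_hproj}.
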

\begin{proof}
Let $G \in \hproj_R^\mathrm{rqr}(\cat A_R, \cat B)$. Hence, for $A \in \cat A$, we have a quasi-isomorphism
\[
\cat B(-,\Phi_G(A)) \xrightarrow{\approx} l(G)(A)_{\diamondsuit} = G_A, 
\]
hence we conclude that $l(G) \in \Fun^\mathrm{hp}_S(\cat A,\hproj_R^\mathrm{rqr}(S,\cat B))$.

On the other hand, let $F \in \Fun^\mathrm{hp}_S(\cat A,\hproj_R^\mathrm{rqr}(S,\cat B))$. Then, for $A \in \Ob(\cat A_R)=\Ob(\cat A)$, we have a quasi-isomorphism
\[
\cat B(-, \Phi_{F(A)}(\diamondsuit)) \xrightarrow{\approx} r(F)_A = F(A)_{\diamondsuit},
\]
and we conclude that $r(F) \in \hproj_R^\mathrm{rqr}(\cat A_R, \cat B)$. The result now follows.
\end{proof}
Combining the above Lemma \ref{lemma:coextension_scalars_hproj} and \ref{lemma:coextension_scalars_hprojrqr} with Lemma \ref{lemma:rqr-to-functors_quasiequivalences}, we can directly deduce the following result, which includes Theorem \ref{thm:coext_univproperty}:
\begin{proposition} \label{prop:coextension_scalars_hproj_hprojrqr}
There are natural $R$-linear quasi-equivalences:
\begin{align}
  \hproj_R^{\mathrm{rqr}}(\cat A_R, \cat B)  & \xrightarrow{\sim} \hproj_S^{\mathrm{rqr}}(\cat A, \hproj_R^\mathrm{rqr}(S,\cat B))_R, \nonumber \\
 \hproj_R(\cat A_R, \cat B)   & \xrightarrow{\sim} \hproj_S^{\mathrm{rqr}}(\cat A, \hproj_R(S,\cat B))_R. \label{eq:coextension_scalars_hproj_universal}
\end{align}
\end{proposition}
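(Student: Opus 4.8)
The plan is simply to chain together the three preceding lemmas, being careful about which intermediate functor category each one lands in and about the fact that restriction of scalars does not disturb the property of being a quasi-equivalence.

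First I would treat the top row. By Lemma \ref{lemma:coextension_scalars_hprojrqr} the dg-functors $l$ and $r$ form a strict $R$-linear adjoint equivalence
\[
l \colon \hproj_R^\mathrm{rqr}(\cat A_R, \cat B) \leftrightarrows \Fun^\mathrm{hp}_S(\cat A,\hproj_R^\mathrm{rqr}(S,\cat B))_R : r,
\]
which is in particular a quasi-equivalence. Next, by Lemma \ref{lemma:rqr-to-functors_quasiequivalences} the dg-functor $L$ is an $S$-linear quasi-equivalence onto the very same intermediate category $\Fun_S^\mathrm{hp}(\cat A,\hproj_R^\mathrm{rqr}(S,\cat B))$. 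Since restriction of scalars $\Hqe(S) \to \Hqe(R)$ preserves quasi-equivalences (cf. \eqref{eq:restriction_hqe}), the restricted dg-functor
\[
L_R \colon \hproj_S^\mathrm{rqr}(\cat A, \hproj_R^\mathrm{rqr}(S,\cat B))_R \xrightarrow{\sim} \Fun_S^\mathrm{hp}(\cat A,\hproj_R^\mathrm{rqr}(S,\cat B))_R
\]
is again an $R$-linear quasi-equivalence. Composing $l$ with a quasi-inverse of $L_R$ in $\Hqe(R)$ then yields the desired natural $R$-linear quasi-equivalence $\hproj_R^{\mathrm{rqr}}(\cat A_R, \cat B) \xrightarrow{\sim} \hproj_S^{\mathrm{rqr}}(\cat A, \hproj_R^\mathrm{rqr}(S,\cat B))_R$.

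For the bottom row I would argue identically, but now starting from the adjoint equivalence of Lemma \ref{lemma:coextension_scalars_hproj},
\[
l \colon \hproj_R(\cat A_R, \cat B) \leftrightarrows \Fun^\mathrm{hp}_S(\cat A,\hproj_R(S,\cat B))_R : r,
\]
and using the quasi-equivalence $L'$ of Lemma \ref{lemma:rqr-to-functors_quasiequivalences} in place of $L$. Again restriction of scalars keeps $L'_R$ a quasi-equivalence, and composing $l$ with a quasi-inverse of $L'_R$ produces the second natural $R$-linear quasi-equivalence.

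The only points requiring attention — and none of them is a genuine obstacle, since the real work sits inside the three lemmas — are bookkeeping ones: that the target of $l$ in each case is literally the restriction of the target of the corresponding $L$ (resp. $L'$), so that the two equivalences are composable; that restriction of scalars preserves quasi-equivalences, so that passing to $\Hqe(R)$ is legitimate; and that each ingredient ($l$, $r$, $L$, $L'$, and restriction) is natural in $\cat A$ and $\cat B$, so that the composite inherits naturality. Because $L$ and $L'$ are only quasi-equivalences and not strict dg-equivalences, the resulting comparison is an isomorphism in $\Hqe(R)$ rather than a strict dg-equivalence, which is exactly the sense in which the proposition (and hence Theorem \ref{thm:coext_univproperty}) is stated.
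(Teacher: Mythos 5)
Your proposal is correct and follows essentially the same route as the paper: the paper's proof is precisely to combine Lemmas \ref{lemma:coextension_scalars_hproj} and \ref{lemma:coextension_scalars_hprojrqr} (the strict adjoint equivalences $l \dashv r$) with Lemma \ref{lemma:rqr-to-functors_quasiequivalences} (the quasi-equivalences $L$, $L'$ onto the same intermediate categories $\Fun_S^\mathrm{hp}(\cat A, -)$), inverting the latter in $\Hqe(R)$. Your bookkeeping remarks (matching targets, restriction preserving quasi-equivalences, naturality) just make explicit what the paper leaves implicit in its one-line deduction.
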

\subsubsection{Coextension of scalars of $\hproj(\smallcat b)$}

Let $\smallcat b$ be a small $R$-linear dg-category. Let us have a closer look at the quasi-equivalence \eqref{eq:coextension_scalars_hproj_universal}. First, from \cite[Corollary 4.2]{canonaco-stellari-internalhoms} we know in general that 
\[
\hproj_R(\opp{\cat C} \lotimes \smallcat b) \cong \RHom_R(\cat C, \hproj(\smallcat b)).
\]
where $\RHom_R(-,-)$ is the internal $\Hom$ in the homotopy category of $R$-linear 
dg-categories $\Hqe(R)$, and $\cat C$ is an $R$-linear dg-categories. In particular, if either $\cat C$ or $\smallcat b$ is h-flat then we have 
\[
\hproj_R(\cat C,\smallcat b) \cong \RHom(\cat C, \hproj_R(\smallcat b)).
\]
So, a posteriori we know that we have a quasi-equivalence
\begin{equation} \label{eq:hprojrqr_hproj}
\hproj_R(\cat C, \smallcat b) \cong \hproj^\mathrm{rqr}_R(\cat C, Q \hproj_R(\smallcat b)),
\end{equation}
where $Q \hproj_R(\smallcat b) \to \hproj_R(\smallcat b)$ is an h-flat resolution. Combining these observations with \eqref{eq:coextension_scalars_hproj_universal}, we get a natural $R$-linear quasi-equivalence:
\begin{equation}
    \hproj_R^\mathrm{rqr}(\cat A_R, Q \hproj_R(\smallcat b))   \xrightarrow{\sim} \hproj_S^{\mathrm{rqr}}(\cat A, \hproj_R(S,\smallcat b))_R.
\end{equation}
So, $\hproj_R^\mathrm{rqr}(S, Q\hproj(\smallcat b))$ satisfies the universal property of Theorem \ref{thm:coext_univproperty}, and we obtain an $S$-linear quasi-equivalence
\begin{equation} \label{eq:hprojrqr_hproj_coextension}
    \hproj_R(S,\smallcat b) \cong \hproj_R^\mathrm{rqr}(S, Q\hproj(\smallcat b)),
\end{equation}
and recalling Remark \ref{remark:notation_coext_rings} we may then identify 
\begin{equation}
    \hproj_R(\smallcat b)_{(S)} = \hproj_R(S,\smallcat b)
\end{equation}
as $S$-linear dg-categories, up to quasi-equivalence (assuming that $\smallcat b$ is h-flat as an $R$-linear dg-category).

It is useful to have yet a closer look to quasi-equivalences \eqref{eq:hprojrqr_hproj} and \eqref{eq:hprojrqr_hproj_coextension}. The first one \eqref{eq:hprojrqr_hproj} is very general, and up to suitable resolutions it is described as in the following result.
\begin{proposition} \label{prop:hprojrqr_hproj}
Let $\cat C$ be an h-projective ($R$-linear) dg-category, and let $\smallcat b$ be any small ($R$-linear) dg-category. Consider the dg-functors
\begin{equation} \label{eq:hprojrqr_hproj_functor1}
\begin{split}
\Phi \colon \compdg_R(\cat C, \smallcat b) & \to \compdg_R(\cat C, \hproj_R(\smallcat b)),\\
F & \mapsto \Phi(F)_C^X = \compdg_R(\smallcat b)(X,F_C)
\end{split}
\end{equation}
and
\begin{equation} \label{eq:hprojrqr_hproj_functor2}
\begin{split}
    \Psi \colon \compdg_R(\cat C, \hproj_R(\smallcat b)) & \to \compdg_R(\cat C, \smallcat b), \\
    G &\mapsto \Psi(G)_C^B = G_C^{h(B)},
\end{split}
\end{equation}
where $h \colon \smallcat b \to \hproj_R(\smallcat b)$ is the Yoneda embedding. These dg-functors are adjoint:
\[
\Psi \dashv \Phi,
\]
and they induce mutually inverse quasi-equivalences:
\begin{equation} \label{eq:hprojrqr_hproj_functors12}
    \Psi \colon \hproj_R^\mathrm{rqr}(\cat C, \hproj_R(\smallcat b)) \leftrightarrows \hproj_R(\cat C, \smallcat b)  : \Phi. 
\end{equation}
\end{proposition}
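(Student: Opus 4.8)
The plan is to regard $\Phi$ and $\Psi$ as operating only in the ``target variable'', the $\cat C$-variable being a mere parameter: for fixed $C$ one has $\Psi(G)_C = \Res_h(G_C)$ and $\Phi(F)_C^X = \compdg_R(\smallcat b)(X, F_C)$, which is precisely the coinduction (right Kan extension) of the right $\smallcat b$-module $F_C$ along the Yoneda embedding $h \colon \smallcat b \to \hproj_R(\smallcat b)$. Thus the whole statement is the parametrized version of the restriction--coinduction adjunction, and I would first establish $\Psi \dashv \Phi$ and then show that the resulting unit and counit become invertible on the relevant subcategories.

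For the adjunction I would argue directly with (co)ends, in the style of \S\ref{ssec:qf}. Expanding $\compdg_R(\smallcat b)(X, F_C) = \int_B \compdg(R)(X^B, F_C^B)$ and using the compatibility of hom-complexes with ends and coends together with the tensor--hom adjunction, the bimodule hom-complex $\compdg_R(\cat C, \hproj_R(\smallcat b))(G, \Phi F)$ rewrites as
\begin{equation*}
\int_{C,B} \compdg(R)\Bigl( \int^{X} G_C^X \otimes X^B, \; F_C^B \Bigr).
\end{equation*}
The crucial simplification is the co-Yoneda (density) identity $\int^{X} G_C^X \otimes X^B \cong G_C^{h(B)}$, which holds because $X^B \cong \hproj_R(\smallcat b)(h(B), X)$ by the Yoneda lemma; this identifies the displayed expression with $\compdg_R(\cat C, \smallcat b)(\Psi G, F)$, naturally in $F$ and $G$, proving $\Psi \dashv \Phi$.

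I would then record the homotopical behaviour. Since every $X \in \hproj_R(\smallcat b)$ is h-projective, $\Phi(F)_C^X = \compdg_R(\smallcat b)(X, F_C)$ is acyclic as soon as $F$ is; hence $\Phi$ preserves acyclic objects, and the standard adjunction argument shows that its left adjoint $\Psi$ preserves h-projectives, so $\Psi$ carries $\hproj_R^\mathrm{rqr}(\cat C, \hproj_R(\smallcat b))$ into $\hproj_R(\cat C, \smallcat b)$. In the other direction, h-projectivity of $\cat C$ gives that each column $F_C$ is h-projective \cite[Lemma 3.4]{canonaco-stellari-internalhoms}, i.e.\ $F_C \in \hproj_R(\smallcat b)$, so that $\Phi(F)_C = \hproj_R(\smallcat b)(-, F_C)$ is representable; thus $\Phi(F)$ is right quasi-representable, and (after an h-projective resolution, as the preceding discussion allows) it lands in $\hproj_R^\mathrm{rqr}(\cat C, \hproj_R(\smallcat b))$.

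Finally I would check the unit and counit objectwise. The counit is even a strict isomorphism, since $(\Psi\Phi F)_C^B = \compdg_R(\smallcat b)(\smallcat b(-,B), F_C) \cong F_C^B$ by Yoneda, so $\Psi\Phi \cong \mathrm{id}$ on all of $\compdg_R(\cat C, \smallcat b)$. For the unit $\eta_G \colon G \to \Phi\Psi G$ I would restrict to right quasi-representable $G$, write $G_C \approx \hproj_R(\smallcat b)(-, P_C)$ with $P_C \in \hproj_R(\smallcat b)$, and verify objectwise that $G_C^X$ and $(\Phi\Psi G)_C^X = \compdg_R(\smallcat b)(X, \Res_h G_C)$ are compatibly quasi-isomorphic to $\hproj_R(\smallcat b)(X, P_C)$, using $\Res_h \hproj_R(\smallcat b)(-,P_C) \approx P_C$ (Yoneda) and the h-projectivity of $X$ to pass this quasi-isomorphism through $\compdg_R(\smallcat b)(X,-)$. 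With invertible unit on the rqr side and invertible counit, $H^0(\Psi)$ and $H^0(\Phi)$ are mutually inverse equivalences. The one genuinely delicate point — which is why the statement is phrased ``up to suitable resolutions'' — is that $\Phi$ does not preserve h-projectivity of bimodules on the nose (already for a polynomial algebra $\cat C = k[x]$ over a field the diagonal is not an h-projective bimodule), so the codomain of $\Phi$ must be understood after resolving; everything else is the co-Yoneda bookkeeping above.
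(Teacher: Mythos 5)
Your proposal is correct and follows essentially the same route as the paper's own proof: $\Phi$ preserves acyclics, hence its left adjoint $\Psi$ preserves h-projectives; the counit $\Psi\Phi F \to F$ is a strict Yoneda isomorphism; the unit $\eta_G$ is an objectwise quasi-isomorphism on right quasi-representable $G$ (using h-projectivity of each $X$ to pass through $\compdg_R(\smallcat b)(X,-)$); and quasi-essential surjectivity uses that the columns $F_C$ are h-projective by \cite[Lemma 3.4]{canonaco-stellari-internalhoms} together with a resolution $Q(\Phi(F))$, exactly as in the paper. The only divergence is cosmetic: you derive $\Psi \dashv \Phi$ by coend calculus and the co-Yoneda identity $\int^X G_C^X \otimes X^B \cong G_C^{h(B)}$, whereas the paper writes down the unit and counit explicitly and verifies the triangle identities.
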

\begin{proof}
We first notice that we have an isomorphism
\begin{equation} \label{eq:hprojrqr_hproj_counit}
    \varepsilon_F \colon \Psi(\Phi(F))_C^B = \compdg_R(\smallcat b)(h(B), F_C) \xrightarrow{\sim} F_C^B,
\end{equation}
naturally in $C \in \cat C$, $B \in \smallcat b$ and $F \in \compdg_R(\cat C,\smallcat b)$, thanks to the Yoneda lemma. Moreover, for $G \in \compdg_R(\cat C, \hproj_R(\smallcat b))$, the action
\[
G_C^X \otimes_R \hproj_R(\smallcat b)(h(B),X) \to G_C^{h(B)} = \Psi(G)_C^B
\]
yields a natural morphism
\begin{equation} \label{eq:hprojrqr_hproj_unit}
  \eta_G \colon  G_C^X \to \compdg_R(\smallcat b)(X, G_C^{h(-)}) = \Phi(\Psi(G))_C^X,
\end{equation}
again recalling the Yoneda lemma: $\hproj_R(\smallcat b)(h(-),X) \cong X$. A direct inspection shows that the triangle identities hold:
\begin{align*}
    (\Psi \xrightarrow{\Psi \circ \eta} \Psi \Phi \Psi \xrightarrow{\varepsilon \circ \Psi} \Psi) = 1_{\Psi}, \\
    (\Phi \xrightarrow{\eta \circ \Phi} \Phi \Psi \Phi \xrightarrow{\Phi \circ \varepsilon} \Phi) = 1_{\Phi}. 
\end{align*}
This proves that $\Psi$ is left adjoint to $\Phi$.

Moreover, we observe that $\Phi$ preserves acyclic objects. Indeed, if $F$ is acyclic, then
\[
\Phi(F)_C^X = \compdg_R(\smallcat b)(X,F_C)
\]
is also acyclic, since $X$ is h-projective and $F_C$ is acyclic. From this, we deduce that $\Psi$ preserves h-projective objects, being left adjoint to $\Phi$: this follows directly from the isomorphism
\[
\Hom(\Psi(G),F) \cong \Hom(G,\Phi(F)).
\]

Hence, $\Psi$ induces a dg-functor
\[
\Psi \colon \hproj_R^\mathrm{rqr}(\cat C, \hproj_R(\smallcat b)) \to \hproj_R(\cat C,\smallcat b).
\]
We first show that $\Psi$ is quasi-fully faithful. On hom-complexes, $\Psi$ is defined by the composition
\[
\Hom(G',G) \xrightarrow{(\eta_{G})_*} \Hom(G', \Phi(\Psi(G))) \cong \Hom(\Psi(G), \Psi(G')).
\]
We want to show that $(\eta_{G})_*$ is a quasi-isomorphism. Since $G$ is taken to be h-projective, this holds if $\eta_{G} \colon G \to \Phi(\Psi(G))$ is a quasi-isomorphism. By hypothesis, $G$ is right quasi-representable; hence, we obtain a commutative diagram:
\begin{equation*}
    \begin{tikzcd}
G_C^X \arrow[r, "\eta_G"]                                     & {\compdg_R(\smallcat b)(X, G_C^{h(-)})}                                 \\
{\hproj_R(\smallcat b)(X,\Phi_G(C))} \arrow[r, "\sim"] \arrow[u, "\sim"] & {\compdg_R(\smallcat b)(X, \hproj_R(\smallcat b)(h(-),\Phi_G(C))).} \arrow[u, "\sim"]
\end{tikzcd}
\end{equation*}
The left vertical arrow is a quasi-isomorphism thanks to the right quasi-representability of $G$, where $\Phi_G(C) \in \hproj_R(\smallcat b)$. The right vertical arrow is a quasi-isomorphism, since $X$ is h-projective and hence $\compdg_R(\smallcat b)(X,-)$ preserves quasi-isomorphisms. Finally, the lower horizontal arrow is an isomorphism thanks to the Yoneda lemma. We conclude that $\eta_G$ is a quasi-isomorphism, as claimed.

We now show that $\Psi$ is quasi-essentially surjective. Let $F \in \hproj_R(\cat C,\smallcat b)$. Since $\cat C$ is h-projective, we deduce \cite[Lemma 3.4]{canonaco-stellari-internalhoms} that $F_C$ lies in $\hproj_R(\smallcat b)$. Hence:
\[
\Phi(F)_C = \hproj_R(\smallcat b)(-, F_C)
\]
is representable for all $C \in \cat C$, and $\Phi(F)$ is in particular right quasi-representable. Next, we take an h-projective resolution $Q(\Phi(F)) \to F$, and we consider the composition
\[
\Psi(Q(\Phi(F))) \to \Psi(\Phi(F)) \xrightarrow{\varepsilon_F} F.
\]
The morphsm $\varepsilon_F$ is an isomorphism, but it is immediate to see that $\Psi$ preserves quasi-isomorphisms. Recalling that it also preserves h-projective objects, we conclude that $\Psi(Q(\Phi(F))) \to F$ is a quasi-isomorphism between h-projective objects, hence it is an isomorphism in $H^0(\hproj_R(\cat C, \smallcat b))$, thus proving our claim.
\end{proof}
We can now obtain a more precise description of the quasi-equivalence \eqref{eq:hprojrqr_hproj_coextension}. 
\begin{proposition} \label{prop:ring_coext_hprojA}
Let $\smallcat b$ be a small h-flat $R$-linear dg-category, and let $q \colon Q \hproj_R(\smallcat b) \to \hproj_R(\smallcat b)$ be an $R$-linear h-flat resolution. There are natural $S$-linear quasi-equivalences:
\begin{equation} \label{eq:ring_coext_hprojA}
 \hproj_R^\mathrm{rqr}(S, Q\hproj(\smallcat b)) \leftrightarrows \hproj_R(S,\smallcat b).
\end{equation}
If viewed as $R$-linear quasi-equivalences between the underlying $R$-linear dg-categories, they can be identified (up to composing with suitable resolutions, see also \S \ref{ssec:qf}) with the quasi-equivalences $\Psi$ and $\Phi$ of the above Proposition \ref{prop:hprojrqr_hproj}.
\end{proposition}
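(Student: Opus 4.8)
The plan is to show directly that the explicit dg-functors $\Phi$ and $\Psi$ of Proposition \ref{prop:hprojrqr_hproj}, whose defining formulas \eqref{eq:hprojrqr_hproj_functor1} and \eqref{eq:hprojrqr_hproj_functor2} make perfect sense when the source dg-category $\cat C$ is taken to be the single-object dg-category $S$, furnish the asserted quasi-equivalences and thereby realize the abstract identification \eqref{eq:hprojrqr_hproj_coextension}. First I would record that these formulas are $S$-linear: the complexes $\Phi(F)_\diamondsuit^X = \compdg_R(\smallcat b)(X, F_\diamondsuit)$ and $\Psi(G)_\diamondsuit^B = G_\diamondsuit^{h(B)}$ only involve the single slot $\diamondsuit$ carrying the $S$-action, so compatibility with the $S$-linear structure of Lemma \ref{lemma:bimodules_tensor_S_Slinear} and Remark \ref{remark:coextension_dgmS} is immediate. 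I would then observe that the adjunction $\Psi \dashv \Phi$, the counit isomorphism \eqref{eq:hprojrqr_hproj_counit}, the unit \eqref{eq:hprojrqr_hproj_unit}, and the preservation of acyclics by $\Phi$ (hence of h-projectives by $\Psi$) established in the proof of Proposition \ref{prop:hprojrqr_hproj} never use that $\cat C$ is h-projective: they only use that the objects $X$ lie in $\hproj_R(\smallcat b)$. The same is true of the quasi-full-faithfulness of $\Psi$, whose proof rests solely on the right quasi-representability of $G$ and the h-projectivity of $X$.

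The one step that genuinely uses h-projectivity of $\cat C$ is the quasi-essential surjectivity of $\Psi$, where one invokes \cite[Lemma 3.4]{canonaco-stellari-internalhoms} to conclude $F_C \in \hproj_R(\smallcat b)$. Since $S$ need not be h-projective over $R$, for $F \in \hproj_R(S,\smallcat b)$ the module $F_\diamondsuit$ need not be h-projective, and this is the main obstacle. I would circumvent it by resolving: choosing an h-projective resolution $Q(F_\diamondsuit) \xrightarrow{\sim} F_\diamondsuit$ in $\compdg_R(\smallcat b)$, one obtains for every $X \in \hproj_R(\smallcat b)$ a quasi-isomorphism $\hproj_R(\smallcat b)(X, Q(F_\diamondsuit)) \xrightarrow{\sim} \compdg_R(\smallcat b)(X, F_\diamondsuit) = \Phi(F)_\diamondsuit^X$, natural in $X$ precisely because $X$ is h-projective. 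Thus $\Phi(F)$ is right quasi-representable (represented by $Q(F_\diamondsuit)$) even though $F_\diamondsuit$ itself is not, and the remainder of the essential-surjectivity argument — taking an h-projective resolution $Q(\Phi(F)) \to \Phi(F)$ and using that $\Psi$ preserves quasi-isomorphisms together with the fact that the counit $\varepsilon_F$ is an isomorphism — goes through verbatim.

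Finally I would reconcile the target $Q\hproj(\smallcat b)$ appearing in \eqref{eq:ring_coext_hprojA} with the $\hproj_R(\smallcat b)$ appearing in the formulas for $\Phi$ and $\Psi$. The h-flat resolution $q \colon Q\hproj(\smallcat b) \xrightarrow{\sim} \hproj_R(\smallcat b)$ induces, by induction in the target variable, an $S$-linear quasi-equivalence $\hproj_R^\mathrm{rqr}(S, Q\hproj(\smallcat b)) \xrightarrow{\sim} \hproj_R^\mathrm{rqr}(S, \hproj_R(\smallcat b))$; this induction acts trivially on the $\diamondsuit$-slot and is therefore $S$-linear, and it is exactly the abuse of notation recalled in \S\ref{ssec:qf}. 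Composing it with $\Psi$ (and, in the other direction, composing $\Phi$ with an h-projective resolution and the inverse of $q_*$) yields the quasi-equivalences of the statement, which are thus identified, as $R$-linear functors and up to these resolutions, with $\Psi$ and $\Phi$. This exhibits $\Psi$ and $\Phi$ as explicit models for the coextension-of-scalars equivalence $\hproj_R(\smallcat b)_{(S)} \cong \hproj_R(S,\smallcat b)$.
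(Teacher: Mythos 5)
Your first two paragraphs are essentially sound: the adjunction, the $S$-linearity, the preservation of acyclics/h-projectives, and the quasi-fully-faithfulness argument in Proposition \ref{prop:hprojrqr_hproj} indeed never use h-projectivity of the source $\cat C$, and your resolution trick $Q(F_\diamondsuit)\to F_\diamondsuit$ correctly repairs quasi-essential surjectivity; the paper's own proof uses exactly this trick when it verifies right quasi-representability. The genuine gap is in your third paragraph. The claimed ``bridge'' $\hproj_R^{\mathrm{rqr}}(S, Q\hproj(\smallcat b)) \xrightarrow{\sim} \hproj_R^{\mathrm{rqr}}(S, \hproj_R(\smallcat b))$ induced by $q$ is unjustified, and nothing in the paper covers it. The underlying dg-functor is induction along $\opp{q}\otimes 1_S$, and since $S$ is \emph{not} h-flat over $R$ in the cases of interest (e.g.\ $S=\basering k$, $R=\basering k[\epsilon]/(\epsilon^2)$), this dg-functor need not be a quasi-equivalence: $q$ is a quasi-isomorphism on hom-complexes whose source is h-flat but whose target is not, and tensoring its acyclic, non-h-flat cone with $S$ can fail to be acyclic --- over $R=\basering k[\epsilon]/(\epsilon^2)$ the acyclic complex $\cdots \to R \xrightarrow{\epsilon} R \xrightarrow{\epsilon} R \to \cdots$ becomes $\cdots \xrightarrow{0} \basering k \xrightarrow{0} \basering k \to \cdots$ after $-\otimes_R S$. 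Proposition \ref{prop: ind_rqr_firstargument} treats quasi-equivalences in the \emph{first} argument with h-flat \emph{second} argument, and invariance of $\hproj^{\mathrm{rqr}}$ under quasi-equivalences of the target requires the source to be h-flat; neither applies here. Nor is your bridge ``exactly the abuse of notation recalled in \S\ref{ssec:qf}'': that remark \emph{defines} the quasi-functor category via a resolution precisely because omitting the resolution may change the category up to quasi-equivalence; it does not assert that the resolved and unresolved categories agree. A telling symptom is that your direct argument never uses the standing hypothesis that $\smallcat b$ is h-flat: you have proven an equivalence for the pair with unresolved target, which is not a priori the derived-invariant object the proposition is about, and the passage to the stated pair is essentially as strong as the proposition itself.

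For comparison, the paper closes this gap by never forming $\hproj_R^{\mathrm{rqr}}(S,\hproj_R(\smallcat b))$ at all: it builds $q$ into the formula, defining $\Phi'(F)_\diamondsuit^X = \compdg_R(\smallcat b)(q(X), F_\diamondsuit)$ with target $\compdg_R(S, Q\hproj_R(\smallcat b))$, proves right quasi-representability by your resolution trick together with quasi-essential surjectivity of $q$ (to land a representing object inside $Q\hproj_R(\smallcat b)$), and then --- instead of re-running the equivalence argument over the non-h-projective source $S$ --- deduces that $\Phi'$ is a quasi-equivalence by reduction to Proposition \ref{prop:hprojrqr_hproj}: restrict to the underlying $R$-linear dg-categories, replace $S$ by an h-projective resolution $Q(S)$, and use the h-flatness of $\smallcat b$ and of $Q\hproj_R(\smallcat b)$ (this is exactly where the hypothesis on $\smallcat b$ enters) to identify $\Phi'$ with the quasi-equivalence $\Phi$ of Proposition \ref{prop:hprojrqr_hproj} for $\cat C = Q(S)$. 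To repair your proof, either adopt this reduction in place of your bridge, or run your direct argument from the outset with the $q$-twisted formulas; in the latter case beware that the strict Yoneda isomorphisms underlying the adjunction $\Psi\dashv\Phi$ degenerate into mere quasi-isomorphisms (since $q(h(B))$ is only homotopy equivalent to $h(B)$), so the verification is more delicate than ``verbatim''.
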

\begin{proof}
We can directly observe that the dg-functor
\begin{align*}
\Phi' \colon \compdg_R(S,\smallcat b) &\to \compdg_R(S, Q \hproj_R(\smallcat b)), \\
F & \mapsto \Phi(F)_{\diamondsuit}^X = \compdg_R(\smallcat b)(q(X),F_{\diamondsuit}).
\end{align*}
is indeed $S$-linear. Moreover, $\Phi'(F)$ is right quasi-representable for all $F$. Indeed, we may take an h-projective resolution $Q_{\smallcat b}(F_{\diamondsuit}) \to F_{\diamondsuit})$, and since $q(X)$ is h-projective for all $X \in Q\hproj_R(\smallcat b)$, we have quasi-isomorphisms
\begin{align*}
\Phi'(F)_{\diamondsuit} &= \compdg(\smallcat b)(q(-), F_{\diamondsuit}) \\
& \cong \hproj_R(\smallcat b)(q(-), Q_{\smallcat b}(F_{\diamondsuit})) \\
& \cong Q\hproj_R(\smallcat b)(-,F'),
\end{align*}
where $F' \in Q\hproj(\smallcat b)$ is some object such that $q(F') \cong Q_{\smallcat b}(F_{\diamondsuit})$ in $H^0(\hproj_R(\smallcat b))$. Hence, $\Phi'$ induces an $S$-linear quasi-functor
\[
\Phi' \colon \hproj_R(S, \smallcat b) \to \hproj_R^\mathrm{rqr}(S, Q\hproj_R(\smallcat b)).
\]
If we take the induced $R$-linear quasi-functor between the underlying $R$-linear dg-categories and we compose with the suitable resolutions, it is clear that we can identify $\Phi'$ with
\[
\Phi \colon \hproj_R(Q(S), \smallcat b) \to \hproj_R^\mathrm{rqr}(Q(S),\hproj_R(\smallcat b))
\]
of Proposition \ref{prop:hprojrqr_hproj}, where $Q(S) \to S$ is an h-projective resolution of $S$ viewed as an $R$-linear dg-category. We conclude that $\Phi'$ is indeed an $S$-linear quasi-equivalence, with quasi-inverse induced by $\Psi$.

Another technique to prove the claim is as follows. We consider the following diagram of $R$-linear quasi-functors:
\begin{equation*}
    \begin{tikzcd}
{\hproj_R^{\mathrm{rqr}}(\cat A_R, Q \hproj_R(\smallcat b))} \arrow[r, leftrightarrow] \arrow[d, leftrightarrow] & {\hproj_S^{\mathrm{rqr}}(\cat A, \hproj_R^\mathrm{rqr}(S, Q \hproj_R(\smallcat b)))_R} \arrow[d, dotted, leftrightarrow] \\
{\hproj_R(\cat A_R, \smallcat b)} \arrow[r, leftrightarrow]                   & {\hproj_S^{\mathrm{rqr}}(\cat A, \hproj_R(S,\smallcat b))_R.}          
\end{tikzcd}
\end{equation*}
The non-dotted arrows are quasi-equivalences. The horizontal arrows are given by Proposition \ref{prop:coextension_scalars_hproj_hprojrqr}; the left vertical arrows are induced by $\Phi$ and $\Psi$ (up to composing with suitable resolutions); the right vertical dotted arrows are the ones which make the diagram commute, and by a Yoneda argument we see that they are induced by $S$-linear quasi-equivalences
\[
\hproj_R^\mathrm{rqr}(S, Q\hproj(\smallcat b)) \leftrightarrows \hproj_R(S,\smallcat b).
\]
If we take $\cat A=S$ in the above diagram, we see that the horizontal arrows can be identified with identities, upon identifying
\begin{align*}
    \hproj_S^{\mathrm{rqr}}(S, \hproj_R^\mathrm{rqr}(S, Q \hproj_R(\smallcat b)))_R & \cong \hproj_R^\mathrm{rqr}(S, Q\hproj_R(\smallcat b))_R, \\
    \hproj_S^{\mathrm{rqr}}(S, \hproj_R(S,\smallcat b))_R &\cong \hproj_R(S,\smallcat b)_R.
\end{align*}
We conclude that, in that case, the dotted vertical arrows can be identified with the left vertical arrows, namely, the quasi-equivalences induced by $\Phi$ and $\Psi$. This is precisely our claim.
\end{proof}

\subsection{Extension of scalars} \label{subsection:scalar_leftextension}
The restriction of scalars functor
\begin{equation*}
    \begin{split}
        \Hqe(S) & \to \Hqe(R), \\
        \smallcat b &\mapsto \smallcat b_R
    \end{split}
\end{equation*}
has also a left adjoint. This is obtained by \emph{extension of scalars} along $R \to S$. We shall define it more in general for locally small dg-categories.
\begin{definition}
Let $\cat A$ be an $R$-linear dg-category. We define an $S$-linear dg-category $S \otimes_R \cat A$ as follows:
\begin{equation}
\begin{split}
    \Ob(S \otimes_R \cat A) &= \Ob(S) \times \Ob(\cat A) = \{\diamondsuit\} \times \Ob(\cat A) = \Ob(\cat A), \\
    (S \otimes_R \cat A)(A,B) &= S \otimes_R \cat A(A,B),
\end{split}
\end{equation}
where the latter hom-complex has a natural $S$-linear structure.
\end{definition}
We first recall the basic (non derived) extension of scalars adjunction.
\begin{proposition} \label{prop:scalars_leftextension_nonderived}
Let $\cat A$ be an $R$-linear dg-category, and let $\cat B$ be an $S$-linear dg-category. There is a natural adjoint dg-equivalence of $R$-linear dg-categories:
\begin{equation}
    l \dashv r \colon \compdg_R(\cat A, \cat B_R) \leftrightarrows \compdg_S(S \otimes_R \cat A, \cat B)_R
\end{equation}

The dg-functor $l$ is defined as follows: for $F \in \compdg_R(\cat A, \cat B_R)$, we set:
\begin{align*}
    l(F)_{(\diamondsuit, A)}^B  &= F_A^B, \\
    l(F)_{(s \otimes f)}^g &= (-1)^{|f||s|} F^{sg}_f,
\end{align*}
where $F_A^B$ has an $S$-linear structure given by
\begin{equation*}
    s \cdot x = F^{s \cdot 1_B}_{1_A}(x),
\end{equation*}
for $x \in F_A^B$.

The dg-functor $r$ is defined by restriction:
\begin{equation*}
    r(F)_A^B = F_{(\diamondsuit, A)}^B.
\end{equation*}
\end{proposition}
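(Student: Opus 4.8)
The plan is to prove the statement by verifying directly that the explicitly defined dg-functors $l$ and $r$ are mutually inverse $R$-linear dg-functors; since a pair of mutually inverse functors forms an adjoint equivalence, this yields the asserted adjoint dg-equivalence together with its naturality. The conceptual point driving the argument is that an $\cat A$-$\cat B_R$-bimodule $F$ already ``sees'' the $S$-action: each value $F_A^B = F(B,A)$ carries a canonical structure of $S$-dg-module via the contravariant action of the scalar endomorphisms $s\cdot 1_B$ of the $S$-linear object $B$, which is exactly the rule $s\cdot x = F^{s\cdot 1_B}_{1_A}(x)$. This is well defined precisely because $S$ is commutative, in complete analogy with the $S$-linear structure on $\compdg_R(S,\cat B)$ from Lemma \ref{lemma:bimodules_tensor_S_Slinear}; indeed the present statement is the extension-of-scalars mirror of Proposition \ref{prop:coextension_scalars_dg_strict}.

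First I would check that $l$ is a well-defined $S$-linear dg-functor out of $\opp{\cat B}\otimes_S(S\otimes_R\cat A)$. This breaks into three verifications: (i) the formula $s\cdot x = F^{s\cdot 1_B}_{1_A}(x)$ endows each $F_A^B$ with a genuine $S$-dg-module structure, associativity and the Leibniz rule for the differential being inherited from functoriality of $F$ and commutativity of $S$ guaranteeing compatibility of this action with the ambient $R$-structure; (ii) the assignment $g\otimes(s\otimes f)\mapsto (-1)^{|f||s|}F^{sg}_f$ respects composition and the differential on the tensor-product category, with the stated Koszul sign; and (iii) $l(F)$ is genuinely $S$-linear as a functor, i.e.\ rescaling a morphism by an element of $S$ inside the tensor $\otimes_S$ produces the corresponding rescaling of $l(F)$ of that morphism. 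Here the two ways of absorbing an $S$-scalar, onto the $\cat B$-factor $g$ or onto the $\cat A$-factor $f$, must agree, which is once more commutativity of $S$. The functor $r$ is simply restriction along the canonical $R$-linear dg-functor $\cat A\to(S\otimes_R\cat A)_R$, $A\mapsto(\diamondsuit,A)$, $f\mapsto 1\otimes f$, so it is manifestly a well-defined $R$-linear dg-functor.

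Next I would verify that $l$ and $r$ are mutually inverse. The identity $rl=\mathrm{id}$ is immediate and strict: $r(l(F))_A^B = l(F)_{(\diamondsuit,A)}^B = F_A^B$ on objects, while on a morphism $g\otimes f$ one restricts to $s=1$, so $r(l(F))^g_f = (-1)^{|f|\cdot 0}F^{1\cdot g}_f = F^g_f$. For $lr=\mathrm{id}$, the agreement on objects and values is again immediate, and on a morphism $g\otimes(s\otimes f)$ the formula gives $l(r(G))^g_{s\otimes f} = (-1)^{|f||s|}G^{sg}_{1\otimes f}$; one then uses that in $S\otimes_R\cat A$ one has $s\otimes f = s\cdot(1\otimes f)$, so that moving the scalar $s$ across $\otimes_S$ onto $g$ and invoking the $S$-linearity of $G$ recovers exactly $G^g_{s\otimes f}$, sign included. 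Thus $lr=\mathrm{id}$ as well, and $l\dashv r$ is an adjoint dg-equivalence.

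The main obstacle I anticipate is the bookkeeping of the Koszul signs: verifying that the factors $(-1)^{|f||s|}$ built into $l$ are exactly compatible with the graded centrality of $S$, so that transferring an $S$-scalar between the three tensor factors $\opp{\cat B}$, $S$, and $\cat A$ is sign-consistent throughout the functoriality checks (ii)--(iii) and in the identity $lr=\mathrm{id}$. Once the sign normalization is pinned down, every remaining step is a routine elementwise computation, so I would fix the sign conventions explicitly at the outset and leave the elementary diagram chases to the reader.
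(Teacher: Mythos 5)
Your proposal is correct and matches what the paper intends: the paper states this proposition with no proof at all, treating it as a routine strict verification, and your plan — checking that $l$ and $r$ are well-defined dg-functors and strictly mutually inverse (using $R$-linearity of $F$ for $rl=\mathrm{id}$ and $S$-linearity of $G$ for $lr=\mathrm{id}$), hence form an adjoint dg-equivalence — is exactly that verification, consistent with how the paper uses the result in the very next lemma (``since they define an adjoint equivalence, they are each left adjoint to the other''). Your closing caveat is well placed: the paper never fixes its conventions for evaluating bifunctors on pairs of morphisms or for right $S$-actions, and the stated Koszul factor $(-1)^{|f||s|}$ is only correct relative to such a choice (under the most common conventions the factor arising from moving $s$ across the tensor sign is $(-1)^{|g||s|}$), so pinning the conventions down explicitly, as you propose, is genuinely necessary rather than cosmetic.
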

\begin{lemma}
Assume that $\cat A$ is an h-flat $R$-linear dg-category and that $\cat B$ is a small h-flat $S$-linear dg-category. The above dg-functors $r$ and $l$ preserve acyclics, so they both preserve h-projectives and they induce an adjoint $R$-linear equivalence
\begin{equation*}
    l \colon \hproj_R(\cat A, \cat B_R) \leftrightarrows \hproj_S(S \otimes_R \cat A, \cat B)_R : r 
\end{equation*}
\end{lemma}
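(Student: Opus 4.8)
The plan is to follow the template of the proof of Lemma~\ref{lemma:coextension_scalars_hproj} almost verbatim, since all of the real content has already been placed at the strict dg-level in Proposition~\ref{prop:scalars_leftextension_nonderived}: there $l$ and $r$ were shown to be mutually inverse $R$-linear dg-equivalences between $\compdg_R(\cat A, \cat B_R)$ and $\compdg_S(S \otimes_R \cat A, \cat B)_R$. It therefore suffices to check that this equivalence is compatible with the two full dg-subcategories of h-projective bimodules, and this will follow formally once we know that $l$ and $r$ preserve acyclicity.

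First I would verify that $l$ and $r$ preserve acyclic bimodules. Recall that a bimodule is acyclic precisely when all of its hom-complexes are acyclic, a condition that only involves the underlying complexes of $R$- (or $S$-)dg-modules and is insensitive to which base ring records the linear structure. The explicit formulas $l(F)_{(\diamondsuit, A)}^B = F_A^B$ and $r(F)_A^B = F_{(\diamondsuit, A)}^B$, together with $\Ob(S \otimes_R \cat A) = \Ob(\cat A)$, show that $l$ and $r$ merely reindex and re-decorate these hom-complexes without altering them as complexes. Hence $F$ is acyclic if and only if $l(F)$ is, and likewise for $r$; in particular both functors preserve (and reflect) acyclics.

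Next I would deduce preservation of h-projectives. Since $l \dashv r$ is an adjoint \emph{equivalence}, both $l \dashv r$ and $r \dashv l$ hold, giving natural $R$-linear isomorphisms of hom-complexes $\Hom(l(F), G) \cong \Hom(F, r(G))$ and $\Hom(r(G), F) \cong \Hom(G, l(F))$. If $P$ is h-projective and $W$ is acyclic, then $\Hom(l(P), W) \cong \Hom(P, r(W))$ is acyclic, because $r(W)$ is acyclic by the previous step and $P$ is h-projective; thus $l(P)$ is h-projective. The symmetric argument, using the second isomorphism, shows that $r$ carries h-projectives to h-projectives.

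Finally, having established that the mutually inverse $R$-linear dg-equivalences $l$ and $r$ of Proposition~\ref{prop:scalars_leftextension_nonderived} both preserve the full dg-subcategories of h-projective objects, I would simply restrict them to obtain the desired adjoint $R$-linear equivalence $l \colon \hproj_R(\cat A, \cat B_R) \leftrightarrows \hproj_S(S \otimes_R \cat A, \cat B)_R : r$. The h-flatness hypotheses on $\cat A$ and on $\cat B$ play no role in the argument itself; they serve only to guarantee that the two dg-categories of h-projective bimodules are the homotopically meaningful ones. For this reason I do not expect any genuine obstacle: the only points requiring (minor) care are the componentwise, base-ring-independent description of acyclicity used in the first step, and the naturality and $R$-linearity of the adjunction isomorphisms, so that they descend to cohomology.
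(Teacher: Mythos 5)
Your proposal is correct and follows essentially the same route as the paper: acyclicity preservation is immediate from the explicit formulas (and is base-ring independent), and h-projectivity preservation then follows formally because $l$ and $r$, being an adjoint equivalence, are each left adjoint to the other, so one restricts the equivalence of Proposition~\ref{prop:scalars_leftextension_nonderived} to the h-projective subcategories. The paper's own proof is just a compressed version of exactly this argument.
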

\begin{proof}
The fact that $r$ and $l$ preserve acyclics is immediate from the definition. Since they define an adjoint equivalence, they are each left adjoint to the other. Hence, they preserve h-projectives (this an argument we have already used before, for example in Proposition \ref{prop:hprojrqr_hproj}).
\end{proof}
Finally, we prove:
\begin{proposition} \label{prop:scalar_leftextension}
Assume that $\cat A$ is an h-flat $R$-linear dg-category and that $\cat B$ is a small h-flat $S$-linear dg-category. The above dg-functors $r$ and $l$ induce an $R$-linear adjoint equivalence
\begin{equation}
    l \colon \hproj^{\mathrm{rqr}}_R(\cat A, \cat B_R) \xrightarrow{\sim} \hproj_S^{\mathrm{rqr}}(S \otimes_R \cat A, \cat B)_R : r.
\end{equation}
\end{proposition}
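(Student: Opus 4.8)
The plan is to bootstrap from the adjoint equivalence on full h-projective bimodule categories that the preceding lemma already supplies, namely
\[
l \colon \hproj_R(\cat A, \cat B_R) \leftrightarrows \hproj_S(S \otimes_R \cat A, \cat B)_R : r,
\]
with $l$ and $r$ mutually quasi-inverse. Since $\hproj^{\mathrm{rqr}}$ is a \emph{full} dg-subcategory of $\hproj$, it suffices to check that $l$ and $r$ each carry right quasi-representable bimodules to right quasi-representable bimodules; the desired equivalence is then obtained simply by restricting the given one. I would reduce both of these verifications to the following single statement about one right module, which isolates all the content: for $\Phi \in \cat B$ and a right $\cat B$-module $M$ over $S$, the module $M$ is quasi-isomorphic to $\cat B(-, \Phi)$ in $\dercomp_S(\cat B)$ if and only if its restriction $M_R$ is quasi-isomorphic to $\cat B_R(-, \Phi)$ in $\dercomp_R(\cat B_R)$.

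Granting this reduction, the two verifications fall out of the explicit formulas for $l$ and $r$ from Proposition \ref{prop:scalars_leftextension_nonderived}. For $r$, the formula $r(G)_A^B = G_{(\diamondsuit, A)}^B$ exhibits $r(G)_A$ as the underlying $R$-linear right $\cat B_R$-module of $G_{(\diamondsuit, A)}$, so an $S$-linear representability $G_{(\diamondsuit, A)} \simeq \cat B(-, \Phi_G(A))$ descends via the ``only if'' direction to $r(G)_A \simeq \cat B_R(-, \Phi_G(A))$. For $l$, the formula $l(F)_{(\diamondsuit, A)}^B = F_A^B$ exhibits $F_A$ as the underlying $R$-linear right $\cat B_R$-module of $l(F)_{(\diamondsuit, A)}$, so an $R$-linear representability $F_A \simeq \cat B_R(-, \Phi_F(A))$ is promoted via the ``if'' direction to $l(F)_{(\diamondsuit, A)} \simeq \cat B(-, \Phi_F(A))$ over $S$. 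The objects match throughout, since $\Ob(S \otimes_R \cat A) = \Ob(\cat A)$ and $\cat B$, $\cat B_R$ share the same objects, so the representing object is the same $\Phi_F(A)$ before and after applying $l$.

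It remains to establish the reduction statement, which is the only real point. The forward (``only if'') direction is immediate, as restriction of scalars leaves underlying complexes unchanged and so cannot destroy a quasi-isomorphism. For the reverse direction I would use the derived Yoneda lemma: representables are h-projective over both $R$ and $S$, giving natural identifications $\dercomp_S(\cat B)(\cat B(-, \Phi), M) \cong H^0(M^\Phi)$ and $\dercomp_R(\cat B_R)(\cat B_R(-, \Phi), M_R) \cong H^0(M_R^\Phi)$, and these two cohomology groups literally coincide because $M^\Phi$ and $M_R^\Phi$ are the same complex. A chosen $R$-linear quasi-isomorphism $\cat B_R(-, \Phi) \xrightarrow{\approx} M_R$ thus corresponds to an element $e \in H^0(M^\Phi)$; reading $e$ through the $S$-linear Yoneda identification yields a morphism $\cat B(-, \Phi) \to M$ in $\dercomp_S(\cat B)$ whose restriction to $R$ is the original morphism, and which is therefore again a quasi-isomorphism since that property is detected on underlying complexes. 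The main obstacle is precisely this last upgrade from an $R$-linear to an $S$-linear quasi-isomorphism with a representable module; it is resolved by the observation that the governing Yoneda $\Hom$-group is computed on the same complex regardless of the base ring.
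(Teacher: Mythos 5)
Your proof is correct and takes essentially the same route as the paper's: the paper likewise reduces to showing that $l$ and $r$ preserve right quasi-representability, handles $r$ by observing that restriction of scalars leaves underlying complexes (and hence quasi-isomorphisms) unchanged, and handles $l$ by the same Yoneda-element argument — the $R$-linear quasi-isomorphism $\cat B_R(-,\Phi_F(A)) \to F_A$ is determined by an element of $F_A^{\Phi_F(A)} = l(F)_{(\diamondsuit,A)}^{\Phi_F(A)}$, which then induces an $S$-linear morphism $\cat B(-,\Phi_F(A)) \to l(F)_{(\diamondsuit,A)}$ lifting the original one and therefore again a quasi-isomorphism. Your packaging of the two checks as a single ``iff'' statement about one module is purely organizational, not a different argument.
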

\begin{proof}
We only need to show that both $l$ and $r$ preserve right quasi-representable bimodules. First, let $F \in \hproj^{\mathrm{rqr}}_R(\cat A, \cat B_R)$. We show that $l(F) \in \hproj_S^{\mathrm{rqr}}(S \otimes_R \cat A, \cat B)_R$. By hypothesis, we have for all $A \in \cat A$ an $R$-linear quasi-isomorphism of right $\cat B_R$-dg-modules
\begin{equation} \label{tag:rqr_scalarextensions}
\cat B_R(-,\Phi_F(A)) \xrightarrow{\approx} F_A, \tag{$\star$}
\end{equation}
which is determined by an element $e \in F^{\Phi_F(A)}_A$. We also notice that $F^{\Phi_F(A)}_A = l(F)_{(\diamondsuit, A)}^{\Phi_F(A)}$, so the element $e$ determines an $S$-linear morphism
\[
\cat B(-, \Phi_F(A)) \to l(F)_{(\diamondsuit, A)},
\]
which is clearly a lift of \eqref{tag:rqr_scalarextensions}, hence it is a quasi-isomorphism as well. This proves that $l(F)$ is right quasi-representable.

Next, let $G \in \hproj_S^{\mathrm{rqr}}(S \otimes_R \cat A, \cat B)_R$. By hypothesis, we have for all $A \in \cat A$ an $S$-linear quasi-isomorphism of right $\cat B$-dg-modules
\[
\cat B(-, \Phi_G(\diamondsuit, A)) \xrightarrow{\approx} G_{(\diamondsuit, A)}.
\]
By definition, $r(G)_A=G_{(\diamondsuit,A)}$, so restricting scalars we obtain an $R$-linear quasi-isomorphism of right $\cat B_R$-dg-modules
\[
\cat B_R(-,\Phi_G(\diamondsuit,A)) \xrightarrow{\sim} r(G)_A,
\]
and we conclude that $r(G)$ is right quasi-representable, as we wanted.
\end{proof}
\begin{corollary}
Let $\smallcat a$ be a small $R$-linear dg-category and let $\smallcat b$ be a small $S$-linear dg-category. We denote by $S \lotimes_R \smallcat a$ the $S$-linear dg-category $S \otimes_R Q(\smallcat a)$, where $Q(\smallcat a)\to \smallcat a$ is an h-flat resolution of $\smallcat a$ (the choice of resolution does not matter up to isomorphism in $\Hqe(S)$). Then, there is a natural bijection
\begin{equation}
    \Hqe(R)(\smallcat a, \smallcat b_R) \cong \Hqe(S)(S \lotimes_R \cat A, \smallcat b),
\end{equation}
so that the functor
\begin{equation}
\begin{split}
\Hqe(R) & \to \Hqe(S), \\
\smallcat a & \mapsto S \lotimes \smallcat a
\end{split}
\end{equation}
is left adjoint to the restriction of scalars $\Hqe(S) \to \Hqe(R)$.
\end{corollary}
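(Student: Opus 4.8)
The plan is to deduce the bijection directly from the adjoint equivalence of Proposition~\ref{prop:scalar_leftextension} by rewriting both hom-sets as sets of isomorphism classes of quasi-functors. First I would fix h-flat resolutions $Q(\smallcat a) \to \smallcat a$ over $R$ and $Q_S(\smallcat b) \to \smallcat b$ over $S$; by definition $S \lotimes_R \smallcat a = S \otimes_R Q(\smallcat a)$, and since restriction of scalars preserves quasi-equivalences (cf. \eqref{eq:restriction_hqe}) the restricted morphism $Q_S(\smallcat b)_R \to \smallcat b_R$ is still a quasi-equivalence. Recalling that morphisms in $\Hqe$ are isomorphism classes of quasi-functors, i.e. that for an h-flat source one has $\Hqe(R)(\smallcat x, \smallcat y) \cong \Iso H^0(\hproj^\mathrm{rqr}_R(\smallcat x, \smallcat y))$ (Corollary~\ref{coroll: ind_rqr_firstargument} together with \cite{toen-morita, canonaco-stellari-internalhoms}), this reduces the left-hand side to
\[
\Hqe(R)(\smallcat a, \smallcat b_R) \cong \Iso H^0\bigl(\hproj^\mathrm{rqr}_R(Q(\smallcat a), Q_S(\smallcat b)_R)\bigr).
\]

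Next I would apply Proposition~\ref{prop:scalar_leftextension} with $\cat A = Q(\smallcat a)$ (h-flat over $R$) and $\cat B = Q_S(\smallcat b)$ (small and h-flat over $S$), obtaining an $R$-linear adjoint equivalence
\[
l \colon \hproj^{\mathrm{rqr}}_R(Q(\smallcat a), Q_S(\smallcat b)_R) \xrightarrow{\sim} \hproj_S^{\mathrm{rqr}}(S \otimes_R Q(\smallcat a), Q_S(\smallcat b))_R,
\]
and hence a bijection after applying $\Iso H^0$. Since the restriction functor $(-)_R$ alters neither the underlying objects nor the degree-$0$ closed morphisms modulo homotopy, $\Iso H^0$ is insensitive to it, so the target becomes $\Iso H^0(\hproj^{\mathrm{rqr}}_S(S \otimes_R Q(\smallcat a), Q_S(\smallcat b)))$. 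Unwinding $S \otimes_R Q(\smallcat a) = S \lotimes_R \smallcat a$, noting that $S \otimes_R Q(\smallcat a)$ (or $Q_S(\smallcat b)$) is h-flat over $S$ so the quasi-functor description again applies, and using $Q_S(\smallcat b) \cong \smallcat b$ in $\Hqe(S)$, I obtain
\[
\Iso H^0\bigl(\hproj^{\mathrm{rqr}}_S(S \lotimes_R \smallcat a, \smallcat b)\bigr) \cong \Hqe(S)(S \lotimes_R \smallcat a, \smallcat b),
\]
which is the claimed bijection.

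Finally I would check naturality in both variables, so as to upgrade the bijection to an adjunction and to identify $S \lotimes_R -$ as a functor $\Hqe(R) \to \Hqe(S)$ (its well-definedness on morphisms following from the fact that $S \otimes_R -$ sends quasi-equivalences of h-flat dg-categories to quasi-equivalences, independently of the chosen resolution). Naturality in $\smallcat a$ comes from the naturality of $l$ and $r$ in their first argument together with the functoriality of $S \otimes_R -$; naturality in $\smallcat b$ requires compatibility of $l, r$ with post-composition along a quasi-functor $\smallcat b \to \smallcat b'$ and the use of functorial resolutions, as in Corollary~\ref{coroll:coext_hqe_univproperty}. I expect this bookkeeping---threading naturality through the two resolutions and the restriction functor $(-)_R$---to be the only real obstacle, since all the homological substance is already contained in Proposition~\ref{prop:scalar_leftextension}.
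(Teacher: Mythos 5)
Your proof is correct and follows exactly the route the paper intends: the paper offers no separate argument for this corollary, treating it as the immediate consequence of Proposition~\ref{prop:scalar_leftextension} once one takes h-flat resolutions $Q(\smallcat a)$, $Q_S(\smallcat b)$ and identifies $\Hqe$-morphisms with isomorphism classes in $H^0$ of the quasi-functor categories (Corollary~\ref{coroll: ind_rqr_firstargument}, as in Corollary~\ref{coroll:coext_hqe_univproperty}). Your spelled-out version, including the observation that $\Iso H^0$ is unaffected by $(-)_R$ and the naturality bookkeeping via functorial resolutions, is a faithful elaboration of that same deduction.
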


\subsection{Change of rings and t-structures} \label{subsection:changeofrings_tstructures}
Working with t-structures, we resume the assumption that the dg-rings $R$ and $S$ are strictly concentrated in nonpositive degrees. 

The coextension of scalars is well behaved with respect with t-structures. If $\cat B$ is an h-flat pretriangulated $R$-linear dg-category with a t-structure $(\cat B_{\leq 0}, \cat B_{\geq 0})$, we endow the $S$-linear dg-category
\[
\cat B_{(S)} = \hproj_R^\mathrm{rqr}(S,\cat B)
\]
with the natural t-structure of Theorem \ref{thm:tstruct_quasifunctors}. We deduce from Proposition \ref{prop:tstruct_quasifunctors_heart}  an $H^0(S)$-linear equivalence:
\footnote{{To be completely rigorous, we should indeed check that the equivalence supplied by Proposition \ref{prop:tstruct_quasifunctors_heart} is $H^0(S)$-linear, if we endow $\Fun_{H^0(R)}(H^0(S),H^0(\cat B)^\heartsuit)$ with the natural $H^0(S)$-linear structure (cf. \cite[\S 4]{lowen-vandenbergh-deformations-abelian}). But this is straightforward.}}
\[
H^0(\cat B_{(S)})^\heartsuit \cong \Fun_{H^0(R)}(H^0(S),H^0(\cat B)^\heartsuit).
\]
We remark that the category $\Fun_{H^0(R)}(H^0(S),H^0(\cat B)^\heartsuit)$ is precisely the category of $H^0(S)$-linear objects $H^0(\cat B)^\heartsuit_{(H^0(S))}$ described in \cite[\S 4]{lowen-vandenbergh-deformations-abelian}. We may then write:
\begin{equation}
    H^0(\cat B_{(S)})^\heartsuit \cong H^0(\cat B)^\heartsuit_{(H^0(S))}.
\end{equation}
Proposition \ref{prop:tstruct_postcomposition} immediately yields the following result, which tells us that the coextension of scalars is actually stable under \emph{t-exact} quasi-equivalences.
 \begin{corollary} \label{coroll:tstruct_changeofrings}
Let $\cat B$ and $\cat B'$ be pretriangulated h-flat $R$-linear dg-categories endowed with t-structures, and let $F \colon \cat B \to \cat B'$ be a quasi-functor. If $F$ is left t-exact (right t-exact, t-exact), the induced quasi-functor
\begin{equation}
    F_* \colon \cat B_{(S)} \to \cat B'_{(S)}
\end{equation}
is left t-exact (right t-exact, t-exact). If $F$ is a t-exact quasi-equivalence, the same is true for $F_*$.
\end{corollary}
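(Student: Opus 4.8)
The plan is to recognize this corollary as the special case $\smallcat a = S$ of Proposition \ref{prop:tstruct_postcomposition}, where $S$ is regarded as an $R$-linear dg-category with a single object. First I would note that, by our standing assumption in this subsection, $S$ is strictly concentrated in nonpositive degrees, so in particular it is a small $R$-linear dg-category with cohomology concentrated in nonpositive degrees; moreover $\cat B$ is h-flat by hypothesis, so the disjunctive hypothesis ``$\smallcat a$ or $\cat B$ h-flat'' of Proposition \ref{prop:tstruct_postcomposition} is satisfied regardless of the h-flatness of $S$ itself. Recalling (Remark \ref{remark:notation_coext_rings} and the discussion preceding the corollary) that for h-flat $\cat B$ we have $\cat B_{(S)} = \hproj_R^\mathrm{rqr}(S,\cat B)$ endowed with exactly the t-structure of Theorem \ref{thm:tstruct_quasifunctors}, and likewise for $\cat B'_{(S)}$, the induced quasi-functor $F_* \colon \cat B_{(S)} \to \cat B'_{(S)}$ is precisely the postcomposition quasi-functor of Proposition \ref{prop:tstruct_postcomposition}. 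Hence if $F$ is left t-exact (resp.\ right t-exact, t-exact), so is $F_*$.

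One point I would make explicit is that t-exactness is a property of the underlying graded/homotopy functor $H^0(F_*)$, and is therefore insensitive to whether we regard $\cat B_{(S)}$ as an $S$-linear or merely an $R$-linear dg-category: the aisles of the t-structure are full subcategories determined on objects, and these coincide for the two enrichments. Consequently it suffices to verify the conclusion after restriction of scalars to $R$, which is exactly what Proposition \ref{prop:tstruct_postcomposition} delivers; the $S$-linear enrichment only refines the statement and plays no role in the argument.

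For the final assertion, suppose $F$ is a t-exact quasi-equivalence. Then $F_*$ is t-exact by the above, so it remains to see that $F_*$ is itself a quasi-equivalence. This follows from the functoriality of the internal hom in $\Hqe(R)$: the assignment $\RHom(S,-) \colon \Hqe(R) \to \Hqe(R)$ sends quasi-equivalences to quasi-equivalences, and $F_* = \RHom(S,F)$ is its value on the quasi-equivalence $F$. Equivalently, postcomposition with $F$ is the tensor-with-$F$ operation on bimodules, which is a quasi-equivalence on $\hproj^\mathrm{rqr}(S,-)$ precisely when $F$ is. I do not expect a genuine obstacle here; the only mild subtlety is confirming that this quasi-equivalence is \emph{$S$-linear}, but that is immediate, since the $S$-action on $\cat B_{(S)}$ and $\cat B'_{(S)}$ originates from the source object $S$ and is untouched by postcomposition with $F$, so $F_*$ respects it on the nose.
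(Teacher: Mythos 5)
Your proposal is correct and follows exactly the paper's route: the corollary is stated in the paper as an immediate consequence of Proposition \ref{prop:tstruct_postcomposition} applied with $\smallcat a = S$ (a one-object dg-category strictly concentrated in nonpositive degrees, with the h-flatness hypothesis supplied by $\cat B$), together with the standard fact that $\RHom_R(S,-)$, being a functor on $\Hqe(R)$, carries quasi-equivalences to quasi-equivalences. Your additional remarks on the enrichment-independence of t-exactness and on the $S$-linearity of $F_*$ are accurate and only make explicit what the paper leaves implicit.
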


Let us now consider the change of rings for the dg-category of dg-modules. From Proposition \ref{prop:ring_coext_hprojA}, we know that we have a natural quasi-equivalence
\[
\Psi \colon \hproj_R^\mathrm{rqr}(S, Q\hproj(\smallcat b)) \xrightarrow{\sim} \hproj_R(S,\smallcat b).
\]
for a given h-flat small dg-category $\smallcat b$. In addition, if $\smallcat b$ has cohomology concentrated in nonpositive degrees, we know that $\hproj(\smallcat b)$ has the natural t-structure; we can transport this t-structure on $Q\hproj(\smallcat b)$ via the h-flat resolution $Q \hproj(\smallcat b) \to \hproj(\smallcat b)$. We endow $\hproj_R^\mathrm{rqr}(S, Q\hproj(\smallcat b))$ with the t-structure of Theorem \ref{thm:tstruct_quasifunctors} and $\hproj_R(S,\smallcat b) = \hproj_R(\smallcat b \otimes_R \opp{S})$ with the canonical t-structure (both $S$ and $\smallcat b$ have cohomology concentrated in nonpositive degrees, so the same is true for the tensor product, see Lemma \ref{lemma:tstruct_tensorprod}). Then, we can check:
\begin{lemma} \label{lemma:tstruct_hproj_coextension}
Let $\smallcat b$ be an h-flat small dg-category. The $S$-linear quasi-equivalences \eqref{eq:ring_coext_hprojA} of Proposition \ref{prop:ring_coext_hprojA},
\[
 \hproj_R^\mathrm{rqr}(S, Q\hproj(\smallcat b)) \leftrightarrows \hproj_R(S,\smallcat b),
\]
are t-exact.
\end{lemma}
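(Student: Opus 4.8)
The plan is to make both t-structures completely explicit and then verify that the quasi-equivalence $\Psi$ of Proposition \ref{prop:ring_coext_hprojA} preserves each aisle; t-exactness of its quasi-inverse $\Phi$ will then follow formally. Indeed, once $\Psi$ is known to be t-exact, for $Y$ in an aisle of the target one uses the isomorphism $\Hom(\Phi(Y),Z) \cong \Hom(Y,\Psi(Z))$ valid for an equivalence, together with the $\Hom$-orthogonality description of the aisles and t-exactness of $\Psi$, to conclude that $\Phi(Y)$ lies in the corresponding aisle of the source. So it suffices to treat $\Psi$.

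First I would record the two descriptions. On the source side, by Remark \ref{remark:tstruct_quasifunctors_cohomologies} the t-structure of Theorem \ref{thm:tstruct_quasifunctors} on $\hproj_R^{\mathrm{rqr}}(S, Q\hproj(\smallcat b))$ is computed objectwise through the quasi-representing object: a quasi-functor $F \colon S \to Q\hproj(\smallcat b)$ lies in the aisle $\hproj_R^{\mathrm{rqr}}(S,Q\hproj(\smallcat b))_{\leq 0}$ (resp. $_{\geq 0}$) if and only if $\Phi_F(\diamondsuit) \in (Q\hproj(\smallcat b))_{\leq 0}$ (resp. $_{\geq 0}$). Transporting along the h-flat resolution $q \colon Q\hproj(\smallcat b) \to \hproj(\smallcat b)$ and writing $M = q(\Phi_F(\diamondsuit)) \in \dercomp(\smallcat b)$, this says exactly that $M$ lies in the natural aisle of $\dercomp(\smallcat b)$ of Proposition \ref{prop:dercomp_naturaltstruct}, i.e. $H^i(M^B) = 0$ for all $i > 0$ (resp. $i < 0$) and all $B \in \smallcat b$. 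On the target side, $\hproj_R(S,\smallcat b) = \hproj_R(\smallcat b \otimes_R \opp S)$ carries the natural t-structure, so a bimodule $N$ lies in $\hproj_R(S,\smallcat b)_{\leq 0}$ (resp. $_{\geq 0}$) precisely when $H^i(N_{\diamondsuit}^B) = 0$ for all $i > 0$ (resp. $i < 0$) and all $B$.

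The crucial step is a Yoneda computation identifying $\Psi(F)$ cohomologically with $M$. Since $F$ is right quasi-representable we have $F_{\diamondsuit} \cong Q\hproj(\smallcat b)(-, \Phi_F(\diamondsuit))$ in $\dercomp(Q\hproj(\smallcat b))$, so that
\[
\Psi(F)_{\diamondsuit}^B = F_{\diamondsuit}^{h(B)} \cong Q\hproj(\smallcat b)(h(B), \Phi_F(\diamondsuit)) \cong \hproj(\smallcat b)(\smallcat b(-,B), M) \cong M^B,
\]
where the middle isomorphism uses that $q$ is a quasi-equivalence (hence induces quasi-isomorphisms on hom-complexes and carries the Yoneda object $h(B)$ to the representable $\smallcat b(-,B)$) and the last is the Yoneda lemma in $\dercomp(\smallcat b)$. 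In particular $H^i(\Psi(F)_{\diamondsuit}^B) \cong H^i(M^B)$ for every $i$ and every $B$.

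Comparing the two aisle descriptions through this identification shows at once that $F \in \hproj_R^{\mathrm{rqr}}(S,Q\hproj(\smallcat b))_{\leq 0}$ iff $\Psi(F) \in \hproj_R(S,\smallcat b)_{\leq 0}$, and likewise for $_{\geq 0}$; hence $\Psi$ is t-exact, and by the formal argument above so is $\Phi$. The main obstacle I anticipate is purely one of bookkeeping: correctly threading the transport of the natural t-structure through the resolution $q$ and checking that the objectwise t-structure on the quasi-functor side matches, under $\Psi$, the cohomological-grading t-structure on the bimodule side. Once the displayed Yoneda identification $\Psi(F)_{\diamondsuit}^B \cong M^B$ is in place, the two aisles coincide degree by degree and there is nothing further to compute.
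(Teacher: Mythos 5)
Your proposal is correct and takes essentially the same route as the paper's proof: both reduce to checking t-exactness of $\Psi$ alone (you spell out via adjunction and aisle-orthogonality what the paper dismisses as standard for a quasi-inverse of a t-exact equivalence), and both conclude by the same Yoneda identification $H^i(\Psi(F)_{\diamondsuit}^B) \cong H^i(\Phi_F(\diamondsuit)^B)$ followed by a degree-by-degree comparison of the objectwise aisles of Theorem \ref{thm:tstruct_quasifunctors} with the canonical aisles of Proposition \ref{prop:dercomp_naturaltstruct}. The only divergence is the ``bookkeeping'' you defer: the paper handles it up front by replacing $S$ with an h-flat resolution $Q(S)$ and $Q\hproj(\smallcat b)$ with $\hproj(\smallcat b)$ through t-exact pre/post-composition (Propositions \ref{prop:tstruct_precomposition} and \ref{prop:tstruct_postcomposition}), so that the strict formula $\Psi(G)_{\diamondsuit}^B = G_{\diamondsuit}^{h(B)}$ of Proposition \ref{prop:hprojrqr_hproj} literally applies, whereas your version tacitly lifts $h(B)$ along the resolution $q$.
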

\begin{proof}
t-exactness does not depend on the base dg-ring, so we may take an h-flat resolution $Q(S) \to S$ of $S$ viewed as an $R$-linear dg-category, and observe that we have t-exact ($R$-linear) quasi-equivalences
\begin{align*}
    \hproj_R^\mathrm{rqr}(S, Q\hproj(\smallcat b)) & \cong \hproj_R^\mathrm{rqr}(Q(S),\hproj(\smallcat b)), \\
    \hproj_R(S,\smallcat b) &\cong \hproj_R(Q(S),\smallcat b).
\end{align*}
The first one is obtained by pre/post-composition with the h-flat resolutions, cf. Proposition \ref{prop:tstruct_precomposition} and Proposition \ref{prop:tstruct_postcomposition}; the second one is obtained by restricting along $Q(S) \otimes \opp{\smallcat b} \to S \otimes \smallcat b$ (restriction of dg-modules is always t-exact). Thanks to the above t-exact quasi-equivalences and recalling the proof of Proposition \ref{prop:ring_coext_hprojA}, it is then enough to prove that the adjoint quasi-equivalences \eqref{eq:hprojrqr_hproj_functors12} of Proposition \ref{prop:hprojrqr_hproj},
\[ 
    \Psi \colon \hproj_R^\mathrm{rqr}(Q(S), \hproj_R(\smallcat b)) \leftrightarrows \hproj_R(Q(S), \smallcat b)  : \Phi,
\]
are t-exact. Of course, it is enough to prove that $\Psi$ is t-exact. If $G \in \hproj_R^\mathrm{rqr}(Q(S), \hproj_R(\smallcat b))$, then by definition
\[
\Psi(G)_{\diamondsuit}^B = G^{h(B)}_{\diamondsuit}.
\]
We then have:
\begin{align*}
H^k(\Psi(G)_{\diamondsuit}^B) & \cong H^k(\hproj(\smallcat b)(h(B), \Phi_G(\diamondsuit))) \\
& \cong H^k(\Phi_G(\diamondsuit)^B). \qquad \text{(Yoneda)}
\end{align*}
From this, recalling how the aisles of $\hproj_R^\mathrm{rqr}(Q(S), \hproj_R(\smallcat b))$ and $\hproj_R(Q(S), \smallcat b)$ are defined (cf. Proposition \ref{prop:dercomp_naturaltstruct} and Theorem \ref{thm:tstruct_quasifunctors}), we immediately conclude.
\end{proof}


\subsubsection{Homotopically finitely presented objects} \label{subsubsec:hfp_obj}

We now put some finiteness conditions on $R$ and $S$.\footnote{We assume the reader to be familiar with the basic notions about coherent rings, finitely generated and finitely presented modules.}
\begin{definition} \label{def:homotopically_fp}
Let $B$ be a commutative dg-ring strictly concentrated in nonpositive degrees.
\begin{itemize}
\item We say that $B$ is \emph{homotopically coherent} if $H^0(B)$ is a coherent commutative ring, and if $H^i(R)$ is a finitely presented $H^0(R)$-module for all $i \in \mathbb Z$:
\[
H^i(B) \in \operatorname{mod}(H^0(B)).
\]

\item Let $M$ be a $B$-dg-module. We say that $M$ is \emph{homotopically finitely presented (hfp)} if $H^i(M)$ is a finitely presented $H^0(B)$-module for all $i \in \mathbb Z$:
\[
H^i(M) \in \operatorname{mod}(H^0(B)).
\]

We denote by $\hfp(B)$ the full dg-subcategory of $\compdg(B)$ spanned by the homotopically finitely presented dg-modules.
\end{itemize}
\end{definition}

In this part we assume that our dg-rings $R$ and $S$ are both homotopically coherent, and moreover that $S$ is homotopically finitely presented as an $R$-dg-module: $S \in \hfp(R)$.
\begin{definition}
  \label{def:hfp}
Let $\cat B$ be a pretriangulated dg-category with a t-structure $(\cat B_{\leq 0}, \cat B_{\geq 0})$. We define the full dg-subcategory $\hfp(\cat B)$ of \emph{homotopically finitely presented objects} of $\cat B$ as follows:
\begin{equation}
    \hfp(\cat B) = \{ B \in \cat B : H^n_t(B) \in \fp(H^0(\cat B)^\heartsuit) \quad \forall\, n \in \mathbb Z \},
\end{equation}
where $\fp(H^0(\cat B)^\heartsuit)$ is the full subcategory of $H^0(\cat B)^\heartsuit$ of finitely presented objects, cf. \cite[\S 2.2]{lowen-vandenbergh-deformations-abelian}.

Moreover, we set:
\begin{equation}
    \hfpbd(\cat B) = \{ B \in \hfp(\cat B) : \tau_{\leq -M} B = 0 \text{ and }  \tau_{\geq M} B = 0, \ \text{for some  $M > 0$}\}.
\end{equation}
\end{definition}
\begin{proposition} \label{prop:hfp_changeofrings}
Let $\cat B$ be an h-flat pretriangulated $R$-linear dg-category with a t-structure $(\cat B_{\leq 0}, \cat B_{\geq 0})$. Assume that filtered colimits in the heart $H^0(\cat B)^\heartsuit$ are exact. The natural ($S$-linear) inclusion functor
\[
\hfp(\hproj_R^\mathrm{rqr}(S, \cat B)) \hookrightarrow \hproj_R^\mathrm{rqr}(S, \cat B)
\]
factors through a ($S$-linear) quasi-equivalence
\begin{equation} \label{eq:hfp_changeofrings}
    \hfp(\hproj_R^\mathrm{rqr}(S, \cat B)) \xrightarrow{\sim} \hproj_R^\mathrm{rqr}(S, \hfp(\cat B)).
\end{equation}
\end{proposition}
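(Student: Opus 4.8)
The plan is to factor the inclusion through the post-composition quasi-functor induced by $\hfp(\cat B)\hookrightarrow\cat B$ and then to identify its essential image objectwise, reducing the problem to a purely abelian-categorical comparison of finitely presented objects over the coherent rings $H^0(R)$ and $H^0(S)$.

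First I would check that $\hfp(\cat B)$ is a pretriangulated full dg-subcategory of $\cat B$ inheriting the t-structure. Closure under shifts is immediate, and closure under cones follows from the long exact sequence of t-cohomologies together with the fact that, under the standing coherence hypotheses ($R,S$ homotopically coherent, $S\in\hfp(R)$) and the assumption that filtered colimits in $H^0(\cat B)^\heartsuit$ are exact, the finitely presented objects $\fp(H^0(\cat B)^\heartsuit)$ form an abelian subcategory closed under kernels, cokernels and extensions (this is the content of \cite[\S 2.2]{lowen-vandenbergh-deformations-abelian}); the truncation functors visibly preserve $\hfp(\cat B)$. Granting this, Theorem \ref{thm:tstruct_quasifunctors} endows $\hproj_R^\mathrm{rqr}(S,\hfp(\cat B))$ with a t-structure, and the t-exact, quasi-fully faithful inclusion $i\colon\hfp(\cat B)\hookrightarrow\cat B$ induces, by Proposition \ref{prop:tstruct_postcomposition}, a t-exact quasi-functor
\[
i_*\colon \hproj_R^\mathrm{rqr}(S,\hfp(\cat B)) \to \hproj_R^\mathrm{rqr}(S,\cat B),
\]
which is again quasi-fully faithful because $\hfp(\cat B)$ is a full dg-subcategory of $\cat B$, so the relevant hom-complexes of quasi-functors agree. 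This provides the desired factorization, and it remains only to show that the essential image of $H^0(i_*)$ is precisely $\hfp(\hproj_R^\mathrm{rqr}(S,\cat B))$.

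Next I would compute this essential image objectwise. Since $S$ has a single object $\diamondsuit$, a quasi-functor $F$ lies in the image of $i_*$ exactly when $\Phi_F(\diamondsuit)\in\hfp(\cat B)$, i.e.\ when $H^n_t(\Phi_F(\diamondsuit))\in\fp(H^0(\cat B)^\heartsuit)$ for all $n$. On the other hand $F\in\hfp(\hproj_R^\mathrm{rqr}(S,\cat B))$ means $H^n_t(F)\in\fp\bigl(H^0(\cat B)^\heartsuit_{(H^0(S))}\bigr)$ for all $n$, using the identification of the heart of $\hproj_R^\mathrm{rqr}(S,\cat B)$ with $H^0(\cat B)^\heartsuit_{(H^0(S))}$ from \S\ref{subsection:changeofrings_tstructures}. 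By Remark \ref{remark:tstruct_quasifunctors_cohomologies} the t-cohomologies are computed objectwise, so $H^n_t(F)$ is the $H^0(S)$-linear object of $H^0(\cat B)^\heartsuit$ whose underlying object is $H^n_t(\Phi_F(\diamondsuit))$. Thus the two membership conditions coincide as soon as one proves the heart-level comparison: an $H^0(S)$-linear object $M$ of $H^0(\cat B)^\heartsuit$ is finitely presented in $H^0(\cat B)^\heartsuit_{(H^0(S))}$ if and only if its underlying object is finitely presented in $H^0(\cat B)^\heartsuit$.

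The main obstacle is this heart-level statement, which is exactly where the finiteness of $R\to S$ is used. I would prove it via the free/forgetful adjunction between $H^0(\cat B)^\heartsuit$ and its category of $H^0(S)$-linear objects, together with the facts that $H^0(S)$ is coherent and finitely presented as an $H^0(R)$-module. For the ``if'' direction, a finite $H^0(R)$-presentation of the underlying object yields a finite $H^0(S)$-generating set whose relation object is finitely generated by coherence of $H^0(S)$; for the ``only if'' direction, a finite $H^0(S)$-presentation exhibits $M$ as a cokernel of a map between finite sums of copies of $H^0(S)$, each finitely presented over $H^0(R)$ since $S\in\hfp(R)$, whence $M$ is finitely presented over $H^0(R)$ by closure of $\fp(H^0(\cat B)^\heartsuit)$ under cokernels. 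This is precisely the comparison of finitely presented linear objects established in \cite[\S 4]{lowen-vandenbergh-deformations-abelian}, which I would cite rather than reprove, the exactness of filtered colimits in the heart guaranteeing that these notions of finite presentation are well-behaved. Combining the objectwise reduction with this comparison identifies the essential image of $H^0(i_*)$ with $\hfp(\hproj_R^\mathrm{rqr}(S,\cat B))$, completing the proof.
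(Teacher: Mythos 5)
Your core argument is the same as the paper's: identify $\hproj_R^\mathrm{rqr}(S,\hfp(\cat B))$ with the full dg-subcategory of $\hproj_R^\mathrm{rqr}(S,\cat B)$ of quasi-functors whose quasi-essential image lands in $\hfp(\cat B)$, reduce both membership conditions to the single object $\diamondsuit$ using the fact that t-cohomology is computed objectwise (Remark \ref{remark:tstruc t_quasifunctors_cohomologies} and Proposition \ref{prop:tstruct_quasifunctors_heart}), and then conclude via the identification $\fp\bigl(\Fun_{H^0(R)}(H^0(S),H^0(\cat B)^\heartsuit)\bigr)\cong\Fun_{H^0(R)}(H^0(S),\fp(H^0(\cat B)^\heartsuit))$ of \cite[Proposition 4.6]{lowen-vandenbergh-deformations-abelian}. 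That part of your proposal is correct and matches the paper step for step.

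However, your opening paragraph contains a claim that is both unnecessary and unjustified: that $\hfp(\cat B)$ is automatically pretriangulated, with closure under cones following because $\fp(H^0(\cat B)^\heartsuit)$ is ``closed under kernels, cokernels and extensions.'' Under the stated hypotheses (exact filtered colimits in the heart, homotopical coherence of $R$ and $S$), finitely presented objects are closed under cokernels and extensions, but \emph{not} under kernels in general; coherence of the base rings says nothing about coherence-type properties of the heart $H^0(\cat B)^\heartsuit$. In the long exact sequence of t-cohomologies for a cone one hits exactly such a kernel term, so $\hfp(\cat B)$ need not be closed under cones. This is precisely why the paper makes ``$\hfp(\cat B)$ is pretriangulated'' an explicit \emph{hypothesis} in Lemma \ref{lemma:hfp_tstruct_restrict} (where t-structures on $\hfp(\cat B)$ and t-exactness enter), and why Proposition \ref{prop:hfp_changeofrings} itself asserts only a quasi-equivalence. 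Fortunately your error is self-contained: the factorization through $i_*$, its quasi-full faithfulness, and the identification of the quasi-essential image never use a t-structure on $\hproj_R^\mathrm{rqr}(S,\hfp(\cat B))$ or t-exactness of $i_*$, so deleting the pretriangulatedness discussion leaves a correct proof, essentially identical to the paper's.
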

\begin{proof}
We may identify $\hproj_R^\mathrm{rqr}(S, \hfp(\cat B))$ with the full dg-subcategory of $\hproj_R^\mathrm{rqr}(S,\cat B)$ of quasi-functors whose essential image lies in $\hfp(\cat B)$. Hence, we only have to check that the quasi-essential image of $\hfp(\hproj_R^\mathrm{rqr}(S, \cat B)) \hookrightarrow \hproj_R^\mathrm{rqr}(S, \cat B)$ is precisely $\hproj_R^\mathrm{rqr}(S, \hfp(\cat B))$.

Let $F \in \hfp(\hproj_R^\mathrm{rqr}(S, \cat B))$. We have a quasi-isomorphism of right $\cat B$-dg-modules:
\[
F_{\diamondsuit} \cong \cat B(-,\Phi_F(\diamondsuit)),
\]
for some $\Phi_F(\diamondsuit) \in \cat B$. According to Proposition \ref{prop:tstruct_quasifunctors_heart} and the discussion at the beginning of this \S \ref{subsection:changeofrings_tstructures}, we identify
\[
H^0(\hproj_R^\mathrm{rqr}(S, \cat B))^\heartsuit \cong \Fun_{H^0(R)}(H^0(S),H^0(\cat B)^\heartsuit).
\]
as $H^0(S)$-linear categories. Thanks to \cite[Proposition 4.6]{lowen-vandenbergh-deformations-abelian}, we may further identify
\[
\fp(\Fun_{H^0(R)}(H^0(S), H^0(\cat B)^\heartsuit)) \cong \Fun_{H^0(R)}(H^0(S), \fp(H^0(\cat B)^\heartsuit))
\]
Then, for all $i$, we have by assumption that $H^i_t(F) \in \Fun_{H^0(R)}(H^0(S), \fp(H^0(\cat B)^\heartsuit))$. Thanks to the above identifications and recalling how the t-structure and the t-cohomologies on $\hproj_R^\mathrm{rqr}(S,\cat B)$ are defined (Remark \ref{remark:tstruct_quasifunctors_cohomologies}), we conclude that
\[
H^i_t(\Phi_F(\diamondsuit)) = H^i_t(F)(\diamondsuit) \in \fp(H^0(\cat B))^\heartsuit,
\]
This proves that the quasi-essential image of $F$ lies in $\hfp(\cat B)$. 

On the other hand, let $F \in \hproj_R^\mathrm{rqr}(S, \hfp(\cat B))$. Viewing $F$ as a quasi-functor $S \to \cat B$ with quasi-essential image in $\hfp(\cat B)$, we have a quasi-isomorphism of right $\cat B$-dg-modules
\[
F_{\diamondsuit} \cong \cat B(-,\Phi_F(\diamondsuit)),
\]
where $\Phi_F(\diamondsuit) \in \hfp(\cat B)$.
We want to prove that
\[
H^i_t(F) \in \fp(\Fun_{H^0(R)}(H^0(S), H^0(\cat B)^\heartsuit)) \cong \Fun_{H^0(R)}(H^0(S), \fp(H^0(\cat B)^\heartsuit),
\]
where we used \cite[Proposition 4.6]{lowen-vandenbergh-deformations-abelian} again. This amounts to checking that
\[
H^i_t(F)(\diamondsuit) = H^i_t(\Phi_F(\diamondsuit)) \in \fp(H^0(\cat B))^\heartsuit,
\]
but this holds by assumption.
\end{proof}

\begin{corollary} \label{coroll:hfpbd_changeofrings}
Let $\cat B$ be an h-flat pretriangulated $R$-linear dg-category with a t-structure $(\cat B_{\leq 0}, \cat B_{\geq 0})$. Assume that filtered colimits in the heart $H^0(\cat B)^\heartsuit$ are exact. The ($S$-linear) quasi-equivalence \eqref{eq:hfp_changeofrings} of Proposition \ref{prop:hfp_changeofrings} restricts to an $S$-linear quasi-equivalence
\begin{equation} \label{eq:hfpbd_changeofrings}
    \hfpbd(\hproj_R^\mathrm{rqr}(S, \cat B)) \xrightarrow{\sim} \hproj_R^\mathrm{rqr}(S, \hfpbd(\cat B)).
\end{equation}
\end{corollary}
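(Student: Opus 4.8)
The plan is to leverage Proposition \ref{prop:hfp_changeofrings}, which we have just established, and simply check that the quasi-equivalence \eqref{eq:hfp_changeofrings} respects the boundedness condition cutting out $\hfpbd$ from $\hfp$ on both sides. Concretely, recall that $\hfpbd(\cat B)$ consists of those $B \in \hfp(\cat B)$ for which there is some $M>0$ with $\tau_{\leq -M}B = 0$ and $\tau_{\geq M}B = 0$; the same definition applied to the quasi-functor category carves out $\hfpbd(\hproj_R^\mathrm{rqr}(S,\cat B))$ from $\hfp(\hproj_R^\mathrm{rqr}(S,\cat B))$. So the entire content of the corollary is that, under the equivalence of Proposition \ref{prop:hfp_changeofrings}, an object $F$ is bounded (in this t-structural sense) if and only if its quasi-essential image is pointwise bounded in $\cat B$.

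First I would recall, from Remark \ref{remark:tstruct_quasifunctors_cohomologies}, that the t-cohomologies of a quasi-functor $F \colon S \to \cat B$ are computed objectwise: $H^n_t(F)(\diamondsuit) = H^n_t(\Phi_F(\diamondsuit))$. Since $S$ has a single object $\diamondsuit$, the truncations $\tau_{\leq -M}$ and $\tau_{\geq M}$ of the t-structure on $\hproj_R^\mathrm{rqr}(S,\cat B)$ are likewise computed by postcomposition with the corresponding truncations on $\cat B$ (this is exactly the "objectwise" description of the t-structure in Theorem \ref{thm:tstruct_quasifunctors} and Remark \ref{remark:tstruct_quasifunctors_cohomologies}). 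Hence $\tau_{\leq -M}F = 0$ in $\hproj_R^\mathrm{rqr}(S,\cat B)$ if and only if $\tau_{\leq -M}\Phi_F(\diamondsuit) = 0$ in $\cat B$, and symmetrically for $\tau_{\geq M}$. This is the crux: a quasi-functor vanishes precisely when its (single) value object vanishes, because the t-structure and its truncation functors are defined pointwise.

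Putting this together, I would argue that the equivalence \eqref{eq:hfp_changeofrings} carries $F \in \hfpbd(\hproj_R^\mathrm{rqr}(S,\cat B))$ to a quasi-functor with quasi-essential image in $\hfpbd(\cat B)$, and conversely: if $F$ lands (quasi-essentially) in $\hfpbd(\cat B)$, then for its unique value $\Phi_F(\diamondsuit) \in \hfpbd(\cat B)$ there is an $M$ witnessing boundedness, and the objectwise description of truncations transports this $M$ back to witness $\tau_{\leq -M}F = 0 = \tau_{\geq M}F$. Since \eqref{eq:hfp_changeofrings} is already a quasi-equivalence of $S$-linear dg-categories onto $\hproj_R^\mathrm{rqr}(S,\hfp(\cat B))$, restricting it to these full subcategories automatically yields a quasi-equivalence onto $\hproj_R^\mathrm{rqr}(S,\hfpbd(\cat B))$, which is the claim. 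I do not expect any serious obstacle here: the only mild subtlety is confirming that the single bound $M$ suffices uniformly — but this is automatic because there is a single object $\diamondsuit$, so pointwise boundedness over the domain is just boundedness of one object. The proof is therefore essentially a bookkeeping consequence of the objectwise nature of the t-structure.

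\begin{proof}
By Proposition \ref{prop:hfp_changeofrings}, the quasi-equivalence \eqref{eq:hfp_changeofrings} identifies $\hfp(\hproj_R^\mathrm{rqr}(S,\cat B))$ with the full dg-subcategory $\hproj_R^\mathrm{rqr}(S,\hfp(\cat B))$ of quasi-functors whose quasi-essential image lies in $\hfp(\cat B)$. It therefore suffices to check that this equivalence matches up the boundedness condition defining $\hfpbd$ on both sides.

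Recall from Remark \ref{remark:tstruct_quasifunctors_cohomologies} that the t-structure on $\hproj_R^\mathrm{rqr}(S,\cat B)$ is computed objectwise, with truncation functors given by postcomposition with the truncations $\tau_{\leq n}$, $\tau_{\geq n}$ of $\cat B$. Since $S$ has a single object $\diamondsuit$, for a quasi-functor $F \colon S \to \cat B$ with $F_{\diamondsuit} \cong \cat B(-,\Phi_F(\diamondsuit))$ we have
\[
\tau_{\leq -M}F = 0 \iff \tau_{\leq -M}\Phi_F(\diamondsuit) = 0, \qquad \tau_{\geq M}F = 0 \iff \tau_{\geq M}\Phi_F(\diamondsuit) = 0.
\]
Hence $F \in \hfpbd(\hproj_R^\mathrm{rqr}(S,\cat B))$, i.e.\ $F \in \hfp(\hproj_R^\mathrm{rqr}(S,\cat B))$ together with an $M>0$ satisfying $\tau_{\leq -M}F = 0$ and $\tau_{\geq M}F = 0$, if and only if $\Phi_F(\diamondsuit) \in \hfp(\cat B)$ and there is some $M>0$ with $\tau_{\leq -M}\Phi_F(\diamondsuit) = 0$ and $\tau_{\geq M}\Phi_F(\diamondsuit) = 0$, that is, if and only if $\Phi_F(\diamondsuit) \in \hfpbd(\cat B)$.

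Therefore the equivalence \eqref{eq:hfp_changeofrings} restricts to a bijection (up to quasi-isomorphism) between $\hfpbd(\hproj_R^\mathrm{rqr}(S,\cat B))$ and the full dg-subcategory $\hproj_R^\mathrm{rqr}(S,\hfpbd(\cat B))$ of quasi-functors with quasi-essential image in $\hfpbd(\cat B)$. Being the restriction of a quasi-equivalence to full dg-subcategories whose objects correspond to each other, \eqref{eq:hfpbd_changeofrings} is again an $S$-linear quasi-equivalence.
\end{proof}
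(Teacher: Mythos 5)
Your proof is correct and takes exactly the same route as the paper: the paper's (one-sentence) proof likewise reduces everything to the fact, recorded in Remark \ref{remark:tstruct_quasifunctors_cohomologies}, that the truncation functors on $\hproj_R^\mathrm{rqr}(S,\cat B)$ are computed objectwise via the truncations of $\cat B$. Your write-up merely spells out the ensuing bookkeeping (vanishing of $F$ is equivalent to vanishing of $\Phi_F(\diamondsuit)$, since $S$ has a single object), which is implicit in the paper's argument.
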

\begin{proof}
The result follows by recalling that the truncation functors of the t-structure $\hproj_R^\mathrm{rqr}(S, \cat B)$ are defined ``objectwise'' using the truncation functors of $\cat B$ (cf Remark \ref{remark:tstruct_quasifunctors_cohomologies}).
\end{proof}
\begin{remark} \label{remark:hfp_changeofrings_inverse}
From the proof of Proposition \ref{prop:hfp_changeofrings}, we see that a quasi inverse of \eqref{eq:hfp_changeofrings} is given by the natural quasi-functor
\begin{equation}
\hproj_R^\mathrm{rqr}(S, \hfp(\cat B)) \xrightarrow{\sim} \hfp(\hproj_R^\mathrm{rqr}(S, \cat B))
\end{equation}
induced by postcomposition with the inclusion $\hfp(\cat B) \hookrightarrow \cat B$.

Analogously, a quasi-inverse of \eqref{eq:hfpbd_changeofrings} is given by the quasi-functor
\begin{equation}
\hproj_R^\mathrm{rqr}(S, \hfpbd(\cat B)) \xrightarrow{\sim} \hfpbd(\hproj_R^\mathrm{rqr}(S, \cat B))
\end{equation}
induced by postcomposition with the inclusion $\hfpbd(\cat B) \hookrightarrow \cat B$.
\end{remark}
In some interesting cases, the t-structure on a dg-category $\cat B$ restricts to t-structures on $\hfp(\cat B)$ and $\hfpbd(\cat B)$, and the same is true for the coextension of scalars dg-category.
\begin{lemma} \label{lemma:hfp_tstruct_restrict}
Let $\cat B$ be an h-flat $R$-linear pretriangulated dg-category endowed with a t-structure, such that filtered colimits in the heart $H^0(\cat B)^\heartsuit$ are exact. Moreover, assume that the full dg-subcategory $\hfp(\cat B)$ is pretriangulated. Then:
\begin{enumerate}
    \item The t-structure on $\cat B$ restricts to t-structures on $\hfp(\cat B)$ and $\hfpbd(\cat B)$ so that the inclusions 
    \[
    \hfpbd(\cat B) \hookrightarrow \hfp(\cat B) \hookrightarrow \cat B
    \]
    are t-exact. \label{enum:t-struct_hfp_pretr}
    \item \label{enum:hfp_functoriality_texact} If $F \colon \cat B \to \cat B'$ is a t-exact quasi-equivalence between dg-categories both satisfying the above assumptions, then $F$ restricts to t-exact quasi-equivalences
    \begin{align*}
    \hfp(F) \colon \hfp(\cat B) & \to \hfp(\cat B'), \\
    \hfpbd(F) \colon \hfpbd(\cat B) & \to \hfpbd(\cat B').
    \end{align*}
    \item  \label{enum:hfp_changeofrings_texact} The coextension of scalars dg-category $\hproj_R^\mathrm{rqr}(S, \hfp(\cat B))$ is pretriangulated and is endowed with the t-structure of Theorem \ref{thm:tstruct_quasifunctors}. The inclusions $\hfp(\cat B) \hookrightarrow \cat B$ and $\hfpbd(\cat B) \hookrightarrow \cat B$ induce t-exact $S$-linear quasi functors
    \begin{align*}
    \hproj_R^\mathrm{rqr}(S, \hfp(\cat B)) & \to \hproj_R^\mathrm{rqr}(S, \cat B), \\
    \hproj_R^\mathrm{rqr}(S, \hfpbd(\cat B)) & \to \hproj_R^\mathrm{rqr}(S, \cat B).
    \end{align*}
    which give t-exact $S$-linear quasi-equivalences
    \begin{align*}
    \hproj_R^\mathrm{rqr}(S, \hfp(\cat B)) & \xrightarrow{\sim} \hfp(\hproj_R^\mathrm{rqr}(S, \cat B)), \\
    \hproj_R^\mathrm{rqr}(S, \hfpbd(\cat B)) & \xrightarrow{\sim} \hfpbd(\hproj_R^\mathrm{rqr}(S, \cat B)).
    \end{align*}
\end{enumerate} 
\end{lemma}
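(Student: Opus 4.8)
The plan is to dispatch the three items in turn, the first two being formal consequences of the definition of $\hfp$ and the last an assembly of the change-of-rings results. For part (1) I would invoke the standard criterion that a t-structure on a pretriangulated $\cat B$ restricts to a pretriangulated full subcategory $\cat C \subseteq \cat B$ precisely when $H^0(\cat C)$ is stable under the truncation functors $\tau_{\leq 0}$ and $\tau_{\geq 0}$, in which case the aisles of $\cat C$ are $\cat C \cap \cat B_{\leq n}$ and $\cat C \cap \cat B_{\geq n}$ and the inclusion is automatically t-exact. For $\cat C = \hfp(\cat B)$, pretriangulated by hypothesis, stability is immediate from Definition \ref{def:hfp}: for $B \in \hfp(\cat B)$ one has $H^n_t(\tau_{\leq 0} B) = H^n_t(B)$ for $n \leq 0$ and $0$ for $n > 0$, and dually for $\tau_{\geq 0}$, so the nonzero t-cohomologies remain finitely presented. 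For $\hfpbd(\cat B)$ I first note it is the intersection of $\hfp(\cat B)$ with the pretriangulated subcategory of bounded objects, hence pretriangulated, and the vanishing conditions $\tau_{\leq -M}(-) = 0$, $\tau_{\geq M}(-) = 0$ are preserved by $\tau_{\leq 0}$ and $\tau_{\geq 0}$ because $M > 0$.

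For part (2), since $F$ is a t-exact quasi-equivalence, $H^0(F)$ is a t-exact equivalence that commutes with every $H^n_t$ and restricts to an equivalence of hearts $H^0(\cat B)^\heartsuit \xrightarrow{\sim} H^0(\cat B')^\heartsuit$. An equivalence of abelian categories preserves and reflects finite presentation, so $H^n_t(B) \in \fp(H^0(\cat B)^\heartsuit)$ if and only if $H^n_t(F(B)) \in \fp(H^0(\cat B')^\heartsuit)$; hence $F$ restricts to a t-exact quasi-functor $\hfp(\cat B) \to \hfp(\cat B')$ which remains quasi-fully faithful and is quasi-essentially surjective, as $F$ both preserves and detects the $\hfp$ condition. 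The boundedness vanishing is preserved because $F$ is t-exact, giving the $\hfpbd$ statement in the same way.

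Part (3) is the substantive one. Because $\hfp(\cat B)$ and $\hfpbd(\cat B)$ are pretriangulated with a t-structure by part (1) and are h-flat (sharing the hom-complexes of the h-flat $\cat B$), Theorem \ref{thm:tstruct_quasifunctors} applies and endows $\hproj_R^\mathrm{rqr}(S, \hfp(\cat B))$ and $\hproj_R^\mathrm{rqr}(S, \hfpbd(\cat B))$ with their t-structures; being quasi-functor dg-categories into pretriangulated targets, they are pretriangulated. The inclusions of part (1) are t-exact, so Proposition \ref{prop:tstruct_postcomposition} makes the induced postcomposition quasi-functors $\hproj_R^\mathrm{rqr}(S, \hfp(\cat B)) \to \hproj_R^\mathrm{rqr}(S, \cat B)$ (and the $\hfpbd$ analogue) t-exact. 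By Proposition \ref{prop:hfp_changeofrings} together with Remark \ref{remark:hfp_changeofrings_inverse} (and Corollary \ref{coroll:hfpbd_changeofrings} for the bounded case) these are exactly the postcomposition quasi-functors realizing the quasi-equivalences onto $\hfp(\hproj_R^\mathrm{rqr}(S, \cat B))$ and $\hfpbd(\hproj_R^\mathrm{rqr}(S, \cat B))$. Finally, the truncations of $\hproj_R^\mathrm{rqr}(S, \cat B)$ are computed objectwise (Remark \ref{remark:tstruct_quasifunctors_cohomologies}), so the argument of part (1) applies verbatim to show that $\hfp(\hproj_R^\mathrm{rqr}(S, \cat B))$ is stable under them and inherits the restricted t-structure; a t-exact quasi-functor that is a quasi-equivalence onto this subcategory is a t-exact quasi-equivalence, which is the claim.

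The main obstacle is the compatibility buried in part (3): one must be sure that the t-structure supplied by Theorem \ref{thm:tstruct_quasifunctors} on $\hproj_R^\mathrm{rqr}(S, \hfp(\cat B))$ matches, under the quasi-equivalence of Proposition \ref{prop:hfp_changeofrings}, the t-structure that $\hfp(\hproj_R^\mathrm{rqr}(S, \cat B))$ inherits from $\hproj_R^\mathrm{rqr}(S, \cat B)$. This reduces to two identifications: that the quasi-equivalence of Proposition \ref{prop:hfp_changeofrings} is literally the one induced by postcomposition with $\hfp(\cat B) \hookrightarrow \cat B$, so that Proposition \ref{prop:tstruct_postcomposition} is applicable, and that both t-structures are the objectwise ones of Remark \ref{remark:tstruct_quasifunctors_cohomologies}. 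Once these are pinned down the t-exactness is automatic, so the care lies entirely in keeping the various objectwise descriptions aligned rather than in any new computation.
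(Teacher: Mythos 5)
Your proof is correct and follows essentially the same route as the paper's: part (1) by the same direct check that truncations preserve finitely presented t-cohomology, part (2) via the induced equivalence of hearts preserving finite presentation, and part (3) by assembling Proposition \ref{prop:tstruct_postcomposition}, Proposition \ref{prop:hfp_changeofrings}, Corollary \ref{coroll:hfpbd_changeofrings} and Remark \ref{remark:hfp_changeofrings_inverse}, exactly the references the paper cites. Your added discussion pinning down that the quasi-equivalence of Proposition \ref{prop:hfp_changeofrings} is the postcomposition functor and that both t-structures are the objectwise ones is a legitimate filling-in of details the paper leaves implicit, not a different argument.
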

\begin{proof}
For \eqref{enum:t-struct_hfp_pretr}, let $X \in \hfp(\cat B)$, and consider $\tau_{\leq 0} X$. If $k>0$ we have $H^k(\tau_{\leq 0} X) = 0$ and if $k\leq 0$ we have $H^k(\tau_{\leq 0} X)= H^k(X)$. In any case, $H^k(\tau_{\leq 0} X) \in \fp(H^0(\cat B)^\heartsuit)$ for all $k$, and the same is true for $\tau_{\geq 0} X$ for similar reasons. This proves that the t-structure on $\cat B$ restricts to $\hfp(\cat B)$, and the inclusion $\hfp(\cat B) \hookrightarrow \cat B$ is t-exact. Moreover, the full subcategory $\hfpbd(\cat B)$ of $\hfp(\cat B)$ is clearly pretriangulated, and the t-structure of $\hfp(\cat B)$ is easily seen to restrict to $\hfpbd(\cat B)$.

For part \eqref{enum:hfp_functoriality_texact}, we first observe that $F$ induces an exact equivalence $F^\heartsuit \colon H^0(\cat B)^\heartsuit \xrightarrow{\sim} H^0(\cat B')^\heartsuit$. Hence, it also induces an equivalence $\fp(H^0(\cat B)^\heartsuit ) \xrightarrow{\sim} \fp(H^0(\cat B')^\heartsuit)$. The claim now follows using that $F$ commutes with truncations and t-cohomologies, being t-exact.

Finally, part \eqref{enum:hfp_changeofrings_texact} follows from Proposition \ref{prop:tstruct_postcomposition}, Proposition \ref{prop:hfp_changeofrings}, Corollary \ref{coroll:hfpbd_changeofrings} and Remark \ref{remark:hfp_changeofrings_inverse}.
\end{proof}

\begin{example} \label{example:hlc_hfp}
Let $\smallcat q$ be a small homotopically locally coherent $R$-linear dg-category (see \cite[Definition 5.8]{genovese-lowen-vdb-dginj}), which we assume to be h-flat. We know from \cite[Theorem 5.9]{genovese-lowen-vdb-dginj} that the natural t-structure on $\hproj(\smallcat q)$ (whose heart $\Mod(H^0(\smallcat q))$ certainly has exact filtered colimits) restricts to the full pretriangulated dg-subcategory $\hproj(\smallcat q)^\mathrm{hfp} = \hfp(\hproj(\smallcat q))$. From Lemma \ref{lemma:tstruct_hproj_coextension} we get a t-exact $S$-linear quasi-equivalence:
\[
\hproj_R^\mathrm{rqr}(S, Q \hproj(\smallcat q)) \cong \hproj_R(\smallcat q \otimes_R  \opp{S}).
\]
Thanks to Lemma \ref{lemma:hfp_tstruct_restrict} \eqref{enum:hfp_functoriality_texact}, there is an induced $S$-linear quasi-equivalence:
\[
\hfp(\hproj_R^\mathrm{rqr}(S, Q \hproj(\smallcat q)) \cong \hproj_R(\smallcat q \otimes_R  \opp{S})^\mathrm{hfp}.
\]
Now, we may combine this with Lemma \ref{lemma:hfp_tstruct_restrict} \eqref{enum:hfp_changeofrings_texact} and deduce $S$-linear quasi-equivalences:
\begin{equation}
\begin{split}
    \hproj_R^\mathrm{rqr}(S,Q\hproj(\smallcat q)^\mathrm{hfp}) & \cong \hfp(\hproj_R^\mathrm{rqr}(S, Q \hproj(\smallcat q)) \\
    & \cong \hproj_R(\smallcat q \otimes_R  \opp{S})^\mathrm{hfp}.
\end{split}
\end{equation}
\end{example}

\subsubsection{Coextension of scalars and duality} 
We now study the compatibility of the coextension of scalars with the duality of quasi-functors \eqref{eq:quasifunctors_duality}.
\begin{proposition} \label{prop:quasifunctors_duality_changeofrings_tstruct}
Let $\cat B$ be an $R$-linear dg-category with h-flat complexes of morphisms. The duality \eqref{eq:quasifunctors_duality} induces an $S$-linear quasi-equivalence
\begin{equation} \label{eq:duality_compatible_coextension}
    \dual = \dual_{S,\cat B} \colon \hproj_R^\mathrm{rqr}(S,\cat B) \xrightarrow{\sim} \opp{\hproj_R^\mathrm{rqr}(S,\opp{\cat B})}.
\end{equation}
If $\cat B$ is pretriangulated and has a t-structure, then the above \eqref{eq:duality_compatible_coextension} is also t-exact.
\end{proposition}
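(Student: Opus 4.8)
The plan is to deduce the statement from the general duality of Proposition~\ref{prop:duality_quasifunctors} by specializing to $\cat A = S$, then to promote the resulting $R$-linear quasi-equivalence to an $S$-linear one, and finally to read off t-exactness from Proposition~\ref{prop:duality_quasifunctor_texact}. Since $S$ is commutative, the one-object dg-category $S$ satisfies $\opp{S} = S$. The duality dg-functor $\dual_{S, \cat B}\colon \compdg_R(S, \cat B) \to \opp{\compdg_R(\opp{S}, \opp{\cat B})}$ is given on components by $\dual_{S, \cat B}(F)_\diamondsuit^B = \compdg_R(\cat B)(F_\diamondsuit, h_B)$, and by Proposition~\ref{prop:duality_quasifunctors} it restricts to a quasi-equivalence on right quasi-representable h-projective bimodules; the requirement that one of the two factors be h-flat is met because $\cat B$ is h-flat by assumption. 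Using $\opp{S}=S$, this is exactly the $R$-linear quasi-equivalence underlying~\eqref{eq:duality_compatible_coextension}.

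Next I would check that $\dual_{S, \cat B}$ is $S$-linear for the $S$-linear structures of Lemma~\ref{lemma:bimodules_tensor_S_Slinear} on the source and on the opposite target; it suffices to verify this for the genuine dg-functor on $\compdg_R$, as the $S$-linear structure on $\hproj_R^\mathrm{rqr}$ is inherited. The functor sends a morphism $\varphi\colon X \to Y$ to precomposition with its component $\varphi_\diamondsuit$. Recall from Lemma~\ref{lemma:bimodules_tensor_S_Slinear} that the $S$-action reads $(s\varphi)_\diamondsuit^B = Y_s^{1_B}\varphi_\diamondsuit^B = (-1)^{|s||\varphi|}\varphi_\diamondsuit^B X_s^{1_B}$, the second equality being precisely the naturality of $\varphi$ with respect to the endomorphism $s\colon \diamondsuit \to \diamondsuit$ of $S$. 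Dualizing turns this naturality into the assertion that precomposition by $\varphi_\diamondsuit$ intertwines, up to the Koszul sign, the $S$-actions on $\dual(X)$ and $\dual(Y)$ induced by the endomorphisms of $\diamondsuit$ in $\opp{S}=S$; these actions are precomposition by $X_s$, respectively $Y_s$ (see Remark~\ref{remark:coextension_dgmS}). A short but careful sign-chase then yields $\dual_{S, \cat B}(s\varphi) = s\,\dual_{S, \cat B}(\varphi)$, so that $\dual_{S, \cat B}$ is $S$-linear; commutativity of $S$ is exactly what supplies the identification $\opp{S}=S$ under which the two $S$-actions can be compared at all.

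For t-exactness, under the additional hypothesis that $\cat B$ is pretriangulated with a t-structure, I would simply apply Proposition~\ref{prop:duality_quasifunctor_texact} with $\smallcat a = S$. By the standing assumptions of this subsection, $R$ and $S$ are strictly concentrated in nonpositive degrees, so $S$ is small and its cohomology is concentrated in nonpositive degrees; together with the h-flatness of $\cat B$, this is exactly the input of that proposition. Since t-exactness concerns only the aisles of the t-structures of Theorem~\ref{thm:tstruct_quasifunctors} on $\hproj_R^\mathrm{rqr}(S, \cat B)$ and $\opp{\hproj_R^\mathrm{rqr}(S, \opp{\cat B})}$, and is insensitive to the $S$-linear refinement established above, the cited proposition gives t-exactness of~\eqref{eq:duality_compatible_coextension} directly.

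The main obstacle is the $S$-linearity verification of the second step: one must unwind the $S$-module structures of Lemma~\ref{lemma:bimodules_tensor_S_Slinear} and Remark~\ref{remark:coextension_dgmS} on both the source and the opposite target, keep careful track of the Koszul signs introduced by dualizing graded morphisms, and use the commutativity of $S$ to make the identification $\opp{S}=S$ compatible with the respective $S$-actions. The remaining two steps are essentially immediate applications of earlier results.
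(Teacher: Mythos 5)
Your proposal is correct and follows essentially the same route as the paper: both establish $S$-linearity of the duality dg-functor at the level of $\compdg_R(S,\cat B)$ by a direct check using $\opp{S}=S$ (the paper calls this a ``tedious but straightforward check''), reduce the quasi-equivalence claim to Proposition~\ref{prop:duality_quasifunctors} applied over $R$ (where $S$-linearity plays no role), and obtain t-exactness directly from Proposition~\ref{prop:duality_quasifunctor_texact}. The only differences are organizational, such as the order in which the $R$-linear quasi-equivalence and the $S$-linear structure are addressed.
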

\begin{proof}
First of all, notice that $\opp{S}=S$ (strictly) by the assumption that $S$ is strictly commutative. Then, a tedious but straightforward check tells us that the duality dg-functor
\[
\dual \colon \compdg_R(S,\cat B) \to \opp{\compdg_R(\opp{S},\opp{\cat B})}
\]
is $S$-linear. This induces an $S$-linear quasi-functor
\[
\dual \colon \hproj_R^\mathrm{rqr}(S,\cat B) \to \opp{\hproj_R(S,\opp{\cat B})},
\]
so to conclude we have to show that it is quasi-fully faithful and its quasi-essential image is precisely $\opp{\hproj_R^\mathrm{rqr}(\opp{S},\opp{\cat B})}$. This claim does not depend on $S$-linearity, so we may restrict back to the underlying $R$-linear dg-categories and quasi-functors. The result is now a direct consequence of Proposition \ref{prop:duality_quasifunctors}. If moreover $\cat B$ is pretriangulated and has a t-structure, we deduce t-exactness from Proposition \ref{prop:duality_quasifunctor_texact}.
\end{proof}

\subsubsection{A tricky remark}
We conclude this part with a ``tricky'' observation. Let $\cat A$ be an $R$-linear dg-category. First, we recall from \S \ref{subsection:scalar_leftextension} that
\begin{equation}
    S \otimes_R \cat A
\end{equation}
is naturally $S$-linear. So, in principle one has the dg-category $\compdg_R(\opp{S} \otimes_R \cat A)$ of $R$-linear dg-bimodules (which is naturally $S$-linear, see Lemma \ref{lemma:bimodules_tensor_S_Slinear}) and also the $S$-linear dg-category $\compdg_S(\opp{S} \otimes_R \cat A)$ of $S$-linear dg-bimodules.
\begin{proposition} \label{prop:comparison_dgmodules_Slinear_Rlinear}
There is an $S$-linear equivalence of dg-categories:
\begin{equation} \label{eq:dgm_Slinear_versusRlinear}
    \compdg_S(\opp{S} \otimes_R \cat A) \xrightarrow{\sim} \compdg_R(\opp{S} \otimes_R \cat A).
\end{equation}

Moreover, assume that $\cat A$ is h-flat. Then, the above equivalence induces an $S$-linear equivalence
\begin{equation} \label{eq:hproj_Slinear_versusRlinear}
    \hproj_S(\opp{S} \otimes_R \cat A) \xrightarrow{\sim} \hproj_R(\opp{S} \otimes_R \cat A).
\end{equation}
Finally, if $\cat A$ has cohomology concentrated in nonpositive degrees, the above equivalence is t-exact with respect to the canonical t-structures.
\end{proposition}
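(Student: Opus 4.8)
The plan is to realise the equivalence \eqref{eq:dgm_Slinear_versusRlinear} as the forgetful dg-functor induced by restriction of scalars $\compdg(S)_R \to \compdg(R)$ on the target, and to reconstruct its quasi-inverse from the tautological central action of $S$. The key structural observation is that, by the very definition of extension of scalars, $S$ sits inside $\opp{S} \otimes_R \cat A$ as central endomorphisms: for every object $A$ and every $s \in S$ there is a degree $|s|$ morphism $s \otimes 1_A \in (\opp{S} \otimes_R \cat A)(A,A)$, and these elements graded-commute with every morphism of $\opp{S} \otimes_R \cat A$ because $S$ is strictly commutative (this is exactly the mechanism of Lemma \ref{lemma:bimodules_tensor_S_Slinear} and Remark \ref{remark:coextension_dgmS}).

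Consequently, any $R$-linear right module $N \in \compdg_R(\opp{S} \otimes_R \cat A)$ carries, for each object $A$, a canonical $S$-dg-module structure on $N(A)$ given by the action of the $s \otimes 1_A$, and the structure maps $N(B) \otimes_R (\opp{S} \otimes_R \cat A)(A,B) \to N(A)$ become automatically $S$-balanced, precisely because the $s \otimes 1_A$ are central. Thus $N$ lifts canonically to an object of $\compdg_S(\opp{S} \otimes_R \cat A)$. First I would check that this lift is inverse to the forgetful functor. Full faithfulness reduces to the remark that an $R$-linear natural transformation $N \to N'$ automatically commutes with the actions of the $s \otimes 1_A$ by naturality, hence is $S$-linear, so that the $R$-linear and $S$-linear morphism complexes literally coincide; essential surjectivity is the lift just described. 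This yields the ($S$-linear) equivalence \eqref{eq:dgm_Slinear_versusRlinear}. The computations are routine but demand some care with the Koszul signs, which is the only genuinely delicate point.

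For \eqref{eq:hproj_Slinear_versusRlinear} I would argue formally. Acyclicity of a module depends only on its underlying complexes $N(A)$ and is therefore insensitive to the base dg-ring, so the equivalence of the first part matches acyclic objects on the two sides. Since h-projectivity is defined by the acyclicity of $\Hom$ into acyclic modules, and the relevant $\Hom$-complexes agree by full faithfulness, the equivalence matches h-projective objects as well, and hence restricts to \eqref{eq:hproj_Slinear_versusRlinear}. Here the h-flatness of $\cat A$ is what guarantees that $\opp{S} \otimes_R \cat A$ computes the derived tensor product $\opp{S} \lotimes_R \cat A$, so that the two $\hproj$-categories are the intended models of the derived categories of bimodules.

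Finally, for t-exactness, when $\cat A$ (and, by the running hypothesis of this subsection, $S$) has cohomology concentrated in nonpositive degrees, so does $\opp{S} \otimes_R \cat A \cong \opp{S} \lotimes_R \cat A$, by Lemma \ref{lemma:tstruct_tensorprod} together with the h-flatness of $\cat A$. Both $\hproj_S(\opp{S} \otimes_R \cat A)$ and $\hproj_R(\opp{S} \otimes_R \cat A)$ therefore carry the canonical t-structure of Proposition \ref{prop:dercomp_naturaltstruct}, whose aisles are cut out by the cohomologies $H^i(N(A))$ of the modules; since the equivalence preserves the underlying complexes $N(A)$, it preserves these cohomologies and so the aisles, and is thus t-exact. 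The main obstacle throughout is the bookkeeping of the central $S$-action and the attendant signs in the first part; once that equivalence is in place, the statements about h-projectives and t-structures follow formally from the base-ring-independence of acyclicity and of module cohomology.
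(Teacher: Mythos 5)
Your proposal is correct and follows essentially the same route as the paper: the forward functor is restriction of scalars $\compdg(S)\to\compdg(R)$ applied to the targets, the inverse endows each $N(A)$ with the canonical $S$-structure coming from the central endomorphisms $s\otimes 1_A$ (the mechanism of Remark \ref{remark:coextension_dgmS}), and h-projectives are matched because both functors preserve acyclics and are mutually inverse. You even spell out the t-exactness step (aisles are cut out by cohomology of the underlying complexes, which the equivalence preserves), which the paper's proof leaves implicit.
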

\begin{proof} 
The dg-functor \eqref{eq:dgm_Slinear_versusRlinear} is defined as follows. Let $M \in \compdg_S(\opp{S} \otimes_R \cat A)$, which is an $S$-linear dg-functor
\[
M \colon S \otimes_R \opp{\cat A} \to \compdg(S)
\]
We map $M$ to the object $\widehat{M} \in \compdg_R(\opp{S} \otimes_R \cat A)$ defined by
\[
\widehat{M} \colon S \otimes_R \opp{\cat A} \to \compdg(S) \xrightarrow{\Res} \compdg(R).
\]
On the other hand, let $N \in \compdg_R(\opp{S} \otimes_R \cat A)$. We define $\widetilde{N} \in \compdg_S(\opp{S} \otimes_R \cat A)$ as
\[
\widetilde{N}_{\diamondsuit}^A = N_{\diamondsuit}^A,
\]
where we endow $N_{\diamondsuit}^A$ with the $S$-linear structure of Remark \ref{remark:coextension_dgmS}. The mapping $N \mapsto \widehat{N}$ then defines an ($S$-linear) inverse to \eqref{eq:dgm_Slinear_versusRlinear}. 

Now, it is clear by the definitions that both $M \mapsto \widehat{M}$ and $N \mapsto \widetilde{N}$ preserve acyclic objects. Being inverses to each other, they also preserve h-projective morphisms (this is an argument we have already used, for example in Proposition \ref{prop:hprojrqr_hproj}). Hence, they restrict and give the equivalence \eqref{eq:hproj_Slinear_versusRlinear} and its inverse.
\end{proof}


\section{Derived deformations} \label{section:derived_deformations}
\subsection{Recollections and notations} \label{subsection:recollections_before_deformations}
We fix a morphism of dg-rings $R \to S$, which we assume to be strictly commutative ($R=\opp{R}$ and $S=\opp{S}$) and strictly concentrated in nonpositive degrees. We are going to ease the notation a little bit, according to the following prescriptions:
\begin{itemize}
    \item For a given $R$-linear dg-category $\cat A$, we shall denote by $\dercompdg(\cat A)$ the \emph{dg-derived category of $\cat A$}, namely the dg-category $\hproj(\cat A)$. It is a dg-enhancement of the derived category $\dercomp(\cat A)$, in sense that 
    \[
    H^0(\dercompdg(\cat A)) \cong \dercomp(\cat A).
    \]
    If we want to stress $R$-linearity, we write $\dercompdg_{,R}(\cat A)$ (and analogously in the $S$-linear case).
    \item Given $R$-linear dg-categories $\cat A$ and $\cat B$, we denote by $\cat A \lotimes \cat B$ the \emph{derived tensor product} of $\cat A$ and $\cat B$, which is obtained by taking the ordinary tensor product $\cat A \otimes \cat B$, upon replacing $\cat A$ or $\cat B$ with a dg-category with h-flat complexes of morphisms if necessary. Moreover, we denote by  $\RHom(\cat A, \cat B)$ the ($R$-linear) \emph{dg-category of quasi-functors}, which is described as
    \[
    \RHom(\cat A, \cat B) = \hproj^{\mathrm{rqr}}(\cat A, \cat B) = \hproj^{\mathrm{rqr}}(\opp{\cat A} \lotimes \cat B),
    \]
    taking a resolution of $\cat A$ or $\cat B$ if necessary so that it is h-flat. If we want to stress $R$-linearity, we also write $\RHom_R(\cat A, \cat B)$ (and analogously in the $S$-linear case). The \emph{category of quasi-functors} is the homotopy category
    \[
    H^0(\RHom(\cat A, \cat B)),
    \]
    which is $H^0(R)$-linear.
    \item If $\cat A$ and $\cat B$ are ($R$-linear) dg-categories, a morphism $F \colon \cat A \to \cat B$ will always be a quasi-functor, namely an object of $\RHom(\cat A, \cat B)$. A quasi-functor $F$ induces a graded ($H^*(R)$-linear) functor  
    \[
    H^*(F) \colon H^*(\cat A) \to H^*(\cat B)
    \]
    between the graded homotopy categories, and a ($H^0(R)$-linear) functor
    \[
    H^0(F) \colon H^0(\cat A) \to H^0(\cat B).
    \]
    
    We say that two quasi-functors $F, G \colon \cat A \to \cat B$ are \emph{isomorphic} and write $F \cong G$ if they are isomorphic in $H^0(\RHom(\cat A, \cat B))$. A \emph{quasi-equivalence} is then a quasi-functor $F \colon \cat A \to \cat B$ such that there is a quasi-functor $G \colon \cat B \to \cat A$ with $GF \cong 1_{\cat A}$ and $FG \cong 1_{\cat B}$. This is the same as requiring that $H^*(F)$ is an equivalence. If $\cat A$ and $\cat B$ are pretriangulated, this is the same as requiring that $H^0(F)$ is an equivalence. We will say that $\cat A$ and $\cat B$ are \emph{quasi-equivalent}, and write
    \[
    \cat A \cong \cat B,
    \]
    if there is a quasi-equivalence $\cat A \xrightarrow{\sim} \cat B$.
\end{itemize}

With that, we may restate the results on change of rings of \S \ref{section:changeofrings}, as follows:
\begin{itemize}
    \item Let $\cat A$ be an $S$-linear dg-category. Then, there is an \emph{underlying $R$-linear dg-category} $\cat A_R$, obtained just restricting along $R \to S$. We will abuse notation and write again $\cat A$ for this underlying $R$-linear dg-category.
    \item Let $\cat B$ be an $R$-linear dg-category. Then, by Theorem \ref{thm:coext_univproperty}, there is an $S$-linear dg-category $\cat B_{(S)}$ together with a natural $R$-linear quasi-equivalence
    \begin{equation}
            \RHom_R(\cat A, \cat B) \cong \RHom_S(\cat A, \cat B_{(S)}).
    \end{equation}
    The dg-category $\cat B_{(S)}$ is uniquely determined up to $S$-linear quasi-equivalence, and it is called the \emph{coextension of scalars dg-category associated to $\cat B$} or also the \emph{dg-category of $S$-linear objects in $\cat B$}. A ``concrete model'' of $\cat B_{(S)}$ is given by the dg-category $\hproj^\mathrm{rqr}(S,\cat B)$, upon replacing $\cat B$ with a dg-category with h-flat complexes of morphisms if necessary.
    \item Let $\cat B$ be an $R$-linear dg-category. Then, by Proposition \ref{prop:scalar_leftextension}, there is an $S$-linear dg-category $S \lotimes_R \cat B$ together with a natural $R$-linear quasi-equivalence
    \begin{equation}
        \RHom_R(\cat B, \cat A) \cong \RHom_S(S \lotimes_R \cat B, \cat A).
    \end{equation}
    The dg-category $S \lotimes_R \cat B$ is uniquely determined up to $S$-linear quasi-equivalence, and it is called the \emph{extension of scalars dg-category associated to $\cat B$}.
    \item The construction $\cat B \mapsto \cat B_{(S)}$ is compatible with t-structures. We start with an $R$-linear pretriangulated dg-category $\cat B$ with a t-structure. Then, the $S$-linear dg-category $\cat B_{(S)}$ is pretriangulated and comes with a natural t-structure described in Theorem \ref{thm:tstruct_quasifunctors}. Any t-exact quasi-equivalence $\cat A \xrightarrow{\sim} \cat B$ between pretriangulated dg-categories with a t-structure induces (Corollary \ref{coroll:tstruct_changeofrings}) a t-exact quasi-equivalence
    \begin{equation} \label{eq:changeofrings_stability_texactequivalence}
        \cat A_{(S)} \xrightarrow{\sim} \cat B_{(S)}.
    \end{equation}
    
    Lemma \ref{lemma:hfp_tstruct_restrict} and Proposition \ref{prop:quasifunctors_duality_changeofrings_tstruct} give us natural t-exact quasi-equivalences, respectively:
    \begin{align}
        \hfp(\cat B_{(S)}) & \cong \hfp(\cat B)_{(S)}, \label{eq:quasieq_hfp_changeofrings}  \\
         \hfpbd(\cat B_{(S)}) & \cong \hfpbd(\cat B)_{(S)}, \label{eq:quasieq_hfpbd_changeofrings}  \\
        \opp{(\cat B_{(S)})} & \cong \opp{\cat B}_{(S)},  \label{eq:quasieq_opposite_changeofrings}
    \end{align}
    the first two ones holding if the t-structure on $\cat B$ is such that filtered colimits are exact in the heart $H^0(\cat B)^\heartsuit$ and $\hfp(\cat B)$ is pretriangulated (Lemma \ref{lemma:hfp_tstruct_restrict}).
    \item Let $\cat A \xrightarrow{\sim} \cat B$ be a t-exact quasi-equivalence between pretriangulated dg-categories with t-structures, and assume that filtered colimits are exact in both $H^0(\cat A)^\heartsuit$ and $H^0(\cat B)^\heartsuit$. Moreover, assume that both $\hfp(\cat A)$ and $\hfp(\cat B)$ are pretriangulated. Then (Lemma \ref{lemma:hfp_tstruct_restrict}), the above quasi-equivalence induces t-exact quasi-equivalences
    \begin{equation} \label{eq:quasieq_hfp}
    \begin{split}
        \hfp(\cat A) & \xrightarrow{\sim} \hfp(\cat B), \\
        \hfpbd(\cat A) & \xrightarrow{\sim} \hfpbd(\cat B).
    \end{split}
    \end{equation}
    \item Let $\smallcat a$ be a small $R$-linear dg-category with cohomology concentrated in nonpositive degrees. Then, the derived dg-category $\dercompdg_{,R}(\smallcat a)$ has a natural t-structure, and it is immediate to see that any quasi-equivalence $\smallcat a \xrightarrow{\sim} \smallcat b$ induces (by restriction) a t-exact quasi-equivalence 
    \begin{equation} \label{eq:quasieq_restriction_texact}
        \dercompdg_{,R}(\smallcat b) \xrightarrow{\sim} \dercompdg_{,R}(\smallcat a).
    \end{equation}
    
    Moreover, combining Lemma \ref{lemma:tstruct_hproj_coextension} and Proposition \ref{prop:comparison_dgmodules_Slinear_Rlinear}, we get natural t-exact $S$-linear quasi-equivalences:
    \begin{equation} \label{eq:quasieq_changeofrings_derdgcat}
        \dercompdg_{,R}(\smallcat a)_{(S)} \cong \dercompdg_{,R}(S \lotimes_R \smallcat a) \cong \dercompdg_{,S}(S \lotimes_R \smallcat a),
    \end{equation}
    recalling that $S \lotimes_R \smallcat a =\opp{S} \lotimes_R \smallcat a$ is naturally $S$-linear.
\end{itemize}
\begin{remark}
{A caveat on set-theoretic universes. In our applications, we will need to use the above \eqref{eq:quasieq_restriction_texact} and \eqref{eq:quasieq_changeofrings_derdgcat} with the small dg-categories $\smallcat a$ and $\smallcat b$ replaced by locally $\mathbb U$-small (hence $\mathbb V$-small; cf. \S \ref{subsec:sizes}) dg-categories. The technical results in \ref{subsection:changeofrings_tstructures} might need the introduction of yet another and bigger universe $\mathbb W$, for instance in order to take a cofibrant resolution such as $Q\hproj(\smallcat b)$. In any case, everything will be (in the worst case) locally $\mathbb V$-small at least up to quasi-equivalence.}
\end{remark}

We can now define \emph{derived deformations} rigorously, using the coextension of scalars.
\begin{definition} \label{def:derived_deformations}
Let $\cat B$ be an $S$-linear dg-category with a t-structure. A \emph{derived deformation} of $\cat B$ along $R \to S$ is an $R$-linear dg-category $\cat A$ with a t-structure together with an $R$-linear quasi-functor $\cat B \to \cat A$, which induces (recall Theorem \ref{thm:coext_univproperty}) a t-exact $S$-linear quasi-equivalence
\[
\cat B \xrightarrow{\sim} \cat A_{(S)}.
\]

We shall often abuse notation and say that \emph{$\cat A$ is a derived deformation of $\cat B$}, forgetting the quasi-functor $\cat B \to \cat A$.
\end{definition}
We also define \emph{left derived deformations} using the extension of scalars.
\begin{definition} \label{def:left_derived_deformation}
Let $\injcat B$ be an $S$-linear dg-category. A \emph{left derived deformation} of $\injcat B$ along $R \to S$ is a small $R$-linear dg-category $\injcat A$ together with an $R$-linear quasi-functor $\injcat A \to \injcat B$ which induces (recall Proposition \ref{prop:scalar_leftextension}) an $S$-linear quasi-equivalence
\begin{equation}
    S \lotimes_R \injcat A \xrightarrow{\sim} \injcat B.
\end{equation}
We shall often abuse notation and say that \emph{$\injcat A$ is a derived deformation of $\injcat B$}, forgetting the quasi-functor $\injcat A \to \injcat B$.
\end{definition}

\subsection{From abelian deformations to derived deformations} \label{subsec:fromabelian_toderived}
Let $R \to S$ be a surjective morphism of coherent commutative rings, with nilpotent kernel and such that $S$ is finitely presented as $R$-module. In particular, the kernel $\ker(R \to S)$ is also finitely presented as an $R$-ideal. The aim of this part is to prove that a deformation of abelian categories in the sense of \cite{lowen-vandenbergh-deformations-abelian}, under suitable assumptions, induces a derived deformation (Definition \ref{def:derived_deformations}) at the level of derived categories (taken with their natural t-structures).

We first recall some results from \cite{lowen-vandenbergh-deformations-abelian}. For a given $R$-linear category $\mathfrak A$, we denote by $\mathfrak A_{(S)}$ the abelian category of $S$-linear objects of $\mathfrak A$:
\begin{equation}
    \mathfrak A_{(S)} = \Fun_R(S,\mathfrak A).
\end{equation}
A \emph{deformation} of an $S$-linear abelian category $\mathfrak B$ is an $R$-linear abelian category $\mathfrak A$ together with an $R$-linear functor $\mathfrak B \to \mathfrak A$ which induces an $S$-linear equivalence $\mathfrak B \to \mathfrak A_{(S)}$. We can deduce from \cite[\S 6, \S 8.8]{lowen-vandenbergh-deformations-abelian} that if $\mathfrak A$ has enough injectives and $\mathfrak B \to \mathfrak A$ is a \emph{flat} deformation of $\mathfrak B$, then $\mathfrak A$ has enough injectives and moreover $\operatorname{Inj}(\mathfrak A)$ is a flat deformation of $\operatorname{Inj}(\mathfrak B)$, namely there is an $S$-linear equivalence
\begin{equation}
    \operatorname{Inj}(\mathfrak B) \cong S \otimes_R \operatorname{Inj}(\mathfrak A),
\end{equation}
and $\operatorname{Inj}(\mathfrak A)$ has flat hom-sets.
\subsubsection{The case of (left bounded) derived categories of Grothendieck categories}
We recall \cite[Theorem 1.3]{genovese-lowen-vdb-dginj} (see also Example \ref{example:hlc_hfp}): if $\cat B$ is a ($S$-linear) pretriangulated dg-category with a left bounded non-degenerate t-structure compatible with countable direct products (namely, the aisles $H^0(\cat B)_{\geq M}$ have countable products) and enough \emph{derived injectives}, then we have a ($S$-linear) t-exact quasi-equivalence\footnote{{We remark that every result in \cite{genovese-lowen-vdb-dginj} was given in the setting of dg-categories over a \emph{field}; however, the proofs carry over to dg-categories over arbitrary commutative (dg\nobreakdash-)rings, with few adjustments.}}
\begin{equation} \label{eq:dginj_reconstruction}
    \cat B \cong \opp{\hfp(\dercompdgmin(\opp{\operatorname{DGInj}(\cat B)}))}.
\end{equation}
Derived categories of Grothendieck abelian categories give typical examples of the above equivalence, indeed one can prove the following:
\begin{lemma} \label{lemma:dercat_dginj}
Let $\mathfrak B$ be any ($S$-linear) Grothendieck abelian category. Let $\dercatabdgplus(\mathfrak B)$ be a fixed dg-enhancement of the left bounded derived category $\dercatabplus(\mathfrak B)$. Then, the natural t-structure on $\dercatabdgplus(\mathfrak B)$ is left bounded, non-degenerate, compatible with countable products and has enough derived injectives. 

Moreover, the derived injectives coincide with the injective objects in $\mathfrak B$:
\begin{equation}
    \operatorname{DGInj}(\dercatabdgplus(\mathfrak B)) = \operatorname{Inj}(\mathfrak B)
\end{equation}
as a dg-subcategory of $\dercatabdgplus(\mathfrak B)$.
\end{lemma}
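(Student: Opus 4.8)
The plan is to work throughout with the natural cohomological t-structure on $\dercatabplus(\mathfrak B)$, whose aisles are $\dercatabplus(\mathfrak B)_{\leq 0} = \{X : H^i(X) = 0,\ i > 0\}$ and $\dercatabplus(\mathfrak B)_{\geq 0} = \{X : H^i(X) = 0,\ i < 0\}$, and whose heart is $\mathfrak B$ itself. Writing $\cat B = \dercatabdgplus(\mathfrak B)$, so that $H^0(\cat B) = \dercatabplus(\mathfrak B)$, the first three properties are essentially formal. Left boundedness holds because every object of $\dercatabplus(\mathfrak B)$ has cohomology bounded below, so $\cat B = \bigcup_M \cat B_{\geq M}$. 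Non-degeneracy follows since an object lying in $\bigcap_M \cat B_{\geq M}$ or in $\bigcap_M \cat B_{\leq -M}$ has vanishing cohomology in every degree, hence is zero in the derived category.

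For compatibility with countable products I would argue via injective resolutions in bounded degrees. Since $\mathfrak B$ is Grothendieck it has products and enough injectives, and a countable product of injectives is again injective. Given a countable family $(X_j)$ with each $X_j \in \cat B_{\geq M}$, choose injective resolutions $X_j \xrightarrow{\sim} I_j^\bullet$ with $I_j^\bullet$ concentrated in degrees $\geq M$ (possible because $H^i(X_j) = 0$ for $i < M$, using a truncated injective resolution). Then $\prod_j I_j^\bullet$ is a bounded-below complex of injectives, hence lies in $\dercatabplus(\mathfrak B)$ and computes the product there; being concentrated in degrees $\geq M$, it lies in $\cat B_{\geq M}$. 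Thus each coaisle $\cat B_{\geq M}$ is closed under countable products. Note that no exactness of $\prod$ is needed, which is precisely why the statement is phrased for each $\cat B_{\geq M}$ rather than for $\dercatabplus(\mathfrak B)$ as a whole.

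The heart of the matter is the identification of the derived injectives. I would use the characterization of a derived injective as an object $E$ of the heart $\mathfrak B$ such that $H^0(\cat B)(B, E) = 0$ for every $B \in \cat B_{\geq 1}$ (equivalently, $\Ext^n_{\mathfrak B}(-, E) = 0$ for all $n > 0$), as in \cite{genovese-lowen-vdb-dginj}. First, for an injective object $I \in \Inj(\mathfrak B)$ placed in degree $0$, the functor $\Hom_{\mathfrak B}(-, I)$ is exact, so $\RHom(B, I) \cong \Hom^\bullet(B, I)$ and $H^n \RHom(B, I) \cong \Hom_{\mathfrak B}(H^{-n}(B), I)$; for $B \in \cat B_{\geq 1}$ one has $H^0(B) = 0$, whence $H^0(\cat B)(B, I) = \Hom_{\mathfrak B}(H^0(B), I) = 0$, and $I$ is a derived injective. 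Conversely, a derived injective $E$ lies in the heart $\mathfrak B$ and, taking $B = X[-n]$ for $X \in \mathfrak B$ and $n > 0$, satisfies $\Ext^n_{\mathfrak B}(X, E) = H^0(\cat B)(X[-n], E) = 0$; this is exactly the condition that $E$ be an injective object of $\mathfrak B$. Hence $\DGInj(\dercatabdgplus(\mathfrak B))$ and $\Inj(\mathfrak B)$ have the same objects, and since both are full dg-subcategories of $\dercatabdgplus(\mathfrak B)$ they coincide. Enough derived injectives then follows immediately: $\mathfrak B$ being Grothendieck, every heart object embeds into an injective, which by the above is a derived injective.

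I expect the two delicate points to be, first, the countable-products step, where products are not exact in a general Grothendieck category and the argument is rescued only by resolving in degrees $\geq M$; and second, matching the homological characterization of derived injectives used above with the precise definition in \cite{genovese-lowen-vdb-dginj}, together with the verification that the dg-hom-complexes between injectives in the enhancement $\dercatabdgplus(\mathfrak B)$ have cohomology concentrated in degree $0$ and equal to $\Hom_{\mathfrak B}$ there, which is what legitimizes $\Inj(\mathfrak B)$ as a genuine dg-category of derived injectives.
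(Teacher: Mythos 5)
Your handling of the formal properties and of countable products is correct, but note it takes a different route from the paper's: you build the product by hand from truncated injective resolutions concentrated in degrees $\geq M$, using that a product of injectives is injective and that bounded-below complexes of injectives compute derived Homs, whereas the paper forms the product in the unbounded derived category $\mathfrak D(\mathfrak B)$ (which exists since $\mathfrak B$ is Grothendieck, citing the Stacks project) and then checks that it lies in $\mathfrak D(\mathfrak B)_{\geq M}$ via the orthogonality characterization of the coaisle. Both work; yours is more elementary, the paper's avoids resolutions entirely. Your proof that every $I \in \Inj(\mathfrak B)$ is derived injective is essentially the paper's computation (exactness of $\Hom_{\mathfrak B}(-,I)$, equivalently $\Hom_{\mathfrak D(\mathfrak B)}(Y,I) = \mathfrak K(\mathfrak B)(Y,I)$ for $Y$ bounded below).

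The gap is in the converse inclusion $\DGInj(\dercatabdgplus(\mathfrak B)) \subseteq \Inj(\mathfrak B)$. Your ``characterization'' of a derived injective as an object $E$ \emph{of the heart} with $\Hom(B,E)=0$ for all $B \in \cat B_{\geq 1}$ is not the definition in \cite{genovese-lowen-vdb-dginj}: there, a derived injective is an object $J$ of the \emph{coaisle} $\cat B_{\geq 0}$ admitting a natural isomorphism $\Hom_{H^0(\cat B)}(X,J) \cong \Hom_{\mathfrak B}(H^0_t(X), I)$ for some injective $I$ of the heart (this is exactly the property the paper verifies for $I$). Heart membership of derived injectives is \emph{false} in general --- for instance, over the dg-ring $k[\epsilon]/(\epsilon^2)$ with $\epsilon$ in degree $-1$, the derived injective attached to the $H^0$-module $k$ is the $k$-linear dual of the dg-ring, which has nonzero $H^1$ --- and proving it for $\dercatabdgplus(\mathfrak B)$ is precisely the nontrivial content of the claimed equality. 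As written, your converse only shows that a derived injective \emph{which happens to lie in the heart} is injective in $\mathfrak B$; it does not exclude derived injectives with nonvanishing positive t-cohomology, and you flag exactly this point (``matching the homological characterization with the precise definition'') without carrying it out. The repair is short: if $J$ is derived injective with associated injective $I$, your first direction gives natural isomorphisms $\Hom(X,I) \cong \Hom_{\mathfrak B}(H^0(X),I) \cong \Hom(X,J)$ for all $X$, so $J \cong I$ by Yoneda, and hence $J$ lies in the heart and is injective there. (To be fair, the paper's written proof also only establishes the inclusion $\Inj(\mathfrak B) \subseteq \DGInj(\dercatabdgplus(\mathfrak B))$ and leaves this Yoneda step implicit.) A similar small completion is needed for ``enough derived injectives'': for a general $X \in \cat B_{\geq 0}$ you must lift an embedding $H^0_t(X) \hookrightarrow I$ to a morphism $X \to I$ along the isomorphism above; your remark about heart objects alone does not provide this.
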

\begin{proof} 
We shall write $[X,Y]$ for the hom-set in the unbounded derived category $\mathfrak D(\mathfrak B)$. Since $\mathfrak B$ is a Grothendieck abelian category, the category $\mathfrak D(\mathfrak B)$ has both direct sums and products \cite[07D9]{stacks-project}. Now, let $\{ X_n : n \in \mathbb N \}$ be a family of objects in the right aisle $\dercatabplus(\mathfrak B)_{\geq M} = \mathfrak D(\mathfrak B)_{\geq M}$. We can form the product
\[
\prod_n X_n \in \mathfrak D(\mathfrak B).
\]
Then, for $Z \in \dercatabplus(\mathfrak B)_{< M}$, we have:
\begin{align*}
    [Z, \prod_n X_n] & \cong \prod_n [Z, X_n] \\
    & \cong 0,
\end{align*}
using that $\dercatabplus(\mathfrak B)_{< M}$ is left orthogonal to $\mathfrak D(\mathfrak B)_{\geq M}$, therefore $[Z,X_n] \cong 0$ for all $n$. We conclude that $\prod_n X_n$ lies in $\mathfrak D(\mathfrak B)_{\geq M}$, which in particular implies that it is a product of $\{ X_n : n \in \mathbb N \}$ in this right aisle. Thus, the t-structure on $\dercatabplus(\mathfrak B)$ is compatible with countable direct products; the other claimed properties (left boundedness and non-degeneracy) are clear.

For the second part of the proof, let $I \in \operatorname{Inj}(\mathfrak B)$. We have to prove that there is a natural isomorphism
\[
[X,I] \cong [H^0(X),I]
\]
for all $X$. First, we have $[X,I] \cong [\tau_{\geq 0} X, I]$ since $I$ lies in $\mathfrak A$ which is the heart of the t-structure of $\dercatabplus(\mathfrak B)$. Then, consider the (functorial) distinguished triangle
\[
H^0(X) \to \tau_{\geq 0} X \to \tau_{\geq 1} X.
\]
Composing with $H^0(X) \to \tau_{\geq 0} X$, we get a morphism
\[
[\tau_{\geq 0} X, I] \to [H^0(X), I]
\]
To check that this is an isomorphism, it is enough to prove that
\[
[\tau_{\geq 1} X,I]=0 \quad \text{and} \quad [(\tau_{\geq 1}X)[-1], I]=0.
\]
This follows from the fact that $I$ being injective implies that
\[
[Y,I] = \mathfrak K(\mathfrak B)(Y,I)
\]
for $Y$ concentrated in nonnegative degrees, where $\mathfrak K(\mathfrak B)$ is the homotopy category of $\mathfrak B$.
\end{proof}
We are now able to prove the following result, which can be summarized by the slogan \emph{``flat deformations of Grothendieck abelian categories induce derived deformations of the corresponding left bounded derived categories''}.
\begin{theorem} \label{thm:abdeform_derdeform}
Let $\mathfrak B$ be an $S$-linear Grothendieck abelian category, and let $\mathfrak A$ be a $R$-linear flat deformation of $\mathfrak B$ (which is again Grothendieck, cf. \cite[Theorem 6.29]{lowen-vandenbergh-deformations-abelian} ). Then, $\dercatabdgplus(\mathfrak A)$ is a $R$-linear derived deformation of the $S$-linear dg-category $\dercatabdgplus(\mathfrak B)$.
\end{theorem}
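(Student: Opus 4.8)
The goal is to show that $\dercatabdgplus(\mathfrak A)$ is a derived deformation of $\dercatabdgplus(\mathfrak B)$, which by Definition \ref{def:derived_deformations} means exhibiting a t-exact $S$-linear quasi-equivalence
\[
\dercatabdgplus(\mathfrak B) \xrightarrow{\sim} \dercatabdgplus(\mathfrak A)_{(S)}.
\]
The whole strategy is to pass through the dg-categories of derived injectives and use the reconstruction equivalence \eqref{eq:dginj_reconstruction}. First I would apply Lemma \ref{lemma:dercat_dginj} to both $\mathfrak A$ and $\mathfrak B$: this identifies $\operatorname{DGInj}(\dercatabdgplus(\mathfrak B)) = \operatorname{Inj}(\mathfrak B)$ and likewise for $\mathfrak A$, and tells us that both $\dercatabdgplus(-)$ have left bounded non-degenerate t-structures compatible with countable products and enough derived injectives, so that \eqref{eq:dginj_reconstruction} applies to each.

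\emph{Linking the two sides via flat deformation of injectives.} The input from abelian deformation theory, recalled at the start of \S\ref{subsec:fromabelian_toderived}, is that a flat deformation $\mathfrak A$ of the Grothendieck category $\mathfrak B$ yields a flat deformation of injectives, i.e.\ an $S$-linear equivalence $\operatorname{Inj}(\mathfrak B) \cong S \otimes_R \operatorname{Inj}(\mathfrak A)$. Combining this with the previous step, I get an $S$-linear quasi-equivalence
\[
\operatorname{DGInj}(\dercatabdgplus(\mathfrak B)) \cong S \lotimes_R \operatorname{DGInj}(\dercatabdgplus(\mathfrak A)),
\]
where the underived tensor product becomes the derived one because $\operatorname{Inj}(\mathfrak A)$ has flat hom-sets (so no resolution is needed). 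In other words, $\operatorname{DGInj}(\dercatabdgplus(\mathfrak A))$ is a \emph{left} derived deformation of $\operatorname{DGInj}(\dercatabdgplus(\mathfrak B))$ in the sense of Definition \ref{def:left_derived_deformation}.

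\emph{Transporting through reconstruction and coextension.} The remaining work is to turn this left derived deformation of the derived-injective dg-categories into a derived deformation of the reconstructed categories. Writing $\injcat J = \operatorname{DGInj}(\dercatabdgplus(\mathfrak B))$ and $\injcat I = \operatorname{DGInj}(\dercatabdgplus(\mathfrak A))$, so that $S \lotimes_R \injcat I \cong \injcat J$, I would apply \eqref{eq:dginj_reconstruction} to rewrite both sides as $\opp{\hfp(\dercompdgmin(\opp{\injcat J}))}$ and $\opp{\hfp(\dercompdgmin(\opp{\injcat I}))}$. I then need to commute coextension of scalars past the operations $\dercompdg$, opposite, and $\hfp$. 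Concretely, using \eqref{eq:quasieq_changeofrings_derdgcat} I identify $\dercompdg_{,R}(\opp{\injcat I})_{(S)} \cong \dercompdg_{,S}(S \lotimes_R \opp{\injcat I}) \cong \dercompdg_{,S}(\opp{\injcat J})$, all as t-exact $S$-linear quasi-equivalences; then \eqref{eq:quasieq_hfp_changeofrings} (for the bounded-below variant of $\hfp$) and \eqref{eq:quasieq_opposite_changeofrings} let me commute $\hfp$ and $\opp{(-)}$ with coextension. Chaining these gives a t-exact $S$-linear quasi-equivalence $\dercatabdgplus(\mathfrak A)_{(S)} \cong \dercatabdgplus(\mathfrak B)$, which is exactly what is needed.

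\emph{The main obstacle.} The genuinely delicate point is the hypothesis-checking required to invoke the change-of-rings compatibilities \eqref{eq:quasieq_hfp_changeofrings}, \eqref{eq:quasieq_opposite_changeofrings} and \eqref{eq:quasieq_changeofrings_derdgcat}: these demand that the relevant dg-categories be h-flat and that the hearts have exact filtered colimits with $\hfp$ pretriangulated (cf.\ Lemma \ref{lemma:hfp_tstruct_restrict} and Example \ref{example:hlc_hfp}). One must verify that $\operatorname{Inj}(\mathfrak A)$ and $\operatorname{Inj}(\mathfrak B)$, as h-flat homotopically locally coherent dg-categories, feed correctly into this machinery and that the bounded-below truncation matches the left-bounded t-structure of $\dercatabdgplus$. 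I also need to be careful that the finiteness conditions on $R \to S$ (surjective with nilpotent finitely presented kernel, $S$ finitely presented over $R$) are precisely those making $S \in \hfp(R)$ and both rings homotopically coherent, as required in \S\ref{subsubsec:hfp_obj}. Once these compatibility hypotheses are confirmed, the chain of quasi-equivalences assembles formally.
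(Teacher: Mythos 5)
Your proposal is correct and follows essentially the same route as the paper's own proof: flat deformation of abelian categories gives $\operatorname{Inj}(\mathfrak B) \cong S \otimes_R \operatorname{Inj}(\mathfrak A)$ with flat (hence h-flat) hom-sets, so the dg-category of injectives of $\mathfrak A$ is a left derived deformation of that of $\mathfrak B$; then one applies the reconstruction \eqref{eq:dginj_reconstruction} via Lemma \ref{lemma:dercat_dginj} and chains the change-of-rings compatibilities \eqref{eq:quasieq_restriction_texact}, \eqref{eq:quasieq_changeofrings_derdgcat}, \eqref{eq:quasieq_hfp_changeofrings} and \eqref{eq:quasieq_opposite_changeofrings} to obtain the t-exact $S$-linear quasi-equivalence $\dercatabdgplus(\mathfrak B) \cong \dercatabdgplus(\mathfrak A)_{(S)}$. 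The hypothesis-checking you flag (h-flatness, exact filtered colimits in the hearts, $\hfp$ pretriangulated) is exactly what the paper's recollections in \S\ref{subsection:recollections_before_deformations} and Example \ref{example:hlc_hfp} are set up to provide, so no gap remains.
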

\begin{proof}
From \cite{lowen-vandenbergh-deformations-abelian} we know that $\injcat I=\operatorname{Inj}(\mathfrak A)$ is a (left) deformation of $\injcat J=\operatorname{Inj}(\mathfrak B)$, in particular we have an $S$-linear equivalence
\[
\injcat J \cong S \otimes_R \injcat I.
\]
$\injcat I$ has flat hom-modules, so it is h-flat if we view it as a dg-category concentrated in degree $0$. Hence, we have that
\[
\injcat J \cong S \lotimes_R \injcat I 
\]
as $S$-linear dg-categories. Taking opposite categories in the above discussion and recalling that $S=\opp{S}$, we deduce that 
\[
\opp{\injcat J} \cong S \lotimes_R \opp{\injcat I}
\]
as $S$-linear dg-categories. Now, invoking the results recollected in \S \ref{subsection:recollections_before_deformations} and the above discussion, we get a chain of $S$-linear t-exact quasi-equivalences:
\begin{align*}
    \dercatabdgplus(\mathfrak B) &\cong \opp{\hfp({\dercompdgmin}_{,S}(\opp{\injcat J}))} \qquad \text{(cf. \eqref{eq:dginj_reconstruction})}  \\
    &\cong \opp{\hfp({\dercompdgmin}_{,S}(S \lotimes_R \opp{\injcat I}))} \qquad \text{(cf. \eqref{eq:quasieq_restriction_texact})} \\
    &\cong \opp{\hfp({\dercompdgmin}_{,R}(S \lotimes_R \opp{\injcat I}))} \qquad \text{(cf. \eqref{eq:quasieq_changeofrings_derdgcat})}  \\
    & \cong  \opp{\hfp({\dercompdgmin}_{,R}(\opp{\injcat I})_{(S)})} \qquad \text{(cf. \eqref{eq:quasieq_changeofrings_derdgcat})} \\
    &\cong \opp{\hfp({\dercompdgmin}_{,R}(\opp{\injcat I}))}_{(S)} \qquad \text{(cf. \eqref{eq:quasieq_hfp_changeofrings} and \eqref{eq:quasieq_opposite_changeofrings})} \\
    &\cong \dercatabdgplus(\mathfrak A)_{(S)}. \qquad \text{(cf. \eqref{eq:changeofrings_stability_texactequivalence} and \eqref{eq:dginj_reconstruction})}
\end{align*}
We also remark that we used \eqref{eq:quasieq_hfp}
throughout. We conclude that $\dercatabdgplus(\mathfrak A)$ is indeed a derived deformation of $\dercatabdgplus(\mathfrak B)$.
\end{proof}
\begin{remark} \label{remark:abdeform_derdeform_enoughinj}
We can slightly change the above argument and show that Theorem \ref{thm:abdeform_derdeform} holds under the more general assumption that $\mathfrak B$ is an $S$-linear abelian category with enough injectives.

The idea is prove the t-exact quasi-equivalence
\[
\dercatabdgplus(\mathfrak B) \cong \opp{\hfp({\dercompdgmin}_{,S}(\opp{\injcat J}))},
\]
and the analogous for $\mathfrak A$, without invoking \eqref{eq:dginj_reconstruction}. This can be done by recalling from \cite[Proposition 6.25]{lowen-vandenbergh-deformations-abelian} that $\mathfrak B$ is equivalent to $\opp{\operatorname{mod}(\opp{\injcat J})}$ and then by directly showing that
\[
\dercatabdgplus(\opp{\operatorname{mod}(\opp{\injcat J})}) \cong \opp{\hfp({\dercompdgmin}_{,S}(\opp{\injcat J}))},
\]
and the analogous result for $\mathfrak A$.
\end{remark}

\subsubsection{The case of bounded derived categories of (small) abelian categories}
Let $\mathfrak B$ be a (small, $S$-linear) abelian category. It is well-known that the category of ind-objects $\Ind(\mathfrak B)$ is a Grothendieck abelian category. Moreover, we have an equivalence:
\begin{equation} \label{eq:fpind_id}
\fp(\Ind(\mathfrak B)) \cong \mathfrak B.
\end{equation}
We denote by $\dercatabdgbd(\mathfrak B)$ be (a fixed dg-enhancement of) the bounded derived category of $\mathfrak B$. The objects of $\dercatabdgbd(\mathfrak B)$ can be taken as the complexes $X$ of objects in $\mathfrak B$ whose cohomologies $H^k(X)$ are $0$ except for a finite number of indices $k$ (cf. \cite[05RR]{stacks-project}). We can prove a derived counterpart of the equivalence \eqref{eq:fpind_id}:
\begin{lemma} \label{lemma:hfpbd_derind}
Let $\mathfrak B$ be a (small) abelian category. We endow the pretriangulated dg-cartegory $\dercatabdgbd(\mathfrak B)$ with the natural t-structure. Moreover, let $\dercatabdgplus(\Ind(\mathfrak B))$ be (a fixed dg-enhancement of) the left bounded derived category of the Grothendieck category $\Ind(\mathfrak B)$. The exact fully faithful functor
\[
\mathfrak B \hookrightarrow \Ind(\mathfrak B)
\]
yields a t-exact fully faithful quasi-functor
\[
\dercatabdgbd(\mathfrak B) \hookrightarrow \dercatabdgplus(\Ind(\mathfrak B)),
\]
which induces a t-exact quasi-equivalence
\begin{equation}
    \dercatabdgbd(\mathfrak B) \xrightarrow{\sim} \hfpbd(\dercatabdgplus(\Ind(\mathfrak B))).
\end{equation}
\end{lemma}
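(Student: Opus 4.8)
The plan is to verify the three assertions in turn: that the induced map is a quasi-functor which is t-exact, that it is fully faithful, and that its essential image is exactly $\hfpbd(\dercatabdgplus(\Ind(\mathfrak B)))$. The genuinely substantial point is full faithfulness; the rest is formal. First I would observe that the embedding $\iota \colon \mathfrak B \hookrightarrow \Ind(\mathfrak B)$ is a \emph{strict} exact functor, so it applies termwise to complexes and preserves quasi-isomorphisms between bounded ones; hence it induces an honest dg-functor on the chosen enhancements and thereby a quasi-functor $\dercatabdgbd(\mathfrak B) \to \dercatabdgplus(\Ind(\mathfrak B))$, with no need to resolve. Exactness of $\iota$ gives $H^n_t(\iota X) \cong \iota(H^n_t(X))$, so the quasi-functor is t-exact and induces on hearts the inclusion $\mathfrak B \hookrightarrow \Ind(\mathfrak B)$ (the heart of the target being $\Ind(\mathfrak B)$).

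For the essential image, recall the identification $\fp(\Ind(\mathfrak B)) \cong \mathfrak B$ of \eqref{eq:fpind_id}. By Definition \ref{def:hfp}, an object of $\dercatabdgplus(\Ind(\mathfrak B))$ lies in $\hfpbd$ precisely when it is bounded and all its t-cohomologies lie in $\fp(\Ind(\mathfrak B)) \cong \mathfrak B$. The image of any bounded complex over $\mathfrak B$ visibly has this shape, so the functor factors through $\hfpbd(\dercatabdgplus(\Ind(\mathfrak B)))$, as claimed.

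The crux is full faithfulness. By the usual dévissage along truncation triangles this reduces to showing that the comparison maps $\Ext^n_{\mathfrak B}(X,Y) \to \Ext^n_{\Ind(\mathfrak B)}(X,Y)$ are isomorphisms for all $X,Y \in \mathfrak B$ and $n \geq 0$. Here I would use that $\mathfrak B$ is a weak Serre subcategory of the \emph{locally coherent} Grothendieck category $\Ind(\mathfrak B)$ (local coherence holds because $\fp(\Ind(\mathfrak B)) \cong \mathfrak B$ is abelian, so $\mathfrak B$ is closed under kernels, cokernels and extensions), and invoke the standard resolution criterion for $D^b$: the comparison is an isomorphism as soon as the following approximation property holds. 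For every $A \in \Ind(\mathfrak B)$ and every epimorphism $A \twoheadrightarrow B$ with $B \in \mathfrak B$, there exist $B' \in \mathfrak B$ and a morphism $B' \to A$ whose composite $B' \to A \twoheadrightarrow B$ is still an epimorphism. I would verify this directly: writing $A = \varinjlim_i A_i$ as a filtered colimit of objects $A_i \in \mathfrak B$, the subobjects $\mathrm{im}(A_i \to B)$ form a directed family whose union is $B$; since $B \in \mathfrak B$ is finitely generated this union is attained at some index $i_0$, so $A_{i_0} \to B$ is already epic, and $B' := A_{i_0}$ with its structure map $A_{i_0} \to A$ does the job.

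Finally, essential surjectivity onto $\hfpbd$ follows by induction on the number of nonzero t-cohomologies. Given $B \in \hfpbd(\dercatabdgplus(\Ind(\mathfrak B)))$ with cohomology in degrees $[a,b]$, its truncation triangle exhibits $B$ as built from $H^a(B)[-a]$ (a single object of $\mathfrak B$) and $\tau_{\geq a+1}B \in \hfpbd$ of shorter cohomological length, the gluing datum being a morphism between images of objects of $\dercatabdgbd(\mathfrak B)$; by the inductive hypothesis and the full faithfulness just established, this morphism lifts to the source, and the cone of the lift maps isomorphically to $B$. I expect the full-faithfulness step (the $\Ext$-comparison, resting on the approximation property) to be the only real obstacle; once it is in place, t-exactness, the image description, and essential surjectivity are all routine.
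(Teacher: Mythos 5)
Your proposal is correct, but it distributes the work very differently from the paper. The paper's proof is essentially two lines: it \emph{realizes} $\dercatabdgbd(\mathfrak B)$ as the full dg-subcategory of $\dercatabdgplus(\Ind(\mathfrak B))$ spanned by complexes of objects of $\mathfrak B$ with bounded cohomology (leaning on the classical identification $\dercomp^b(\mathfrak B) \simeq \dercomp^b_{\mathfrak B}(\Ind(\mathfrak B))$, which is folded into the choice of enhancement and the Stacks citation in the preamble), after which the lemma reduces to the tautological matching of objects: $Y \in \hfpbd(\dercatabdgplus(\Ind(\mathfrak B)))$ iff its cohomologies are almost all zero and lie in $\fp(\Ind(\mathfrak B)) \cong \mathfrak B$. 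What you do instead is prove that classical identification from scratch: full faithfulness by d\'evissage to the comparison $\Ext^n_{\mathfrak B}(X,Y) \to \Ext^n_{\Ind(\mathfrak B)}(X,Y)$, established via the approximation property (every epimorphism from an ind-object onto an object of $\mathfrak B$ is refined by an object of $\mathfrak B$, which you correctly verify using finite presentability of objects of $\mathfrak B$ and exactness of filtered colimits), and quasi-essential surjectivity onto $\hfpbd$ by induction on cohomological amplitude. Note that the paper's phrase ``by definition, this means that $Y \in \dercatabdgbd(\mathfrak B)$'' in the converse direction silently uses exactly the essential surjectivity statement you prove by induction, so your proof makes explicit an input the paper treats as part of the identification. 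The trade-off: the paper's route buys brevity by building the embedding into the model of the enhancement, while yours is self-contained and isolates the genuine mathematical content (local coherence of $\Ind(\mathfrak B)$ and the resolution criterion). One small caveat: your claim that termwise application of $\iota$ gives a dg-functor ``with no need to resolve'' is model-dependent (it fails for an h-injective model of $\dercatabdgplus(\Ind(\mathfrak B))$); this is harmless, since one may either choose a dg-quotient model as you indicate, or simply adopt the paper's device of taking $\dercatabdgbd(\mathfrak B)$ to be the relevant full dg-subcategory, but it deserves a sentence of justification.
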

\begin{proof}
If we identify $\mathfrak B$ as a full (exact) abelian subcategory of $\Ind(\mathfrak B)$, we may identify $\dercatabdgbd(\mathfrak B)$ as the full dg-subcategory of $\dercatabdgplus(\Ind(\mathfrak B))$ of complexes of objects of $\mathfrak B$ also having almost all zero cohomologies. The natural t-structure on $\dercatabdgbd(\mathfrak B)$ is then obtained from the natural t-structure on $\dercatabdgplus(\Ind(\mathfrak B))$ by restriction.

Now, if $X \in \dercatabdgbd(\mathfrak B)$, it is clear that it lies in $\hfpbd(\dercatabdgplus(\Ind(\mathfrak B)))$. Indeed, its cohomologies are almost all zero, and they lie in $\mathfrak B = \fp(\Ind(\mathfrak B))$. Conversely, let $Y \in \hfpbd(\dercatabdgplus(\Ind(\mathfrak B)))$. This means that $H^k(Y) \in \mathfrak B$ for all $k$, and $H^k(Y)=0$ for almost all $k$. By definition, this means that $Y \in \dercatabdgbd(\mathfrak B)$.
\end{proof}

We are now able to prove the following result, which can be summarized by the slogan \emph{``flat deformations of (small) abelian categories induce derived deformations of their bounded derived categories''}.
\begin{theorem} \label{thm:smallabdeform_derdeform}
Let $\mathfrak B$ be an $S$-linear (small) abelian category, and let $\mathfrak A$ be a $R$-linear flat deformation of $\mathfrak B$. Then, $\dercatabdgbd(\mathfrak A)$ is a $R$-linear derived deformation of the $S$-linear dg-category $\dercatabdgbd(\mathfrak B)$.
\end{theorem}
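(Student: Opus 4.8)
The plan is to reduce the bounded statement to the Grothendieck case already established in Theorem \ref{thm:abdeform_derdeform}, by passing to the ind-categories and then cutting back down to bounded, homotopically finitely presented objects via Lemma \ref{lemma:hfpbd_derind}. Throughout, the guiding principle is that every comparison should be realized by a \emph{t-exact} quasi-equivalence, so that the final chain assembles into a derived deformation in the sense of Definition \ref{def:derived_deformations}.

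The first step I would carry out is to promote the given abelian deformation to a deformation of the ind-categories: namely, to show that $\Ind(\mathfrak A)$ is an $R$-linear flat deformation of the $S$-linear Grothendieck category $\Ind(\mathfrak B)$. Since $\mathfrak A$ is a flat deformation of $\mathfrak B$ and $S$ is finitely presented as an $R$-module, the functor of $S$-linear objects $\Fun_R(S,-)$ commutes with the filtered colimits defining ind-objects; combined with the identification $\fp(\Ind(\mathfrak B)) \cong \mathfrak B$ of \eqref{eq:fpind_id}, this should yield an $S$-linear equivalence $\Ind(\mathfrak A)_{(S)} \cong \Ind(\mathfrak B)$ together with preservation of flatness. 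This is the step I expect to be the main obstacle, as it requires carefully tracking the interaction between $\Ind$, the functor $\Fun_R(S,-)$, and the flatness hypothesis; the remaining steps are essentially formal bookkeeping of the quasi-equivalences recollected in \S\ref{subsection:recollections_before_deformations}.

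Next I would invoke Theorem \ref{thm:abdeform_derdeform} for the Grothendieck pair $\Ind(\mathfrak A)$, $\Ind(\mathfrak B)$, producing a t-exact $S$-linear quasi-equivalence $\dercatabdgplus(\Ind(\mathfrak B)) \cong \dercatabdgplus(\Ind(\mathfrak A))_{(S)}$. Applying $\hfpbd(-)$ and using Lemma \ref{lemma:hfp_tstruct_restrict} \eqref{enum:hfp_functoriality_texact}, whose hypotheses are met because the hearts are the locally coherent Grothendieck categories $\Ind(\mathfrak B)$ and $\Ind(\mathfrak A)$, so that filtered colimits are exact and the $\hfp$-subcategories are pretriangulated by Example \ref{example:hlc_hfp}, I obtain a t-exact quasi-equivalence $\hfpbd(\dercatabdgplus(\Ind(\mathfrak B))) \cong \hfpbd(\dercatabdgplus(\Ind(\mathfrak A))_{(S)})$. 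The change-of-rings compatibility \eqref{eq:quasieq_hfpbd_changeofrings} then rewrites the right-hand side as $\hfpbd(\dercatabdgplus(\Ind(\mathfrak A)))_{(S)}$.

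Finally, Lemma \ref{lemma:hfpbd_derind} identifies $\hfpbd(\dercatabdgplus(\Ind(\mathfrak B))) \cong \dercatabdgbd(\mathfrak B)$ and, applied to $\mathfrak A$, gives $\hfpbd(\dercatabdgplus(\Ind(\mathfrak A))) \cong \dercatabdgbd(\mathfrak A)$, both t-exactly. Concatenating the quasi-equivalences above yields a t-exact $S$-linear quasi-equivalence $\dercatabdgbd(\mathfrak B) \cong \dercatabdgbd(\mathfrak A)_{(S)}$, with the underlying $R$-linear quasi-functor $\dercatabdgbd(\mathfrak B) \to \dercatabdgbd(\mathfrak A)$ inherited from the chain; this is precisely the assertion that $\dercatabdgbd(\mathfrak A)$ is an $R$-linear derived deformation of $\dercatabdgbd(\mathfrak B)$.
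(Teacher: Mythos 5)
Your proposal follows essentially the same route as the paper: pass to ind-categories, apply Theorem \ref{thm:abdeform_derdeform} to the Grothendieck pair, apply $\hfpbd(-)$ together with the change-of-rings compatibility \eqref{eq:quasieq_hfpbd_changeofrings}, and conclude via Lemma \ref{lemma:hfpbd_derind}. The one step you single out as the main obstacle — that $\Ind(\mathfrak A)$ is an $R$-linear flat deformation of $\Ind(\mathfrak B)$ — is not reproved in the paper either: it is quoted directly from \cite[\S 5]{lowen-vandenbergh-deformations-abelian}, so your sketch of it (via $\Fun_R(S,-)$ commuting with filtered colimits) is unnecessary, and the rest of your argument matches the paper's proof.
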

\begin{proof}
From \cite[\S 5]{lowen-vandenbergh-deformations-abelian} we know that the given flat deformation $\mathfrak A$ of $\mathfrak B$ induces a flat deformation $\Ind(\mathfrak A)$ of $\Ind(\mathfrak B)$. We can then apply Theorem \ref{thm:abdeform_derdeform} and conclude that $\dercatabdgplus(\Ind(\mathfrak A))$ is a derived deformation of $\dercatabdgplus(\Ind(\mathfrak B))$. Namely, there is a t-exact quasi-equivalence:
\[
\dercatabdgplus(\Ind(\mathfrak B)) \cong \dercatabdgplus(\Ind(\mathfrak A))_{(S)}.
\]
We apply $\hfpbd(-)$ and we obtain t-exact quasi-equivalences:
\begin{align*}
    \hfpbd(\dercatabdgplus(\Ind(\mathfrak B))) & \cong \hfpbd(\dercatabdgplus(\Ind(\mathfrak A))_{(S)}) \\
    & \cong \hfpbd(\dercatabdgplus(\Ind(\mathfrak A)))_{(S)} \qquad \text{(cf. \eqref{eq:quasieq_hfpbd_changeofrings})}
\end{align*}
Applying the above Lemma \ref{lemma:hfpbd_derind}, we obtain a t-exact quasi-equivalence
\[
\dercatabdgbd(\mathfrak B) \cong \dercatabdgbd(\mathfrak A)_{(S)}
\]
as we wanted.
\end{proof}

\subsection{Deformations of derived injectives} \label{subsec:deformations_dginj}
Thanks to the results in \cite{genovese-lowen-vdb-dginj} and in analogy with \cite{lowen-vandenbergh-deformations-abelian}, it is natural to expect that derived deformations of dg-categories with suitable t-structures can be understood in terms of appropriately left defined deformations (Definition \ref{def:left_derived_deformation}) of their derived injectives. We will indeed show (Proposition \ref{prop:properties_lifting_leftdeformation} and Theorem \ref{thm:dginj_derived_deformation}) that ``dg-categories of derived injectives'' are stable under (left) derived deformations, and that such deformations induce derived deformations of the associated t-structures.
\subsubsection{Setup and preliminaries} \label{subsubsection:dginj_deform_setup}
We fix a morphism $\theta \colon R \to S$ of commutative dg-rings strictly concentrated in nonpositive degrees. We make the following assumptions:
\begin{enumerate}[series=propertieslist]
    \item \label{item:dginj_deform_setup_strictsurj} The morphism $\theta \colon R \to S$ is strictly surjective. 
    \item \label{item:dginj_deform_setup_homotopicallycoherent} Both $R$ and $S$ are \emph{homotopically coherent} (see Definition \ref{def:homotopically_fp}), namely, $H^0(R)$ and $H^0(S)$ are coherent commutative rings, and
    \begin{align*}
    H^i(R) \in \operatorname{mod}(H^0(R)), \\
    H^i(S) \in \operatorname{mod}(H^0(S)),
    \end{align*}
    for all $i \in \mathbb Z$.
    \item \label{item:dginj_deform_setup_hfp} $S$ is homotopically finitely presented as an $R$-dg-module (see Definition \ref{def:homotopically_fp}), namely $H^i(S) \in \operatorname{mod}(H^0(R))$ for all $i \in \mathbb Z$.
    \item \label{item:dginj_deform_setup_cohomologicalnilpotency} The dg-ideal $I=\ker \theta$ is cohomologically nilpotent, namely $H^*(I^n)=0$ for some integer $n> 0$.
    \item \label{item:dginj_deform_setup_powers_hfp} The dg-ideals $I^k$ are homotopically finitely presented as $R$-dg-modules for all $k=1,\ldots, n-1$.
\end{enumerate}
\begin{remark}
{In principle, we would like purely cohomological conditions on $R \to S$ that, upon replacement up to quasi-isomorphism, give the \eqref{item:dginj_deform_setup_strictsurj}-\eqref{item:dginj_deform_setup_powers_hfp} above. How to achieve this is not yet obvious to us. Still, these conditions hold in our key examples (see Example \ref{example:deformations_dgrings_mainexample} below). More details about assumption \eqref{item:dginj_deform_setup_powers_hfp} are contained in the following Remark \ref{remark:setup_explanation}.}
\end{remark}
\begin{remark} \label{remark:setup_explanation}
We can deduce from assumptions \eqref{item:dginj_deform_setup_strictsurj}-\eqref{item:dginj_deform_setup_hfp} that $I=\ker \theta$ is a homotopically finitely presented $R$-dg-module. Indeed, we have a short exact sequence of $R$-dg-modules
\[
0 \to I \to R \to S \to 0,
\]
which induces the usual long exact sequence in cohomology. Since $R$ is homotopically coherent and $S$ is homotopically finitely presented as an $R$-dg-module, we may use a similar argument as in \cite[Lemma 5.10]{genovese-lowen-vdb-dginj} and conclude that $I$ is homotopically finitely presented as an $R$-dg-module.

Unfortunately, it's not evident how to deduce that also the powers $I^k$ are homotopically finitely presented (for $k=1,\ldots,n-1$). However, this property will still hold in the examples we consider (see Example \ref{example:deformations_dgrings_mainexample} below), so we just assume it by hypothesis.
\end{remark}
\begin{example} \label{example:deformations_dgrings_mainexample}
{Is this ok? Maybe more details?} Let $\basering k$ be a field. We take $S=\basering k$, and we set
\[
R = \basering k[\epsilon] / (\epsilon^n),
\]
for some $n\geq 2$, where $\epsilon$ has degree $2-k$ for some $k \geq 2$. $R$ is thus a dg-ring with trivial differentials, and it is easy to see that the natural projection
\begin{align*}
\basering k[\epsilon] / (\epsilon^n) & \to \basering k, \\
a + \epsilon b & \mapsto a
\end{align*}
satisfies the assumptions of \S \ref{subsubsection:dginj_deform_setup}.
\end{example}

\subsubsection{Statement of the main results} \label{subsubsection:dginj_deform_results}
We recall from \cite[\S 5.2]{genovese-lowen-vdb-dginj} that a dg-category $\injcat A$ is \emph{(left) homotopically locally coherent (hlc)} if it has the following properties:
\begin{itemize}
    \item It is cohomologically concentrated in nonpositive degrees.
    \item $H^0(\injcat A)$ is additive.
    \item $H^0(\injcat A)$ is (left) coherent. Assuming additivity, this is the same as requiring that any morphism $f \colon A \to B$ in $H^0(\injcat A)$ has a weak cokernel, namely a morphism $g \colon B \to C$ in $H^0(\injcat A)$ such that the sequence
    \[
    \injcat A(C,-) \xrightarrow{g^*} \injcat A(B,-) \xrightarrow{f^*} \injcat A(A,-)
    \]
    is exact.
    \item For all $A \in \injcat A$, the representable left dg-module $\injcat A(A,-)$ is homotopically finitely presented: $H^k(\injcat A(A,-)) \in \operatorname{mod}(\opp{H^0(\injcat A)})$ for all $k \in \mathbb Z$.
\end{itemize}
We know from \cite{genovese-lowen-vdb-dginj} that $\injcat A$ is a ``dg-category of derived injectives'' if it is hlc and $H^0(\injcat A)$ is Karoubian. Our goal is to show that being a dg-category of derived injectives is preserved under left derived deformations (cf. Definition \ref{def:left_derived_deformation}). More precisely:
\begin{proposition} \label{prop:properties_lifting_leftdeformation}
We assume the setup of \S \ref{subsubsection:dginj_deform_setup}. Let $\injcat J$ be an $S$-linear hlc dg-category such that $H^0(\injcat J)$ is Karoubian, and let $\injcat I$ be a left derived deformation of $\injcat J$ along $R \to S$. Then, $\injcat I$ is hlc and $H^0(\injcat I)$ is Karoubian.
\end{proposition}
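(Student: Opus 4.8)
The plan is to transfer each defining property of an hlc, Karoubian dg-category from $\injcat J$ to $\injcat I$ along the reduction map, by a dévissage using that $I := \ker\theta$ is cohomologically nilpotent. Concretely, the left derived deformation supplies a quasi-functor $p \colon \injcat I \to \injcat J$ inducing an $S$-linear quasi-equivalence $S \lotimes_R \injcat I \xrightarrow{\sim} \injcat J$; since extension of scalars is the identity on objects, $H^0(p)$ is essentially surjective, and the comparison of hom-complexes is governed by the unit map $M \to S \lotimes_R M$ for $M := \injcat I(A,B)$, whose target is (quasi-isomorphic to) the corresponding hom-complex of $\injcat J$. Everything reduces to analysing this map.

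First I would prove that $\injcat I$ is cohomologically concentrated in nonpositive degrees. Each associated graded $I^k/I^{k+1}$ is an $S$-module strictly concentrated in nonpositive degrees (a subquotient of $R$), so base change gives $(I^k/I^{k+1}) \lotimes_R M \cong (I^k/I^{k+1}) \lotimes_S (S \lotimes_R M)$, which is concentrated in nonpositive degrees by Lemma \ref{lemma:tensorproduct_nonpositive} since $S \lotimes_R M$ is. Feeding these into the triangles arising from $0 \to I^k/I^{k+1} \to R/I^{k+1} \to R/I^k \to 0$ and inducting on $k$, the long exact cohomology sequences show $(R/I^k) \lotimes_R M$ is concentrated in nonpositive degrees for every $k$; taking $k = n$ and using that $R \xrightarrow{\sim} R/I^n$ (as $I^n$ is acyclic) yields the claim for $M$ itself. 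The same circle of ideas gives fullness of $H^0(p)$ on hom-sets: the triangle $I \lotimes_R M \to M \to S \lotimes_R M$ with $I \lotimes_R M$ concentrated in nonpositive degrees forces $H^1(I \lotimes_R M) = 0$, so $H^0(M) \twoheadrightarrow H^0(S \lotimes_R M)$.

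The next key point is that the kernel ideal $\mathcal I = \ker H^0(p)$ is nilpotent: an $n$-fold composite of morphisms in $\mathcal I$ factors, through the $I$-filtration of the composition maps, over $I^n \lotimes_R (-)$, which is acyclic by assumption \eqref{item:dginj_deform_setup_cohomologicalnilpotency}. Granting this, I would obtain additivity of $H^0(\injcat I)$ (a zero object and binary biproducts) and the splitting of idempotents (Karoubianness) by the standard lifting theory along a nilpotent ideal: the required data exist in $H^0(\injcat J)$, lift along the full, essentially surjective functor $H^0(p)$, and are corrected modulo $\mathcal I$. Coherence is handled in the same spirit: given $f \colon A \to B$ in $H^0(\injcat I)$, lift a weak cokernel of $H^0(p)(f)$ to some $g \colon B \to C$ by fullness, and check exactness of $\injcat I(C,-) \to \injcat I(B,-) \to \injcat I(A,-)$ by the dévissage above, reducing it to the known exactness of the corresponding sequence over $\injcat J$.

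Finally, for the homotopic finite presentation of the representables $\injcat I(A,-)$, I would combine the hfp property of the representables of $\injcat J$ with the finiteness hypotheses on $R \to S$: the dg-ideals $I^k$ are hfp $R$-modules (assumption \eqref{item:dginj_deform_setup_powers_hfp}) and $R$, $S$ are homotopically coherent (assumption \eqref{item:dginj_deform_setup_homotopicallycoherent}), so a dévissage through $I^k \lotimes_R \injcat I(A,-)$ in the style of \cite[Lemma 5.10]{genovese-lowen-vdb-dginj} shows each $H^k(\injcat I(A,-))$ lies in $\operatorname{mod}(\opp{H^0(\injcat I)})$. I expect the main obstacle to be precisely this finiteness transfer, together with the coherence step: one must pass from finite presentation over $H^0(\injcat J)$ to finite presentation over $H^0(\injcat I)$ along a nilpotent, coherent extension while keeping the truncation bookkeeping under control, whereas additivity and Karoubianness are comparatively formal consequences of the nilpotent-ideal lifting machinery.
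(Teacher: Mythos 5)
Most of your preparatory analysis is sound: the d\'evissage along the filtration $I \supseteq I^2 \supseteq \cdots \supseteq I^n$ does prove that $\injcat I$ is cohomologically concentrated in nonpositive degrees, that $H^0(p)$ is full and essentially surjective with nilpotent kernel ideal, and from there additivity and Karoubianness of $H^0(\injcat I)$ follow by standard nilpotent lifting. (The paper reaches the same facts by a different bookkeeping: it factors $R \to S$ into square-zero extensions, reduces to $I^2=0$, proves $H^0(\injcat J) \cong H^0(S) \otimes_{H^0(R)} H^0(\injcat I)$ in Corollary \ref{coroll:induced_deformation_H0}, and quotes \cite[Propositions A.5 and A.7]{lowen-vandenbergh-deformations-abelian}.) The genuine gap is your coherence step. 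A weak cokernel of $f$ in $H^0(\injcat I)$ cannot be obtained by lifting a weak cokernel $\bar g$ of $H^0(p)(f)$: any lift $g$ of $\bar g$ only satisfies $gf \in \ker H^0(p)$, not $gf=0$, so the candidate sequence is not even a complex, and no correction fixes this. Concretely, take $R = \basering k[\epsilon]/(\epsilon^2) \to S = \basering k$, let $\injcat I$ be finitely generated free $R$-modules (a left derived deformation of $\injcat J = $ finite-dimensional $\basering k$-vector spaces), and let $f = \epsilon \colon R \to R$. Then $H^0(p)(f)=0$, whose weak cokernels are the split monomorphisms out of $\basering k$; every lift $g$ of such a $\bar g$ has a component which is a unit of $R$, so $gf \neq 0$, and exactness fails: e.g.\ for $g$ a unit $u$ one has $\operatorname{Im}(u^*) = \Hom_R(R,X)$ while $\ker(f^*) = \{h : \epsilon h = 0\}$. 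The actual weak cokernel of $\epsilon$ is $\epsilon$ itself, which is \emph{not} a lift of any weak cokernel downstairs (it maps to $0$). So exactness over $\injcat I$ genuinely does not reduce to exactness over $\injcat J$ by d\'evissage.

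This is exactly why the paper's Lemma \ref{lemma:hlc_lift_coherentH0} argues differently: the weak cokernel is \emph{constructed}, not lifted. One forms the homotopy fibre $F = \cone(f^*)[-1]$ in left $\injcat I$-modules, proves that $F$ is homotopically finitely presented --- using the identification $I \lotimes_R \injcat I \cong I \lotimes_S \injcat J$ of Lemma \ref{lemma:tensor_lift_Slinear} (this is where the square-zero reduction pays off), the nontrivial tensor-preservation Lemma \ref{lemma:hfp_preserve_tensorproduct}, and the fact that restriction along $H^0(\injcat I) \to H^0(\injcat J)$ preserves finite presentation --- and only then obtains $g$ from a representable cover $H^0(\injcat I(C,-)) \twoheadrightarrow H^0(F)$. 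Your final paragraph on the representables $\injcat I(A,-)$ is closer in spirit to the paper's Lemma \ref{lemma:hlc_lift_deform}, but two points are underestimated there: the workhorse is not a cone argument ``in the style of \cite[Lemma 5.10]{genovese-lowen-vdb-dginj}'' but the resolution argument behind Lemma \ref{lemma:hfp_preserve_tensorproduct}, which is where hlc-ness of $\injcat J$ enters; and splicing the long exact sequences requires kernels and cokernels of maps of finitely presented modules to remain finitely presented, i.e.\ it needs the coherence of $H^0(\injcat I)$ (or the $H^0(\injcat J)$-linearity of the connecting maps) as an input. Since your coherence argument is the step that fails, the chain of deductions breaks precisely there.
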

We pospone the proof of Proposition \ref{prop:properties_lifting_leftdeformation} to \S \ref{subsubsection:dginj_deform_order2reduction} and  \S \ref{subsubsection:dginj_deform_liftingproperties_proof}. Now, we use it to prove the main result of this part, which can be summarized by the following slogan: \emph{left derived deformations of dg-categories of derived injectives induce derived deformations of the t-structures associated to them.}
\begin{theorem} \label{thm:dginj_derived_deformation}
Let $\injcat J$ be an $S$-linear dg-category which is homotopically locally coherent and such that $H^0(\injcat J)$ is Karoubian, and let $\injcat I \to \injcat J$ be a left derived deformation of $\injcat J$. We set:
\begin{align*}
    \cat B &= \opp{\hfp({\dercompdgmin}_{,S}(\opp{\injcat J}))}, \\
    \cat A &= \opp{\hfp({\dercompdgmin}_{,R}(\opp{\injcat I}))},
\end{align*}
endowed with the t-structures of \cite[Theorem 1.2]{genovese-lowen-vdb-dginj}. Then, $\cat A$ is a derived deformation of $\cat B$.
\end{theorem}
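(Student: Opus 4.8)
The plan is to realize $\cat A$ as a derived deformation of $\cat B$ in the sense of Definition \ref{def:derived_deformations}, i.e.\ to construct a t-exact $S$-linear quasi-equivalence $\cat B \xrightarrow{\sim} \cat A_{(S)}$; by Theorem \ref{thm:coext_univproperty} such an equivalence corresponds to an $R$-linear quasi-functor $\cat B \to \cat A$, which is exactly the datum required. The first move is to apply Proposition \ref{prop:properties_lifting_leftdeformation}, which tells us that $\injcat I$ is again homotopically locally coherent with $H^0(\injcat I)$ Karoubian. This is what legitimizes $\cat A$ together with its t-structure from \cite[Theorem 1.2]{genovese-lowen-vdb-dginj}; moreover it ensures that the natural t-structure on $\dercompdgmin_{,R}(\opp{\injcat I})$ has heart the module category $\Mod(H^0(\opp{\injcat I}))$, where filtered colimits are exact, and that its subcategory of homotopically finitely presented objects is pretriangulated (cf.\ \cite[Theorem 5.9]{genovese-lowen-vdb-dginj} and Example \ref{example:hlc_hfp}). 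These are precisely the hypotheses under which the change-of-rings equivalences recollected in \S\ref{subsection:recollections_before_deformations} apply.

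Next I would translate the deformation datum. By Definition \ref{def:left_derived_deformation} the quasi-functor $\injcat I \to \injcat J$ induces an $S$-linear quasi-equivalence $S \lotimes_R \injcat I \xrightarrow{\sim} \injcat J$. Since $S$ is strictly commutative, $S = \opp{S}$, so extension of scalars commutes with passage to opposites on the nose, and I obtain an $S$-linear quasi-equivalence $S \lotimes_R \opp{\injcat I} \xrightarrow{\sim} \opp{\injcat J}$.

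The heart of the proof is then a chain of t-exact $S$-linear quasi-equivalences, formally parallel to the one in the proof of Theorem \ref{thm:abdeform_derdeform}:
\begin{align*}
    \cat B &= \opp{\hfp({\dercompdgmin}_{,S}(\opp{\injcat J}))} \\
    &\cong \opp{\hfp({\dercompdgmin}_{,S}(S \lotimes_R \opp{\injcat I}))} \qquad \text{(cf.\ \eqref{eq:quasieq_restriction_texact})} \\
    &\cong \opp{\hfp({\dercompdgmin}_{,R}(S \lotimes_R \opp{\injcat I}))} \qquad \text{(cf.\ \eqref{eq:quasieq_changeofrings_derdgcat})} \\
    &\cong \opp{\hfp({\dercompdgmin}_{,R}(\opp{\injcat I})_{(S)})} \qquad \text{(cf.\ \eqref{eq:quasieq_changeofrings_derdgcat})} \\
    &\cong \opp{\hfp({\dercompdgmin}_{,R}(\opp{\injcat I}))}_{(S)} \qquad \text{(cf.\ \eqref{eq:quasieq_hfp_changeofrings} and \eqref{eq:quasieq_opposite_changeofrings})} \\
    &= \cat A_{(S)}.
\end{align*}
The first step applies \eqref{eq:quasieq_restriction_texact} to the quasi-equivalence of the previous paragraph; the second and third transport the derived dg-category across $R \to S$ using \eqref{eq:quasieq_changeofrings_derdgcat}; and the last commutes $\hfp(-)$ and $\opp{(-)}$ with coextension of scalars, which is permitted precisely by the hypotheses secured in the first step. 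Composing the chain yields the desired t-exact $S$-linear quasi-equivalence $\cat B \cong \cat A_{(S)}$, which is the sought-after statement.

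The genuinely hard part is not this chain, which is essentially bookkeeping once its inputs are available, but Proposition \ref{prop:properties_lifting_leftdeformation}, whose proof is deferred to \S\ref{subsubsection:dginj_deform_order2reduction} and \S\ref{subsubsection:dginj_deform_liftingproperties_proof}: establishing that homotopical local coherence and Karoubianness of $H^0$ descend along a left derived deformation is delicate, since it requires controlling finite presentation of representable modules and the existence of weak cokernels after the derived, non-flat base change $S \lotimes_R -$. A secondary point to check is that the recollected equivalences, stated for the unbounded $\dercompdg$, transfer verbatim to the bounded-above variant $\dercompdgmin$; this holds because each functor in play is t-exact and hence preserves bounded-above aisles, exactly as is already used implicitly in the proof of Theorem \ref{thm:abdeform_derdeform}.
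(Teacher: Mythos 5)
Your proposal is correct and takes essentially the same route as the paper's own proof: first invoke Proposition \ref{prop:properties_lifting_leftdeformation} to lift homotopical local coherence and Karoubianness to $\injcat I$, then pass to opposites to obtain the $S$-linear quasi-equivalence $S \lotimes_R \opp{\injcat I} \cong \opp{\injcat J}$, and finally run the identical chain of t-exact quasi-equivalences via \eqref{eq:quasieq_restriction_texact}, \eqref{eq:quasieq_changeofrings_derdgcat}, \eqref{eq:quasieq_hfp_changeofrings}, \eqref{eq:quasieq_opposite_changeofrings} and \eqref{eq:quasieq_hfp}. If anything, your version is marginally more careful than the paper's, since you make explicit both the transfer of the recollected equivalences from $\dercompdg$ to the bounded-above variant $\dercompdgmin$ (justified by t-exactness) and the verification of the hypotheses of Lemma \ref{lemma:hfp_tstruct_restrict}, points the paper leaves implicit.
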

\begin{proof}
By the above Proposition \ref{prop:properties_lifting_leftdeformation}, $\injcat I$ is an hlc dg-category such that $H^0(\injcat I)$ is Karoubian. Since $\injcat I \to \injcat J$ describes $\injcat I$ as a left derived deformation of $\injcat J$, the opposite quasi-functor $\opp{\injcat I} \to \opp{\injcat J}$ describes $\opp{\injcat J}$ as a left derived deformation of $\opp{\injcat I}$. Indeed, let $Q(\injcat I) \to \injcat I$ be an $R$-linear h-flat resolution of $\injcat I$; clearly, $\opp{Q(\injcat I)} \to \opp{\injcat I}$ is an h-flat resolution of $\opp{\injcat I}$. The $S$-linear quasi-equivalence
\[
S \otimes_R Q(\injcat I) \cong S \lotimes_R \injcat I \cong \injcat J
\]
induces $S$-linear quasi-equivalences
\begin{align*}
    S \lotimes_R \opp{\injcat I} & \cong S \otimes_R \opp{Q(\injcat I)} \\
    & \cong \opp{(S \otimes_R Q(\injcat I))} \\
    & \cong (S \lotimes_R \injcat I)^\mathrm{op} \\
    & \cong \opp{\injcat J},
\end{align*}
obtained by taking opposites (see also \S \ref{sec:duality}).

Now, we have a chain of t-exact quasi-equivalences:
\begin{align*}
       \cat B &\cong \opp{\hfp({\dercompdgmin}_{,S}(\opp{\injcat J}))} \qquad \text{(cf. \eqref{eq:dginj_reconstruction})}  \\
    &\cong \opp{\hfp({\dercompdgmin}_{,S}(S \lotimes_R \opp{\injcat I}))} \qquad \text{(cf. \eqref{eq:quasieq_restriction_texact})} \\
    &\cong \opp{\hfp({\dercompdgmin}_{,R}(S \lotimes_R \opp{\injcat I}))} \qquad \text{(cf. \eqref{eq:quasieq_changeofrings_derdgcat})}  \\
    & \cong  \opp{\hfp({\dercompdgmin}_{,R}(\opp{\injcat I})_{(S)})} \qquad \text{(cf. \eqref{eq:quasieq_changeofrings_derdgcat})} \\
    &\cong \opp{\hfp({\dercompdgmin}_{,R}(\opp{\injcat I}))}_{(S)} \qquad \text{(cf. \eqref{eq:quasieq_hfp_changeofrings} and \eqref{eq:quasieq_opposite_changeofrings})} \\
    &\cong \cat A_{(S)}. \qquad \text{(cf. \eqref{eq:changeofrings_stability_texactequivalence} and \eqref{eq:dginj_reconstruction})}
\end{align*}
We also remark that we used \eqref{eq:quasieq_hfp}
throughout. We conclude that $\cat A$ is indeed a derived deformation of $\cat B$.
\end{proof}
\subsubsection{Reduction to nilpotency order $2$} \label{subsubsection:dginj_deform_order2reduction}
The proof of Proposition \ref{prop:properties_lifting_leftdeformation} is our last technical endeavor. First, we explain how to replace the (cohomological) nilpotency assumption \eqref{item:dginj_deform_setup_cohomologicalnilpotency} in \S \ref{subsubsection:dginj_deform_setup} with the following stronger one:
\begin{primenumerate}[resume=propertieslist,start=\getrefnumber{item:dginj_deform_setup_cohomologicalnilpotency}]
\item \label{item:dginj_deform_setup_order2strictnilpotency} The dg-ideal $I=\ker(R \xrightarrow{\theta} S)$ is strictly nilpotent of order $2$, namely $I^2=0$.
\end{primenumerate}
To do so, we first observe that $\theta \colon R \to S$ can be factored as the following chain of morphisms of commutative dg-rings (all concentrated in nonpositive degrees):
\begin{equation} \label{eq:dgrings_morphism_factorization}
    R \xrightarrow{\theta_n} R/I^n \xrightarrow{\theta_{n, n-1}} R/I^{n-1} \to \cdots \xrightarrow{\theta_{2,1}} R/I = S.
\end{equation}
Moreover, $\theta_n \colon R \xrightarrow{\approx} R/I^n$ is a quasi-isomorphism, since the dg-ideal $I$ satisfies $H^*(I^n)=0$ by assumption \eqref{item:dginj_deform_setup_cohomologicalnilpotency} in \S \ref{subsubsection:dginj_deform_setup}.
\begin{lemma} \label{lemma:dgring_morphism_factorization_properties}
For $k=1,\ldots,n-1$, the morphism $\theta_{k+1,k} \colon R/I^{k+1} \to R/I^k$ satisfies the assumptions \eqref{item:dginj_deform_setup_strictsurj}-\eqref{item:dginj_deform_setup_powers_hfp} in \S \ref{subsubsection:dginj_deform_setup}, where \eqref{item:dginj_deform_setup_cohomologicalnilpotency} is replaced by the stronger \textup{\ref{item:dginj_deform_setup_order2strictnilpotency}} above, namely, $\ker(\theta_{k+1,k})^2=0$.
\end{lemma}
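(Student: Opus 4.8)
The plan is to check conditions \eqref{item:dginj_deform_setup_strictsurj}--\eqref{item:dginj_deform_setup_powers_hfp} (with \eqref{item:dginj_deform_setup_cohomologicalnilpotency} strengthened to \textup{\ref{item:dginj_deform_setup_order2strictnilpotency}}) for each projection $\theta_{k+1,k}$ separately. Two easy observations dispose of the ``formal'' conditions: $\theta_{k+1,k}$ is a canonical quotient map, hence strictly surjective, which is \eqref{item:dginj_deform_setup_strictsurj}; and its kernel is $I^k/I^{k+1}$, which satisfies $(I^k/I^{k+1})^2=0$ because $I^{2k}\subseteq I^{k+1}$ for $k\geq 1$, giving \textup{\ref{item:dginj_deform_setup_order2strictnilpotency}}. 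Since the kernel squares to zero, condition \eqref{item:dginj_deform_setup_powers_hfp} collapses to the single requirement that $I^k/I^{k+1}$ be homotopically finitely presented over $R/I^{k+1}$. All the dg-rings $R/I^k$ are strictly concentrated in nonpositive degrees, being quotients of $R$.

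Everything else is a cohomological finiteness computation resting on two standard facts about a coherent commutative ring $A$: finitely presented $A$-modules form an abelian subcategory of $\operatorname{Mod}(A)$ closed under extensions, and for a finitely generated ideal $J$ the quotient $A/J$ is again coherent, while any $A/J$-module that is finitely presented over $A$ is finitely presented over $A/J$ (tensor a finite presentation with $A/J$ and use $M\otimes_A A/J=M$). First I would run the long exact cohomology sequences of the strictly exact sequences of $R$-dg-modules
\[
0\to I^k\to R\to R/I^k\to 0,\qquad 0\to I^{k+1}\to I^k\to I^k/I^{k+1}\to 0.
\]
Combined with the fact that $R$ and the powers $I^j$ are hfp over $R$ (for $1\le j\le n-1$ by \eqref{item:dginj_deform_setup_powers_hfp}, with $I^n$ acyclic by \eqref{item:dginj_deform_setup_cohomologicalnilpotency}) and the abelian-category fact, these show that $H^i(R/I^k)$ and $H^i(I^k/I^{k+1})$ are finitely presented over $H^0(R)$ for all $i$ and all $k$. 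Reading the first sequence in degree zero identifies $H^0(R/I^k)=H^0(R)/J_k$ with $J_k=\operatorname{im}(H^0(I^k)\to H^0(R))$ a finitely generated ideal, so each $H^0(R/I^k)$ is coherent by the quotient fact.

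It then remains to transport these finiteness statements from $H^0(R)$ down to the relevant quotient ring. Here I would use that $R/I^k$ and $I^k/I^{k+1}$ are genuinely $R/I^{k+1}$-modules (because $I^{k+1}\subseteq I^k$ and $I^{k+1}I^k\subseteq I^{2k+1}\subseteq I^{k+1}$), so the $H^0(R)$-action on their cohomologies factors through $H^0(R/I^{k+1})$ and the finitely generated ideal $J_{k+1}$ acts by zero; the descent fact then upgrades finite presentation over $H^0(R)$ to finite presentation over $H^0(R/I^{k+1})$. Applying this (with $k+1$, and also with $k$ in place of $k+1$) yields homotopical coherence of every $R/I^k$, hence \eqref{item:dginj_deform_setup_homotopicallycoherent} for $\theta_{k+1,k}$; finite presentation of $R/I^k$ over $R/I^{k+1}$, i.e.\ \eqref{item:dginj_deform_setup_hfp}; and finite presentation of $I^k/I^{k+1}$ over $R/I^{k+1}$, which is the reduced form of \eqref{item:dginj_deform_setup_powers_hfp}.

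The main obstacle I anticipate is purely one of bookkeeping: every finiteness assertion must be taken relative to the correct base ring, $H^0(R/I^{k+1})$ rather than $H^0(R)$. What makes this work, and what must be set up carefully, is the coherence of the intermediate rings $H^0(R/I^k)$ together with the descent of finite presentation along the surjections $H^0(R)\twoheadrightarrow H^0(R/I^{k})$; once these are in place the chain \eqref{eq:dgrings_morphism_factorization} becomes usable and the remaining verifications are routine diagram chases through the long exact sequences.
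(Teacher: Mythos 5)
Your proof is correct and follows essentially the same route as the paper: the same decomposition into (i) the trivial verifications of strict surjectivity and $\ker(\theta_{k+1,k})^2=0$ via $I^{2k}\subseteq I^{k+1}$, (ii) coherence of $H^0(R/I^k)\cong H^0(R)/\operatorname{Im}(H^0(I^k)\to H^0(R))$ as a quotient of a coherent ring by a finitely generated ideal, (iii) finite presentation of the relevant cohomologies over $H^0(R)$ via long exact sequences and the closure properties of finitely presented modules over a coherent ring, and (iv) descent of finite presentation along the surjections $H^0(R)\twoheadrightarrow H^0(R/I^j)$, using that the kernel acts trivially on the modules in question.

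The only deviations are minor. You prove the descent fact by tensoring a finite presentation with the quotient ring and using right-exactness, which is more elementary than the paper's Sublemma (fully faithfulness of restriction along a surjection plus a filtered-colimit argument); and you quote quotient-coherence as standard, where the paper proves it inline --- by exactly the argument you would give. Also, for assumption \eqref{item:dginj_deform_setup_powers_hfp} you run the long exact sequence of $0\to I^{k+1}\to I^k\to I^k/I^{k+1}\to 0$ over $R$ (noting $I^n$ is acyclic in the top case) and then descend, whereas the paper instead uses $0\to I^k/I^{k+1}\to R/I^{k+1}\to R/I^k\to 0$ over $R/I^{k+1}$, which requires its other sublemmas (homotopical coherence of $R/I^{k+1}$ and hfp-ness of $R/I^k$ over it) to be established first; both are valid, and your ordering is marginally more direct.
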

\begin{proof}
$\ker(\theta_{k+1,k})$ is isomorphic to $I^k/I^{k+1}$, which is clearly nilpotent of order $2$, so \textup{\ref{item:dginj_deform_setup_order2strictnilpotency}} holds. The morphism $\theta_{k+1,k}$ is clearly surjective, so \eqref{item:dginj_deform_setup_strictsurj} also holds. Assumptions \eqref{item:dginj_deform_setup_homotopicallycoherent} and \eqref{item:dginj_deform_setup_hfp} follow from Sublemmas \ref{sublemma:quotient_H0_coherent}, \ref{sublemma:factors_hfp} and \ref{quotient_homotopicallycoherent}  below. Finally, \eqref{item:dginj_deform_setup_powers_hfp} reduces to the claim that $\ker(\theta_{k+1,k})$ is homotopically finitely presented, and this follows from a similar argument as in Remark \ref{remark:setup_explanation}.
\end{proof}
\begin{sublemma} \label{sublemma:finitelypresented_lift_surjective}
Let $g \colon A \to B$ be a surjective morphism of commutative rings, and let $M$ be a $B$-module. If $M$ is finitely presented as an $A$-module, it is finitely presented as a $B$-module.
\end{sublemma}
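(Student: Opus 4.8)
The plan is to tensor a finite presentation of $M$ over $A$ up to $B$ and exploit the right-exactness of the tensor product. Write $I=\ker(g)$, so that $B\cong A/I$. Since $M$ is a $B$-module, the $A$-module structure on $M$ is obtained by restriction along $g$; in particular the ideal $I$ annihilates $M$, i.e. $IM=0$. This observation is the only ring-theoretic input we need.

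First I would invoke the hypothesis to choose a finite presentation of $M$ as an $A$-module, namely an exact sequence of $A$-modules
\[
A^m \xrightarrow{\phi} A^n \xrightarrow{\psi} M \to 0 .
\]
Next I would apply the right-exact functor $B\otimes_A(-)$ to this sequence, obtaining an exact sequence of $B$-modules
\[
B\otimes_A A^m \to B\otimes_A A^n \to B\otimes_A M \to 0 .
\]
Using the canonical identifications $B\otimes_A A^k\cong B^k$ for the two left-hand terms, this becomes $B^m\to B^n\to B\otimes_A M\to 0$.

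The key step is the identification of the rightmost term: $B\otimes_A M\cong (A/I)\otimes_A M\cong M/IM$, and since $IM=0$ the quotient map $M\to M/IM$ is an isomorphism, so $B\otimes_A M\cong M$ as $B$-modules. Substituting this yields an exact sequence
\[
B^m \to B^n \to M \to 0
\]
of $B$-modules, which is precisely a finite presentation of $M$ over $B$, as desired. There is no serious obstacle here: the argument is a formal consequence of right-exactness, and the only point requiring a moment's care is checking that the induced surjection $B^n\to M$ is the reduction of $\psi$ (so that the image of $B^m\to B^n$ is indeed its kernel), which is immediate from naturality of the isomorphism $B\otimes_A M\cong M$.
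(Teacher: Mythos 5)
Your proof is correct, and it is genuinely different from the one in the paper. You work with the concrete definition of finite presentation: starting from a presentation $A^m \to A^n \to M \to 0$ over $A$, you apply the right exact functor $B \otimes_A -$ and use the isomorphism $B \otimes_A M \cong M/IM \cong M$ (valid precisely because $g$ is surjective and $M$ is a $B$-module, so $I = \ker g$ kills $M$) to obtain a presentation $B^m \to B^n \to M \to 0$ over $B$. The paper never chooses a presentation: it instead uses the categorical characterization of finitely presented modules as those $M$ for which $\Hom_B(M,-)$ commutes with filtered colimits, together with the observation that, $g$ being surjective, the restriction functor $\Res_g \colon \Mod(B) \to \Mod(A)$ is fully faithful and commutes with colimits; the compactness of $M$ over $B$ is then inherited from its compactness over $A$ by a short diagram chase. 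Your argument is more elementary and has the virtue of producing an explicit presentation; the paper's argument is presentation-free and transports verbatim to finitely presented objects in more general abelian categories, which matches how finite presentation is used elsewhere in the paper (e.g.\ in hearts of t-structures, where free objects are unavailable). Both proofs use surjectivity in an essential way — yours through $B \cong A/I$ and $IM = 0$, the paper's through full faithfulness of $\Res_g$ — and both are complete.
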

\begin{proof}
Since $g$ is surjective, we easily see that the restriction functor
\[
\Res_g \colon \Mod(B) \to \Mod(A)
\]
is fully faithful. Now, let $M \in \Mod(B)$ be such that $\Res_g(M)$ is a finitely presented $A$-module. We show that $M$ is finitely presented by checking that for any directed system $\{X_i : i \in I \}$ in $\Mod(B)$, the natural map
\[
\varinjlim_i \Hom_B(M,X_i) \to \Hom_B(M, \varinjlim_i X_i)
\]
is an isomorphism. Since $\Res_g$ is fully faithful and commutes with colimits, we have a commutative diagram:
\[
\begin{tikzcd}
{\varinjlim_i \Hom_B(M,X_i)} \arrow[d, "\sim"'] \arrow[r] & {\Hom_B(M,\varinjlim_i X_i)} \arrow[d, "\sim"] \\
{\varinjlim_i \Hom_A(\Res_g(M),\Res_g(X_i))} \arrow[r]    & {\Hom_A(\Res_g(M),\varinjlim_i \Res_g(X_i)).}  
\end{tikzcd}
\]
By hypothesis, the lower horizontal arrow is an isomorphism. We conclude that the upper horizontal arrow is an isomorphism, as claimed.
\end{proof}
\begin{sublemma} \label{sublemma:quotient_H0_coherent}
The commutative ring $H^0(R/I^k)$ is coherent for all $k=1,\ldots,n-1$.
\end{sublemma}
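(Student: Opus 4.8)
The plan is to reduce the statement to an elementary fact about coherent commutative rings and then invoke Sublemma \ref{sublemma:finitelypresented_lift_surjective}. First I would compute $H^0(R/I^k)$. Since $R$ is strictly concentrated in nonpositive degrees and $I^k$ is a dg-ideal, both $I^k$ and $R/I^k$ are strictly concentrated in nonpositive degrees, so $H^i(I^k)=0$ for $i>0$. The short exact sequence of $R$-dg-modules $0 \to I^k \to R \to R/I^k \to 0$ then yields, via the long exact cohomology sequence, an exact sequence $H^0(I^k) \xrightarrow{\alpha} H^0(R) \to H^0(R/I^k) \to 0$. Hence the canonical ring surjection $H^0(R) \twoheadrightarrow H^0(R/I^k)$ identifies $H^0(R/I^k)$ with $A/J$, where $A := H^0(R)$ and $J := \operatorname{im}(\alpha)$ is an ideal of $A$. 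The key input is that $J$ is finitely generated: by assumption \eqref{item:dginj_deform_setup_powers_hfp} the dg-ideal $I^k$ is homotopically finitely presented, so $H^0(I^k) \in \operatorname{mod}(A)$ is in particular a finitely generated $A$-module, and therefore so is its image $J$.

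Next I would prove the purely ring-theoretic claim: if $A$ is a coherent commutative ring (which holds by assumption \eqref{item:dginj_deform_setup_homotopicallycoherent}) and $J \subseteq A$ is a finitely generated ideal, then $A/J$ is coherent. Let $\bar K$ be an arbitrary finitely generated ideal of $A/J$. Lifting a finite generating set of $\bar K$ to $A$ and adjoining a finite generating set of $J$ produces a finitely generated ideal $K \subseteq A$ with $J \subseteq K$ and $K/J = \bar K$; here finiteness of the generating set of $K$ uses that $J$ itself is finitely generated. Since $A$ is coherent, the finitely generated ideal $K$ is finitely presented as an $A$-module. As $J$ is a finitely generated submodule of the finitely presented $A$-module $K$, the quotient $\bar K = K/J$ is again finitely presented over $A$.

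Finally I would transfer finite presentation across the surjection $A \to A/J$. The $A/J$-module $\bar K$ is, by the previous step, finitely presented when regarded as an $A$-module; Sublemma \ref{sublemma:finitelypresented_lift_surjective} then shows that $\bar K$ is finitely presented as an $A/J$-module. Since $\bar K$ was an arbitrary finitely generated ideal of $A/J$, this proves $A/J = H^0(R/I^k)$ is coherent for every $k = 1, \ldots, n-1$. The main obstacle is really the bookkeeping of the middle step: ensuring that a finitely generated ideal of the quotient lifts to a finitely generated ideal upstairs (which is exactly where finite generation of $J$, and hence the homotopical finite presentation hypothesis \eqref{item:dginj_deform_setup_powers_hfp}, is indispensable) and that a quotient of a finitely presented module by a finitely generated submodule remains finitely presented. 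Once these are in place, the reduction via Sublemma \ref{sublemma:finitelypresented_lift_surjective} is immediate.
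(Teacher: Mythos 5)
Your proposal is correct and follows essentially the same route as the paper's proof: the long exact cohomology sequence identifies $H^0(R/I^k)$ as $H^0(R)$ modulo the finitely generated ideal $\operatorname{Im}(H^0(I^k)\to H^0(R))$ (using assumption \eqref{item:dginj_deform_setup_powers_hfp}), coherence of $H^0(R)$ then makes any lifted finitely generated ideal finitely presented, the quotient by the finitely generated ideal stays finitely presented, and Sublemma \ref{sublemma:finitelypresented_lift_surjective} transfers finite presentation down the surjection. The only cosmetic difference is that you build the lifted ideal from chosen generators while the paper takes the full preimage; these coincide.
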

\begin{proof}
Consider the following short exact sequence:
\[
0 \to I^k \hookrightarrow R \xrightarrow{\theta_k} R/I^k \to 0
\]
and the induced cohomological long exact sequence:
\[
H^0(I^k) \to H^0(R) \to H^0(R/I^k) \to 0.
\]
We deduce the following isomorphism of commutative rings:
\[
H^0(R/I^k) \cong \frac{H^0(R)}{\operatorname{Im}(H^0(I^k) \to H^0(R))}.
\]
Since $H^0(I^k)$ is finitely generated by assumption \eqref{item:dginj_deform_setup_powers_hfp} in \S \ref{subsubsection:dginj_deform_setup}, we have that $\operatorname{Im}(H^0(I^k) \to H^0(R))$ is also a finitely generated $H^0(R)$-ideal. 

Now, $H^0(R/I^k)$ is a coherent ring, being the quotient of the coherent ring $H^0(R)$ by a finitely generated ideal. Indeed, let $\overline{J}$ be a finitely generated ideal of $H^0(R/I^k)$. Since $H^0(\theta_k) \colon H^0(R) \to H^0(R/I^k)$ is surjective, $\overline{J}$ is also a finitely generated as an $H^0(R)$-module. Moreover, it is (up to isomorphism of $H^0(R/I^k)$-modules) of the form
\[
\frac{J}{\operatorname{Im}(H^0(I^k) \to H^0(R))},
\]
for some ideal $J$ of $H^0(R)$. This ideal $J$ is finitely generated, being an extension of $\operatorname{Im}(H^0(I^k) \to H^0(R))$ and $\overline{J}$, which are finitely generated $H^0(R)$-modules. Since $H^0(R)$ is coherent, $J$ is also finitely presented. We conclude that
\[
\overline{J} \cong \frac{J}{\operatorname{Im}(H^0(I^k) \to H^0(R))}
\]
is a finitely presented $H^0(R)$-module, being a quotient of a finitely presented module by a finited generated module, cf. \cite[0517, Lemma 10.5.3]{stacks-project}. Applying the above Sublemma \ref{sublemma:finitelypresented_lift_surjective}, we conclude that $\overline{J}$ is finitely presented as a $H^0(R/I^k)$-module, as we wanted.
\end{proof}
\begin{sublemma} \label{sublemma:factors_hfp}
For all $k=1,\ldots,n-1$, the morphism $\theta_{k+1,k} \colon R/I^{k+1} \to R/I^k$ exhibits $R/I^k$ as a homotopically finitely presented $R/I^{k+1}$-dg-module.
\end{sublemma}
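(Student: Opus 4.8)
The plan is to compute the cohomology of $R/I^k$ and show that each $H^i(R/I^k)$ is finitely presented over $H^0(R/I^{k+1})$, which is exactly the assertion that $R/I^k$ is homotopically finitely presented as an $R/I^{k+1}$-dg-module. The cleanest route is to work first over $R$ itself and only afterwards descend along $\theta_{k+1}$. Concretely, I would start from the short exact sequence of $R$-dg-modules
\[
0 \to I^k \to R \to R/I^k \to 0
\]
and pass to the associated long exact sequence in cohomology. Since $R$ is homotopically coherent by assumption \eqref{item:dginj_deform_setup_homotopicallycoherent}, each $H^i(R)$ lies in $\operatorname{mod}(H^0(R))$; since $I^k$ is homotopically finitely presented for $k\le n-1$ by assumption \eqref{item:dginj_deform_setup_powers_hfp}, the same holds for $H^{i+1}(I^k)$. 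Because $H^0(R)$ is coherent, the finitely presented $H^0(R)$-modules form an abelian subcategory closed under kernels, cokernels and extensions, so the long exact sequence exhibits $H^i(R/I^k)$ as an extension of a kernel by a cokernel of maps between finitely presented modules. Hence $H^i(R/I^k)$ is finitely presented over $H^0(R)$ for all $i$ (only $i\le 0$ being nontrivial, everything being concentrated in nonpositive degrees).

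Next I would upgrade this to finite presentation over $H^0(R/I^{k+1})$. The quotient map $R/I^{k+1}\twoheadrightarrow R/I^k$ (coming from $I^{k+1}\subseteq I^k$) makes $R/I^k$ an $R/I^{k+1}$-dg-module, so $H^i(R/I^k)$ is naturally an $H^0(R/I^{k+1})$-module whose restriction of scalars along $H^0(R)\to H^0(R/I^{k+1})$ recovers the $H^0(R)$-module structure used above. The morphism $H^0(R)\to H^0(R/I^{k+1})$ is surjective: applying the long exact sequence to
\[
0 \to I^{k+1} \to R \to R/I^{k+1} \to 0
\]
and using $H^1(I^{k+1})=0$, since $I^{k+1}$ is strictly concentrated in nonpositive degrees, gives the surjectivity. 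I can then invoke Sublemma \ref{sublemma:finitelypresented_lift_surjective} with $A=H^0(R)$, $B=H^0(R/I^{k+1})$ and $M=H^i(R/I^k)$ to conclude that each $H^i(R/I^k)$ is finitely presented over $H^0(R/I^{k+1})$, which is precisely what we want.

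The only delicate point is bookkeeping with the module structures: one must verify that the $H^0(R/I^{k+1})$-module $H^i(R/I^k)$ restricts, along $H^0(R)\to H^0(R/I^{k+1})$, to exactly the finitely presented $H^0(R)$-module produced in the first step, so that Sublemma \ref{sublemma:finitelypresented_lift_surjective} applies verbatim. I do not expect this to be a genuine obstacle, as both structures descend from the ring structure on $R/I^k$. One could instead try to argue directly from the short exact sequence $0\to I^k/I^{k+1}\to R/I^{k+1}\to R/I^k\to 0$, but this would require knowing in advance that $I^k/I^{k+1}$ is finitely presented over $R/I^{k+1}$, which is more circular; working over $R$ and then descending via the surjection is preferable.
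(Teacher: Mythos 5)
Your proposal is correct and follows essentially the same route as the paper: both start from the short exact sequence $0 \to I^k \to R \to R/I^k \to 0$ to deduce that $R/I^k$ is homotopically finitely presented over $R$ (you inline the coherence/long-exact-sequence argument, while the paper cites \cite[Lemma 5.10]{genovese-lowen-vdb-dginj}), and both then descend along the surjection $H^0(R) \to H^0(R/I^{k+1})$ using Sublemma \ref{sublemma:finitelypresented_lift_surjective}. Your extra care about the module-structure bookkeeping and the explicit verification of surjectivity via $H^1(I^{k+1})=0$ are fine and pose no issue.
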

\begin{proof}
Let $k \in \{1,\ldots,n-1\}$. We consider the short exact sequence of $R$-dg-modules
\[
0 \to I^k \to R \xrightarrow{\theta_k} R/I^k \to 0.
\]
Since $R$ and $I^k$ are homotopically finitely presented by assumption, we may argue as in \cite[Lemma 5.10]{genovese-lowen-vdb-dginj} and deduce that $R/I^k$ is homotopically finitely presented as an $R$-dg-module. This means that $H^i(R/I^k)$ is a finitely presented $H^0(R)$-module for all $i \in \mathbb Z$. Now, the morphism $H^0(\theta_{k+1}) \colon H^0(R) \to H^0(R/I^{k+1})$ is surjective, so we may apply the above Sublemma \ref{sublemma:finitelypresented_lift_surjective} and conclude that $H^i(R/I^k)$ is finitely presented as an $H^0(R/I^{k+1})$-module for all $i \in \mathbb Z$, which is what we wanted.
\end{proof}
\begin{sublemma} \label{quotient_homotopicallycoherent}
The dg-ring $R/I^k$ is homotopically coherent for all $k=1,\ldots,n-1$.
\end{sublemma}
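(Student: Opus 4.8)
The plan is to verify directly the two conditions of Definition \ref{def:homotopically_fp}, assembling pieces already available from the preceding sublemmas. Note first that $R/I^k$ is again strictly concentrated in nonpositive degrees (being a quotient of $R$), so the definition applies. The coherence of the commutative ring $H^0(R/I^k)$ is precisely Sublemma \ref{sublemma:quotient_H0_coherent}, so half the statement is immediate; it remains to prove that each $H^i(R/I^k)$ is finitely presented as an $H^0(R/I^k)$-module.

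For this, I would first record that $H^i(R/I^k)$ is finitely presented \emph{as an $H^0(R)$-module}. This is exactly the intermediate conclusion reached in the proof of Sublemma \ref{sublemma:factors_hfp}: since $R$ is homotopically coherent and $I^k$ is homotopically finitely presented as an $R$-dg-module (assumption \eqref{item:dginj_deform_setup_powers_hfp} in \S\ref{subsubsection:dginj_deform_setup}), the short exact sequence $0 \to I^k \to R \to R/I^k \to 0$ together with the argument of \cite[Lemma 5.10]{genovese-lowen-vdb-dginj} shows that $H^i(R/I^k) \in \operatorname{mod}(H^0(R))$ for all $i \in \mathbb Z$.

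Finally, I would transfer finite presentation along the surjective ring morphism $H^0(\theta_k) \colon H^0(R) \to H^0(R/I^k)$, whose surjectivity is recorded in the proof of Sublemma \ref{sublemma:quotient_H0_coherent}. Each $H^i(R/I^k)$ is naturally an $H^0(R/I^k)$-module, being a graded component of the cohomology ring $H^*(R/I^k)$, and its restriction of scalars along $H^0(\theta_k)$ is the finitely presented $H^0(R)$-module from the previous step. Sublemma \ref{sublemma:finitelypresented_lift_surjective} then promotes this to finite presentation over $H^0(R/I^k)$, which combined with Sublemma \ref{sublemma:quotient_H0_coherent} yields that $R/I^k$ is homotopically coherent.

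I do not anticipate a serious obstacle: the statement is essentially bookkeeping on top of Sublemmas \ref{sublemma:finitelypresented_lift_surjective}, \ref{sublemma:quotient_H0_coherent} and \ref{sublemma:factors_hfp}. The only point requiring a little care is the compatibility of module structures, namely that restricting the $H^0(R/I^k)$-action on $H^i(R/I^k)$ along $H^0(\theta_k)$ reproduces the $H^0(R)$-action under which finite presentation was first established; this is automatic from the functoriality of $H^*$ applied to $\theta_k \colon R \to R/I^k$.
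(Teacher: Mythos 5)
Your proof is correct and follows essentially the same route as the paper: coherence of $H^0(R/I^k)$ from Sublemma \ref{sublemma:quotient_H0_coherent}, finite presentation of each $H^i(R/I^k)$ over a ring surjecting onto $H^0(R/I^k)$, and then transfer via Sublemma \ref{sublemma:finitelypresented_lift_surjective}. The only (immaterial) difference is that you transfer directly along $H^0(R) \twoheadrightarrow H^0(R/I^k)$, invoking the intermediate conclusion inside the proof of Sublemma \ref{sublemma:factors_hfp}, whereas the paper transfers along $H^0(R/I^{k+1}) \twoheadrightarrow H^0(R/I^k)$, invoking the statement of that sublemma.
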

\begin{proof}
Thanks to the above Sublemma \ref{sublemma:quotient_H0_coherent}, we only need to prove that
\[
H^i(R/I^k) \in \operatorname{mod}(H^0(R/I^k))
\]
for all $i \in \mathbb Z$. We know from Sublemma \ref{sublemma:factors_hfp} that $H^i(R/I^k)$ is a finitely presented $H^0(R/I^{k+1})$-module. Since $H^0(\theta_{k+1,k}) \colon H^0(R/I^{k+1}) \to H^0(R/I^k)$ is surjective, we may apply the above Sublemma \ref{sublemma:finitelypresented_lift_surjective} and conclude that $H^i(R/I^k)$ is indeed a finitely presented $H^0(R/I^k)$-module, for all $i \in \mathbb Z$.
\end{proof}

With Lemma \ref{lemma:dgring_morphism_factorization_properties} in mind, the idea is now to ``factor'' any left derived deformation $\injcat I \to \injcat J$ along $R \to S$ into a finite sequence of left derived deformations
\[
\injcat I_{k+1} \to \injcat I_k
\]
along $R/I^{k+1} \to R/I^k$. In order to do so, we have to check that the derived extension of scalars (cf. \S \ref{subsection:scalar_leftextension}) is transitive.
\begin{lemma} \label{lemma:leftextension_derived_transitive}
Let $R_1 \xrightarrow{f} R_2 \xrightarrow{g} R_3$ be morphisms of commutative dg-rings concentrated in nonpositive degrees, and let $\cat A$ be an $R_1$-linear dg-category. Then, there is a natural $R_3$-linear quasi-equivalence:
\begin{equation}
    R_3 \lotimes_{R_1} \cat A \cong R_3 \lotimes_{R_2} (R_2 \lotimes_{R_1} \cat A).
\end{equation}
\end{lemma}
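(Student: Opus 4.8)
The plan is to work with explicit h-flat models for the derived extension of scalars and reduce the claim to the strict associativity of the (underived) tensor product, which holds on the nose. Conceptually this is just the statement that left adjoints compose: both $R_3 \lotimes_{R_1} -$ and $R_3 \lotimes_{R_2}(R_2 \lotimes_{R_1} -)$ are left adjoint (in $\Hqe$, via Proposition \ref{prop:scalar_leftextension} and its corollary) to restriction of scalars along the composite $R_1 \to R_3$, since restriction is manifestly transitive; uniqueness of adjoints then yields the natural quasi-equivalence. I prefer, however, to give the concrete argument, as it produces the naturality and $R_3$-linearity directly and sidesteps the size bookkeeping of the adjunction.

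First I would fix a functorial $R_1$-linear h-flat resolution $Q(\cat A) \xrightarrow{\sim} \cat A$, so that by definition $R_2 \lotimes_{R_1} \cat A \cong R_2 \otimes_{R_1} Q(\cat A)$ and $R_3 \lotimes_{R_1}\cat A \cong R_3 \otimes_{R_1} Q(\cat A)$. The key step is the following preservation lemma: if $V$ is an h-flat $R_1$-dg-module, then $R_2 \otimes_{R_1} V$ is an h-flat $R_2$-dg-module. This follows from the natural isomorphism
\[
(R_2 \otimes_{R_1} V) \otimes_{R_2} W \cong V \otimes_{R_1} W_{R_1}
\]
for any $R_2$-dg-module $W$, where $W_{R_1}$ denotes the restriction of $W$ along $R_1 \to R_2$: if $W$ is acyclic over $R_2$ then $W_{R_1}$ is acyclic over $R_1$, and h-flatness of $V$ makes the right-hand side acyclic. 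Applying this termwise to the hom-complexes $Q(\cat A)(A,B)$ shows that $R_2 \otimes_{R_1} Q(\cat A)$ is h-flat as an $R_2$-linear dg-category.

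Consequently $R_2 \otimes_{R_1} Q(\cat A)$ may be used directly, without a further resolution, to compute the extension of scalars along $R_2 \to R_3$, giving
\[
R_3 \lotimes_{R_2}(R_2 \lotimes_{R_1}\cat A) \cong R_3 \otimes_{R_2}(R_2 \otimes_{R_1} Q(\cat A)).
\]
I would then invoke the strict associativity isomorphism $R_3 \otimes_{R_2}(R_2 \otimes_{R_1} U) \cong R_3 \otimes_{R_1} U$ of dg-modules, applied to each $U = Q(\cat A)(A,B)$; being the identity on objects and natural in $U$, it assembles into an $R_3$-linear isomorphism of dg-categories $R_3 \otimes_{R_2}(R_2 \otimes_{R_1} Q(\cat A)) \cong R_3 \otimes_{R_1} Q(\cat A) \cong R_3 \lotimes_{R_1}\cat A$. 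Chaining these identifications yields the desired natural $R_3$-linear quasi-equivalence. The only genuine content — and hence the main obstacle — is the h-flatness lemma of the previous paragraph; everything else is formal associativity of tensor products together with the definition of $\lotimes$ via h-flat resolutions. Naturality in $\cat A$ is inherited from the functoriality of $Q(-)$ and of the tensor products involved.
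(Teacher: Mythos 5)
Your proposal is correct and takes essentially the same route as the paper's proof: both reduce to an h-flat model of $\cat A$, prove the key preservation lemma that $R_2 \otimes_{R_1} Q(\cat A)$ remains h-flat as an $R_2$-linear dg-category (via the same restriction-of-scalars isomorphism $(R_2 \otimes_{R_1} V) \otimes_{R_2} W \cong V \otimes_{R_1} W$), and then conclude by the strict associativity isomorphism $R_3 \otimes_{R_2}(R_2 \otimes_{R_1} Q(\cat A)) \cong R_3 \otimes_{R_1} Q(\cat A)$, which the paper spells out as an explicit pair of mutually inverse $R_3$-linear dg-functors.
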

\begin{proof}
The strategy is to prove a corresponding result in the ``non-derived'' framework, and then use h-flat resolutions. Without less of generality, we assume that $\cat A$ is an h-flat $R_1$-linear dg-category, so that we may identify
\[
R_2 \lotimes_{R_1} \cat A  \cong R_2 \otimes_{R_1} \cat A.
\]
Let $\cat B$ any $R_2$-linear dg-category. We define the following dg-functors:
\begin{equation} \label{eq:leftextension_transitivity_nonderived}
    \begin{split}
        \cat B \otimes_{R_1} \cat A & \leftrightarrows \cat B \otimes_{R_2} (R_2 \otimes_{R_1} \cat A), \\
        g \otimes_{R_1} f & \mapsto g \otimes_{R_2} (1_{R_2} \otimes_{R_1} f), \\
        g r_2 \otimes_{R_1} f & \mapsfrom g \otimes_{R_2} (r_2 \otimes_{R_1} f),
    \end{split}
\end{equation}
and by the identity on objects. It is easy to see that they are $R_2$-linear isomorphisms, inverse to each other. Moreover, taking $\cat B =R_3$, we obtain a natural $R_3$-linear isomorphism:
\begin{equation} \label{eq:leftextension_transitivity_nonderived_R3linear}
R_3 \otimes_{R_1} \cat A  \leftrightarrows R_3 \otimes_{R_2} (R_2 \otimes_{R_1} \cat A).
\end{equation}

Next, we observe that $R_2 \otimes_{R_1} \cat A$ is an h-flat $R_2$-linear dg-category. Indeed, let $V$ be an acyclic $R_2$-dg-module. Then, the same argument as \eqref{eq:leftextension_transitivity_nonderived} yields an $R_1$-linear isomorphism:
\[
V \otimes_{R_2} (R_2 \otimes_{R_1} \cat A(A,B)) \cong V \otimes_{R_1} \cat A(A,B),
\]
for all objects $A,B \in \cat A$. Since $\cat A(A,B)$ is an h-flat $R_1$-dg-module, we conclude that $V \otimes_{R_1} \cat A(A,B)$ is acyclic. Hence, $V \otimes_{R_2} (R_2 \otimes_{R_1} \cat A(A,B))$ is also an acyclic $R_2$-dg-module (acylicity is independent of the choice of base).

Finally, since $\cat A$ and $R_2 \otimes_{R_1} \cat A$ are h-flat, we may identify
\begin{align*}
R_3 \lotimes_{R_2} (R_2 \lotimes_{R_1} \cat A) & \cong R_3 \otimes_{R_2} (R_2 \otimes_{R_1} \cat A), \\
R_3 \lotimes_{R_1} \cat A & \cong R_3 \otimes_{R_1} \cat A,
\end{align*}
as $R_3$-linear dg-categories. Using \eqref{eq:leftextension_transitivity_nonderived_R3linear}, we then immediately conclude.
\end{proof}
Finally, we are able to prove the desired reduction to nilpotency order $2$.
\begin{lemma} \label{lemma:nilpotency_order2_reduction}
Assume that Proposition \ref{prop:properties_lifting_leftdeformation} holds for all morphisms of dg-rings $R_1 \to R_2$ concentrated in nonpositive degrees, satisfying the assumptions in \S \ref{subsubsection:dginj_deform_setup} with \eqref{item:dginj_deform_setup_cohomologicalnilpotency} replaced by the stronger assumption
\begin{primenumerate}[resume=propertieslist,start=\getrefnumber{item:dginj_deform_setup_cohomologicalnilpotency}]
\item The dg-ideal $\ker(R_1 \to R_2)$ is strictly nilpotent of order $2$.
\end{primenumerate}
Then, it holds in its original version.
\end{lemma}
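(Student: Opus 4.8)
The plan is to exploit the factorization \eqref{eq:dgrings_morphism_factorization} of $\theta$ together with the transitivity of derived extension of scalars (Lemma \ref{lemma:leftextension_derived_transitive}) in order to break a general left derived deformation into a finite tower of order-$2$ deformations, each of which is governed by the special case of Proposition \ref{prop:properties_lifting_leftdeformation} that we are allowed to assume. Concretely, I would fix an $R$-linear h-flat resolution of $\injcat I$ and set, for $k = 1, \dots, n$,
\[
\injcat I_k = (R/I^k) \lotimes_R \injcat I,
\]
an $R/I^k$-linear dg-category. For $k = 1$ this is $S \lotimes_R \injcat I \cong \injcat J$, which is hlc with $H^0(\injcat J)$ Karoubian by hypothesis. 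At the top of the tower, since $H^*(I^n) = 0$ the morphism $\theta_n \colon R \xrightarrow{\approx} R/I^n$ is a quasi-isomorphism, so extension of scalars along it is a quasi-equivalence of the underlying dg-categories; hence $\injcat I_n \cong \injcat I$. I would also record that $H^0(R) \cong H^0(R/I^n)$, so that the conditions defining ``hlc'' — which are level-$H^0$ conditions on $H^0(\injcat I_n) \cong H^0(\injcat I)$ and finiteness conditions on the graded cohomology of representables — as well as Karoubianness of $H^0$, are invariant under this quasi-equivalence.

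Next I would verify that, for each $k$, the dg-category $\injcat I_{k+1}$ is a left derived deformation of $\injcat I_k$ along $\theta_{k+1,k} \colon R/I^{k+1} \to R/I^k$ in the sense of Definition \ref{def:left_derived_deformation}. This is precisely Lemma \ref{lemma:leftextension_derived_transitive} applied to $R \to R/I^{k+1} \to R/I^k$: it furnishes a natural $(R/I^k)$-linear quasi-equivalence
\[
(R/I^k) \lotimes_{R/I^{k+1}} \injcat I_{k+1} \cong (R/I^k) \lotimes_R \injcat I = \injcat I_k,
\]
with the requisite quasi-functor $\injcat I_{k+1} \to \injcat I_k$ being the canonical one induced by $\theta_{k+1,k}$. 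By Lemma \ref{lemma:dgring_morphism_factorization_properties}, each $\theta_{k+1,k}$ satisfies the assumptions of \S\ref{subsubsection:dginj_deform_setup} with \eqref{item:dginj_deform_setup_cohomologicalnilpotency} strengthened to strict nilpotency of order $2$, namely $\ker(\theta_{k+1,k})^2 = 0$.

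Finally I would run the induction on $k$. The base case $k = 1$ is $\injcat I_1 \cong \injcat J$, which is hlc with Karoubian $H^0$. Assuming $\injcat I_k$ is hlc with $H^0(\injcat I_k)$ Karoubian, the order-$2$ case of Proposition \ref{prop:properties_lifting_leftdeformation} — which the statement permits us to assume — applied to the deformation $\injcat I_{k+1} \to \injcat I_k$ along $\theta_{k+1,k}$, yields that $\injcat I_{k+1}$ is again hlc with $H^0(\injcat I_{k+1})$ Karoubian. After $n-1$ steps we obtain that $\injcat I_n$, and hence $\injcat I$ through the quasi-equivalence $\injcat I_n \cong \injcat I$, is hlc with Karoubian $H^0$, which is the desired conclusion. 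The homological content is entirely carried by the already-established transitivity and ring-theoretic lemmas, so I expect the only real obstacle to be the careful bookkeeping of the various $R/I^k$-linear structures and h-flat resolutions, ensuring that each step of the tower is genuinely a left derived deformation and that the invariance of ``hlc'' and ``$H^0$ Karoubian'' under the quasi-equivalence induced by $\theta_n$ is spelled out correctly.
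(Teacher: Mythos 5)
Your proposal is correct and follows essentially the same route as the paper: factor $\theta$ through the quotients $R/I^k$, use Lemma \ref{lemma:dgring_morphism_factorization_properties} and the transitivity Lemma \ref{lemma:leftextension_derived_transitive} to exhibit each stage as an order-$2$ left derived deformation, induct upward from $\injcat I_1 \cong \injcat J$, and transfer the conclusion along the quasi-isomorphism $\theta_n \colon R \to R/I^n$ at the end. The only (immaterial) difference is that you define $\injcat I_k = (R/I^k) \lotimes_R \injcat I$ directly, whereas the paper builds the tower iteratively as $\injcat I_k = (R/I^k) \lotimes_{R/I^{k+1}} \injcat I_{k+1}$; the two descriptions agree by the same transitivity lemma both arguments invoke.
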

\begin{proof}
We fix $\theta \colon R \to S$ as in \S \ref{subsubsection:dginj_deform_setup}, we identify $S=R/I$, and we fix a left derived deformation $\injcat I \to \injcat J$ along $R \to S$, assuming that $\injcat J$ is hlc and $H^0(\injcat J)$ is Karoubian. We define iteratively:
\begin{align*}
    \injcat I_n &= R/I^n \lotimes_R \injcat I, \\
    \injcat I_{n-1} &= R/I^{n-1} \lotimes_{R/I^n} \injcat I_n, \\
    & \vdots \\
    \injcat I_k &= R/I^k \lotimes_{R/I^{k+1}} \injcat I_{k+1}, \\
    & \vdots \\
    \injcat I_1 &= R/I \lotimes_{R/I^2} \injcat I_2.
\end{align*}
$\injcat I_k$ is an $R/I^k$-linear dg-category, and $\injcat I_{k+1}$ is a left derived deformation of $\injcat I_k$ along $\theta_{k+1,k} \colon R/I^{k+1} \to R/I^k$. We know from Lemma \ref{lemma:dgring_morphism_factorization_properties} that $\theta_{k+1,k}$ satisfies the assumptions \eqref{item:dginj_deform_setup_strictsurj}-\eqref{item:dginj_deform_setup_powers_hfp} in \S \ref{subsubsection:dginj_deform_setup}, where \eqref{item:dginj_deform_setup_cohomologicalnilpotency} is replaced by  \textup{\ref{item:dginj_deform_setup_order2strictnilpotency}}. By assumption, we have an $R/I$-linear quasi-equivalence
\[
R/I \lotimes_R \injcat I \cong \injcat J.
\]
Applying the above Lemma \ref{lemma:leftextension_derived_transitive} iteratively, we deduce that
\[
\injcat I_1 = R/I \lotimes_{R/I^2} \injcat I_2 \cong \injcat J
\]
as $R/I$-linear dg-categories.

Now, we may apply the hypothesis recursively and deduce that $\injcat I_n$ is indeed hlc (as an $R/I^n$-linear dg-category) and $H^0(\injcat I_n)$ is Karoubian. Finally, since $\theta_n \colon R \to R/I^n$ is a quasi-isomorphism, we get an $R$-linear quasi-equivalence
\[
\injcat I_n = R/I^n \lotimes_R \injcat I \cong R \lotimes_R \injcat I \cong \injcat I.
\]
Thanks to the induced isomorphism $H^0(R) \xrightarrow{\sim} H^0(R/I^n)$, we get an equivalence $\operatorname{mod}(H^0(R)) \cong \operatorname{mod}(H^0(R/I^n))$. That said, it is easy to conclude that $\injcat I$ is hlc as an $R$-linear dg-category, and $H^0(\injcat I)$ is Karoubian.
\end{proof}
\subsubsection{The proof of Proposition \ref{prop:properties_lifting_leftdeformation}} \label{subsubsection:dginj_deform_liftingproperties_proof}

We finally turn to the actual proof of Proposition \ref{prop:properties_lifting_leftdeformation}. We recall that we are making the following assumptions:
\begin{itemize}
    \item We fix a morphism of commutative dg-rings concentrated in nonpositive degrees $\theta \colon R \to S$, satisfying the assumptions of \S \ref{subsubsection:dginj_deform_setup}. We shall often identify $S=R/I$.
    \item Thanks to the above Lemma \ref{lemma:nilpotency_order2_reduction}, we may (and will) assume that the dg-ideal $I=\ker \theta$ is strictly nilpotent of order $2$: $I^2=0$.
    \item We fix an $S$-linear dg-category $\injcat J$ and an $R$-linear left derived deformation $\injcat I \to \injcat J$. To ease notation, we shall identify $\injcat J = S \lotimes_R \injcat I$. This is allowed, for the results we want to prove depend only on the quasi-equivalence class of $\injcat J$.
\end{itemize}
We first deduce:
\begin{lemma} \label{lemma:kernel_order2nilpotent_Sdgmodule}
The $R$-dg-ideal $I$ has a natural structure of $S$-dg-module, and its restriction along $R \to S$ gives back the original $R$-linear structure.
\end{lemma}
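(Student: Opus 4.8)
The plan is to exploit the strict square-zero condition $I^2 = 0$, which reduces the statement to the classical fact that a square-zero ideal is a module over the quotient ring, now carried out strictly at the dg-level. The $R$-dg-module structure on $I$ is nothing but scalar multiplication inside $R$: for $r \in R$ and $x \in I$ we have $rx \in I$ since $I$ is a dg-ideal. First I would observe that this action descends to $S = R/I$, i.e.\ that $rx$ depends only on the class $\theta(r) \in S$. Indeed, if $\theta(r) = \theta(r')$ then $r - r' \in I$, so $(r - r')x \in I^2 = 0$ and hence $rx = r'x$. We may therefore define
\[
\theta(r) \cdot x := rx, \qquad r \in R,\ x \in I,
\]
which is a well-defined $S$-scalar multiplication on $I$.

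Next I would verify that this makes $I$ an $S$-dg-module. Bilinearity, associativity and unitality are immediate from the corresponding properties of the multiplication of $R$ together with the well-definedness just established; and compatibility with the grading and the differential is inherited from $R$. Writing $d$ for the differential (of $R$, $S$ and $I$, which all agree on $I$), the Leibniz rule
\[
d(rx) = (dr)\,x + (-1)^{|r|} r\,(dx)
\]
in $R$ projects, via $\theta$, to the Leibniz rule for the $S$-action, since $\theta$ is a morphism of dg-rings and $dr$ is a lift of $d(\theta(r))$. Equivalently, and more invariantly, the module structure is encoded by the dg-ring map $\lambda \colon R \to \mathrm{End}(I)$, $r \mapsto (x \mapsto rx)$, whose kernel contains $I$ (because $\lambda_r(I) \subseteq I^2 = 0$ for $r \in I$); hence $\lambda$ factors uniquely through a dg-ring map $S = R/I \to \mathrm{End}(I)$, which is exactly the desired $S$-structure.

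Finally, the compatibility claim is built into the construction: restriction of scalars along $\theta \colon R \to S$ sends the $S$-action $\theta(r)\cdot x$ back to $rx$, which is the original $R$-action. I do not anticipate a genuine obstacle, as the strictness of both $\theta$ and the nilpotency $I^2 = 0$ keeps the entire argument on the nose, with no resolutions or derived functors involved; the only minor care needed is the routine bookkeeping of Koszul signs in the differential, which are inherited unchanged from the commutative dg-ring $R$.
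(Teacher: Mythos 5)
Your proposal is correct and follows exactly the paper's argument: the paper also defines the action by $[r]x = rx$ and invokes $I^2 = 0$ for well-definedness, leaving the remaining verifications (which you spell out, including the Leibniz rule and the factorization through $\mathrm{End}(I)$) to the reader. No gaps; your write-up is simply a more detailed version of the same proof.
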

\begin{proof}
We identify $S=R/I$ and define the following $S$-linear structure on $I$:
\[
[r]x = rx,
\]
if $[r] \in R/I$ and $x \in I$. It is easy to check that it is well defined (thanks to $I^2=0$) and it satisfies the desired properties.
\end{proof}
From now on, we shall view $I$ as an $S$-dg-module according to the above lemma.
\begin{lemma} \label{lemma:tensor_lift_Slinear}
 There is a natural $R$-linear quasi-isomorphism of $\injcat I$-$\injcat I$-dg-bimodules:
\[
    I \lotimes_R \injcat I \cong I \lotimes_S (S \lotimes_R \injcat I) = I \lotimes_S \injcat J.
\]
\end{lemma}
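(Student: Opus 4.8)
The plan is to reduce the derived statement to the strict associativity of the ordinary tensor product, in exactly the same spirit as the proof of Lemma~\ref{lemma:leftextension_derived_transitive}. First I would fix an $R$-linear h-flat resolution $Q(\injcat I) \to \injcat I$ and use it to compute both sides. Since $Q(\injcat I)$ is h-flat over $R$, we may identify the left-hand side as $I \lotimes_R \injcat I \cong I \otimes_R Q(\injcat I)$, the $\injcat I$-$\injcat I$-bimodule obtained by tensoring the diagonal bimodule $\injcat I(B,A) = Q(\injcat I)(B,A)$ with the fixed $R$-dg-module $I$ hom-complexwise. Likewise we take $\injcat J = S \otimes_R Q(\injcat I)$ as our model of $S \lotimes_R \injcat I$.

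Next, recall from the proof of Lemma~\ref{lemma:leftextension_derived_transitive} that $S \otimes_R Q(\injcat I)$ is h-flat \emph{as an $S$-linear dg-category}. Hence the derived tensor product on the right-hand side is computed with no further resolution:
\[
    I \lotimes_S \injcat J \cong I \otimes_S \bigl(S \otimes_R Q(\injcat I)\bigr),
\]
where $I$ is regarded as an $S$-dg-module via Lemma~\ref{lemma:kernel_order2nilpotent_Sdgmodule}. The core step is then the strict associativity isomorphism: applying the $S$-linear isomorphism \eqref{eq:leftextension_transitivity_nonderived} hom-complexwise, now with the $S$-dg-module $I$ playing the role of the $S$-linear dg-category $\cat B$, yields for all objects $A, B$ a natural $S$-linear (hence $R$-linear) isomorphism
\[
    I \otimes_S \bigl(S \otimes_R Q(\injcat I)(B,A)\bigr) \cong I \otimes_R Q(\injcat I)(B,A).
\]
Assembling these over all pairs of objects produces the desired quasi-isomorphism of bimodules $I \lotimes_S \injcat J \cong I \lotimes_R \injcat I$.

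The point that requires care — and which I expect to be the main, though still routine, obstacle — is to verify that this termwise isomorphism respects the two-sided module structures, i.e.\ that it is a morphism of $\injcat I$-$\injcat I$-bimodules once the right-hand side $I \lotimes_S \injcat J$ is restricted to an $\injcat I$-$\injcat I$-bimodule along the canonical quasi-functor $\injcat I \to \injcat J$ (on our models, the inclusion $Q(\injcat I) \hookrightarrow S \otimes_R Q(\injcat I)$, $a \mapsto 1 \otimes a$). Under the associativity isomorphism, the left and right $Q(\injcat I)$-actions induced from this inclusion correspond precisely to the actions obtained by composition in $I \otimes_R Q(\injcat I)$, because $1 \in S$ acts as the identity; naturality in $A$ and $B$ then gives compatibility with the full bimodule structure, and $R$-linearity is automatic. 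Tracking these identifications is purely formal, so no genuine difficulty arises beyond the bookkeeping.
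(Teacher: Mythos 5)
Your proposal is correct and follows essentially the same route as the paper's proof: both arguments come down to the same pair of mutually inverse strict maps $x \otimes_R f \mapsto x \otimes_S (1_S \otimes_R f)$ and $x \otimes_S (s \otimes_R f) \mapsto xs \otimes_R f$, using the $S$-module structure on $I$ from Lemma~\ref{lemma:kernel_order2nilpotent_Sdgmodule}, with naturality in the two object variables giving the bimodule compatibility. The only (harmless) difference is how h-flatness is arranged so that the derived tensor products become ordinary ones: the paper additionally replaces $I$ by an h-flat resolution as an $S$-dg-module, whereas you keep $I$ as it is and instead invoke the h-flatness over $S$ of $S \otimes_R Q(\injcat I)$ established in the proof of Lemma~\ref{lemma:leftextension_derived_transitive} --- both choices are legitimate, since h-flatness of one factor suffices.
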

\begin{proof}
To the purpose of this proof, we we may take suitable resolutions and assume that $I$ is h-flat as an $S$-dg-module, and that $\injcat I$ is h-flat as an $R$-linear dg-category. Hence, we may take the ordinary tensor products instead of the derived ones. We have $R$-linear morphisms of $\injcat I$-$\injcat I$-dg-bimodules:
\begin{align*}
I \otimes_R \injcat I(-,-) & \to I \otimes_S (S \otimes_R \injcat I(-,-)), \\
x \otimes_R f & \mapsto x \otimes_S (1_S \otimes_R f),
\end{align*}
and
\begin{align*}
I \otimes_S (S \otimes_R \injcat I(-,-)) & \to I \otimes_R \injcat I(-,-), \\
x \otimes_S (s \otimes_R f) & \mapsto xs \otimes_R f.
\end{align*}
It is straightforward to see that they are inverse to each other.
\end{proof}

We start checking that the suitable properties of $\injcat J$ lift to its deformation $\injcat I$.
\begin{lemma} \label{lemma:deformation_lift_negativedegrees}
If $\injcat J$ is cohomologically concentrated in nonpositive degrees, the same is true for $\injcat I$.
\end{lemma}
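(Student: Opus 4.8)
The plan is to argue hom-complex by hom-complex, exploiting the short exact sequence of $R$-dg-modules
\[
0 \to I \to R \to S \to 0.
\]
Fix objects $A, B \in \injcat I$. Since extension of scalars preserves objects, these are also objects of $\injcat J = S \lotimes_R \injcat I$, and—taking an $R$-linear h-flat resolution $Q(\injcat I) \to \injcat I$, so that each $Q(\injcat I)(A,B)$ is an h-flat $R$-dg-module—we may identify $\injcat J(A,B) \cong S \lotimes_R \injcat I(A,B)$. Tensoring the displayed sequence with $\injcat I(A,B)$ over $R$ (derived) then produces a distinguished triangle in $\dercomp(R)$,
\[
I \lotimes_R \injcat I(A,B) \to \injcat I(A,B) \to \injcat J(A,B) \xrightarrow{+1},
\]
where I have used the canonical identification $R \lotimes_R \injcat I(A,B) \cong \injcat I(A,B)$ for the middle term.

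Next I would control the two outer terms of this triangle. By hypothesis the term $\injcat J(A,B)$ has cohomology concentrated in nonpositive degrees. For the left-hand term, Lemma \ref{lemma:tensor_lift_Slinear} supplies a natural quasi-isomorphism $I \lotimes_R \injcat I(A,B) \cong I \lotimes_S \injcat J(A,B)$. Since $R$ is strictly concentrated in nonpositive degrees and $I = \ker\theta \subseteq R$ is a sub-dg-module, the dg-module $I$ is itself strictly (hence cohomologically) concentrated in nonpositive degrees; combining this with the hypothesis on $\injcat J$ and applying Lemma \ref{lemma:tensorproduct_nonpositive} over the base $S$, I conclude that $I \lotimes_S \injcat J(A,B)$, and therefore $I \lotimes_R \injcat I(A,B)$, has cohomology concentrated in nonpositive degrees.

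Finally I would pass to the long exact sequence in cohomology of the triangle. For every $k > 0$ the two outer terms vanish in cohomological degree $k$, so exactness forces $H^k(\injcat I(A,B)) = 0$ for all $k > 0$. As the pair $A, B$ was arbitrary, this shows that $\injcat I$ is cohomologically concentrated in nonpositive degrees. The proof is essentially a formal assembly of the two lemmas just established, so I do not expect a genuine obstacle; the only step demanding care is the identification of the fibre term $I \lotimes_R \injcat I(A,B)$ with a derived tensor product over $S$ of objects known to be nonpositively concentrated, which is precisely what Lemma \ref{lemma:tensor_lift_Slinear} (together with the $S$-module structure on $I$ from Lemma \ref{lemma:kernel_order2nilpotent_Sdgmodule}) was arranged to furnish.
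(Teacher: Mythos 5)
Your proof is correct and follows essentially the same route as the paper: tensoring the short exact sequence $0 \to I \to R \to S \to 0$ with $\injcat I$, identifying the fibre term via Lemma \ref{lemma:tensor_lift_Slinear} as $I \lotimes_S \injcat J$, invoking Lemma \ref{lemma:tensorproduct_nonpositive}, and concluding with the long exact sequence in cohomology. The only difference is presentational — you argue hom-complex by hom-complex while the paper phrases the same distinguished triangle at the level of $\injcat I$-$\injcat I$-dg-bimodules — which is immaterial.
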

\begin{proof}
The short exact sequence (of $R$-dg-modules)
\[
0 \to I \to R \xrightarrow{\theta} S \to 0
\]
clearly induces a distinguished triangle
\[
I \to R \to S
\]
in the derived category $\dercomp(R)$. From this, we obtain the following distinguished triangle in the derived category $\dercomp(\injcat I,\injcat I)$ of $\injcat I$-$\injcat I$-dg-bimodules:
\[
I \lotimes_R \injcat I \to \injcat I \to S \lotimes_R \injcat I
\]
Using the induced long exact sequence in cohomology, it is clear that $\injcat I$ is cohomologically concentrated in nonpositive degrees once we know that $I \lotimes_R \injcat I$ and $S \lotimes_R \injcat I$ are cohomologically concentrated in nonpositive degrees. By assumption, $S \lotimes_R \injcat I=\injcat J$ has this property. On the other hand, by the above Lemma \ref{lemma:tensor_lift_Slinear} we have that
\[
I \lotimes_R \injcat I \cong I \lotimes_S \injcat J
\]
is also cohomologically concentrated in negative degrees, being a tensor product of $I$ and $\injcat J$ which are both cohomologically concentrated in nonpositive degrees (cf. Lemma \ref{lemma:tensorproduct_nonpositive}).
\end{proof}
Now, we are already able to show that left derived deformations of dg-categories cohomologically concentrated in nonpositive degrees induce left linear deformations of the corresponding homotopy categories.
\begin{corollary} \label{coroll:induced_deformation_H0}
Assume that $\injcat J$ is cohomologically concentrated in nonpositive degrees. Then, there exists a natural $H^0(S)$-linear equivalence of categories:
\begin{equation*}
    H^0(S) \otimes_{H^0(R)}  H^0(\injcat I) \xrightarrow{\sim} H^0(S \lotimes_R \injcat I) = H^0(\injcat J).
\end{equation*}
In particular, $H^0(\injcat I)$ is a left linear deformation of $H^0(\injcat J)$ along $H^0(\theta) \colon H^0(R) \to H^0(S)$.

Moreover, $\ker (H^0(\theta))$ is an order $2$ nilpotent ideal of $H^0(R)$, hence $H^0(\injcat I)$ is an order $2$ nilpotent left linear deformation of $H^0(\injcat J)$.
\end{corollary}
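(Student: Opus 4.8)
The plan is to reduce the claimed equivalence to the level of hom-complexes and then to invoke the comparison result of Lemma~\ref{lemma:H^0_comparison_complexes}. Since the corollary assumes $\injcat J$ to be cohomologically concentrated in nonpositive degrees, Lemma~\ref{lemma:deformation_lift_negativedegrees} shows that the deformation $\injcat I$ is cohomologically concentrated in nonpositive degrees as well; in particular every hom-complex $\injcat I(A,B)$ has cohomology in nonpositive degrees. As $S$ is strictly, hence cohomologically, concentrated in nonpositive degrees, Lemma~\ref{lemma:H^0_comparison_complexes} applies with $V=S$ and $W=\injcat I(A,B)$ and produces, for every pair of objects $A,B$, a natural $H^0(R)$-linear isomorphism
\[
H^0(S)\otimes_{H^0(R)} H^0(\injcat I)(A,B)\xrightarrow{\ \sim\ } H^0\bigl(S\lotimes_R \injcat I(A,B)\bigr)=H^0(S\lotimes_R\injcat I)(A,B),
\]
working with an h-flat model of $S\lotimes_R\injcat I$ so that the derived tensor product of hom-complexes is computed strictly.

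Next I would assemble these hom-isomorphisms into a functor which is the identity on the common object set $\Ob\injcat I$. Both sides are $H^0(R)$-linear categories with these objects, so it remains to verify that the isomorphisms above respect composition and units. This follows from the explicit shape $[f]\otimes[g]\mapsto[f\otimes g]$ of the comparison map together with the naturality in Lemma~\ref{lemma:H^0_comparison_complexes}: composition in $S\lotimes_R\injcat I$ multiplies the $S$-components and composes the $\injcat I$-components, which is exactly the composition law of $H^0(S)\otimes_{H^0(R)}H^0(\injcat I)$ (all correcting signs being trivial in degree $0$). Being bijective on objects and fully faithful, the resulting functor is an isomorphism, hence an equivalence, of categories. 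I would then check $H^0(S)$-linearity: the target $H^0(\injcat J)$ carries the $H^0(S)$-linear structure induced by the $S$-linear structure of $\injcat J=S\lotimes_R\injcat I$, the source carries the tautological $H^0(S)$-linear structure of an extension of scalars, and the map $[s\otimes f]\mapsto[s\otimes f]$ visibly intertwines the two $S$-actions. This exhibits $H^0(\injcat I)$ as a left linear deformation of $H^0(\injcat J)$ along $H^0(\theta)$ in the sense of \cite{lowen-vandenbergh-deformations-abelian}; should flatness be demanded explicitly, it can be read off from $H^0(I\lotimes_R\injcat I)\cong H^0(I)\otimes_{H^0(S)}H^0(\injcat J)$ via Lemma~\ref{lemma:tensor_lift_Slinear} and Lemma~\ref{lemma:H^0_comparison_complexes}.

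For the final assertion I would compute $\ker H^0(\theta)$ from the short exact sequence $0\to I\to R\xrightarrow{\theta}S\to 0$ of $R$-dg-modules. Its long exact cohomology sequence is exact at $H^0(R)$, giving $\ker H^0(\theta)=\operatorname{Im}\bigl(H^0(I)\to H^0(R)\bigr)$. If $[x],[y]$ lie in this image, I choose cocycle representatives $x,y\in Z^0(I)$; then $[x][y]=[xy]$ with $xy\in I^2=0$, so $[x][y]=0$. Hence $\ker H^0(\theta)$ is a square-zero ideal of $H^0(R)$ and $H^0(\injcat I)$ is an order $2$ nilpotent left linear deformation of $H^0(\injcat J)$.

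The genuine analytic content is entirely absorbed by Lemmas~\ref{lemma:deformation_lift_negativedegrees} and~\ref{lemma:H^0_comparison_complexes}, so I expect the main obstacle to be purely organizational: ensuring that the objectwise isomorphisms of Lemma~\ref{lemma:H^0_comparison_complexes} are natural enough, simultaneously in $A$, in $B$, and with respect to the $S$-action, to upgrade from a family of module isomorphisms to a single $H^0(S)$-linear equivalence of categories compatible with composition.
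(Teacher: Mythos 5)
Your proposal is correct and follows essentially the same route as the paper: reduce to an h-flat model of $\injcat I$, invoke Lemma \ref{lemma:deformation_lift_negativedegrees} to get that $\injcat I$ is cohomologically concentrated in nonpositive degrees, apply Lemma \ref{lemma:H^0_comparison_complexes} hom-by-hom to see that the natural $H^0(S)$-linear functor $[s]\otimes[f]\mapsto[s\otimes f]$ is an isomorphism, and prove square-zero nilpotency of $\ker H^0(\theta)$ by identifying it with $\operatorname{Im}(H^0(I)\to H^0(R))$ and using $I^2=0$ on cocycle representatives. The extra verifications you flag (compatibility with composition, units, and the $S$-action) are exactly what the paper compresses into ``the natural functor is clearly $H^0(S)$-linear,'' so there is no substantive difference.
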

\begin{proof}
Without loss of generality, assume that $\injcat I$ is an h-flat $R$-linear dg-category, so that $S \lotimes_R \injcat I = S \otimes_R \injcat I$. The natural functor
\begin{equation} \label{eq:induced_deformation_H0} \tag{$\ast$}
\begin{split}
    H^0(S) \otimes_{H^0(R)}  H^0(\injcat I) & \to H^0(S \otimes_R \injcat I), \\
    [s] \otimes [f] & \mapsto [s \otimes f]
\end{split}
\end{equation}
is clearly $H^0(S)$-linear. From the above Lemma \ref{lemma:deformation_lift_negativedegrees} we know that $\injcat I$ is cohomologically concentrated in nonpositive degrees. Hence, we may apply Lemma \ref{lemma:H^0_comparison_complexes} and conclude that \eqref{eq:induced_deformation_H0} is indeed an isomorphism.

For the second part of the claim, consider the exact sequence:
\[
H^0(I) \to H^0(R) \xrightarrow{H^0(\theta)} H^0(S).
\]
Hence, $\ker (H^0(\theta)) = \operatorname{Im}(H^0(I) \to H^0(R))$. To prove that it is nilpotent of order $2$, let $x,y \in Z^0(I)$: we want to prove that $xy$ is a coboundary in $R$. Since $I^2=0$, we even get $xy=0$ and we conclude.
\end{proof}

We can now lift properties that depend only on the homotopy categories.
\begin{lemma} \label{lemma:liftingproperties_KaroubianadditiveH0}
Assume that $\injcat J$ is cohomologically concentrated in nonpositive degrees, and moreover that $H^0(\injcat J)$ is additive, resp. Karoubian. Then, the same is true for $H^0(\injcat I)$.
\end{lemma}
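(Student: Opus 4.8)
The plan is to transport the whole question to the level of homotopy categories and then lift structural data along a nilpotent ideal. By Corollary~\ref{coroll:induced_deformation_H0} (which applies since $\injcat J$ is assumed cohomologically concentrated in nonpositive degrees, so that so is $\injcat I$ by Lemma~\ref{lemma:deformation_lift_negativedegrees}), the functor $H^0(\injcat I)\to H^0(\injcat J)$ exhibits $H^0(\injcat J)$ as the base change $H^0(S)\otimes_{H^0(R)}H^0(\injcat I)$ along $H^0(\theta)$, whose kernel $\bar I=\ker(H^0(\theta))$ satisfies $\bar I^2=0$ and acts centrally. Concretely $H^0(\injcat I)$ and $H^0(\injcat J)$ share the same objects, and the comparison is the identity on objects and the quotient map $H^0(\injcat I)(A,B)\twoheadrightarrow H^0(\injcat I)(A,B)/\bar I\,H^0(\injcat I)(A,B)=H^0(\injcat J)(A,B)$ on hom-modules. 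Thus it suffices to prove that a central, order-$2$ nilpotent linear deformation of an additive (resp. Karoubian) category is again additive (resp. Karoubian). Writing $\smallcat{c}'=H^0(\injcat I)$ and $\smallcat{c}=H^0(\injcat J)$, I would record three elementary consequences of $\bar I^2=0$ and centrality: (i) a morphism of $\smallcat{c}'$ whose image in $\smallcat{c}$ is invertible is itself invertible; (ii) if an $H^0(R)$-module $M$ satisfies $M=\bar I M$ then $M=0$; (iii) two idempotents of $\smallcat{c}'(C,C)$ with equal image in $\smallcat{c}$ are conjugate by a unit reducing to $1_C$, via the explicit conjugator $w=ee'+(1_C-e)(1_C-e')$.

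For the \emph{Karoubian} case, let $e\in\smallcat{c}'(A,A)$ be idempotent. Its reduction $\bar e$ splits in $\smallcat{c}$ through an object $X$ via $\bar p\colon A\to X$, $\bar i\colon X\to A$ with $\bar p\bar i=1_X$ and $\bar i\bar p=\bar e$; note $X$ is also an object of $\smallcat{c}'$. Lifting $\bar p,\bar i$ to $p^0,i^0$, the composite $p^0i^0$ reduces to $1_X$, hence is a unit by (i), so replacing $p^0$ by $(p^0i^0)^{-1}p^0$ yields $p,i$ with $pi=1_X$; then $e'=ip$ is an idempotent on $A$ splitting through $X$ and reducing to $\bar e$. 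By (iii) a unit $w$ satisfies $we'w^{-1}=e$, and $e$ then splits through $X$ via $wi$ and $pw^{-1}$. Hence $\smallcat{c}'$ is Karoubian.

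For the \emph{additive} case I would \emph{not} invoke Karoubianness; instead I lift the biproduct data directly, splitting only idempotents whose splitting object is already prescribed. First, the zero object $0$ of $\smallcat{c}$ is a zero object of $\smallcat{c}'$: since $\smallcat{c}'(0,A)$ and $\smallcat{c}'(A,0)$ reduce to $0$, they equal $\bar I\cdot(-)$ and vanish by (ii). Next, given $A,B$, put $C=A\oplus B$ in $\smallcat{c}$ with biproduct data $\bar i_A,\bar p_A,\bar i_B,\bar p_B$. As in the Karoubian step I obtain $i_A,p_A$ with $p_Ai_A=1_A$ and $e_A:=i_Ap_A$ idempotent reducing to $\bar i_A\bar p_A$, and $i_B^{0},p_B$ with $p_Bi_B^{0}=1_B$ and $e_B':=i_B^{0}p_B$ reducing to $\bar i_B\bar p_B$. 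Setting $e_B:=1_C-e_A$ gives an idempotent orthogonal to $e_A$ with $e_A+e_B=1_C$ and reduction $\bar i_B\bar p_B$; by (iii) a unit conjugates $e_B'$ to $e_B$, producing $i_B,p_B$ with $p_Bi_B=1_B$ and $i_Bp_B=e_B$. The five biproduct identities then follow formally from $e_A+e_B=1_C$, $e_Ae_B=e_Be_A=0$, $p_Ai_A=1_A$, $p_Bi_B=1_B$: for instance $p_Ai_B=(p_Ai_A)(p_Ai_B)(p_Bi_B)$ vanishes because $e_Ae_B=0$. Hence $C$ is a biproduct $A\oplus B$ in $\smallcat{c}'$, and together with the zero object this gives additivity.

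The genuine content, and the step I expect to be most delicate to write out cleanly, is fact (iii) together with the check that conjugation reassembles honest biproduct data: one must verify that conjugating $e_B'$ to $e_B=1_C-e_A$ is compatible with the \emph{already fixed} $e_A$, so that all five relations hold simultaneously rather than each datum being lifted in isolation. The decisive simplification is $\bar I^2=0$: order-$2$ nilpotency collapses the would-be obstruction and correction terms to linear ones, so the Nakayama vanishing (ii), the unit criterion (i), and the explicit conjugator in (iii) all apply on the nose, with no iterated approximation needed.
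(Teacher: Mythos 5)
Your proof is correct, and its opening step coincides with the paper's: both reduce, via Lemma \ref{lemma:deformation_lift_negativedegrees} and Corollary \ref{coroll:induced_deformation_H0}, to the statement that a central, order-$2$ nilpotent linear deformation of an additive (resp.\ Karoubian) $H^0(R)$-linear category is again additive (resp.\ Karoubian). The difference lies in how that statement is established: the paper concludes by citing \cite[Propositions A.5 and A.7]{lowen-vandenbergh-deformations-abelian}, with the caveat that the flatness hypothesis appearing there is not used in the cited proofs, whereas you re-prove the lifting from scratch, exploiting $\bar I^2=0$ to get the Nakayama-type vanishing (ii), the unit-lifting criterion (i), and the explicit conjugator $w=ee'+(1_C-e)(1_C-e')$ of (iii), and then assembling split idempotents (Karoubian case) and orthogonal idempotents with biproduct identities (additive case). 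The paper's route buys brevity and applies to nilpotent ideals of arbitrary order; your route buys a self-contained argument whose hypotheses are transparent --- only centrality and square-zero-ness of $\bar I$ enter, which is exactly what the reduction of Lemma \ref{lemma:nilpotency_order2_reduction} (in force in this subsection) supplies --- so the reader need not inspect the external proofs to verify the flatness caveat. Two cosmetic remarks: your ``concrete'' description of $H^0(\injcat I)\to H^0(\injcat J)$ as the identity on objects and the quotient by $\bar I$ on hom-modules should either be preceded by the standard normalization used in the neighbouring proofs (take $\injcat I$ h-flat and identify $\injcat J=S\otimes_R\injcat I$, after which the comparison functor of Corollary \ref{coroll:induced_deformation_H0} is literally an isomorphism which is the identity on objects), or be accompanied by the observation that additivity and Karoubianness are invariant under equivalence, since the corollary as stated only provides an equivalence; and the compatibility worry you raise at the end is already resolved by your own construction, because $e_B$ is defined as $1_C-e_A$ rather than by conjugation, the conjugation serving only to transport the splitting data of $e_B'$ onto $e_B$, after which all five biproduct identities follow formally from orthogonality.
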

\begin{proof}
From the above Corollary \ref{coroll:induced_deformation_H0} we know that $H^0(\injcat I) \to H^0(\injcat J)$ is an order $2$ nilpotent deformation of $H^0(\injcat J)$ along $H^0(\theta) \colon H^0(R) \to H^0(S)$. By hypothesis, $H^0(R)$ and $H^0(S)$ are coherent commutative rings and $H^0(S)$ is finitely presented over $H^0(R)$; moreover, $H^0(\theta)$ is surjective. Hence, we may directly apply \cite[Proposition A.5 and A.7]{lowen-vandenbergh-deformations-abelian}. {Notice that the flatness hypothesis is not used in the proofs of the cited results}.
\end{proof}

Next, we take care of the (homotopy) coherence properties. To that purpose, we first prove a result which essentially tells that homotopically finitely presented dg-modules (also having an upper bound on cohomology) are stable under derived tensor products.
\begin{lemma} \label{lemma:hfp_preserve_tensorproduct}
Let $J$ be an $S$-dg-module, which is homotopically finitely presented as an $R$-dg-module, such that $H^i(J)=0$ for all $i>0$. Let $\injcat C$ be a (left) homotopically locally coherent $S$-linear dg-category, and let $M \in \dercomp(\opp{\injcat C})^\mathrm{hfp}$ be a homotopically finitely presented dg-module (namely, $H^i(M) \in \operatorname{mod}(\opp{H^0(\injcat C)})$ for all $i \in \mathbb Z$). Moreover, assume that $H^i(M)=0$ for $i \gg 0$. Then, the same properties hold for the the tensor product $J \lotimes_S M$. 
\end{lemma}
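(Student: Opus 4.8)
The plan is to reduce the derived tensor product $J \lotimes_S M$ to an honest tensor product against a well-chosen resolution of $J$, and then read off both asserted properties degree by degree. First I would upgrade the finiteness hypothesis on $J$: although $J$ is only assumed homotopically finitely presented as an $R$-dg-module, Sublemma \ref{sublemma:finitelypresented_lift_surjective} applied to the surjection $H^0(R)\to H^0(S)$ shows that each $H^i(J)$ is in fact finitely presented over $H^0(S)$. Together with the vanishing $H^i(J)=0$ for $i>0$, this exhibits $J$ as a \emph{pseudo-coherent, cohomologically bounded above} $S$-dg-module, which is the form best suited to the computation.

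The technical heart, and the step I expect to be the main obstacle, is the construction of a semifree resolution $P\xrightarrow{\sim}J$ of $S$-dg-modules in which $P$ is concentrated in generator-degrees $\le 0$ and uses only \emph{finitely many} free generators in each degree. This is the dg-analogue of the classical fact that a bounded-above complex with finitely presented cohomology over a coherent ring is pseudo-coherent. I would build $P$ cell by cell, descending in degree: at each stage the homotopical coherence of $S$ (that $H^0(S)$ is coherent and every $H^i(S)$ is finitely presented over $H^0(S)$) guarantees that the module of relations to be killed is finitely generated, so only finitely many new generators need to be attached, while the cohomological boundedness of $J$ keeps all generator degrees $\le 0$. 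This is exactly the finiteness bookkeeping already carried out in \cite[Lemma 5.10]{genovese-lowen-vdb-dginj}, whose template I would follow.

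Given such a $P$, it is h-flat, so $J\lotimes_S M\cong P\otimes_S M$, where $P$ is a complex whose terms are finite direct sums of shifts of $S$; hence every brutal subquotient of $P$ tensored with $M$ is a finite direct sum of shifts of $M$. I would then argue one cohomological degree at a time. Fix $n$, let $b$ be an upper bound for the cohomology of $M$, and consider the brutal truncation triangle $\sigma^{\ge -k}P\otimes_S M \to P\otimes_S M \to \sigma^{<-k}P\otimes_S M$, where $\sigma^{\ge -k}P$ is the finite subcomplex in degrees $-k,\dots,0$ and $\sigma^{<-k}P$ the quotient, concentrated in degrees $\le -k-1$. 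The right-hand term is a derived tensor product (the truncations of $P$ being termwise free) of dg-modules bounded above by $-k-1$ and $b$, so by a shift of Lemma \ref{lemma:tensorproduct_nonpositive} its cohomology vanishes above degree $-k-1+b$; thus for $k$ large it contributes nothing near degree $n$, giving $H^n(J\lotimes_S M)\cong H^n(\sigma^{\ge -k}P\otimes_S M)$.

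Finally, $\sigma^{\ge -k}P$ is a \emph{finite} complex of finite free $S$-modules, so $\sigma^{\ge -k}P\otimes_S M$ carries a finite filtration whose subquotients are finite direct sums of shifts of $M$, each with finitely presented cohomology over $\opp{H^0(\injcat C)}$ by hypothesis. Since $\injcat C$ is homotopically locally coherent, $\opp{H^0(\injcat C)}$ is coherent and $\operatorname{mod}(\opp{H^0(\injcat C)})$ is abelian and closed under kernels, cokernels and extensions; running the long exact sequences of this finite filtration then shows $H^n(\sigma^{\ge -k}P\otimes_S M)$, hence $H^n(J\lotimes_S M)$, lies in $\operatorname{mod}(\opp{H^0(\injcat C)})$. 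The same amplitude estimate (with $P$ in degrees $\le 0$ and $M$ bounded above by $b$) yields $H^n(J\lotimes_S M)=0$ for $n>b$, which is the required boundedness above, and the two assertions follow.
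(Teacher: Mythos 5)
Your proposal is correct, and at its core it is the same pseudo-coherence argument as the paper's: upgrade $J$ to a homotopically finitely presented $S$-dg-module via Sublemma \ref{sublemma:finitelypresented_lift_surjective}, approximate $J$ by objects finitely built from shifts of $S$, tensor those with $M$, and conclude using the coherence of $\opp{H^0(\injcat C)}$. The organization, however, differs in a way worth recording. You decouple the two halves: first construct, purely over $S$, a degreewise-finite semifree resolution $P \xrightarrow{\sim} J$ with generators in degrees $\leq 0$, and only then tensor with $M$, controlling the approximation error by the cohomological amplitude of the tail $\sigma^{<-k}P \otimes_S M$ (a shifted Lemma \ref{lemma:tensorproduct_nonpositive}). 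The paper never builds the full resolution: it runs the iterated-triangle approximation of \cite[Proposition 5.13]{genovese-lowen-vdb-dginj} \emph{interleaved} with $-\lotimes_S M$, and to make the iteration survive tensoring it needs Lemma \ref{lemma:tensorproduct_nonpositive_surjectivemorphism} (preservation of $H^0$-surjectivity), which your route bypasses entirely. Your endgame — finite filtration with subquotients finite sums of shifts of $M$, then long exact sequences plus closure of finitely presented modules under kernels, cokernels and extensions over the coherent $\opp{H^0(\injcat C)}$ — is the hands-on version of the paper's appeal to the triangulatedness of $\dercomp(\opp{\injcat C})^{\mathrm{hfp}}$. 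What each buys: your version is more modular and the amplitude bookkeeping is cleaner than surjectivity bookkeeping; the paper's version avoids having to state the existence of a global degreewise-finite resolution as a separate result (though that existence is exactly what the cited iteration produces, so the underlying work, and the reliance on homotopical coherence of $S$, is the same in both).

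One imprecision to fix, which does not affect the argument: your truncations must be taken with respect to \emph{generator} degree (i.e., the stages of the semifree filtration of $P$), not total degree. Since $S$ has elements in strictly negative degrees, the total-degree truncation $\sigma^{\geq -k}P$ is not an $S$-submodule of $P$, so $\sigma^{\geq -k}P \otimes_S M$ would not even be defined; likewise ``finite complex of finite free $S$-modules'' should be read as ``finite extension of finite free $S$-modules along the semifree filtration''. Under that reading all the properties you use hold: the stage $P_k$ is a sub-dg-module finitely filtered with subquotients $S[j]^{\oplus m_j}$, and the quotient $P/P_k$ is semifree on generators of degree $\leq -k-1$, hence (as $S$ sits in degrees $\leq 0$) concentrated in total degrees $\leq -k-1$, which is what your amplitude estimate requires.
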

\begin{proof}
Since the induced morphism $H^0(R) \to H^0(S)$ is surjective, we deduce by Sublemma \ref{sublemma:finitelypresented_lift_surjective} that, since $J$ is homotopically finitely presented as an $R$-dg-module by assumption, it is also homotopically finitely presented as an $S$-dg-module.

Upon shifting, we may assume that $M$ is cohomologically concentrated in nonpositive degrees. If $f \colon J_1 \to J_2$ is a morphism of $S$-dg-modules cohomologically concentrated in nonpositive degrees and such that $H^0(f)$ is surjective, then the induced morphism
\[
H^0(J_1 \lotimes_S M) \xrightarrow{H^0(f \otimes 1)} H^0(J_2 \lotimes_S M)
\]
is also surjective, thanks to Lemma \ref{lemma:tensorproduct_nonpositive_surjectivemorphism}.

We now refer the reader to \cite[\S 5.2]{genovese-lowen-vdb-dginj}. Since $S$ is a homotopically coherent dg-ring, we may directly adapt \cite[Theorem 5.9 (1)]{genovese-lowen-vdb-dginj} and deduce that $\dercomp(S)^\mathrm{hfp}$ has a natural t-structure induced from the one on $\dercomp(S)$, whose heart is $\operatorname{mod}(H^0(S))$. Then, \cite[Proposition 5.13]{genovese-lowen-vdb-dginj} can be applied with $\mathbf D$ being (a dg-enhancement of) $\dercomp(S)^\mathrm{hfp}$ and $\smallcat q$ being its full dg-subcategory spanned by finite direct sums of copies of $S$ (cf. also \cite[Proposition 5.17]{genovese-lowen-vdb-dginj}). Thanks to the observations in the previous paragraph of this proof, we may now actually apply the argument of \cite[Proposition 5.13]{genovese-lowen-vdb-dginj} to deduce a ``resolution'' of the tensor product $J \lotimes_S M$ from a ``resolution'' of $J$. We start with a distinguished triangle in $\dercomp(S)$:
\[
C \to S^{\oplus m} \to J
\]
for some $m>0$, where $H^0(S)^{\oplus m} \to H^0(J)$ is surjective and $C$ is also a homotopically finitely presented dg-module. By taking the tensor $- \lotimes_S M$, we get a distinguished triangle in $\dercomp(\opp{\injcat C})$:
\[
C \lotimes_S M \to S^{\oplus m} \lotimes_S M \to J \lotimes_S M,
\]
where $H^0(S^{\oplus m} \lotimes_S M) \to H^0(J \lotimes_S M)$ is surjective. We can now iterate the procedure according to the proof of \cite[Proposition 5.13]{genovese-lowen-vdb-dginj} (and applying $- \lotimes_S M$ accordingly). In the end, we find objects of the form $X_k \lotimes_S M$ for $k<0$ such that $X_k$ lies in the pretriangulated hull $\pretr(S)$, together with isomorphisms
\[
H^i(X_k \lotimes_S M) \xrightarrow{\sim} H^i(J \lotimes_S M),
\]
for $i>k$. Moreover, $X_k \lotimes_S M$ is by construction obtained from $M$ by a finite iteration of cone and shifts. Since $\injcat C$ is left hlc, the category $\dercomp(\opp{\injcat C})^\mathrm{hfp}$ is triangulated \cite[Theorem 5.9 (1)]{genovese-adjunctions}; we deduce that $X_k \lotimes_S M$ is homotopically finitely presented. Hence, choosing $k \ll 0$, we find out that $H^i(J \lotimes_S M)$ is a finitely presented left $H^0(\injcat C)$-module for any fixed $i$, which actually means that $J \lotimes_S M$ is homotopically finitely presented, as claimed.
\end{proof}
To finish the proof of Proposition \ref{prop:properties_lifting_leftdeformation}, we only need to deal with the (homotopy) coherence properties. It is worth noticing that we don't require any flatness assumptions in the following proofs, unlike in the abelian case.
\begin{lemma} \label{lemma:hlc_lift_coherentH0}
Assume that $\injcat J$ is (left) homotopically locally coherent. Then, $H^0(\injcat I)$ is (left) coherent.
\end{lemma}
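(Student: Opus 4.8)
The plan is to reduce the statement to a lifting problem for weak cokernels along the square-zero extension produced by Corollary \ref{coroll:induced_deformation_H0}. By Lemma \ref{lemma:deformation_lift_negativedegrees} the dg-category $\injcat I$ is cohomologically concentrated in nonpositive degrees, and by Lemma \ref{lemma:liftingproperties_KaroubianadditiveH0} the category $H^0(\injcat I)$ is additive; I set $\Lambda = H^0(\injcat I)$ and $\bar\Lambda = H^0(\injcat J)$. Corollary \ref{coroll:induced_deformation_H0} identifies the reduction functor with $\Lambda \to \Lambda/\mathcal N = \bar\Lambda$, where $\mathcal N = \bar I \cdot \Lambda$ for the central square-zero ideal $\bar I = \ker(H^0(\theta))$, so that $\mathcal N^2 \subseteq \bar I^2 \Lambda = 0$ and, for all $X,Y$, the reduction $\Lambda(X,Y) \to \bar\Lambda(X,Y)$ is surjective with kernel $\mathcal N(X,Y) = \bar I\,\Lambda(X,Y)$. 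Moreover, by the long exact cohomology sequence of the bimodule triangle $I \lotimes_R \injcat I \to \injcat I \to \injcat J$ of Lemma \ref{lemma:deformation_lift_negativedegrees}, each $\mathcal N(X,-)$ is a quotient of $H^0\big((I \lotimes_R \injcat I)(X,-)\big)$, which by Lemma \ref{lemma:tensor_lift_Slinear} equals $H^0\big((I \lotimes_S \injcat J)(X,-)\big)$.

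By the definition of left coherence it suffices to produce, for every $f \colon A \to B$ in $\Lambda$, a weak cokernel, i.e. to show that $\ker\big(f^* \colon \Lambda(B,-) \to \Lambda(A,-)\big)$ is a finitely generated left $\Lambda$-module. Since $\bar\Lambda$ is coherent, the reduced morphism $\bar f$ has a weak cokernel $\bar g \colon B \to C$, so $\ker \bar f^* = \operatorname{Im}\bar g^*$ is finitely generated over $\bar\Lambda$. I would then compare $\ker f^*$ and $\ker \bar f^*$ through the morphism of short exact sequences of left $\Lambda$-modules obtained by reducing modulo $\mathcal N$, with rows $0 \to \mathcal N(B,-) \to \Lambda(B,-) \to \bar\Lambda(B,-) \to 0$ and $0 \to \mathcal N(A,-) \to \Lambda(A,-) \to \bar\Lambda(A,-) \to 0$ and vertical maps induced by $f$. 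The snake lemma reduces finite generation of $\ker f^*$ to finite generation of $\ker \bar f^*$ together with control of the kernel of $\mathcal N(B,-) \to \mathcal N(A,-)$.

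The crucial finiteness input comes from the fact that the square-zero ideal $\mathcal N$ is built from homotopically finitely presented data. Indeed, $I$ is homotopically finitely presented over $R$ (Remark \ref{remark:setup_explanation}), hence over $S$ by Sublemma \ref{sublemma:finitelypresented_lift_surjective}, and satisfies $H^{>0}(I)=0$, while $\injcat J$ is hlc, so each representable $\injcat J(X,-)$ is hfp; Lemma \ref{lemma:hfp_preserve_tensorproduct} applied to $I \lotimes_S \injcat J(X,-)$ then shows that $H^0\big((I\lotimes_S\injcat J)(X,-)\big)$ is finitely presented over $\bar\Lambda$. Since $\mathcal N(X,-)$ is a quotient of this module, it is finitely generated over $\bar\Lambda$; combined with coherence of $\bar\Lambda$ (finitely generated submodules of finitely presented modules are again finitely presented) this lets me keep the $\mathcal N$-terms appearing in the snake sequence under control, lift a finite generating set of $\ker\bar f^*$ into $\ker f^*$, and conclude that $\ker f^*$ is finitely generated, i.e. that $f$ admits a weak cokernel in $H^0(\injcat I)$.

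I expect the main obstacle to be precisely this lifting step: a generator $\bar h \in \ker\bar f^*$ lifts to some $h \in \Lambda(B,-)$ with $hf \in \mathcal N(A,-)$ rather than $hf = 0$, and one must absorb the error by a correction $n \in \mathcal N(B,-)$, which is possible exactly when $hf$ lies in the image of $\mathcal N(B,-) \xrightarrow{f^*} \mathcal N(A,-)$. Establishing that this obstruction vanishes is where the square-zero condition $\mathcal N^2=0$ and the finite-presentation properties of $\mathcal N$ furnished by Lemma \ref{lemma:hfp_preserve_tensorproduct} are indispensable. This is the coherent analogue of the additive- and Karoubian-lifting results of \cite{lowen-vandenbergh-deformations-abelian} invoked in Lemma \ref{lemma:liftingproperties_KaroubianadditiveH0}, and the delicate point is carrying it out without any flatness hypothesis, as promised in the surrounding discussion.
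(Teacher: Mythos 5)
Your route is genuinely different from the paper's: the paper stays at the derived level, forming $F=\cone(f^*)[-1]$ in $\dercomp(\opp{\injcat I})$, proving $F$ is homotopically finitely presented via the triangle $I\lotimes_S F'\to F\to F'$ and Lemma \ref{lemma:hfp_preserve_tensorproduct}, and then covering the finitely generated module $H^0(F)$ by a representable; you instead descend immediately to $H^0$ and run a snake-lemma argument for the square-zero extension $\Lambda\to\bar\Lambda$. That strategy is viable and uses the same inputs (Lemmas \ref{lemma:tensor_lift_Slinear}, \ref{lemma:hfp_preserve_tensorproduct}, Corollary \ref{coroll:induced_deformation_H0}, coherence of $\bar\Lambda$), but as written it has a genuine gap at the key finiteness step. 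Finite generation of $\mathcal N(X,-)$ over $\bar\Lambda$ is not enough: in the snake sequence you must show that $\ker f^*_{\mathcal N}=\ker\bigl(\mathcal N(B,-)\to\mathcal N(A,-)\bigr)$ and $\ker\delta$ are finitely generated, where $\delta\colon\ker\bar f^*\to\coker f^*_{\mathcal N}$ is the connecting map. Over a coherent category, kernels of maps between finitely \emph{presented} modules are finitely presented, but kernels of maps between merely finitely \emph{generated} modules can fail to be finitely generated (take a presentation $\bar\Lambda(C,-)\twoheadrightarrow M$ of a finitely generated, non-finitely-presented $M$); similarly, $\ker\delta$ is only controlled once $\operatorname{im}\delta$ is finitely presented, which requires $\coker f^*_{\mathcal N}$, hence $\mathcal N(A,-)$, to be finitely presented. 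Your parenthetical appeal to ``finitely generated submodules of finitely presented modules are finitely presented'' has nothing to bite on: $\mathcal N(X,-)$ is a submodule of $\Lambda(X,-)$, which is a $\Lambda$-module---and coherence of $\Lambda$ is exactly what is being proven---while over $\bar\Lambda$ you have exhibited $\mathcal N(X,-)$ only as a \emph{quotient}. The repair is short and uses only what you already assembled: in the long exact sequence of $I\lotimes_S\injcat J\to\injcat I\to\injcat J$ evaluated at $X$, the kernel of the surjection $H^0\bigl((I\lotimes_S\injcat J)(X,-)\bigr)\twoheadrightarrow\mathcal N(X,-)$ is the image of $H^{-1}(\injcat J(X,-))$, which is finitely generated because $\injcat J$ is hlc; hence $\mathcal N(X,-)$ is a quotient of a finitely presented $\bar\Lambda$-module by a finitely generated submodule, so it is finitely presented.

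Your closing paragraph also misidentifies the crux: you do not need the obstruction to lifting generators of $\ker\bar f^*$ to vanish, and in general it does not. The snake lemma gives the exact sequence $0\to\ker f^*_{\mathcal N}\to\ker f^*\to\ker\delta\to 0$, i.e.\ the elements of $\ker\bar f^*$ that lift are precisely those of $\ker\delta$, and that is all you need. Once the $\mathcal N$-terms are finitely presented, coherence of $\bar\Lambda$ yields that $\ker f^*_{\mathcal N}$ is finitely presented and that $\operatorname{im}\delta$ (a finitely generated submodule of the finitely presented module $\coker f^*_{\mathcal N}$) is finitely presented, whence $\ker\delta$ is finitely generated; then $\ker f^*$ is finitely generated over $\Lambda$, and additivity of $\Lambda$ plus the Yoneda lemma convert a finite generating set into a weak cokernel $g\colon B\to C$. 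With these two corrections your argument goes through and constitutes a legitimate alternative to the paper's proof, closer in spirit to the abelian-case lifting arguments of \cite{lowen-vandenbergh-deformations-abelian}.
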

\begin{proof}
Without loss of generality, we assume that $\injcat I$ is an h-flat $R$-linear dg-category. We shall identify $\injcat J = S \lotimes_R \injcat I = S \otimes_R \injcat I$ as usual, and often implicitly apply restriction of dg-modules along the natural dg-functor $\injcat I \cong R \otimes_R \injcat I \to \injcat J$. From Lemma \ref{lemma:deformation_lift_negativedegrees} we know that $\injcat I$ is cohomologically concentrated in nonpositive degrees, and from Lemma \ref{lemma:liftingproperties_KaroubianadditiveH0} we know that $H^0(\injcat I)$ is additive. Hence, left coherence amounts to the existence of weak cokernels (cf. \cite[Proposition A.11]{lowen-vandenbergh-deformations-abelian}). Let $f \colon B \to A$ be a closed degree $0$ morphism in $\injcat I$, and consider the pullback
\[
f^* \colon \injcat I(A,-) \to \injcat I(B,-).
\]
We want to find an object $C \in H^0(\injcat I)$ and a closed degree $0$ morphism $g \colon A \to C$ such that the sequence
\[
H^0(\injcat I(C,-)) \xrightarrow{H^0(g^*)} H^0(\injcat I(A,-)) \xrightarrow{H^0(f^*)} H^0(\injcat I(B,-))
\]
is exact.

Consider the ($R$-linear) commutative diagram in $\dercomp(\opp{\injcat I})$:
\[
\begin{tikzcd}
I \lotimes_S F' \arrow[r] \arrow[d]                                     & F \arrow[r] \arrow[d]                         & F' \arrow[d]                                     \\
{I \lotimes_S \injcat J(A,-)} \arrow[d, "I \lotimes_S f^*"] \arrow[r] & {\injcat I(A,-)} \arrow[d, "f^*"] \arrow[r] & {\injcat J(A,-)} \arrow[d, "S \lotimes_R f^*"] \\
{I \lotimes_S \injcat J(B,-)} \arrow[r]                               & {\injcat I(B,-)} \arrow[r]                  & {\injcat J(B,-),}     
\end{tikzcd}
\]
where $F=\cone(f^*)[-1]$ and $F'=\cone(S \lotimes_R f^*)[-1]$. A few explanations are needed. First, we identify $I \lotimes_R \injcat I = I \lotimes_S \injcat J$ according to Lemma \ref{lemma:tensor_lift_Slinear}. The tensor product
\[
S \lotimes_R F \cong F' \cong \cone(S \lotimes_R f^*)[-1]
\]
is endowed with a natural $S$-linear structure, making it a (right) $S \lotimes_R \opp{\injcat I} = \opp{\injcat J}$-module. Using the same argument as in Lemma \ref{lemma:tensor_lift_Slinear}, we have an ($R$-linear) isomorphism in $\dercomp(\opp{\injcat I})$:
\[
I \lotimes_R F \cong I \lotimes_S F',
\]
which we view as an identification. We also identify
\[
I \lotimes_R \injcat I \cong I \lotimes_S \injcat J,
\]
again according to Lemma \ref{lemma:tensor_lift_Slinear}. The above diagram is then obtained by applying $-\lotimes_R \injcat I$ to the short exact sequence of $R$-modules
\[
0 \to I \to R \xrightarrow{\theta} S \to 0
\]
and by making the identifications as outlined above.  Using functorial choices of cones in $\dercompdg(\opp{\injcat I})$, we also have that both rows and columns of the above diagram are distinguished triangles. 

The dg-category $\injcat J$ is (left) hlc by hypothesis, hence $\dercomp(\opp{\injcat J})^\mathrm{hfp}$ is pretriangulated \cite[Theorem 5.9 (1)]{genovese-lowen-vdb-dginj}. The left $\injcat J$-dg-module $F'$ sits in the vertical distinguished triangle
\[
F' \to \injcat J(A,-) \xrightarrow{S \lotimes_R f^*} \injcat J(B,-)
\]
and by hypothesis both $\injcat J(A,-)$ and $\injcat J(B,-)$ are homotopically finitely presented. We conclude that $F'$ is also homotopically finitely presented as a left $\injcat J$-dg-module; it is also clear that it has cohomology bounded from above. Applying Lemma \ref{lemma:hfp_preserve_tensorproduct}, we deduce that $I \lotimes_S F'$ is a homotopically finitely presented left $\injcat J$-dg-module. By Corollary \ref{coroll:induced_deformation_H0}, we have an $H^0(S)$-linear equivalence $H^0(\injcat J) \cong H^0(S) \otimes_{H^0(R)} H^0(\injcat I)$, and the restriction morphism $\Mod(\opp{H^0(\injcat J)}) \to \Mod(\opp{H^0(\injcat I)})$ preserves finitely presented modules (it can be checked directly, or by invoking \cite[Proposition 4.6]{lowen-vandenbergh-deformations-abelian}). We conclude that both $I \lotimes_S F'$ and $F'$ are homotopically finitely presented as left $\injcat I$-dg-modules. Since
\[
I \lotimes_S F' \to F \to F'
\]
is a distinguished triangle, we conclude that $F$ is also a homotopically finitely presented left $\injcat I$-dg-module.

Finally, we consider the central vertical distinguished triangle
\[
F \to \injcat I(A,-) \xrightarrow{f^*} \injcat I(B,-)
\]
and we take the long exact sequence in cohomology:
\[
H^0(F) \to H^0(\injcat I(A,-)) \xrightarrow{H^0(f^*)} H^0(\injcat I(B,-)).
\]
We know that $H^0(F)$ is finitely presented as a left $H^0(\injcat I)$-module, in particular it is finitely generated. Using that $H^0(\injcat I)$ is additive, we find an object $C \in \injcat I$ and a surjection
\[
H^0(\injcat I(C,-)) \twoheadrightarrow H^0(F).
\]
This gives an exact sequence
\[
H^0(\injcat I(C,-)) \to H^0(\injcat I(A,-)) \xrightarrow{H^0(f^*)} H^0(\injcat I(B,-)).
\]
By the Yoneda Lemma, $H^0(\injcat I(C,-)) \to H^0(\injcat I(A,-))$ is of the form $H^0(g^*)$ for a suitable $g \colon A \to C$, as we wanted.
\end{proof}
\begin{lemma} \label{lemma:hlc_lift_deform}
If $\injcat J$ is (left) hlc, the same is true for $\injcat I$. 
\end{lemma}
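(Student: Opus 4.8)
The plan is to verify directly the four defining properties of a (left) hlc dg-category for $\injcat I$, noting that the preceding lemmas already secure three of them so that only one genuinely new point remains. First I would recall that $\injcat I$ is cohomologically concentrated in nonpositive degrees by Lemma \ref{lemma:deformation_lift_negativedegrees}, that $H^0(\injcat I)$ is additive by Lemma \ref{lemma:liftingproperties_KaroubianadditiveH0}, and that $H^0(\injcat I)$ is left coherent by Lemma \ref{lemma:hlc_lift_coherentH0}. Hence the only outstanding property is that for every $A \in \injcat I$ the representable left dg-module $\injcat I(A,-)$ is homotopically finitely presented, i.e. $H^k(\injcat I(A,-)) \in \operatorname{mod}(\opp{H^0(\injcat I)})$ for all $k$ (and since $H^k$ vanishes for $k>0$ by the first property, only $k \le 0$ is at issue).

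The key step is to tensor the short exact sequence $0 \to I \to R \xrightarrow{\theta} S \to 0$ with the representable module, which, upon applying $-\lotimes_R \injcat I(A,-)$ and using $\injcat J = S \lotimes_R \injcat I$, yields a distinguished triangle in $\dercomp(\opp{\injcat I})$:
\[
I \lotimes_R \injcat I(A,-) \to \injcat I(A,-) \to \injcat J(A,-).
\]
I would then show that both outer terms are homotopically finitely presented as left $\injcat I$-modules. The term $\injcat J(A,-)$ is hfp as a left $\injcat J$-module because $\injcat J$ is hlc by hypothesis, and restriction along $\injcat I \to \injcat J$ preserves finite presentation by Corollary \ref{coroll:induced_deformation_H0}, which identifies $H^0(\injcat J) \cong H^0(S) \otimes_{H^0(R)} H^0(\injcat I)$. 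For the term $I \lotimes_R \injcat I(A,-)$, evaluating the bimodule isomorphism of Lemma \ref{lemma:tensor_lift_Slinear} at $A$ gives $I \lotimes_R \injcat I(A,-) \cong I \lotimes_S \injcat J(A,-)$, where $I$ carries its $S$-module structure (Lemma \ref{lemma:kernel_order2nilpotent_Sdgmodule}); since $I$ is hfp over $R$ (Remark \ref{remark:setup_explanation}) with $H^i(I)=0$ for $i>0$, and $\injcat J(A,-) \in \dercomp(\opp{\injcat J})^\mathrm{hfp}$ has cohomology bounded above, Lemma \ref{lemma:hfp_preserve_tensorproduct} shows $I \lotimes_S \injcat J(A,-)$ is hfp over $\injcat J$, and restriction again makes it hfp over $\injcat I$.

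Finally I would run the long exact cohomology sequence of the triangle: each $H^k(\injcat I(A,-))$ fits into an extension of a subobject of $H^k(\injcat J(A,-))$ by a quotient of $H^k(I\lotimes_R \injcat I(A,-))$, expressed as kernels and cokernels of the connecting maps between the (finitely presented) outer cohomologies. Since $H^0(\injcat I)$ is coherent, the finitely presented $H^0(\injcat I)$-modules form an abelian category closed under kernels, cokernels and extensions, whence $H^k(\injcat I(A,-))$ is finitely presented for every $k$. This establishes the last property and concludes that $\injcat I$ is hlc.

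The main obstacle I expect is avoiding a circular appeal to the structure theory: the fact that homotopically finitely presented modules form a triangulated category is available for the hlc category $\injcat J$ but \emph{not} a priori for $\injcat I$, since that is essentially what we are proving. The argument must therefore establish all the finiteness statements on the $\injcat J$-side and transport them across the restriction functor, relying only on the \emph{already proven} coherence of $H^0(\injcat I)$ (from Lemma \ref{lemma:hlc_lift_coherentH0}) to close up the long exact sequence and deduce finite presentation of the middle cohomologies.
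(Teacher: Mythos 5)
Your proposal is correct and follows essentially the same route as the paper's proof: the same distinguished triangle obtained by tensoring $0 \to I \to R \to S \to 0$ with $\injcat I(A,-)$, the identification $I \lotimes_R \injcat I(A,-) \cong I \lotimes_S \injcat J(A,-)$ via Lemma \ref{lemma:tensor_lift_Slinear}, the appeal to Lemma \ref{lemma:hfp_preserve_tensorproduct} and to restriction along $H^0(\injcat J) \cong H^0(S) \otimes_{H^0(R)} H^0(\injcat I)$ to get finite presentation of the outer terms over $\injcat I$, and the closing long-exact-sequence argument using coherence of $H^0(\injcat I)$ and extension-closedness of finitely presented modules. Your concluding remark about keeping all triangulated-category finiteness arguments on the $\injcat J$-side is precisely the structure of the paper's argument as well.
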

\begin{proof}
Thanks to the above results, it now remains to prove only that $H^i(\injcat I(A,-))$ is a finitely presented left $H^0(\injcat I)$-module for all $i \in \mathbb Z$. As in the proof of the above Lemma \ref{lemma:hlc_lift_coherentH0}, we make the following identifications:
\begin{align*}
    \injcat J &= S \lotimes_R \injcat I, \\
    I \lotimes_S \injcat J &= I \lotimes_R \injcat I. \qquad \text{(see Lemma \ref{lemma:tensor_lift_Slinear})}
\end{align*}

Applying $-\lotimes_R \injcat I(A,-)$ to the short exact sequence
\[
0 \to I \to R \to S \to 0
\]
and recalling the above identifications, we get the following distinguished triangle in $\dercomp(\opp{\injcat I})$:
\[
I \lotimes_S \injcat J(A,-) \to \injcat I(A,-) \to \injcat J(A,-).
\]
We know apply Lemma \ref{lemma:hfp_preserve_tensorproduct} and deduce that $I \lotimes_S \injcat J(A,-)$ is a homotopically finitely presented left $\injcat J$-dg-module; the same is true for $\injcat J(A,-)$ by hypothesis. By Corollary \ref{coroll:induced_deformation_H0}, we have an $H^0(S)$-linear equivalence $H^0(\injcat J) \cong H^0(S) \otimes_{H^0(R)} H^0(\injcat I)$, and the restriction morphism $\Mod(\opp{H^0(\injcat J)}) \to \Mod(\opp{H^0(\injcat I)})$ preserves finitely presented modules (it can be checked directly, or by invoking \cite[Proposition 4.6]{lowen-vandenbergh-deformations-abelian}). We conclude that both $I \lotimes_S \injcat J(A,-)$ and $\injcat J(A,-)$ are homotopically finitely presented as left $\injcat I$-dg-modules.

Next, we argue as in the proof of \cite[Lemma 5.10]{genovese-lowen-vdb-dginj}. We take the long exact sequence in cohomology:
\[
H^{i-1}(\injcat J(A,-)) \xrightarrow{\alpha} H^i(I \lotimes_S \injcat J(A,-)) \to H^i(\injcat I(A,-)) \to H^i(\injcat J(A,-)) \xrightarrow{\beta} H^{i+1}(I \lotimes_S \injcat J(A,-)).
\]
From this, we obtain the following short exact sequence in $\Mod(\opp{H^0(\injcat I)})$:
\[
0 \to \coker \alpha \to H^i(\injcat I(A,-)) \to \ker \beta \to 0. 
\]
We know that the domain and codomain of both $\alpha$ and $\beta$ are objects in $\operatorname{mod}(\opp{H^0(\injcat I)})$. By the above Lemma \ref{lemma:hlc_lift_coherentH0}, we know that $H^0(\injcat I)$ is left coherent, which implies that
\[
\coker \alpha,\ \ker \beta \in \operatorname{mod}(\opp{H^0(\injcat I)}).
\]
Since finitely presented modules are closed under extension \cite[0519]{stacks-project}, we conclude that $H^i(\injcat I(A,-))$ is finitely presented, as we wanted.
\end{proof}
Proposition \ref{prop:properties_lifting_leftdeformation} finally follows by combining Lemma \ref{lemma:liftingproperties_KaroubianadditiveH0} and Lemma \ref{lemma:hlc_lift_deform}.

\bibliographystyle{amsplain}

\begin{thebibliography}{10}

\bibitem{amiot-cluster}
Claire Amiot, \emph{Cluster categories for algebras of global dimension 2 and
  quivers with potential}, Ann. Inst. Fourier (Grenoble) \textbf{59} (2009),
  no.~6, 2525--2590.

\bibitem{beilinson-pervers}
A.~A. Be{\u\i}linson, J.~Bernstein, and P.~Deligne, \emph{Faisceaux pervers},
  Analysis and topology on singular spaces, {I} ({L}uminy, 1981), Ast\'erisque,
  vol. 100, Soc. Math. France, Paris, 1982, pp.~5--171.

\bibitem{canonaco-stellari-internalhoms}
Alberto Canonaco and Paolo Stellari, \emph{Internal {H}oms via extensions of dg
  functors}, Adv. Math. \textbf{277} (2015), 100--123.

\bibitem{genovese-adjunctions}
Francesco Genovese, \emph{Adjunctions of quasi-functors between
  {DG}-categories}, Appl. Categ. Structures \textbf{25} (2017), no.~4,
  625--657.

\bibitem{genovese-lowen-vdb-dginj}
Francesco Genovese, Wendy Lowen, and Michel Van~den Bergh, \emph{t-structures
  and twisted complexes on derived injectives}, Adv. Math. \textbf{387} (2021),
  Paper No. 107826, 70.

\bibitem{keller-lowen-nicolas}
Bernhard Keller, Wendy Lowen, and Pedro Nicol\'{a}s, \emph{On the
  (non)vanishing of some ``derived'' categories of curved dg algebras}, J. Pure
  Appl. Algebra \textbf{214} (2010), no.~7, 1271--1284.

\bibitem{loregian-coend}
Fosco Loregian, \emph{({C}o)end calculus}, London Mathematical Society Lecture
  Note Series, Cambridge University Press, 2021.

\bibitem{lowen-vdb-hochschild}
Wendy Lowen and Michel Van~den Bergh, \emph{Hochschild cohomology of abelian
  categories and ringed spaces}, Adv. Math. \textbf{198} (2005), no.~1,
  172--221.

\bibitem{lowen-vandenbergh-deformations-abelian}
\bysame, \emph{Deformation theory of abelian categories}, Trans. Amer. Math.
  Soc. \textbf{358} (2006), no.~12, 5441--5483.

\bibitem{lunts-schnurer-smoothness-equivariant}
Valery~A. Lunts and Olaf~M. Schn\"{u}rer, \emph{Smoothness of equivariant
  derived categories}, Proc. Lond. Math. Soc. (3) \textbf{108} (2014), no.~5,
  1226--1276.

\bibitem{rizzardo-nonFM}
Alice Rizzardo, Michel Van~den Bergh, and Amnon Neeman, \emph{An example of a
  non-{F}ourier-{M}ukai functor between derived categories of coherent
  sheaves}, Invent. Math. \textbf{216} (2019), no.~3, 927--1004.

\bibitem{stacks-project}
The {Stacks project authors}, \emph{The {S}tacks project},
  http://stacks.math.columbia.edu.

\bibitem{tabuada-dgcat}
Gon{\c{c}}alo Tabuada, \emph{Une structure de cat\'egorie de mod\`eles de
  {Q}uillen sur la cat\'egorie des dg-cat\'egories}, C. R. Math. Acad. Sci.
  Paris \textbf{340} (2005), no.~1, 15--19.

\bibitem{tabuada-dgvssimplicial}
Gon\c{c}alo Tabuada, \emph{Differential graded versus simplicial categories},
  Topology Appl. \textbf{157} (2010), no.~3, 563--593.

\bibitem{toen-morita}
Bertrand To{\"e}n, \emph{The homotopy theory of {$dg$}-categories and derived
  {M}orita theory}, Invent. Math. \textbf{167} (2007), no.~3, 615--667.

\end{thebibliography}

\providecommand{\bysame}{\leavevmode\hbox to3em{\hrulefill}\thinspace}
\providecommand{\MR}{\relax\ifhmode\unskip\space\fi MR }
\providecommand{\MRhref}[2]{%
  \href{http://www.ams.org/mathscinet-getitem?mr=#1}{#2}
}
\providecommand{\href}[2]{#2}

\end{document}